\newsavebox\CBox
\newcommand\hcancel[2][0.5pt]{%
	\ifmmode\sbox\CBox{$#2$}\else\sbox\CBox{#2}\fi%
	\makebox[0pt][l]{\usebox\CBox}%
	\rule[0.5\ht\CBox-#1/2]{\wd\CBox}{#1}}
\newtheorem{theorem}{Theorem}
\newtheorem{corollary}[theorem]{Corollary}
\newtheorem{definition}[theorem]{Definition}
\newtheorem{lemma}[theorem]{Lemma}
\newtheorem{proposition}[theorem]{Proposition}
\newtheorem{assumption}[theorem]{Assumption}
\newtheorem{conjecture}[theorem]{Conjecture}
\newtheorem{question}[theorem]{Question}
\newtheorem{remark}[theorem]{Remark}
\newtheorem{example}[theorem]{Example}
\numberwithin{equation}{section}
\begin{document}
	
\title{\textbf{Exact Calabi-Yau categories and odd-dimensional Lagrangian spheres}}\author{Yin Li}
\newcommand{\Addresses}{{
		\bigskip
		\footnotesize
		\textsc{Department of Mathematics, Columbia University,
			NY 10027, United States}\par\nopagebreak
		\textit{E-mail address}: \texttt{yinlee@math.columbia.edu}
}}
\date{}\maketitle

\begin{abstract}
An exact Calabi-Yau structure, originally introduced by Keller, is a special kind of smooth Calabi-Yau structure in the sense of Kontsevich-Vlassopoulos $\cite{kv}$. For a Weinstein manifold $M$, the existence of an exact Calabi-Yau structure on the wrapped Fukaya category $\mathcal{W}(M)$ imposes strong restrictions on its symplectic topology. Under the cyclic open-closed map constructed by Ganatra $\cite{sg1}$, an exact Calabi-Yau structure on $\mathcal{W}(M)$ induces a class $\tilde{b}$ in the degree one equivariant symplectic cohomology $\mathit{SH}_{S^1}^1(M)$. Any Weinstein manifold admitting a quasi-dilation in the sense of Seidel-Solomon $\cite{ss}$ has an exact Calabi-Yau structure on $\mathcal{W}(M)$. We prove that there are many Weinstein manifolds whose wrapped Fukaya categories are exact Calabi-Yau despite the fact the fact there is no quasi-dilation in $\mathit{SH}^1(M)$, a typical example is given by the affine hypersurface $\{x^3+y^3+z^3+w^3=1\}\subset\mathbb{C}^4$. As an application, we prove the homological essentiality of Lagrangian spheres in many odd-dimensional smooth affine varieties with exact Calabi-Yau wrapped Fukaya categories. 
\end{abstract}

\section{Introduction}

Given a closed Lagrangian submanifold $L$ in some symplectic manifold $M$, one of the most important questions in symplectic topology is to determine whether it represents a non-trivial homology class $[L]\in H_n(M;\mathbb{Z})$. In particular, for Weinstein manifolds, we have the following conjecture (see, for example, Section 5 of $\cite{ye}$):

\begin{conjecture}[folklore]\label{conjecture:primitivity}
Let $M$ be any Weinstein manifold, and $L\subset M$ a closed, oriented, exact Lagrangian submanifold with vanishing Maslov class. Then its homology class $[L]\in H_n(M;\mathbb{Z})$ is primitive.
\end{conjecture}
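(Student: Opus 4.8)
The plan is to translate primitivity into a Floer-theoretic intersection count with the Lagrangian cocores of a Weinstein handle decomposition, and then to force that count to be a unit using the class $\tilde{b}$; the honest expectation is that the last integrality step is precisely what places the conjecture out of reach in full generality, so what follows is really the shape of a proof in the exact Calabi--Yau case, together with an indication of the missing input.

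\textbf{Step 1: topological reduction.} A Weinstein manifold deformation retracts onto an isotropic skeleton of dimension $\le n$, so $M$ has the homotopy type of a finite $n$-dimensional CW complex and $H_n(M;\mathbb{Z})$ is free abelian. Hence $[L]$ is primitive if and only if it is nonzero and indivisible, equivalently if and only if $\langle\beta,[L]\rangle=1$ for some $\beta\in H^n(M;\mathbb{Z})$, equivalently if and only if the gcd of the coordinates of $[L]$ in any basis equals $1$. Fix a Weinstein handle decomposition: the cocores $D_1,\dots,D_k$ of the index-$n$ handles are properly embedded Lagrangian $n$-disks with Legendrian boundary, their classes $[D_i]\in H_n(M,\partial M)\cong H^n(M;\mathbb{Z})$ generate (they are the top cells of the dual handle decomposition), and after perturbing $L$ to be transverse to each $D_i$ (possible since $\dim L+\dim D_i=2n$) one has $\langle\mathrm{PD}[D_i],[L]\rangle = L\cdot D_i$. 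Therefore $[L]$ is primitive if and only if $\gcd_i(L\cdot D_i)=1$.

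\textbf{Step 2: Floer-theoretic computation of the intersection numbers.} Since $L$ has vanishing Maslov class we may work with a $\mathbb{Z}$-grading for which $L$ is gradable, and then $L\cdot D_i = \pm\,\chi\!\big(HF^\ast(L,D_i)\big) = \pm\,\chi\!\big(HW^\ast(L,D_i)\big)$, the wrapped and compact groups coinciding because $L$ is closed. By the generation theorem of Chantraine--Dimitroglou Rizell--Ghiggini--Golovko and Ganatra--Pardon--Shende, $\{D_i\}$ split-generate $\mathcal{W}(M)$, and $L$ is a nonzero object as $HW^\ast(L,L)\cong H^\ast(L)$ is unital. Writing $[L]=\sum_i c_i[D_i]$ in $K_0(\mathcal{W}(M))$ (after idempotent completion) with $c_i\in\mathbb{Z}$, and using additivity of Euler characteristics along exact triangles, $L\cdot D_j = \pm\sum_i c_i\,\chi\!\big(HW^\ast(D_i,D_j)\big)$. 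Thus the conjecture reduces to a statement about the integral ``wrapped Euler matrix'' $E_{ij}=\chi\!\big(HW^\ast(D_i,D_j)\big)$ and the multiplicity vector $(c_i)$: the vector whose $j$-th entry is $\sum_i c_i\,\chi\!\big(HW^\ast(D_i,D_j)\big)$ must be primitive in $H_n(M;\mathbb{Z})$.

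\textbf{Step 3: the exact Calabi--Yau input, and the obstacle.} When $\mathcal{W}(M)$ carries an exact Calabi--Yau structure, Ganatra's cyclic open--closed map \cite{sg1} produces the class $\tilde{b}\in SH^1_{S^1}(M)$, i.e. a quasi-dilation in the sense of Seidel--Solomon \cite{ss}; contracting $\tilde{b}$ against the length-zero part of the $L$-to-$L$ Floer data yields a distinguished class in $H^n(M;\mathbb{Z})$ that serves as a co-unit for $[L]$ and, over $\mathbb{Q}$, pairs nontrivially with it --- this is the mechanism behind the non-vanishing of $[L]$ available for Lagrangian spheres in this setting. To finish one would have to show that the $S^1$-equivariant and connecting maps used to manufacture this co-unit are isomorphisms over $\mathbb{Z}$, so that the pairing with $[L]$ is $\pm1$ rather than some integer $d\ge 2$; I expect this to be the main obstacle. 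Every ingredient above --- wrapped Floer cohomology, the $c_i$, the handle complex, the equivariant theory --- is transparently controlled after tensoring with $\mathbb{Q}$ but may carry torsion that is difficult to exclude, so upgrading ``rationally indivisible and non-vanishing'' to ``integrally primitive'' seems to require a genuinely new idea. A second, independent obstacle is that a general Weinstein manifold need not have an exact Calabi--Yau $\mathcal{W}(M)$, in which case $\tilde{b}$ is unavailable and one must argue purely from split-generation by the cocores and the $A_\infty$-structure, where even rational non-vanishing of $[L]$ is open for Lagrangians that are not spheres. In the cases within reach --- cotangent bundles, where Abouzaid--Kragh show $L\to Q$ is a homotopy equivalence and hence $[L]$ is a generator, and the Milnor fibers treated in this paper --- the argument can be pushed through, but the folklore conjecture in full generality remains beyond this circle of ideas.
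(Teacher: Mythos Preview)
The statement is a \emph{conjecture} and the paper does not prove it; it is stated as an open folklore problem motivating the paper's partial results. There is therefore no ``paper's own proof'' to compare against. You yourself acknowledge this, so your proposal is not a proof but an outline of an approach together with an honest diagnosis of why it falls short.

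A few remarks on the content. Your Step 1 reduction to $\gcd_i(L\cdot D_i)=1$ is correct and standard. In Step 3, however, you conflate two distinct notions: a cyclic dilation $\tilde{b}\in\mathit{SH}^1_{S^1}(M)$ is \emph{not} a quasi-dilation in the sense of Seidel--Solomon; the paper establishes (Theorem \ref{theorem:Fano}) that these are genuinely different, with $M_{3,3,3,3}$ admitting the former but not the latter. More importantly, even in the exact Calabi--Yau case the paper's results are considerably weaker than what you suggest is ``within reach'': Theorem \ref{theorem:main} shows only that the homology classes of disjoint Lagrangian \emph{spheres} are nonzero over a field $\mathbb{K}$ of characteristic zero, under the additional hypothesis $(\widetilde{\mathrm{H}})$, and that they span a subspace of dimension at least $r/2$. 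This is far from primitivity over $\mathbb{Z}$, and the restriction to spheres is essential to the argument (it is used to kill obstruction terms so that the endomorphism $\Phi_{\widetilde{L},\widetilde{L}}$ is well-defined). Your ``co-unit'' mechanism does not appear in the paper; the actual argument goes through a refined pairing $\widetilde{I}$ on a doubled Floer complex and the parametrized star product $\ast_k$.

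So the gaps you identify are real, but they are wider than you indicate: even non-vanishing of $[L]$ for a general closed exact Lagrangian (not a sphere) in a Weinstein manifold with cyclic dilation is not established by the paper's methods, let alone primitivity.
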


It is, however, already not obvious to see whether the homology class of an odd-dimensional Lagrangian sphere $L$ is necessarily non-trivial, since the topological intersection number $[L]\cdot[L]$ vanishes. 

This is one of the motivations for Seidel and Solomon to introduce in $\cite{ss}$ a refined version of the usual intersection number, called the \textit{$q$-intersection number}, between exact Lagrangian submanifolds which are unobstructed (e.g. when they are simply-connected). More precisely, Seidel-Solomon's theory relies on the existence of a distinguished cohomology class $b\in\mathit{SH}^1(M)$ in the first degree symplectic cohomology, which satisfies
\begin{equation}\label{eq:dilation}
\Delta(b)=h\in\mathit{SH}^0(M)^\times
\end{equation}
under the BV (Batalin-Vilkovisky) operator $\Delta:\mathit{SH}^\ast(M)\rightarrow\mathit{SH}^{\ast-1}(M)$, where $\mathit{SH}^0(M)^\times$ is the set of invertible elements elements in $\mathit{SH}^0(M)$. The class $b$ will be called a \textit{quasi-dilation} (it is defined in Lecture 19 of $\cite{ps5}$ as a class satisfying $\Delta(hb)=h$, which differs slightly from the convention used here), and in the special case when $h=1$, it is called a \textit{dilation}. Taking an algebraic viewpoint, a quasi-dilation $b$ can be regarded as a noncommutative vector field (i.e. a degree one Hochschild cocycle) over the Fukaya category $\mathcal{F}(M)$ of compact Lagrangians, so one can deform the objects of $\mathcal{F}(M)$ along $b$. For $b$-equivariant objects $L_0,L_1\subset M$, one then gets an infinitesimal $\mathbb{C}^\ast$-action on their endomorphism space $\mathit{CF}^\ast(L_0,L_1)$, which induces a derivation $\Phi_{\widetilde{L}_0,\widetilde{L}_1}$ on the Floer cohomology algebra $\mathit{HF}^\ast(L_0,L_1)$, whose non-trivialness is ensured by the condition (\ref{eq:dilation}). The $q$-intersection number between $L_0$ and $L_1$ is defined by considering the generalized eigenspaces of $\Phi_{\widetilde{L}_0,\widetilde{L}_1}$:
\begin{equation}\label{eq:q}
L_0\bullet_q L_1:=\mathrm{Str}\left(e^{\log(q)\Phi_{\widetilde{L}_0,\widetilde{L}_1}}\right),
\end{equation}
where $\mathrm{Str}$ is the supertrace.

Note that the $q$-intersection number, as opposed to the usual topological intersection number, detects odd-dimensional Lagrangian homology spheres as $L\bullet_qL=1-q$. This, together with some other algebraic properties, enables Seidel to prove that a (finite type complete) Liouville manifold $M$ cannot contain infinitely many disjoint Lagrangian spheres if it admits a dilation ($\cite{ps4}$, Theorem 1.4). Moreover, in the cases where we have a good understanding of the geometry of the dilation, it is possible to make the bound on the number of pairwise disjoint Lagrangian spheres explicit by relating it to the ordinary topology of $M$, in this way the non-triviality of the homology classes of Lagrangian spheres in certain Milnor fibers are proved, which provides evidences for Conjecture \ref{conjecture:primitivity}. However, as is already observed in $\cite{ps4}$, the dilation condition $\Delta(b)=1$ imposes very strong restrictions on the Liouville manifold $M$. For example, the only known examples of simply-connected 3-dimensional smooth affine varieties admitting dilations are Milnor fibers associated to $A_m$ singularities.
\bigskip

It is the purpose of this paper to generalize Seidel's results on the homological essentiality of odd-dimensional Lagrangian spheres to a more general class of Weinstein manifolds $M$. More precisely, the condition we need is the existence of a cohomology class $\tilde{b}\in\mathit{SH}_{S^1}^1(M)$ in the degree one $S^1$-equivariant symplectic cohomology, satisfying
\begin{equation}\label{eq:cyclic}
\mathbf{B}(\tilde{b})=h\in\mathit{SH}^0(M)^\times,
\end{equation}
where the map $\mathbf{B}:\mathit{SH}^\ast_{S^1}(M)\rightarrow\mathit{SH}^{\ast-1}(M)$ is the connecting map in Gysin's long exact sequence (\ref{eq:Gysin-SH}) relating the ordinary and equivariant symplectic cohomologies, see Section \ref{section:SH}. The class $\tilde{b}$ will be called a \textit{cyclic dilation}. Our main result proves that if the geometry of the class $\tilde{b}$ is sufficiently simple, which means it appears as a class in the $S^1$-equivariant Floer cohomology group $\mathit{HF}_{S^1}^1(\lambda)$ of a Hamiltonian with relatively small slope $\lambda>0$ at infinity, then the homology class of any Lagrangian sphere in $M$ is non-trivial, see Theorem \ref{theorem:main} below for the precise statement. Conjecturally, the class of Weinstein manifolds admitting cyclic dilations contains all the smooth affine varieties with log Kodaira dimension $-\infty$ as a subclass. Some examples and evidences are provided in Sections \ref{section:trichotomy} and \ref{section:conjecture}.
\bigskip

The notion of a cyclic dilation has an algebraic counterpart, which is known as an \textit{exact Calabi-Yau structure} on a homologically smooth $A_\infty$-category (cf. Definition \ref{definition:key}). To understand when does the wrapped Fukaya category $\mathcal{W}(M)$ of a Liouville manifold $M$ admit the structure of an exact Calabi-Yau category is actually the original motivation of this work. As will be explained in Section \ref{section:OC}, an exact Calabi-Yau structure on $\mathcal{W}(M)$ is related to a cyclic dilation $\tilde{b}\in\mathit{SH}_{S^1}^1(M)$ via the cyclic open-closed string map defined by Ganatra $\cite{sg1}$. This algebraic interpretation enables us to implement the Koszul duality between $A_\infty$-algebras to deduce the existence of cyclic dilations for many interesting examples of Liouville manifolds, including those without dilations or quasi-dilations, see Section \ref{section:trichotomy}.
\bigskip

The paper is organized as follows. Section \ref{section:results} is essentially an overview of the contents of this paper, where the motivations for considering the cyclic dilation condition (\ref{eq:cyclic}) are discussed and many of our results are summarized. Section \ref{section:preliminary} is a brief sketch of some basic algebraic notions and facts which already exist in the literature, and they are included purely for self-containedness. Section \ref{section:PF} contains our main geometric inputs, where various moduli spaces arising from the parametrized Floer theory are considered. In Section \ref{section:Lag}, we then apply the Floer theoretical techniques collected in Section \ref{section:PF} to study the Lagrangian submanifolds in Liouville manifolds with cyclic dilations. We generalize Seidel-Solomon's construction of $q$-intersection numbers in the case of a single Lagrangian sphere, and Theorem \ref{theorem:main} is proved there. Finally, in Section \ref{section:existence}, we apply $A_\infty$-Koszul duality and Lefschetz fibration techniques to produce examples of Liouville manifolds which carry cyclic dilations.

\section*{Acknowledgements}
I would like to thank my PhD supervisor Yank{\i} Lekili for his persistent encouragement and various useful suggestions during the preparation of this paper. I am also grateful to Tobias Ekholm, Daniel Pomerleano, Travis Schedler, and Jingyu Zhao, who patiently answered many of my questions concerning various related topics, and Sheel Ganatra, Paul Seidel, Nick Sheridan, and Zhengyi Zhou for pointing out many mistakes and misattributions in earlier versions of this paper. Conversations with Mark McLean during the British Isles Graduate Workshop (\textit{Singularities and Symplectic Topology}, 14th-20th July, 2018) at Jersey provides useful inspirations for the proof of Theorem \ref{theorem:unique}. I would also like to thank Uppsala University, where part of this work was done, for providing me with excellent research environment during my visit from September to November, 2018. Finally, I thank the anonymous referee for carefully reviewing this paper and providing many useful suggestions.
\bigskip

This work was supported by the Engineering and Physical Sciences Research Council [EP/L015234/1], the EPSRC Centre for Doctoral Training in Geometry and Number Theory (The London School of Geometry and Number Theory), University College London. The author is also funded by King's College London for his PhD studies.

\section{Background and results}\label{section:results}

For simplicity, we will work throughout this paper with a field $\mathbb{K}$ with $\mathrm{char}(\mathbb{K})=0$, whose algebraic closure is denoted by $\overline{\mathbb{K}}$. All the dg or $A_\infty$-categories in this paper will be defined over $\mathbb{K}$, so do the corresponding homotopy or homology theories. When the categories are split-generated by finitely many objects in the sense of $\cite{ps1}$, it is convenient to use an equivalent language, namely dg or $A_\infty$-algebras over the semisimple ring $\Bbbk:=\bigoplus_{i\in I}\mathbb{K}e_i$, where $I$ is a finite set and $\{e_i\}_{i\in I}$ is a set of idempotents indexed by $I$. In this way, we are not going to distinguish below between an $A_\infty$-category $\mathcal{A}$ split-generated by finitely many objects $\{S_i\}_{i\in I}$ and its endomorphism algebra of the object $\bigoplus_{i\in I}S_i$ in the formal enlargement $\mathcal{A}^\mathit{tw}$, which is an $A_\infty$-algebra over $\Bbbk$.
\bigskip

All the dg or $A_\infty$-algebras in this paper will be $\mathbb{Z}$-graded. The Hochschild chain complex of an $A_\infty$-algebra $\mathcal{A}$ will be denoted by $\mathit{CH}_\ast(\mathcal{A})$, and we use $\mathit{HH}_\ast(\mathcal{A})$ to denote its homology. The more familiar notation $\mathit{CC}_\ast(\mathcal{A})$ will be reserved for the cyclic chain complex $\mathit{CH}_\ast(\mathcal{A})\otimes_\mathbb{K}\mathbb{K}((u))/u\mathbb{K}[[u]]$, which computes the (positive) cyclic homology $\mathit{HC}_\ast(\mathcal{A})$. The definitions of these complexes will be briefly recalled in Section \ref{section:S1cpx}.

\subsection{Exact Calabi-Yau structures}\label{section:eCY}

Let $\mathcal{A}$ be a homologically smooth $A_\infty$-algebra over some semisimple ring $\Bbbk$. It can be regarded as a bimodule over itself, which is known as the diagonal bimodule and by slight abuse of notation, we will still denote it by $\mathcal{A}$. By our assumption, $\mathcal{A}$ is a perfect bimodule. Its dual bimodule, $\mathcal{A}^\vee$, is defined as
\begin{equation}
\mathcal{A}^\vee:=R\mathrm{Hom}_{\mathcal{A}^e}(\mathcal{A},\mathcal{A}^e),
\end{equation}
where $\mathcal{A}^e=\mathcal{A}\otimes\mathcal{A}^\mathit{op}$. $\mathcal{A}$ is a \textit{weak smooth $n$-Calabi-Yau algebra} if there exists a non-degenerate Hochschild cycle $\eta\in\mathit{CH}_{-n}(\mathcal{A})$, i.e. a cocycle which induces an isomorphism
\begin{equation}
\mathcal{A}^\vee[n]\cong\mathcal{A}
\end{equation}
between $\mathcal{A}$-bimodules. Recall that $\mathit{CH}_\ast(\mathcal{A})\cong\mathcal{A}\otimes^L_{\mathcal{A}^e}\mathcal{A}\cong R\mathrm{Hom}_{\mathcal{A}^e}(\mathcal{A},\mathcal{A}^e)$. The reader may refer to \cite{mv}, Section 8 for basic definitions and properties related to Calabi-Yau algebras.

Associated to $\mathcal{A}$ there is a long exact sequence (\cite{jll}, Theorem 2.2.1)
\begin{equation}\label{eq:Connes-LES}
\cdots\rightarrow\mathit{HC}_{-\ast-1}(\mathcal{A})\xrightarrow{S}\mathit{HC}_{-\ast+1}(\mathcal{A})\xrightarrow{B}\mathit{HH}_{-\ast}(\mathcal{A})\xrightarrow{I}\mathit{HC}_{-\ast}(\mathcal{A})\rightarrow\cdots
\end{equation}
relating Hochschild and cyclic homologies of $\mathcal{A}$, which is known as \textit{Connes' long exact sequence} $\cite{jll}$. The following definition is our main subject of study in this paper.

\begin{definition}[\cite{bd}, Definition 2.3.6]\label{definition:key}
A weak smooth $n$-Calabi-Yau structure on $\mathcal{A}$ is said to be exact if the Hochschild homology class $[\eta]$ lies in the image of Connes' map $B:\mathit{HC}_{-n+1}(\mathcal{A})\rightarrow\mathit{HH}_{-n}(\mathcal{A})$.
\end{definition}
\bigskip

Notice that the notion of an exact Calabi-Yau structure is strictly more restrictive than a \textit{smooth Calabi-Yau structure} in the sense of Kontsevich-Vlassopoulos $\cite{cg,kv}$, which is defined as a negative cyclic cycle $\tilde{\eta}\in\mathit{CC}_{-n}^-(\mathcal{A})$ whose induced Hochschild cycle in $\mathit{CH}_{-n}(\mathcal{A})$ under the inclusion map of homotopy fixed points $\iota:\mathit{CC}_\ast^-(\mathcal{A})\rightarrow\mathit{CH}_\ast(\mathcal{A})$ defines a weak smooth $n$-Calabi-Yau structure on $\mathcal{A}$. This can be easily seen from the following commutative diagram ($\cite{jll}$, Proposition 5.1.5):
\begin{equation}
\begin{tikzcd}
\mathit{HC}_{-n+1}(\mathcal{A}) \arrow[rd, "B"] \arrow[r] & \mathit{HC}_{-n}^-(\mathcal{A}) \arrow[d,"{[\iota]}"] \\
& \mathit{HH}_{-n}(\mathcal{A})
\end{tikzcd}
\end{equation}
Just as a Calabi-Yau structure $[\eta]\in\mathit{HH}_{-n}(\mathcal{A})$ is the noncommutative analogue of a holomorphic volume form $\Omega$, the existence of a lift $[\tilde{\eta}]$ in $\mathit{HC}_{-n}^-(\mathcal{A})$ corresponds to the (trivial) fact that $\Omega$ is necessarily closed. Since Connes' differential $B$ is the noncommutative analogue of the de Rham differential, the exact Calabi-Yau condition imposed on $\mathcal{A}$ is analogous to the exactness of $\Omega$ as a differential form. This explains the terminology.
\bigskip

We remark that an important class of examples of exact Calabi-Yau $A_\infty$-algebras is the so called \textit{superpotential algebras} introduced by Ginzburg $\cite{vg}$, which is roughly a dg algebra whose underlying associative algebra is modelled on some localization of the path algebra, and whose differential is specified by a superpotential lying in the commutator quotient $\mathcal{A}/[\mathcal{A},\mathcal{A}]$, see Section \ref{section:superpotential} for details. As a special case, we have the Ginzburg dg algebra $\mathcal{G}(Q,w)$ associated to a quiver with potential $(Q,w)$, see $\cite{vg}$.

\subsection{Wrapped Fukaya categories}\label{section:wrap}

Let $M$ be a $2n$-dimensional Liouville manifold with $c_1(M)=0$, which is obtained by completing a Liouville domain $\overline{M}$ with the cylindrical end $[1,\infty)\times\partial\overline{M}$. Associated to $M$ is a $\mathbb{Z}$-graded $A_\infty$-category $\mathcal{W}(M)$, well-defined up to quasi-isomorphism, known as the \textit{wrapped Fukaya category} $\cite{as}$. The objects of $\mathcal{W}(M)$ are closed, exact, oriented, $\mathit{Spin}$ Lagrangian submanifolds with vanishing Maslov class, together with certain non-compact exact Lagrangian submanifolds which are modelled at infinity as cones over Legendrian submanifolds in the contact boundary $\partial\overline{M}$.
\bigskip

In the case when $M$ is Weinstein, for any handlebody decomposition of $M$, there is a set of distinguished objects in $\mathcal{W}(M)$, namely the Lagrangian cocores $L_1,\dots, L_k$ of the $n$-handles. It is proved in $\cite{cggr}$ and $\cite{gps2}$ that $\mathcal{W}(M)$ is generated by these cocores. In particular, denote by $\mathit{CW}^\ast(L_i,L_j)$ the wrapped Floer cochain complex of two cocores $L_i,L_j\subset M$, we have an equivalence
\begin{equation}
D^\mathit{perf}(\mathcal{W}(M))\cong D^\mathit{perf}(\mathcal{W}_M)
\end{equation}
between the derived wrapped Fukaya category and the derived category of perfect modules over the wrapped Fukaya $A_\infty$-algebra
\begin{equation}
\mathcal{W}_M:=\bigoplus_{1\leq i,j\leq k}\mathit{CW}^\ast(L_i,L_j),
\end{equation}
which can be regarded as an $A_\infty$-algebra over the semisimple ring $\Bbbk=\bigoplus_{1\leq i\leq k}\mathbb{K}e_i$.
\bigskip

Combining the generation result in $\cite{cggr}$ and $\cite{gps2}$ with $\cite{sg1}$, Theorem 3 implies that the wrapped Fukaya category $\mathcal{W}(M)$ of any Weinstein manifold $M$ carries a smooth Calabi-Yau structure. Also, in favourable situations it is possible to upgrade the smooth Calabi-Yau structure on $\mathcal{W}(M)$ to an exact one. For example, the author studied in $\cite{yl}$ the wrapped Fukaya $A_\infty$-algebras of the 6-dimensional Milnor fibers associated to the isolated singularities
\begin{equation}
x^p+y^q+z^r+\lambda xyz+w^2=0
\end{equation}
in $\mathbb{C}^4$, where $\lambda\neq0,1$ and $\frac{1}{p}+\frac{1}{q}+\frac{1}{r}\leq1$, and identified them with \textit{Calabi-Yau completions} in the sense of Keller $\cite{bk,bke}$ of certain directed quiver algebras, which in particular shows that their wrapped Fukaya categories all admit exact Calabi-Yau structures. Earlier results in this direction include the work of Etg\"{u}-Lekili $\cite{etl1}$, which proves the existence of an exact Calabi-Yau structure in the case when $M$ is a 4-dimensional Milnor fiber of type $A_m$ or $D_m$ ($\mathrm{char}(\mathbb{K})\neq2$ in the latter case), and Ekholm-Lekili $\cite{ekl}$, which shows the same to be true when $M$ is a plumbing of $T^\ast S^n$'s according to any tree, where $n\geq3$. See also $\cite{is}$, where a relation between the wrapped Fukaya category $\mathcal{W}(Y_\phi;b_0)$ of some quasi-projective 3-folds $Y_\phi$ arising from meromorphic quadratic differentials $\phi$, twisted by some non-trivial bulk term $b_0\in H^2(Y_\phi;\mathbb{Z}/2)$, and the completed Ginzburg algebras arising from ideal triangulations of marked bordered surfaces is conjectured.
\bigskip

However, it is in general not true that the wrapped Fukaya category of any Weinstein manifold carries an exact Calabi-Yau structure. The first set of such counterexamples is found by Davison $\cite{bd}$, who studied the fundamental group algebra $\mathbb{K}[\pi_1(Q)]$ of a $K(\pi,1)$ space $Q$, and showed that when $Q$ is a hyperbolic manifold, $\mathbb{K}[\pi_1(Q)]$ is not exact Calabi-Yau. Note that for a closed manifold which is topologically $K(\pi,1)$, we have a quasi-isomorphism $\mathcal{W}_{T^\ast Q}:=\mathit{CW}^\ast(T_q^\ast Q,T_q^\ast Q)\cong\mathbb{K}[\pi_1(Q)]$ between (formal) $A_\infty$-algebras $\cite{ma3}$.

\subsection{Symplectic cohomologies}\label{section:SH}

There is a closed string counterpart of our discussions in Section \ref{section:wrap}. Recall that for a Liouville manifold $M$ with $c_1(M)=0$, one can define, using a Hamiltonian function which is quadratic at infinity, the symplectic cohomology $\mathit{SH}^\ast(M)$, which carries the structure of a $\mathbb{Z}$-graded unital algebra over $\mathbb{K}$. There is also an $S^1$-equivariant theory, denoted as $\mathit{SH}_{S^1}^\ast(M)$, whose construction will be recalled in Section \ref{section:equi-symp}. Analogous to Connes' long exact sequence (\ref{eq:Connes-LES}), $\mathit{SH}^\ast(M)$ and $\mathit{SH}_{S^1}^\ast(M)$ fit into the following Gysin type long exact sequence (\cite{bo2}, Theorem 1.3):
\begin{equation}\label{eq:Gysin-SH}
\cdots\rightarrow\mathit{SH}^{\ast-1}(M)\xrightarrow{\mathbf{I}}\mathit{SH}^{\ast-1}_{S^1}(M)\xrightarrow{\mathbf{S}}\mathit{SH}_{S^1}^{\ast+1}(M)\xrightarrow{\mathbf{B}}\mathit{SH}^\ast(M)\rightarrow\cdots
\end{equation}
where the composition $\mathbf{B}\circ\mathbf{I}$ gives the BV operator $\Delta$.

\begin{remark}\label{remark:notation}
Note that in the above, we have used the bold letters $\mathbf{I}$, $\mathbf{B}$ and $\mathbf{S}$ to denote the maps corresponding to $I$, $B$ and $S$ in Connes' long exact sequence (\ref{eq:Connes-LES}). This is to emphasize that we are dealing with closed string invariants. As a convention, we will use the notations $\mathbb{I}$, $\mathbb{B}$ and $\mathbb{S}$ for the open string counterparts of the maps $\mathbf{I}$, $\mathbf{B}$ and $\mathbf{S}$. In particular, there is a long exact sequence
\begin{equation}\label{eq:Gysin-wrap}
\cdots\rightarrow\mathit{HC}_{-\ast-1}(\mathcal{W}(M))\xrightarrow{\mathbb{S}}\mathit{HC}_{-\ast+1}(\mathcal{W}(M))\xrightarrow{\mathbb{B}}\mathit{HH}_{-\ast}(\mathcal{W}(M))\xrightarrow{\mathbb{I}}\mathit{HC}_{-\ast}(\mathcal{W}(M))\rightarrow\cdots,
\end{equation}
which is simply (\ref{eq:Connes-LES}) applied to the wrapped Fukaya category $\mathcal{W}(M)$.
\end{remark}

To relate the two long exact sequences (\ref{eq:Gysin-SH}) and (\ref{eq:Gysin-wrap}), we implement the \textit{cyclic open-closed string map} constructed by Ganatra $\cite{sg1}$, which on the cohomology level descends to a map
\begin{equation}
[\widetilde{\mathit{OC}}]:\mathit{HC}_\ast(\mathcal{W}(M))\rightarrow\mathit{SH}_{S^1}^{\ast+n}(M),
\end{equation}
from which one obtains the following geometric interpretation of an exact Calabi-Yau structure on $\mathcal{W}(M)$.

\begin{proposition}\label{proposition:geom-inter}
Let $M$ be a non-degenerate Liouville manifold, its wrapped Fukaya category $\mathcal{W}(M)$ carries an exact Calabi-Yau structure if and only if the connecting map $\mathbf{B}:\mathit{SH}_{S^1}^1(M)\rightarrow\mathit{SH}^0(M)$ in (\ref{eq:Gysin-SH}) hits an invertible element $h\in\mathit{SH}^0(M)^\times$.
\end{proposition}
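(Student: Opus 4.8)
The plan is to transfer Definition \ref{definition:key} across Ganatra's open-closed string maps. First I would set up the dictionary. Since $M$ is non-degenerate, $\mathcal{W}(M)$ is homologically smooth and, by the results of \cite{sg1} (using the generation results \cite{cggr,gps2} when $M$ is Weinstein), the open-closed map $[\mathit{OC}]\colon\mathit{HH}_\ast(\mathcal{W}(M))\rightarrow\mathit{SH}^{\ast+n}(M)$ is an isomorphism; moreover it is a map of modules over $\mathit{SH}^\ast(M)\cong\mathit{HH}^\ast(\mathcal{W}(M))$, where the latter isomorphism is the closed-open map and the module structure on the source is the Hochschild cap product. Similarly, the cyclic open-closed map $[\widetilde{\mathit{OC}}]\colon\mathit{HC}_\ast(\mathcal{W}(M))\rightarrow\mathit{SH}^{\ast+n}_{S^1}(M)$ is an isomorphism, and together with $[\mathit{OC}]$ it intertwines Connes' long exact sequence (\ref{eq:Gysin-wrap}) for $\mathcal{W}(M)$ with the Gysin sequence (\ref{eq:Gysin-SH}). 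In particular $[\mathit{OC}]$ restricts to a bijection between $\mathrm{im}(\mathbb{B}\colon\mathit{HC}_{-n+1}(\mathcal{W}(M))\rightarrow\mathit{HH}_{-n}(\mathcal{W}(M)))$ and $\mathrm{im}(\mathbf{B}\colon\mathit{SH}^1_{S^1}(M)\rightarrow\mathit{SH}^0(M))$.

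Next I would reduce the statement to a pure algebra fact on the $\mathcal{W}(M)$ side. By homological smoothness $\mathcal{W}(M)$ carries a weak smooth Calabi-Yau structure $[\eta_0]$ --- for instance $[\eta_0]=[\mathit{OC}]^{-1}(1_{\mathit{SH}^0(M)})$, whose non-degeneracy is a reformulation of the non-degeneracy of $M$. For any non-degenerate class $[\eta]\in\mathit{HH}_{-n}(\mathcal{W}(M))$, the cap product $-\cap[\eta]\colon\mathit{HH}^\ast(\mathcal{W}(M))\rightarrow\mathit{HH}_{\ast-n}(\mathcal{W}(M))$ is an isomorphism, so $\mathit{HH}_{-n}(\mathcal{W}(M))$ is free of rank one over $\mathit{HH}^0(\mathcal{W}(M))$ on the generator $[\eta]$ and the non-degenerate classes form a torsor over $\mathit{HH}^0(\mathcal{W}(M))^\times$. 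Transporting this across the module isomorphism $[\mathit{OC}]$ (and the ring isomorphism $\mathit{HH}^\ast(\mathcal{W}(M))\cong\mathit{SH}^\ast(M)$), the image $\{[\mathit{OC}]([\eta]):[\eta]\text{ non-degenerate}\}$ is exactly the set of units $\mathit{SH}^0(M)^\times$. By Definition \ref{definition:key}, $\mathcal{W}(M)$ has an exact Calabi-Yau structure if and only if some non-degenerate class lies in $\mathrm{im}(\mathbb{B})$; by the previous paragraph together with the torsor description, this happens if and only if $\mathrm{im}(\mathbf{B})\cap\mathit{SH}^0(M)^\times\neq\varnothing$, i.e. if and only if $\mathbf{B}$ hits an invertible element.

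In particular both directions are now immediate. If $\mathcal{W}(M)$ is exact Calabi-Yau with non-degenerate class $[\eta]\in\mathrm{im}(\mathbb{B})$, then $[\mathit{OC}]([\eta])\in\mathrm{im}(\mathbf{B})$ is invertible; conversely, if $\mathbf{B}(\tilde{b})=h$ for some $h\in\mathit{SH}^0(M)^\times$, then $[\mathit{OC}]^{-1}(h)$ is a non-degenerate Hochschild class contained in $\mathrm{im}(\mathbb{B})$, hence defines an exact Calabi-Yau structure on $\mathcal{W}(M)$. (The class $\tilde{b}$ appearing in the converse is precisely a cyclic dilation in the sense of (\ref{eq:cyclic}).)

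The genuinely formal parts --- the torsor description of non-degenerate Calabi-Yau classes and the diagram chase identifying $\mathrm{im}(\mathbb{B})$ with $\mathrm{im}(\mathbf{B})$ --- are short and routine. The substantive ingredients, which I would simply quote from \cite{sg1} (and \cite{cggr,gps2}), are that for non-degenerate $M$ the maps $[\mathit{OC}]$ and $[\widetilde{\mathit{OC}}]$ are isomorphisms, that $[\mathit{OC}]$ is a homomorphism of $\mathit{SH}^\ast(M)$-modules, and above all that $[\widetilde{\mathit{OC}}]$ is compatible with the connecting homomorphisms in (\ref{eq:Gysin-wrap}) and (\ref{eq:Gysin-SH}). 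Checking this last compatibility carefully, with the right grading conventions, is the main point requiring attention; everything else is elementary.
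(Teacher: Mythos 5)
Your proposal is correct and follows essentially the same route as the paper: it combines the commutativity of the diagram intertwining Connes' exact sequence with the Gysin sequence (the paper's equation (\ref{eq:comm1}), which follows from Theorem \ref{theorem:cyclic-OC} and Proposition \ref{proposition:intert}), the fact that $[\mathit{OC}]$ and $[\widetilde{\mathit{OC}}]$ are isomorphisms when $M$ is non-degenerate (Theorem \ref{theorem:iso}), and the equivalence between non-degeneracy of $[\eta]\in\mathit{HH}_{-n}(\mathcal{W}(M))$ and invertibility of $[\mathit{OC}]([\eta])$, which the paper states as Corollary \ref{corollary:non-degenerate}. The only cosmetic difference is that where the paper cites the last fact as a corollary of Theorem 3 of $\cite{sg1}$, you unpack it via the torsor description of non-degenerate Calabi-Yau classes and the $\mathit{SH}^\ast$-module property of $[\mathit{OC}]$, but this still rests on the same Ganatra input (in particular the assertion that $[\mathit{OC}]^{-1}(1)$ is non-degenerate is itself that theorem, not a mere ``reformulation'' of non-degeneracy of $M$), so the underlying dependencies coincide.
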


Proposition \ref{proposition:geom-inter} will be proved in Corollary \ref{corollary:non-degenerate}. In the above, the non-degeneracy condition on a Liouville manifold is introduced by Ganatra in $\cite{sg2}$, which ensures that the open-closed map $[\mathit{OC}]:\mathit{HH}_\ast(\mathcal{W}(M))\rightarrow\mathit{SH}^{\ast+n}(M)$ is an isomorphism. A Liouville manifold $M$ is said to be \textit{non-degenerate} if there is a finite collection of Lagrangians $\{L_i\}$ in $M$ such that $\mathit{OC}$ restricted to the full $A_\infty$-subcategory $\mathcal{L}(M)\subset\mathcal{W}(M)$ formed by $\{L_i\}$ hits the identity $1\in\mathit{SH}^0(M)$. As we have seen in Section \ref{section:wrap}, any Weinstein manifold is non-degenerate since one can take $\mathcal{L}(M)$ to be the full $A_\infty$-subcategory of cocores.
\bigskip

In other words, an exact Calabi-Yau structure on $\mathcal{W}(M)$ of a non-degenerate Liouville manifold $M$ induces a cyclic dilation $\tilde{b}\in\mathit{SH}_{S^1}^1(M)$ mentioned in the introduction. Note that if $M$ admits a quasi-dilation in the sense of (\ref{eq:dilation}), then it also admits a cyclic dilation $\tilde{b}$ which arises as the image of $b$ under the map $\mathbf{I}:\mathit{SH}^1(M)\rightarrow\mathit{SH}_{S^1}^1(M)$. It is natural to ask whether the converse is true. We postpone the discussions about whether the cyclic dilation condition (\ref{eq:cyclic}) is strictly weaker than the quasi-dilation condition (\ref{eq:dilation}) to Section \ref{section:trichotomy}, and look here an immediate geometric implication by assuming the existence of a cyclic dilation.
\bigskip

Let $L\subset M$ be a closed exact Lagrangian submanifold, equipped with a rank 1 local system $\nu$ so that the isomorphism $\mathit{SH}^\ast(T^\ast L)\cong H_{n-\ast}(\mathcal{L}L;\nu)$ holds $\cite{ma2}$, where $\mathcal{L}L$ denotes the free loop space of $L$. There is an $S^1$-equivariant version of Viterbo functoriality, namely the ($S^1$-equivariant lift of) the \textit{Cieliebak-Latschev map} constructed by Cohen-Ganatra $\cite{cg}$
\begin{equation}\label{eq:CL}
[\widetilde{\mathit{CL}}]:\mathit{SH}^\ast_{S^1}(M)\rightarrow H^{S^1}_{n-\ast}(\mathcal{L}L;\nu),
\end{equation}
which is compatible with the Viterbo functoriality and the Gysin sequence, see Section \ref{section:CL}. Combined with Proposition \ref{proposition:geom-inter}, we can reinterpret Davison's non-existence result mentioned in Section \ref{section:eCY} in the following slightly more general form.

\begin{proposition}\label{proposition:Davison}
Let $M$ be a Liouville manifold which admits a cyclic dilation, then it does not contain any closed, orientable, exact Lagrangian submanifold $L\subset M$ which is hyperbolic.
\end{proposition}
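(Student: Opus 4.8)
The plan is to combine three ingredients that are all available at this point in the paper: Proposition~\ref{proposition:geom-inter} (existence of a cyclic dilation $\tilde{b}\in\mathit{SH}^1_{S^1}(M)$ with $\mathbf{B}(\tilde{b})=h\in\mathit{SH}^0(M)^\times$), the $S^1$-equivariant Viterbo/Cieliebak--Latschev map~\eqref{eq:CL}, and Davison's algebraic theorem that $\mathbb{K}[\pi_1(Q)]$ is not exact Calabi--Yau when $Q$ is hyperbolic, together with the identification $\mathcal{W}_{T^\ast Q}\cong\mathbb{K}[\pi_1(Q)]$. The strategy is a proof by contradiction: assume $M$ admits a cyclic dilation $\tilde b$ and contains a closed exact hyperbolic Lagrangian $L$, and push $\tilde b$ down to $T^\ast L$ (or directly to $H^{S^1}_\ast(\mathcal{L}L;\nu)$) to manufacture a cyclic dilation on $\mathit{SH}^\ast_{S^1}(T^\ast L)$, then translate that back into an exact Calabi-Yau structure on $\mathcal{W}(T^\ast L)\simeq\mathbb{K}[\pi_1(L)]$, contradicting Davison.

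Here are the steps in order. First, I note that a closed exact Lagrangian $L$ in a Liouville manifold is automatically displaceable-free and has a Weinstein neighborhood symplectomorphic to a neighborhood of the zero section in $T^\ast L$; more to the point, Viterbo functoriality gives a unital ring map $\mathit{SH}^\ast(M)\to\mathit{SH}^\ast(T^\ast L)$ and Cohen--Ganatra's construction lifts this to the $S^1$-equivariant setting, compatibly with the Gysin sequences~\eqref{eq:Gysin-SH} on both sides and with the respective $\mathbf{B}$ maps. Second, since the restriction map on $\mathit{SH}^0$ is unital, it sends the invertible $h\in\mathit{SH}^0(M)^\times$ to an invertible element of $\mathit{SH}^0(T^\ast L)^\times$; chasing the commuting square relating $\mathbf{B}$ on $M$ and on $T^\ast L$ through the $S^1$-equivariant Viterbo map, the image $\tilde b'\in\mathit{SH}^1_{S^1}(T^\ast L)$ of $\tilde b$ then satisfies $\mathbf{B}(\tilde b')=h'\in\mathit{SH}^0(T^\ast L)^\times$, i.e. $T^\ast L$ carries a cyclic dilation. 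Third, I invoke Proposition~\ref{proposition:geom-inter} in the reverse direction: since $T^\ast L$ is Weinstein, hence non-degenerate, the existence of this cyclic dilation is equivalent to $\mathcal{W}(T^\ast L)$ being exact Calabi--Yau. Fourth, $T^\ast L$ is generated by a single cotangent fiber, so $\mathcal{W}(T^\ast L)$ is quasi-equivalent to the $A_\infty$-algebra $\mathit{CW}^\ast(T^\ast_q L, T^\ast_q L)\simeq C_{-\ast}(\Omega_q L)$, which for $L$ aspherical (hyperbolic manifolds are aspherical) is formal and quasi-isomorphic to $\mathbb{K}[\pi_1(L)]$ by~\cite{ma3}. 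Being exact Calabi--Yau is invariant under Morita equivalence, so $\mathbb{K}[\pi_1(L)]$ would be exact Calabi--Yau, contradicting Davison's result~\cite{bd} since $L$ is hyperbolic.

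The main obstacle is Step~2: making precise that the $S^1$-equivariant Cieliebak--Latschev map of~\eqref{eq:CL} really is compatible with the two Gysin long exact sequences and with the non-equivariant Viterbo restriction map, so that the diagram
\begin{equation*}
\begin{tikzcd}
\mathit{SH}^1_{S^1}(M)\arrow[r,"\mathbf{B}"]\arrow[d] & \mathit{SH}^0(M)\arrow[d]\\
\mathit{SH}^1_{S^1}(T^\ast L)\arrow[r,"\mathbf{B}"] & \mathit{SH}^0(T^\ast L)
\end{tikzcd}
\end{equation*}
commutes, where the vertical arrows are the restriction maps. One has to be slightly careful that the target of~\eqref{eq:CL} is $H^{S^1}_{n-\ast}(\mathcal{L}L;\nu)$ rather than $\mathit{SH}^\ast_{S^1}(T^\ast L)$, and that under the isomorphism $\mathit{SH}^\ast(T^\ast L)\cong H_{n-\ast}(\mathcal{L}L;\nu)$ the invertible element $h'$ pairs correctly with the unit (the constant loops class), i.e. that $h'$ still lies in the units; this is where unitality of Viterbo functoriality and the ring structure on loop space homology are used. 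A secondary technical point is to confirm that the equivalence $\mathcal{W}(T^\ast L)\simeq\mathbb{K}[\pi_1(L)]$ respects gradings and the $\Bbbk=\mathbb{K}$ structure so that Davison's theorem applies verbatim; but since $L$ is connected this is routine. Once the compatibility in Step~2 is in hand, the rest is formal.
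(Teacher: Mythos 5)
Your proof is correct and follows essentially the same route as the paper: push the cyclic dilation down via the $S^1$-equivariant Cieliebak--Latschev enhancement of Viterbo restriction (the commutative square you worry about in Step 2 is exactly the diagram the paper obtains from Propositions \ref{proposition:intert} and \ref{proposition:CL}), use unitality to preserve invertibility, identify $\mathcal{W}(T^\ast L)\simeq\mathbb{K}[\pi_1(L)]$ for $L$ aspherical, and contradict Davison. The only cosmetic difference is that you phrase the target as $\mathit{SH}^\ast_{S^1}(T^\ast L)$ while the paper works directly with $H^{S^1}_{n-\ast}(\mathcal{L}L;\mathbb{K})$ — these are isomorphic, so the argument is the same.
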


Proposition \ref{proposition:Davison} will be proved in Section \ref{section:CL}. In particular, when $M=T^\ast Q$, it follows from Proposition \ref{proposition:geom-inter} and the formality result mentioned at the end of Section \ref{section:wrap} that the fundamental group algebra $\mathbb{K}[\pi_1(Q)]$ cannot be exact Calabi-Yau if $Q$ is hyperbolic, which recovers \cite{bd}, Corollary 6.2.4.

Analogous to what Seidel and Solomon have done in the case of dilations and quasi-dilations $\cite{ps5,ss}$, one can use Lefschetz fibrations to produce more examples of Liouville manifolds which admit cyclic dilations starting from the known ones. More precisely, we prove in Section \ref{section:Lefschetz} the following:
\begin{theorem}\label{theorem:Lefschetz}
Let $M$ be a $2n$-dimensional Liouville manifold, with $n\geq3$. Suppose that $\pi:M\rightarrow\mathbb{C}$ is an exact symplectic Lefschetz fibration with smooth fiber $F$. If $F$ admits a cyclic dilation, then the same is true for the total space $M$.
\end{theorem}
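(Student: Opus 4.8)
The plan is to realize the cyclic dilation on $M$ by a Gysin-type transfer of the one on $F$, using the symplectic geometry of the Lefschetz fibration. First I would recall the standard relation (due to Seidel, and exploited by Seidel--Solomon in \cite{ss} for dilations) between the symplectic cohomology of the total space $M$ of an exact Lefschetz fibration and that of the fiber $F$. The key structural input is that there is a map $\mathit{SH}^\ast(F)\to\mathit{SH}^\ast(M)$, or more precisely a long exact sequence relating $\mathit{SH}^\ast(M)$, $\mathit{SH}^\ast(F)$, and a term built from the Lefschetz thimbles (fixed points of the global monodromy), which is compatible with the BV operator and with the $S^1$-equivariant enrichment. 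I would lift this entire picture to the $S^1$-equivariant setting: there should be a commuting ladder between Gysin sequence (\ref{eq:Gysin-SH}) for $M$ and for $F$, together with a morphism of the two, so that the class $\tilde{b}_F\in\mathit{SH}_{S^1}^1(F)$ produces a candidate class $\tilde{b}_M\in\mathit{SH}_{S^1}^1(M)$.

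\textbf{Key steps.} (1) Construct the $S^1$-equivariant transfer map $\mathit{SH}_{S^1}^\ast(F)\to\mathit{SH}_{S^1}^\ast(M)$ and verify it intertwines $\mathbf{B}$ on the two sides with the ordinary transfer $\mathit{SH}^\ast(F)\to\mathit{SH}^\ast(M)$; this uses the Floer-theoretic count in a neighborhood of a regular fiber, following the TQFT setup already available for $\mathit{SH}^\ast$ and the standard $S^1$-equivariant package (Borel construction / $S^1$-families of Floer data) recalled in Section \ref{section:equi-symp}. (2) Set $\tilde{b}_M$ to be the image of $\tilde{b}_F$. Then $\mathbf{B}(\tilde{b}_M)$ equals the image under $\mathit{SH}^0(F)\to\mathit{SH}^0(M)$ of $h_F=\mathbf{B}(\tilde{b}_F)\in\mathit{SH}^0(F)^\times$. (3) The crucial point is invertibility of $h_M$: one must show the transfer map $\mathit{SH}^0(F)\to\mathit{SH}^0(M)$ sends units to units. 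Here the oddness of $n$ and the hypothesis $n>1$ enter exactly as in \cite{ss}: the relative term in the Lefschetz long exact sequence is concentrated in gradings where, after the relevant degree shift by $n$ (and using that the thimbles are $n$-dimensional with the monodromy acting), it cannot interfere with degree $0$; more precisely, the transfer map restricted to units is expected to be unital (send $1\mapsto 1$, hence $h_F\mapsto$ invertible), and the parity argument rules out the correction terms that would otherwise obstruct this. (4) Conclude that $\tilde b_M$ is a cyclic dilation, i.e.\ (\ref{eq:cyclic}) holds for $M$.

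\textbf{Main obstacle.} The hard part will be step (3): establishing that the image of an invertible element of $\mathit{SH}^0(F)$ under the Lefschetz transfer is again invertible, which is really the assertion that the transfer map is a \emph{ring} map (or at least unital) in degree zero. In the dilation case $h=1$ this is the statement that the restriction map sends the unit to the unit, which is geometrically transparent; for a general unit $h$ one instead argues that $\mathit{SH}^0(M)$ receives a ring homomorphism from $\mathit{SH}^0(F)$ whose kernel/cokernel is controlled by the thimble contributions, and the grading restrictions ($n$ odd, $n>1$) force those contributions to vanish in the degrees relevant to detecting invertibility. A secondary technical burden is checking the compatibility of the $S^1$-equivariant transfer with Connes' operator $\mathbf B$ at the chain level, i.e.\ building the relevant moduli spaces of parametrized Floer cylinders in the fibration and their compactifications; this is routine given the machinery of Section \ref{section:PF} but needs to be stated carefully, particularly the gluing that produces the commuting ladder of Gysin sequences.
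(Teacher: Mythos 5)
There is a genuine gap: the ``transfer map'' $\mathit{SH}_{S^1}^\ast(F)\to\mathit{SH}_{S^1}^\ast(M)$ that your proposal is built around does not exist as a natural map. An exact symplectic Lefschetz fibration does not make $F$ a retract of $M$, and symplectic cohomology is not covariant under inclusion of a fiber. What the geometry actually provides (following Lemma 7.2 of $\cite{ss}$, as the paper does) is a short exact sequence of chain complexes going in the \emph{opposite} direction,
\[
0\rightarrow\mathbb{K}^{\mathrm{Crit}(\pi)}[-n]\rightarrow\mathit{CF}^\ast_{\mathit{vert}}(M,\lambda)\rightarrow\mathit{CF}^\ast(F,\lambda)\rightarrow 0,
\]
where $\mathit{CF}^\ast_{\mathit{vert}}(M,\lambda)$ is the Floer complex for a Hamiltonian adapted to the fibration with finite slope in the base direction. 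The induced map on cohomology is a restriction-type map $\mathit{HF}_\mathit{vert}^\ast(M,\lambda)\to\mathit{HF}^\ast(F,\lambda)$, and there is no composite in the associated long exact sequence yielding a degree-preserving map from $F$ to $M$. So ``set $\tilde b_M$ to be the image of $\tilde b_F$'' is not a move you can make; the correct formulation is a \emph{lifting} problem against this restriction.

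Once the direction is corrected, the rest of your analysis is essentially right but your ``Main obstacle'' dissolves rather than needing to be overcome. The paper first checks that the short exact sequence above is one of $S^1$-complexes (this requires an action-filtration argument showing the $\delta_i$ preserve the filtration, which is the serious chain-level input — cf.\ your ``secondary technical burden'' remark). The resulting ladder of Gysin sequences then shows: for $n$ odd, the equivariant thimble term $\mathbb{K}((u))/u\mathbb{K}[[u]]^{\mathrm{Crit}(\pi)}[-n]$ is supported in odd degrees, so $\mathit{HF}_{S^1,\mathit{vert}}^1(M,\lambda)\to\mathit{HF}_{S^1}^1(F,\lambda)$ is surjective (this is where oddness enters, not in protecting degree $0$ as you suggest); for $n>1$, the non-equivariant thimble term avoids degrees $0$ and $1$, so $\mathit{HF}_\mathit{vert}^0(M,\lambda)\to\mathit{HF}^0(F,\lambda)$ is an isomorphism of rings. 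Lifting $\tilde b_F$ to $\tilde b_M$ and applying $\mathbf{B}$ sends $\tilde b_M$ to the unique preimage of $h_F$ under the degree-$0$ ring isomorphism, which is automatically a unit — no separate ``unitality of transfer'' argument is needed. Finally, you do not address the last step: $\mathit{SH}_\mathit{vert}^\ast(M)$ is not $\mathit{SH}^\ast(M)$, so one still needs the unital (hence unit-preserving) equivariant continuation map from the vertical theory to the genuine symplectic cohomology to produce the cyclic dilation in $\mathit{SH}_{S^1}^1(M)$.
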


\subsection{Trichotomy of affine varieties}\label{section:trichotomy}

The well-known trichotomy of Riemannian manifolds says that positively curved, flat, and negatively curved manifolds have distinct geometric behaviours. In symplectic topology, there is an analogy of this trichotomy for Liouville manifolds. Geometrically, this can be understood by studying the existence and abundance of $J$-holomorphic maps $u:S\rightarrow M^\mathit{in}$ with finite energy in the interior $M^\mathit{in}$ of the associated Liouville domain $\overline{M}$, where $S$ is a punctured sphere, see $\cite{mm}$.

For simplicity, we restrict our attention to the case when $M\subset\mathbb{C}^N$ is an $n$-dimensional smooth affine variety, equipped with the restriction of the constant symplectic form on the ambient affine space, in which case the aforementioned trichotomy has a numerical description in terms of the \textit{log Kodaira dimension}
\begin{equation}\label{eq:Kodaira}
\kappa(M)\in\{-\infty\}\cup\{0,\cdots,n\}.
\end{equation}
This is defined by choosing a compactification $X$ of $M$ so that $X$ is a smooth projective variety, and the divisor $D=X\setminus M$ has simple normal crossing. $\kappa(M)$ is defined as the Kodaira-Iitaka dimension of the line bundle $K_X+D$ over $X$. We shall be interested here in the cases when $\kappa(M)=-\infty$, $\kappa(M)=0$ (in which case $M$ is known as \textit{log Calabi-Yau}); and $\kappa(M)=n$ (in which case $M$ is \textit{log general type}). These should be thought of as analogues of positively curved, flat and negatively curved Riemannian manifolds, respectively. The existence question of a cyclic dilation (or equivalently, an exact Calabi-Yau structure on $\mathcal{W}(M)$) will be considered separately in these three cases.
\bigskip

First, let $M$ be a smooth affine variety with $\kappa(M)=-\infty$. An important class of such manifolds is given by the Milnor fibers $M_{a_1,\cdots,a_{n+1}}\subset\mathbb{C}^{n+1}$ associated to the Brieskorn singularities
\begin{equation}\label{eq:Brieskorn}
z_1^{a_1}+z_2^{a_2}+\cdots+z_{n+1}^{a_{n+1}}=0,
\end{equation}
where $\sum_{i=1}^{n+1}\frac{1}{a_i}>1$. In this paper, we will study the simplest non-trivial case, namely a Fermat affine cubic 3-fold $M_{3,3,3,3}\subset\mathbb{C}^4$.

\begin{theorem}\label{theorem:Fano}
The manifold $M_{3,3,3,3}$ admits a cyclic dilation. 
\end{theorem}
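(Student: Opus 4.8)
The plan is to combine Proposition~\ref{proposition:geom-inter} with the Lefschetz fibration result Theorem~\ref{theorem:Lefschetz} and an explicit Koszul-duality computation for a lower-dimensional affine variety. By Proposition~\ref{proposition:geom-inter} (applicable since every Weinstein manifold is non-degenerate), the assertion is equivalent to the existence of an exact Calabi-Yau structure on $\mathcal{W}(M_{3,3,3,3})$, equivalently to the existence of a cyclic dilation $\tilde{b}\in\mathit{SH}_{S^1}^1(M_{3,3,3,3})$. Rather than work directly with the $6$-dimensional manifold, I would realise it as the total space of a Weinstein Lefschetz fibration and reduce to the fiber. Concretely, restrict the projection $\mathbb{C}^4\to\mathbb{C}$, $(x,y,z,w)\mapsto w$, to $M_{3,3,3,3}=\{x^3+y^3+z^3+w^3=1\}$: over a regular value $w=t$ the fiber is the smooth affine cubic surface $\{x^3+y^3+z^3=1-t^3\}$, which is Liouville-deformation-equivalent to $M_{3,3,3}$, the Milnor fiber of the simple elliptic singularity $x^3+y^3+z^3$; over the three roots of $w^3=1$ the fiber acquires an isolated, non-Morse $\tilde{E}_6$ singularity. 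Replacing $M_{3,3,3,3}$ by a generic Morsification, each of these three fibers breaks into a cluster of nodal fibers, so that the resulting map is a genuine exact symplectic Lefschetz fibration whose smooth fiber $F$ is still $M_{3,3,3}$. Since here $n=3$ is odd and $>1$, Theorem~\ref{theorem:Lefschetz} applies and reduces the theorem to showing that $M_{3,3,3}$ admits a cyclic dilation.

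For this four-dimensional base case I would use $A_\infty$-Koszul duality to identify $\mathcal{W}_{M_{3,3,3}}$ explicitly. A Weinstein handle decomposition of the affine cubic surface has Lagrangian cocores that are $2$-spheres, and the relevant compact Fukaya $A_\infty$-algebra of a generating set of vanishing spheres should be formal and isomorphic to the zigzag-type (Brauer graph) algebra attached to their intersection configuration, which is the affine $E_6$ diagram. By the Koszul-duality formalism recalled in Section~\ref{section:preliminary}, together with the generation of $\mathcal{W}$ by Lagrangian cocores, $\mathcal{W}_{M_{3,3,3}}$ is then quasi-isomorphic to a (possibly deformed) $2$-dimensional Calabi-Yau completion of the path algebra of an explicit quiver $Q$ (the affine $E_6$ star). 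This is a superpotential/Ginzburg-type algebra, hence carries an exact Calabi-Yau structure by the discussion in Section~\ref{section:superpotential}; Proposition~\ref{proposition:geom-inter} applied to $M_{3,3,3}$ then produces a cyclic dilation on $M_{3,3,3}$, and Theorem~\ref{theorem:Lefschetz} closes the argument. One cannot shortcut this by exhibiting a quasi-dilation on $M_{3,3,3}$, since the parabolic case is not expected to carry one; this is exactly why $M_{3,3,3,3}$ has an exact Calabi-Yau wrapped Fukaya category despite possessing no quasi-dilation.

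The main obstacle, I expect, is the base-case computation: one must control the full $A_\infty$-structure on the compact Fukaya algebra of the vanishing spheres of the affine cubic surface --- in particular establish formality, or else pin down all higher products --- and then verify that the resulting preprojective-type algebra is \emph{exact} Calabi-Yau, i.e. that its canonical Calabi-Yau class lies in the image of Connes' map $\mathbb{B}:\mathit{HC}_{-1}(\mathcal{W}_{M_{3,3,3}})\rightarrow\mathit{HH}_{-2}(\mathcal{W}_{M_{3,3,3}})$. This last point is automatic for honest Calabi-Yau completions by Keller's theorem, but requires genuine care if the algebra first appears in a multiplicative guise. A secondary, more technical issue is to set up the Morsified Lefschetz fibration on $M_{3,3,3,3}$ carefully enough that its smooth fiber is identified, as a Liouville domain, with $M_{3,3,3}$, so that Theorem~\ref{theorem:Lefschetz} can be invoked as a black box.
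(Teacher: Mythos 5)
Your proposed reduction fails at a fundamental level. You want to apply Theorem \ref{theorem:Lefschetz} to a Lefschetz fibration on $M_{3,3,3,3}$ whose smooth fiber is $M_{3,3,3}\cong T_{3,3,3}$, and then produce a cyclic dilation on the fiber. But $T_{3,3,3}$ does \emph{not} admit a cyclic dilation: this is precisely the content of Proposition \ref{proposition:pqr}, which states that among the $T_{p,q,r}$ only $T_{0,0,0}$ and $T_{1,0,0}$ admit cyclic dilations. Indeed, $T_{3,3,3}$ is an affine log Calabi--Yau surface with maximal boundary, so by Proposition \ref{proposition:log-CY} it has a cyclic dilation if and only if it has a quasi-dilation; and since $M_{3,3,3}=M_{a_1,a_2,a_3}$ with all $a_i\geq3$, the argument of Example 2.7 of Seidel (cited in Section \ref{section:trichotomy}) rules out a quasi-dilation. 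Alternatively, $T_{3,3,3}$ contains an exact Lagrangian torus and $\mathit{SH}^0$ has only trivial units, so Corollary \ref{corollary:h=1} forbids a cyclic dilation. Your prediction that $\mathcal{W}_{M_{3,3,3}}$ is a deformed $2$-Calabi--Yau completion (Ginzburg algebra) of an affine $E_6$ path algebra is also incorrect: the wrapped Fukaya algebra of a plumbing of $T^\ast S^2$'s along an affine Dynkin tree is a \emph{multiplicative} preprojective algebra, and it is exactly this multiplicative, non-Ginzburg shape that blocks exactness of the smooth Calabi--Yau structure.

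The paper's proof does not reduce to the fiber at all. It works directly with the $6$-dimensional manifold, and the fact that the fiber of $\pi:M_{3,3,3,3}\to\mathbb{C}$ is $T_{3,3,3}$ (which has no cyclic dilation) is precisely why Theorem \ref{theorem:Lefschetz} is unusable here. Instead, the argument is: (i) produce an explicit Legendrian-surgery presentation of $M_{3,3,3,3}$ and use Rutherford--Sullivan's cellular model to prove $\mathcal{W}_{M_{3,3,3,3}}$ is concentrated in non-positive degrees up to quasi-isomorphism (Lemma \ref{lemma:grading}); (ii) prove, via the $\tilde{E}_6$-type global monodromy and the diagonal argument, that $\mathcal{F}_{M_{3,3,3,3}}$ and $\mathcal{W}_{M_{3,3,3,3}}$ are honestly Koszul dual, with $\mathcal{W}_{M_{3,3,3,3}}$ quasi-isomorphic to its completion (Proposition \ref{proposition:KD0}, using Theorem \ref{theorem:completion}); (iii) invoke Ganatra's proper Calabi--Yau structure on $\mathcal{F}_{M_{3,3,3,3}}$ to make it cyclic (Proposition \ref{proposition:proper-CY}); and (iv) apply Van den Bergh's theorem (Theorem \ref{theorem:Koszul}) to conclude $\mathcal{W}_{M_{3,3,3,3}}$ is exact Calabi--Yau. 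The crucial numerical input is $n=3$: it is the extra complex dimension that pushes $M_{3,3,3,3}$ from the log Calabi--Yau regime (where your base case sits and fails) into $\kappa=-\infty$, and it is only in that regime that the grading and Koszul-duality hypotheses of Theorem \ref{theorem:Koszul} can be met.
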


This will be proved in Section \ref{section:Koszul} using essentially algebraic arguments. Abstractly, one should think of Theorem \ref{theorem:Fano} as a consequence of the Koszul duality between the compact and the wrapped Fukaya categories of $M_{3,3,3,3}$.

Another key point of the proof is to show that up to quasi-isomorphism, the wrapped Fukaya $A_\infty$-algebra of $M_{3,3,3,3}$ is concentrated in non-positive degrees, which is expected to be true for any $M_{a_1,\cdots,a_{n+1}}$ with $\sum_{i=1}^{n+1}\frac{1}{a_i}>1$, although the verification is more involved in general. 

\begin{remark}
Since our proof of Theorem \ref{theorem:Fano} relies on the results of $\cite{ekl}$, it is also dependent on the Legendrian surgery description of the wrapped Fukaya category due to Bourgeois-Ekholm-Eliashberg $\cite{bee}$. Details of the proofs of the results sketched in $\cite{bee,ekl}$ can be found in the recent work $\cite{te}$.
\end{remark}

Note that if $a_i\geq3$ for all $i$, then $M_{a_1,\cdots,a_{n+1}}$ does not admit a quasi-dilation. This is argued in $\cite{ps4}$, Example 2.7 for dilations, and the argument there extends trivially to the more general case of quasi-dilations (a sketch is given in the proof of Corollary \ref{corollary:non-formality}). In particular, Theorem \ref{theorem:Fano} shows that the existence of a cyclic dilation is strictly weaker than having a quasi-dilation.

Combining Theorems \ref{theorem:Lefschetz} and \ref{theorem:Fano} we have the following:
\begin{corollary}
Take the affine hypersurface $\{p(z_1,\cdots,z_{n+1})=0\}\subset\mathbb{C}^{n+1}$, such that
\begin{equation}\label{eq:cubic}
p(z)=z_1^3+z_2^3+z_3^3+z_4^3+\tilde{p}(z_5,\cdots,z_{n+1})
\end{equation}
has an isolated singularity at the origin. Let $M$ be the Milnor fiber associated to $p$, then $M$ admits a cyclic dilation.
\end{corollary}

\begin{remark}
More interesting examples of Liouville manifolds admitting cyclic dilations are established in the recent work of Zhou $\cite{zz2}$ based on the machinery of Diogo-Lisi $\cite{dl}$. In particular, his result implies that $M_{a,\cdots,a}$ admits a cyclic dilation as long as $n\geq a$, therefore generalizes Theorem \ref{theorem:Fano} above. Our method has the advantage that it is applicable to examples beyond complements of smooth divisors in projective varieties. As an example, see Proposition \ref{proposition:plumbing}.
\end{remark}
\bigskip

Second, we consider the case when $M$ is a smooth log Calabi-Yau variety. These manifolds provide important local examples for testing the validity of mirror symmetry and have been studied extensively in the existing literature $\cite{aak,gp1,gp2,ghk,jp}$. As an illustration to the general situation, we consider here the simplest case when $\dim_\mathbb{C}(M)=2$, and make the following observation.\footnote{The author thanks Daniel Pomerleano for suggesting this approach to prove Proposition \ref{proposition:log-CY}, which greatly simplifies the original argument.} Recall that a log Calabi-Yau surface with \textit{maximal boundary} is the complement $X\setminus D$, where $X$ is a smooth projective surface, and $D\subset X$ is a singular anticanonical divisor with nodal singularities.

\begin{proposition}\label{proposition:log-CY}
Let $M$ be an affine log Calabi-Yau surface with maximal boundary, then $M$ admits a cyclic dilation if and only if it admits a quasi-dilation.
\end{proposition}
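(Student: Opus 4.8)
The plan is to exploit the special structure of affine log Calabi-Yau surfaces with maximal boundary, for which the symplectic cohomology is very well understood. One direction is automatic: by the discussion in Section \ref{section:SH}, a quasi-dilation $b\in\mathit{SH}^1(M)$ satisfying $\Delta(b)=h\in\mathit{SH}^0(M)^\times$ always yields a cyclic dilation $\tilde{b}=\mathbf{I}(b)\in\mathit{SH}_{S^1}^1(M)$, since $\mathbf{B}\circ\mathbf{I}=\Delta$ and hence $\mathbf{B}(\tilde{b})=\Delta(b)=h$. So the content is the reverse implication: if $M$ admits a cyclic dilation, then it admits a quasi-dilation.

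For the reverse direction I would use the Gysin sequence (\ref{eq:Gysin-SH}) together with the known computation of $\mathit{SH}^\ast(M)$ for log Calabi-Yau surfaces with maximal boundary (work of Pascaleff and of Ganatra-Pomerleano): in this setting $\mathit{SH}^\ast(M)$ is supported in non-negative degrees, and in fact the relevant low-degree part is governed by the mixed Hodge structure / the combinatorics of the boundary divisor $D$. Given a cyclic dilation $\tilde{b}\in\mathit{SH}_{S^1}^1(M)$ with $\mathbf{B}(\tilde{b})=h$ invertible, the exactness of (\ref{eq:Gysin-SH}) at $\mathit{SH}^0(M)$ reads $\mathrm{im}(\mathbf{B})=\ker\big(\mathbf{I}:\mathit{SH}^0(M)\to\mathit{SH}_{S^1}^0(M)\big)$, so $h\in\ker(\mathbf{I})$. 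Now I would argue that for these surfaces the map $\mathbf{I}$ in degree $0$ has kernel equal to the image of $\Delta:\mathit{SH}^1(M)\to\mathit{SH}^0(M)$ — equivalently, that the part of $\mathit{SH}_{S^1}^0(M)$ coming from $\mathbf{S}$ vanishes in the relevant range, which follows because $\mathit{SH}^{-1}(M)=0$ forces $\mathbf{I}:\mathit{SH}^{-1}\to\mathit{SH}_{S^1}^{-1}$ to be zero and then a diagram chase up the Gysin sequence pins down $\ker(\mathbf{I})$ in degree $0$. Thus $h=\Delta(b)$ for some $b\in\mathit{SH}^1(M)$, and $b$ is a quasi-dilation by definition. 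The only subtlety is invertibility: one must check that the $b$ produced this way (or a correction of it) can be chosen so that the witnessing element is genuinely a unit; since $\mathit{SH}^0(M)$ for a log Calabi-Yau surface with maximal boundary is a specific ring (built from the affine coordinate ring and the combinatorics of $D$) and we already know some invertible $h$ lies in $\mathrm{im}(\Delta)$, this is straightforward once the exactness chase is in place.

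\textbf{Main obstacle.} The delicate point is controlling the equivariant group $\mathit{SH}_{S^1}^\ast(M)$ in negative and zero degrees well enough to identify $\ker(\mathbf{I})$ in degree $0$ exactly with $\mathrm{im}(\Delta)$; a priori $\ker(\mathbf{I})$ could be strictly larger than $\mathrm{im}(\Delta)$ if $\mathbf{S}:\mathit{SH}_{S^1}^{-1}(M)\to\mathit{SH}_{S^1}^{1}(M)$ contributed. I expect this to be handled by the vanishing of $\mathit{SH}^\ast(M)$ in negative degrees for log Calabi-Yau surfaces with maximal boundary, which propagates through the Gysin sequence to give $\mathit{SH}_{S^1}^{\ast}(M)=0$ for $\ast$ sufficiently negative and then the desired identification in degree $0$; making this propagation precise, using the explicit degree bounds available in this geometric class, is where the real work lies.
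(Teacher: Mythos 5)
Your easy direction is fine and matches the paper: if $b$ is a quasi-dilation then $\tilde{b}=\mathbf{I}(b)$ satisfies $\mathbf{B}(\tilde{b})=\Delta(b)=h$, so $\tilde{b}$ is a cyclic dilation.

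For the hard direction you take a genuinely different route from the paper — a cohomology-level diagram chase in the Gysin sequence — but there are real gaps. First, you appeal only to vanishing of $\mathit{SH}^\ast(M)$ in negative degrees, whereas the decisive input for these surfaces is the vanishing of the \emph{cochain complex} $\mathit{SC}^\ast(M)$ in degrees $\geq 3$ (the Conley-Zehnder indices of periodic orbits can be arranged to lie in $\{0,1,2\}$, cf.\ Ganatra--Pomerleano and Pascaleff). Second, the proposed ``propagation'' of negative-degree vanishing from $\mathit{SH}^\ast$ to $\mathit{SH}_{S^1}^\ast$ is false: when $\mathit{SH}^{-1}(M)=\mathit{SH}^{-2}(M)=0$ the Gysin sequence gives $\mathbf{S}\colon\mathit{SH}_{S^1}^{-2}(M)\to\mathit{SH}_{S^1}^{0}(M)$ an \emph{isomorphism}, so the equivariant groups are nonzero in arbitrarily negative degrees; the $\mathbf{S}$-contribution you want to rule out does not go away. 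Finally, even granting full cohomological vanishing in degrees $<0$ and $>2$, identifying $\ker\bigl(\mathbf{I}\colon\mathit{SH}^0\to\mathit{SH}_{S^1}^0\bigr)$ with $\mathrm{im}(\Delta)$ requires showing $\mathit{SH}_{S^1}^1(M)=\mathrm{im}\bigl(\mathbf{I}^{(1)}\bigr)+\ker\bigl(\mathbf{B}^{(0)}\bigr)$, which you assert but do not establish; it does not obviously follow from the two vanishing ranges alone.

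The paper's argument avoids all of this by staying at the chain level. A cyclic dilation is represented by $\tilde{\beta}=\sum_{k\geq 0}\beta_k\otimes u^{-k}$ with $\beta_k\in\mathit{SC}^{2k+1}(M)$ (cf.\ Definition 4.1 and equation (4.24)). Since $\mathit{SC}^{2k+1}(M)=0$ for $k\geq 1$, the cochain is forced to be $\tilde{\beta}=\beta_0$, and the cyclic dilation condition $\sum_{k}\delta_{k+1}(\beta_k)\sim h$ collapses to $\delta_1(\beta_0)\sim h$, i.e.\ $\Delta([\beta_0])=h$, which is exactly the quasi-dilation condition. If you want your cohomological approach to work, you would at minimum need to import the chain-level degree bound (not just the cohomological one) and then argue through the Gysin sequence more carefully — at which point the chain-level argument is both shorter and more robust.
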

\begin{proof}
With our assumptions, one can arrange so that the Conley-Zehnder indices of the periodic orbits are 0, 1 and 2, see for example $\cite{gp2,jp}$. In particular, the cochain complex $\mathit{SC}^\ast(M)$ defining the symplectic cohomology $\mathit{SH}^\ast(M)$ is supported in these three degrees. Thus any cyclic dilation can only come from a cocycle in $\mathit{SC}^1(M)$, see our discussions in Section \ref{section:Gysin} for details. 
\end{proof}

We expect the same to be true in higher dimensions, although no insights can be drawn from the argument above.
\bigskip

Finally, let us take a look at the case when $M$ is a smooth affine variety of log general type. To get some concrete examples, one can take any Milnor fiber $M_{a_1,\cdots,a_{n+1}}$ as above, but now with $\sum_{i=1}^{n+1}\frac{1}{a_i}<1$. In complex dimension 2, the Milnor fibers associated to Arnold's 14 exceptional unimodal singularities are affine surfaces of log general type, since they are complements of ample divisors in K3 surfaces, see $\cite{lu}$.

Via the Abel-Jacobi map, we can embed a genus two curve $\Sigma_2$ in its Jacobian variety $J(\Sigma_2)$, let $M$ be the complement in $J(\Sigma_2)$ of the image of $\Sigma_2$. Clearly, $M$ is log general type. On the other hand, since there is an embedding $D^\ast T^2\# D^\ast T^2\hookrightarrow M$ from the plumbing of two copies of the disc cotangent bundles over $T^2$ into $M$ as a Liouville subdomain, Lagrangian surgery produces a genus two exact Lagrangian surface in $M$. One can therefore use Proposition \ref{proposition:Davison} to conclude that there is no cyclic dilation in $\mathit{SH}_{S^1}^1(M)$. This example can be generalized to the case when $M$ is the complement of a nearly tropical hypersurface in the abelian variety $(\mathbb{C}^\ast)^n/\Gamma$, where $\Gamma\subset\mathbb{R}^n$ is a lattice, see $\cite{aak}$, Section 10.

It seems that genus two exact Lagrangian surfaces can also be established in the 4-dimensional Milnor fibers $M_{a_1,a_2,a_3}$ with $\frac{1}{a_1}+\frac{1}{a_2}+\frac{1}{a_3}<1$, by imitating the strategy of Keating $\cite{ak1}$. However, it is not true that hyperbolic exact Lagrangian submanifolds can always be constructed in varieties of log general type. For instance, this is the case of the complement $M$ of $n+2$ generic hyperplanes in $\mathbb{CP}^n$, with $n\geq2$. These manifolds are known as higher dimensional pair-of-pants, and are studied extensively in the context of mirror symmetry, see for example $\cite{lp}$. Since $M$ is uniruled by $(n+2)$-punctured holomorphic spheres, similar argument as in the proof of $\cite{egh}$, Theorem 1.7.5 excludes the existence of hyperbolic Lagrangians in $M$.

We expect that a smooth affine variety of log general type can never admit a cyclic dilation, and will prove the following general statement in Section \ref{section:general type}. 

\begin{theorem}\label{theorem:unique}
Let $M$ be a smooth affine variety of log general type which contains an exact Lagrangian torus, then it does not admit a cyclic dilation.
\end{theorem}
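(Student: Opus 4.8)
The plan is to assume a cyclic dilation exists on $M$, restrict it to a Weinstein neighbourhood $T^\ast L$ of the exact Lagrangian $L$, and then derive a contradiction from the log general type hypothesis.

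\emph{Step 1: restriction to $T^\ast L$.} Suppose $M$ admits a cyclic dilation $\tilde b\in\mathit{SH}^1_{S^1}(M)$ with $\mathbf{B}(\tilde b)=h\in\mathit{SH}^0(M)^\times$, and equip $L$ with the rank one local system $\nu$ of (\ref{eq:CL}). The $S^1$-equivariant Cieliebak--Latschev map $[\widetilde{\mathit{CL}}]$ of Cohen--Ganatra \cite{cg}, together with its non-equivariant counterpart---which is the Viterbo restriction $\mathit{SH}^\ast(M)\to\mathit{SH}^\ast(T^\ast L)\cong H^\ast(\mathcal{L}L;\nu)$, a unital ring homomorphism---is compatible with the two Gysin long exact sequences (\ref{eq:Gysin-SH}). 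Hence the image $\tilde b_L$ of $\tilde b$ satisfies $\mathbf{B}(\tilde b_L)=h_L$, where $h_L$ is the image of $h$; being the image of a unit under a unital ring map, $h_L\in\mathit{SH}^0(T^\ast L)^\times$. In other words $T^\ast L$ again admits a cyclic dilation.

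\emph{Step 2: algebraic reformulation.} The manifold $T^\ast L$ is Weinstein, hence non-degenerate, and its wrapped Fukaya category is generated by a cotangent fibre, whose endomorphism algebra is quasi-isomorphic to the group algebra $\mathbb{K}[\pi_1 L]$ concentrated in degree zero (after incorporating $\nu$, which causes no trouble since $L$ is $\mathit{Spin}$). Proposition \ref{proposition:geom-inter}, applied to $T^\ast L$, therefore converts Step 1 into the assertion that $\mathbb{K}[\pi_1 L]$ carries an \emph{exact} Calabi--Yau structure in the sense of Definition \ref{definition:key}: the non-degenerate Hochschild class $[\eta]$---which under the open--closed map corresponds to the unit $1\in\mathit{SH}^0(T^\ast L)$, equivalently to the fundamental class sitting in the identity component of $\mathcal{L}L$---lies in the image of Connes' operator $B\colon\mathit{HC}_{-n+1}(\mathbb{K}[\pi_1 L])\to\mathit{HH}_{-n}(\mathbb{K}[\pi_1 L])$. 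For hyperbolic $L$ this is exactly the contradiction behind Proposition \ref{proposition:Davison}, via Davison's computation \cite{bd}; the remaining task is to exclude it for a general closed aspherical $L$ under the hypothesis $\kappa(M)=n$.

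\emph{Step 3: the log general type obstruction, and the expected main obstacle.} This is where the bigness of $K_X+D$ must be used. I would bring it in through the abundance of finite-energy punctured holomorphic spheres $u\colon\overline S\to\overline M$ forced by $\kappa(M)=n$ (in the spirit of the uniruledness argument recalled before the statement, which uses \cite{egh}, and of McLean's relation between log Kodaira dimension and the growth of wrapped Floer cohomology \cite{mm}). Two candidate routes: (a) these curves force $\pi_1(L)$ to be ``large''---of exponential word growth, with infinitely many conjugacy classes whose centralizers have cohomological dimension $<n$---after which the Connes long exact sequence (\ref{eq:Connes-LES}) for $\mathbb{K}[\pi_1 L]$, combined with the loop space decomposition $\mathcal{L}L\simeq\coprod_{[\gamma]\in\mathrm{Conj}(\pi_1 L)}B Z_{\pi_1 L}(\gamma)$, shows as in Davison that $[\eta]$, being supported in the identity component, cannot be $B$-exact; or (b) one feeds the holomorphic curves directly into the parametrized Floer operations of Section \ref{section:PF} to produce a positivity statement that a cyclic dilation on $M$ would violate, bypassing the fine structure of $\pi_1(L)$. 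Steps 1 and 2 only assemble results already in the excerpt; the substance is Step 3---extracting from $\kappa(M)=n$ an obstruction that genuinely survives the Viterbo restriction to $T^\ast L$ and defeats the exact Calabi--Yau condition for \emph{all} aspherical Lagrangians $M$ can contain, not merely the hyperbolic ones. I expect the hard part to be the precise positivity/holomorphic-curve input---how many finite-energy curves a log general type affine variety must carry---together with the bookkeeping over conjugacy classes in $\mathcal{L}L$.
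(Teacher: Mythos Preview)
Your Steps 1--2 are correct and indeed recover what the paper packages as Corollary \ref{corollary:h=1}: a cyclic dilation with $h=1$ cannot coexist with a closed exact $K(\pi,1)$ Lagrangian, because Davison's Theorem 6.1.3 in \cite{bd} says the marking map on $T^\ast L$ never hits the identity for \emph{any} aspherical $L$ (not just hyperbolic ones). But this leaves the case $h\neq 1$ completely open, and here your Step 3 goes astray in a way that is not just a matter of filling in details.

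The direction of the uniruledness argument is inverted. Log general type does \emph{not} force an abundance of finite-energy punctured spheres; on the contrary, by McLean's Lemma \ref{lemma:ML} it forbids algebraic $2$-uniruledness, and by Theorem \ref{theorem:ML} this forbids symplectic $(2,\mu)$-uniruledness. The paper exploits this as an \emph{obstruction}: assuming $\mathit{SH}^0(M)^\times\not\cong\mathbb{K}^\times$, one takes a non-trivial unit $h$, restricts it via Viterbo to a central unit in $\mathbb{K}[\pi_1(L)]$, and invokes Davison's structure result on central units in group algebras of aspherical manifolds (Theorems 6.1.2--6.1.3 of \cite{bd}) to conclude that this restricted unit has \emph{no constant-loop component}. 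Hence the equation $h\cdot h^{-1}=1$ forces the pair-of-pants product of two genuinely non-constant orbits to hit the identity class. A limiting argument in the style of \cite{ab,dh}---degenerating the Hamiltonian to zero on the interior---converts these Floer pairs-of-pants into honest $J$-holomorphic cylinders through every generic point, so $\overline{M}$ is $(2,\mu)$-uniruled. This contradicts $\kappa(M)=n$. Therefore $\mathit{SH}^0(M)^\times\cong\mathbb{K}^\times$, any cyclic dilation must have $h=1$, and Corollary \ref{corollary:h=1} finishes.

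So the missing idea is: the $K(\pi,1)$ hypothesis is used twice, in two different ways. First, via Davison's description of central units, to strip the constant part off a hypothetical non-trivial unit and thereby \emph{produce} the holomorphic curves whose existence contradicts log general type; second, once $h$ is forced to be $1$, via the $h=1$ obstruction you already know. Your route (a) tries to make $\kappa(M)=n$ say something about the size of $\pi_1(L)$, but nothing of the sort is available; your route (b) is closer in spirit but still has the curves flowing the wrong way.
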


Theorem \ref{theorem:unique} shows that the higher-dimensional pair-of-pants mentioned above do not admit cyclic dilations, but it is not helpful in general as there are many contractible affine varieties of log general type, which conjecturally do not contain any closed exact Lagrangian submanifold. One example is the Ramanujam surface studied in $\cite{ssm}$.

\subsection{Categorical dynamics}\label{section:dynamics}

We start with a brief overview of the theory of categorical dynamics, which is developed by Seidel in a series of works $\cite{ps3,ps4,ps5,ps7,ss}$. Various assumptions will be imposed here to keep the exposition simple enough. Given a quiver with potential $(Q,w)$, one can associate to it two $A_\infty$-algebras. One of them is the (completed) Ginzburg dg algebra $\widehat{\mathcal{G}}(Q,w)$ mentioned at the end of Section \ref{section:eCY} (see also Section \ref{section:superpotential} below for related backgrounds), while the other one, denoted as $\mathcal{B}(Q,w)$, is introduced by Kontsevich-Soibelman $\cite{ks}$. $\mathcal{B}(Q,w)$ is related to $\widehat{\mathcal{G}}(Q,w)$ via Koszul duality, which means there are quasi-isomorphisms
\begin{equation}\label{eq:BQW}
\mathcal{B}(Q,w)\cong R\mathrm{Hom}_{\mathcal{G}(Q,w)}(\Bbbk,\Bbbk), \widehat{\mathcal{G}}(Q,w)\cong R\mathrm{Hom}_{\mathcal{B}(Q,w)}(\Bbbk,\Bbbk),
\end{equation}
where $\Bbbk:=\bigoplus_{i\in Q_0}\mathbb{K}e_i$ is the semisimple ring consisting of copies of $\mathbb{K}$ indexed by the set of vertices $Q_0$ of $Q$. Koszul duality between $\mathcal{B}(Q,w)$ and $\mathcal{G}(Q,w)$ induces an isomorphism between their Hochschild cohomologies \cite{bk1}:
\begin{equation}\label{eq:HH-Koszul}
\mathit{HH}^\ast(\mathcal{B}(Q,w))\cong\mathit{HH}^\ast(\mathcal{G}(Q,w)).
\end{equation}
In general this is only an isomorphism of Gerstenhaber algebras, but since $\mathcal{B}(Q,w)$ is a cyclic $A_\infty$-algebra, and $\mathcal{G}(Q,w)$ is exact Calabi-Yau, their Hochschild cohomologies carry naturally induced BV structures. For cyclic $A_\infty$-algebras, this is proved by Tradler (\cite{tt}, Theorem 1, and for smooth Calabi-Yau algebras, this is due to Ginzburg (\cite{vg}, Theorem 3.4.3). Thus one expects that (\ref{eq:HH-Koszul}) is a BV algebra isomorphism. See \cite{cyz}, Theorem A, where this is confirmed for Koszul Calabi-Yau algebras.
\bigskip

Let us assume temporarily that the superpotential $w$ is homogeneous and consists only of cubic terms, which in particular implies the formality of the $A_\infty$-structure on $\mathcal{B}(Q,w)$ (since it is a dg algebra with vanishing differential), and work over the ground field $\mathbb{K}=\mathbb{C}$. The graded associative algebra $\mathcal{B}(Q,w)$ then carries a rational $\mathbb{C}^\ast$-action, which has weight $i$ on the degree $i$ part. According to $\cite{ps7}$, this $\mathbb{C}^\ast$-action enables us to define a bigraded refinement $\mathcal{B}(Q,w)^\mathbf{perf}$ of the $A_\infty$-category $\mathcal{B}(Q,w)^\mathit{perf}$ of perfect $A_\infty$-modules over $\mathcal{B}(Q,w)$. The $\mathbb{C}^\ast$-action on $\mathcal{B}(Q,w)$ induces at the infinitesimal level a Hochschild cocycle $\mathit{eu}_\mathcal{B}\in\mathit{CH}^1(\mathcal{B}(Q,w))$. This particular noncommutative vector field is known as the \textit{Euler vector field}. Under the BV operator, $\mathit{eu}_\mathcal{B}$ goes to a non-zero scalar multiple of the identity. Any object $\mathcal{E}$ of $\mathcal{B}(Q,w)^\mathit{perf}$ which is \textit{rigid} and \textit{simple}, meaning that
\begin{equation}
H^0(\hom_{\mathcal{B}(Q,w)^\mathit{perf}}(\mathcal{E},\mathcal{E}))\cong\mathbb{C}, H^1(\hom_{\mathcal{B}(Q,w)^\mathit{perf}}(\mathcal{E},\mathcal{E}))\cong 0,
\end{equation}
is $\mathbb{C}^\ast$-equivariant, and therefore defines an object in the category $\mathcal{B}(Q,w)^\mathbf{perf}$. In particular, it is infinitesimally equivariant with respect to $\mathit{eu}_\mathcal{B}$. For any two such objects $\mathcal{E}_0$ and $\mathcal{E}_1$, the derivation $[\mathit{eu}_\mathcal{B}]$ defines an endomorphism of $H^\ast(\hom_{\mathcal{B}(Q,w)^\mathit{perf}}(\mathcal{E}_0,\mathcal{E}_1))$, from which one recovers the weight grading on $\mathcal{B}(Q,w)^\mathbf{perf}$.
\bigskip

Geometrically, let $M$ be a $2n$-dimensional Weinstein manifold, and assume that its wrapped Fukaya $A_\infty$-algebra $\mathcal{W}_M$ is quasi-isomorphic to some Ginzburg dg algebra $\mathcal{G}(Q,w)$. In view of Proposition \ref{proposition:geom-inter}, $M$ admits a cyclic dilation. For simplicity, assume further that the Fukaya categories $\mathcal{F}(M)$ and $\mathcal{W}(M)$ are Koszul dual, so that the endomorphism algebra $\mathcal{F}_M$ of a set of split-generators in $\mathcal{F}(M)$ can be identified with $\mathcal{B}(Q,w)$. Our assumptions therefore ensure that the closed-open string map
\begin{equation}\label{eq:CO}
[\mathit{CO}]:\mathit{SH}^\ast(M)\rightarrow\mathit{HH}^\ast(\mathcal{F}(M))
\end{equation}
is an isomorphism.
\bigskip

Via the quasi-isomorphism $\mathcal{F}_M\cong\mathcal{B}(Q,w)$ and the inverse of (\ref{eq:CO}), the Euler vector field $\mathit{eu}_\mathcal{B}$ gives rise to a quasi-dilation $b\in\mathit{SH}^1(M)$. In other words, the infinitesimal symmetry $b$ integrates to a \textit{dilating} $\mathbb{C}^\ast$-action on the Fukaya category $\mathcal{F}(M)$ in the sense of $\cite{ps3}$. For any two $\mathbb{C}^\ast$-equivariant objects $L_0,L_1$ of $\mathcal{F}(M)$, this enables us to define an endomorphism of their Floer cohomology algebra $\mathit{HF}^\ast(L_0,L_1)$, which equips $\mathit{HF}^\ast(L_0,L_1)$ with an additional $\mathbb{C}$-grading by generalized eigenspaces. 

More generally, as mentioned in the introduction, one can start directly with a quasi-dilation $b\in\mathit{SH}^1(M)$, and consider the infinitesimal deformation of the objects in $\mathcal{F}(M)$ along $b$. For $b$-equivariant Lagrangian submanifolds $L_0,L_1\subset M$, the infinitesimal action of $b$ still defines a derivation $\Phi_{\widetilde{L}_0,\widetilde{L}_1}$ on the $\mathbb{C}$-algebra $\mathit{HF}^\ast(L_0,L_1)$, which appears in the definition of the $q$-intersection number (\ref{eq:q}). This construction is due to Seidel-Solomon $\cite{ss}$. As an application, Seidel proves in $\cite{ps4}$ that for Liouville manifolds with dilations, there is an upper bound on the number of disjoint Lagrangian spheres.
\bigskip

It is natural to ask whether there is a similar theory after removing the assumption that $\mathcal{B}(Q,w)$ (and thus $\mathcal{F}_M$) is formal. Note that as a consequence of non-formality, the aforementioned $\mathbb{C}^\ast$-action on $\mathcal{B}(Q,w)$ does not preserve the $A_\infty$-structure. 

In general, given any Liouville manifold $M$ with a cyclic dilation $\tilde{b}\in\mathit{SH}_{S^1}^1(M)$, one can try to imitate Seidel-Solomon's construction by making use of the higher order closed-open string maps introduced by Ganatra $\cite{sg1}$, see Section \ref{section:CO}. Due to the existence of certain obstruction terms (corresponding to some unwanted boundary strata in the relevant moduli spaces), it is in general not possible to obtain endomorphisms on Floer cohomology groups $\mathit{HF}^\ast(L_0,L_1)$ of any simply-connected Lagrangian submanifolds. However, for an odd-dimensional Lagrangian sphere $L\subset M$ with dimension $n\geq3$, the obstruction vanishes and we obtain a non-trivial derivation $\Phi_{\widetilde{L},\widetilde{L}}$ on $\mathit{HF}^\ast(L,L)$, see Section \ref{section:q}. This indicates that one can use cyclic dilations to detect the homological non-triviality of odd-dimensional Lagrangian spheres. In fact, we will prove the following result in Section \ref{section:disjoint}.

\begin{theorem}\label{theorem:main}
Let $M$ be a $2n$-dimensional Weinstein manifold, where $n\geq3$ is odd, and $c_1(M)=0$. Assume that $M$ admits a cyclic dilation $\tilde{b}\in\mathit{SH}_{S^1}^1(M)$, and satisfies an additional property ($\widetilde{\textrm{H}}$) (cf. Definition \ref{definition:H}). Then for any Lagrangian sphere $L\subset M$, its homology class $[L]\in H_n(M;\mathbb{K})$ is nonzero.
\end{theorem}

In the above, if $\mathbb{K}$ can be taken to be an arbitrary field, then one would be able to arrive at the conclusion that $[L]\in H_n(M;\mathbb{Z})$ is primitive. However, proving Theorem \ref{theorem:main} may involve essential difficulties when $\mathrm{char}(\mathbb{K})=2$, see Remark \ref{remark:characteristic}. Here, property ($\widetilde{\textrm{H}}$) imposes additional restrictions on the cohomology class $\tilde{b}$. More precisely, there is a continuation map $\mathit{HF}^1_{S^1}(\lambda)\rightarrow\mathit{SH}_{S^1}^1(M)$ from the $S^1$-equivariant Floer cohomology of a Hamiltonian with slope $\lambda$ on the cylindrical end $M\setminus\overline{M}$ to the $S^1$-equivariant symplectic cohomology, see (\ref{eq:continuation}). Roughly speaking, property ($\widetilde{\textrm{H}}$) says that the class $\tilde{b}$ appears already in $\mathit{HF}^1_{S^1}(\lambda)$, with $\lambda>0$ being relatively small with respect to the minimal period of the Reeb orbits on the contact boundary $\partial\overline{M}$. See Definition \ref{definition:H} for details.

This property is expected to be true for many Weinstein manifolds with cyclic dilations, including the Milnor fibers $M_{a_1,\cdots,a_{n+1}}$ with $\sum_{i=1}^{n+1}\frac{1}{a_i}>1$. In fact, it follows from the argument in $\cite{zz2}$, Section 5.2 that it holds for the Milnor fibers $M_{a,\cdots,a}$ with $n\geq a$. When the cyclic dilation satisfies $\mathbf{B}(\tilde{b})=1$, the assumption of Theorem \ref{theorem:main} can be equivalently expressed in terms of the first Gutt-Hutchings capacity of the Weinstein domain $\overline{M}$, see Corollary \ref{corollary:GH}.

In algebraic terms, a related question is the following:

\begin{question}\label{question:phantom}
Let $X$ be any spherical object in the triangulated $A_\infty$-category $\mathcal{B}(Q,w)^\mathit{perf}$, then is it always true that its class $[X]$ in the Grothendieck group $K_0\left(\mathcal{B}(Q,w)^\mathit{perf}\right)$ is nonzero?
\end{question}

For Weinstein manifolds whose wrapped Fukaya $A_\infty$-algebra $\mathcal{W}_M$ can be identified with the Ginzburg dg algebra $\mathcal{G}(Q,w)$, the work of Lazarev ($\cite{ol}$, Theorem 1.8) combined with Koszul duality gives rise to an injective map
\begin{equation}
K_0(\mathcal{F}(M))\rightarrow H_n(M;\mathbb{Z}).
\end{equation}
Thus an affirmative answer to Question \ref{question:phantom} would imply the homological essentiality of Lagrangian spheres in $M$. It is in general unknown how to answer Question \ref{question:phantom} in odd dimensions, unless $Q$ is a finite oriented tree and $w$ vanishes ($\cite{ps5}$, Remark 15.16). 

Theorem \ref{theorem:main} will be proved in Section \ref{section:disjoint}.

\section{Algebraic preliminaries}\label{section:preliminary}

We summarize in this section some basic algebraic notions and facts that will be used in this paper. The expositions in Sections \ref{section:S1cpx} and \ref{section:non-unital} follow essentially from Sections 2 and 3 of $\cite{sg1}$.

\subsection{Superpotential algebras}\label{section:superpotential}

As we have mentioned in Section \ref{section:eCY}, an important class of exact Calabi-Yau algebras is given by superpotential algebras defined by Ginzburg $\cite{vg}$, whose definition we shall briefly recall here.
\bigskip 

Let $\mathcal{A}$ be a finitely generated unital dg algebra over $\Bbbk$, denote by $\Omega_\mathcal{A}^1$ the $\mathcal{A}$-bimodule of non-commutative differentials on $\mathcal{A}$, which is the kernel of the multiplication map $\mathcal{A}\otimes\mathcal{A}\rightarrow\mathcal{A}$. $\mathcal{A}$ is said to be \textit{quasi-free} if $\Omega_\mathcal{A}^1$ is projective. For example, any path algebra of a quiver is quasi-free, so is its localization. Assume from now on that $\mathcal{A}$ is quasi-free, define the space of de Rham differential forms
\begin{equation}
\mathit{DR}_{\Bbbk}(\mathcal{A}):=T_\mathcal{A}(\Omega_\mathcal{A}^1)/[T_\mathcal{A}(\Omega_\mathcal{A}^1),T_\mathcal{A}(\Omega_\mathcal{A}^1)],
\end{equation}
where $T_\mathcal{A}(\Omega_\mathcal{A}^1)$ is the tensor algebra of $\Omega_\mathcal{A}^1$ over $\mathcal{A}$, so in particular $\mathit{DR}_{\Bbbk}(\mathcal{A})$ carries the structure of a dg algebra. Denote by $\mathit{Der}_\Bbbk(\mathcal{A},\mathcal{A})$ the dg vector space of $\Bbbk$-linear (super)derivations on $\mathcal{A}$. For a closed 2-form $\omega\in\mathit{DR}_{\Bbbk}^2(\mathcal{A})$, we have a map
\begin{equation}\label{eq:contra}
i_\omega:\mathit{Der}_\Bbbk(\mathcal{A},\mathcal{A})\rightarrow\mathit{DR}_\Bbbk^1(\mathcal{A})
\end{equation}
defined by contracting $\omega$ with every derivation in $\mathit{Der}_\Bbbk(\mathcal{A},\mathcal{A})$. We say that $\omega$ is \textit{symplectic} if $i_\omega$ is an isomorphism. Let $\mathbb{D}\mathit{er}_\Bbbk(\mathcal{A}):=\mathit{Der}_\Bbbk(\mathcal{A},\mathcal{A}\otimes\mathcal{A})=(\Omega_\mathcal{A}^1)^\vee$ be the bimodule of \textit{double derivations} of $\mathcal{A}$, analogous to (\ref{eq:contra}) we have a map
\begin{equation}
\iota_\omega:\mathbb{D}\mathit{er}_\Bbbk(\mathcal{A})\rightarrow\Omega_\mathcal{A}^1.
\end{equation}
A symplectic form $\omega$ is \textit{bisymplectic} if $\iota_\omega$ is an isomorphism.

Given any $a\in\mathcal{A}$ and let $\omega\in\mathit{DR}_{\Bbbk}^2(\mathcal{A})$ be bisymplectic, consider the double derivation $H_a\in\mathbb{D}\mathit{er}_\Bbbk(\mathcal{A})$ defined by $\iota_{H_a}\omega=Da:=a\otimes1-1\otimes a$. Using $H_a$ one can define a bracket $\{\cdot,\cdot\}$ on $\mathcal{A}$ by
\begin{equation}
\{a_1,a_2\}=\circ(H_{a_1}(a_2)),
\end{equation}
where $\circ:\mathcal{A}\otimes\mathcal{A}\rightarrow\mathcal{A}$ is the multiplication on $\mathcal{A}$. It can be checked that $\{\cdot,\cdot\}$ descends to a Lie bracket on $\mathcal{A}/[\mathcal{A},\mathcal{A}]$, and it defines an action of $\mathcal{A}/[\mathcal{A},\mathcal{A}]$ on $\mathcal{A}$ by derivations. See \cite{mv1}, Appendix A.

On the other hand, given $\theta\in\mathit{Der}_\Bbbk(\mathcal{A},\mathcal{A})$, we can define the Lie derivative $L_\theta$ on $T_\mathcal{A}(\Omega_\mathcal{A}^1)$ by
\begin{equation}
L_\theta(a)=\theta(a), L_\theta(Da)=D(\theta(a))
\end{equation}
for any $a\in\mathcal{A}$ and $Da\in\Omega_\mathcal{A}^1$. Clearly, $L_\theta$ descends to a map on the quotient $\mathit{DR}_{\Bbbk}(\mathcal{A})$.

Consider the triple $(\mathcal{A},\omega,\theta)$, where $\mathcal{A}=T_\mathcal{B}(\mathcal{M})$ is connected (\cite{bd}, Definition 4.3.2) and non-positively graded, with $\mathcal{B}$ being a quasi-free associative algebra concentrated in degree 0, and $\mathcal{M}$ a $\mathcal{B}$-bimodule. The 2-form $\omega\in\mathit{DR}_{\Bbbk}^2(\mathcal{A})$ is a bisymplectic form which is homogeneous with respect to the grading induced from $\mathcal{A}$, and $\theta\in\mathit{Der}_\Bbbk(\mathcal{A},\mathcal{A})$ has cohomological degree 1, which satisfies $L_\theta\omega=0$ and $\theta^2=0$. One can associate to this data a dg algebra $\mathcal{G}(\omega,\theta)$, called \textit{Ginzburg dg algebra}, whose underlying graded algebra is given by the free product $\mathcal{A}\ast\Bbbk[t]$, where $|t|=|\omega|-1$. The differential $d$ on $\mathcal{G}(\omega,\theta)$ is defined by considering the non-commutative moment map
\begin{equation}
\mu_\mathit{nc}:\mathit{DR}_{\Bbbk}^2(\mathcal{A})_\mathit{cl}\rightarrow\overline{\mathcal{A}}:=\mathcal{A}/\Bbbk
\end{equation}
on the space of closed cyclic 2-forms $\mathit{DR}_{\Bbbk}^2(\mathcal{A})_\mathit{cl}\subset\mathit{DR}_{\Bbbk}^2(\mathcal{A})$, which satisfies $D(\mu_\mathit{nc}(\omega))=\iota_\Delta\omega$, where $\Delta\in\mathbb{D}\mathit{er}(\mathcal{A})$ is the double derivation
\begin{equation}
\Delta(a)=a\otimes1-1\otimes a.
\end{equation}
Under the assumption that $\mathcal{A}$ is connected, $\mu_\mathit{nc}$ lifts to a map
\begin{equation}
\tilde{\mu}_\mathit{nc}:\mathit{DR}_{\Bbbk}^2(\mathcal{A})_\mathit{cl}\rightarrow[\mathcal{A},\mathcal{A}].
\end{equation}
With the above notations, we define
\begin{equation}
da=\theta(a),dt=\tilde{\mu}_\mathit{nc}(\omega).
\end{equation}
Note that our assumptions on the derivation $\theta$ ensures that $d^2=0$, so the dg algebra $\mathcal{G}(\omega,\theta)$ is well-defined.

\begin{definition}
The Ginzburg dg algebra $\mathcal{G}(\omega,\theta)$ defined above is a superpotential algebra if $\theta=\{w,\cdot\}$ for some $w\in\mathcal{A}/[\mathcal{A},\mathcal{A}]$. $w$ called the superpotential.
\end{definition}

If one takes $\mathcal{A}$ above to be the path algebra $\mathbb{K}\widetilde{Q}$ of the double $\widetilde{Q}$ of some quiver $Q=(Q_0,Q_1)$ obtained by adding a reverse $a^\ast$ to all the arrows $a\in Q_1$ (if $a$ is a cycle of odd degree, then $a^\ast=a$), the above construction recovers the Ginzburg dg algebra (or the \textit{dg preprojective algebra}, in the terminology of $\cite{mv}$) $\mathcal{G}(Q,w)$ associated to the quiver with potential $(Q,w)$, which can be defined in a more concrete and elementary way, see $\cite{mv}$, Section 9.3.

However, there are also many superpotential algebras which are not of the form $\mathcal{G}(Q,w)$, a typical example is the fundamental group algebra $\mathbb{K}\left[\pi_1(T^3)\right]$, see $\cite{bd}$, Example 4.3.4. Another counterexample is the following:

\begin{example}\label{example:110}
Consider the associative algebra $\mathbb{K}[x,y][(xy-1)^{-1}]$, regarded as a trivially graded dg algebra with vanishing differential. One can show that this algebra has a superpotential description. In fact, consider the path algebra $\mathbb{K}\widetilde{Q}_{1,0,0}$ of the double of the quiver $Q_{1,0,0}$ which consists of a single vertex and a unique cycle $x$, see Figure \ref{fig:quiver_{1,0,0}}. Let
\begin{equation}
\mathcal{A}=\mathbb{K}\langle x,y\rangle[(xy-1)^{-1}]
\end{equation}
be the localization of $\mathbb{K}\widetilde{Q}_{1,0,0}$ at $xy-1$, where the generator $y$ represents the reverse of the cycle $x$ in the quiver $\widetilde{Q}_{1,0,0}$. Since $\mathbb{K}\widetilde{Q}_{1,0,0}$ is quasi-free, so is $\mathcal{A}$. In this way, we have identified $\mathbb{K}[x,y][(xy-1)^{-1}]$ with the dg algebra $\left(\mathcal{A}\ast\mathbb{K}[t],d\right)$, so that $dx=dy=0$ and $dt=xy-yx$. The superpotential vanishes for dimension reasons.

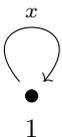
\begin{figure}[h!]
	\centering
	\begin{tikzpicture}
	\foreach \ang\lab\anch in {90/1/north}{
		\draw[fill=black] ($(0,0)+(\ang:3)$) circle (.08);
		\node[anchor=\anch] at ($(0,0)+(\ang:2.8)$) {$\lab$};
		\draw[->,shorten <=7pt, shorten >=7pt] ($(0,0)+(\ang:3)$).. controls +(\ang+40:1.5) and +(\ang-40:1.5) .. ($(0,0)+(\ang:3)$);
		\node at (0,4.1) {\footnotesize $x$};
	}
	\end{tikzpicture}
	\caption{The quiver $Q_{1,0,0}$}
	\label{fig:quiver_{1,0,0}}
\end{figure}
\end{example}

\begin{theorem}[$\cite{bd}$, Theorem 4.3.8]
Let $\mathcal{G}(\omega,\theta)$ be a Ginzburg dg algebra so that $\omega$ has cohomological degree $-n+2$, then $\mathcal{G}(\omega,\theta)$ has a smooth $n$-Calabi-Yau structure which is exact.
\end{theorem}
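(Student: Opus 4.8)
\textit{Proof strategy.} The plan is to split the statement into two parts, as in $\cite{bd}$: first, that $\mathcal{G}:=\mathcal{G}(\omega,\theta)$ is homologically smooth and carries a non-degenerate Hochschild cycle $\eta\in\mathit{CH}_{-n}(\mathcal{G})$, i.e. a weak smooth $n$-Calabi-Yau structure; and second, that $[\eta]$ lies in the image of Connes' connecting map $B:\mathit{HC}_{-n+1}(\mathcal{G})\rightarrow\mathit{HH}_{-n}(\mathcal{G})$, which is the exactness of Definition \ref{definition:key}. The first part is, at this level of generality, due to Ginzburg and Van den Bergh (and recovers Keller's result in the case of $\mathcal{G}(Q,w)$); the genuinely new input is the second part.

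For the first part I would use that the underlying graded algebra $\mathcal{A}\ast\Bbbk[t]$ is quasi-free (a free product of the quasi-free algebra $\mathcal{A}=T_\mathcal{B}(\mathcal{M})$ with a polynomial algebra), and that the differential is assembled from $\theta$ and the noncommutative moment map $\tilde{\mu}_{\mathit{nc}}(\omega)$. Together with the bisymplectic condition on $\omega$ (the isomorphisms $i_\omega$ and $\iota_\omega$), this produces an explicit finite-length projective bimodule resolution $P_\bullet\xrightarrow{\sim}\mathcal{G}$ of the diagonal -- the Ginzburg resolution -- which is self-dual up to a shift: $\mathcal{G}^\vee[n]\cong\mathcal{G}$ as $\mathcal{G}$-bimodules, the shift by $n$ reflecting $|\omega|=-n+2$. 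Transporting this isomorphism through the bar resolution yields the non-degenerate class $[\eta]\in\mathit{HH}_{-n}(\mathcal{G})$; homological smoothness comes from the same package, since $P_\bullet$ is a bounded complex of finitely generated projective bimodules.

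The heart of the argument is showing $[\eta]\in\mathrm{im}(B)$. The guiding picture, as in Section \ref{section:eCY}, is that $\eta$ is the noncommutative avatar of a holomorphic volume form and its exactness should be witnessed by a noncommutative primitive; here that primitive is the extra variable $t$, whose defining relation $dt=\tilde{\mu}_{\mathit{nc}}(\omega)$, together with the closedness $D\omega=0$ and $L_\theta\omega=0$, should encode exactly that $\eta$ is $D$ of a chain built from $t$. Concretely, I would write down an explicit cyclic cycle $\xi\in\mathit{CC}_{-n+1}(\mathcal{G})$ whose leading term pairs $t$ with the generators of $\mathcal{A}$ in the pattern dictated by $\omega$, and check that $B(\xi)$ represents $[\eta]$, using the Hochschild--Kostant--Rosenberg-type identification of $\mathit{CH}_\ast$ of a quasi-free algebra with noncommutative differential forms, under which Connes' $B$ becomes the de Rham differential $D$. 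The main obstacle is that this has to be run in the \emph{dg} setting: Connes' $B$ now interacts with the internal differential coming from $\theta$ and $\tilde{\mu}_{\mathit{nc}}$, so the HKR comparison is valid only up to correction terms and the signs require care. A cleaner route I would try is to filter $\mathcal{G}$ by powers of $t$ (equivalently by $\mathcal{A}$-word length) and run a spectral-sequence / mapping-cone argument, reducing the exactness of $\eta$ to the trivial exactness of the constant volume form on the smooth algebra $\mathcal{A}$ plus an explicit boundary supplied by $t$.

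Once a cyclic cycle $\xi$ with $B([\xi])=[\eta]$ is produced, Definition \ref{definition:key} gives that the weak smooth $n$-Calabi-Yau structure on $\mathcal{G}(\omega,\theta)$ is exact. In the superpotential case $\theta=\{w,\cdot\}$ one can make $\xi$ fully explicit in terms of $w$, which is also the slickest way to verify the closedness used above. Alternatively one could identify $\mathcal{G}(\omega,\theta)$ with a (deformed) $n$-Calabi-Yau completion in Keller's sense and invoke that Calabi-Yau completions are exact by construction, but then the work merely shifts to identifying the completion and checking that the deformation preserves exactness, which is of comparable difficulty.
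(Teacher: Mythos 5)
The paper does not prove this statement: it is stated as a direct citation to Theorem 4.3.8 of Davison's work $\cite{bd}$ with no accompanying proof, so there is no argument in the paper to compare yours against. Your sketch is a reasonable reconstruction of the strategy in the cited sources: the weak smooth Calabi-Yau structure via the self-dual Ginzburg/Van den Bergh bimodule resolution, and the exactness via an explicit cyclic cycle in which the generator $t$ (with $dt=\tilde{\mu}_{\mathit{nc}}(\omega)$) plays the role of the noncommutative primitive. That is the correct high-level picture, and your remark that Connes' $B$ must interact correctly with the internal differential $\theta$ and the relation $dt=\tilde{\mu}_{\mathit{nc}}(\omega)$ is exactly where the substance of Davison's argument lies. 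However, as written the proposal remains a plan rather than a proof: you identify three possible routes (explicit cyclic cycle, $t$-adic filtration and spectral sequence, reduction to Calabi-Yau completions) without carrying any of them through, and the dg corrections to the noncommutative HKR comparison that you flag are precisely the nontrivial content. If you want to turn this into a proof you should either execute the explicit-cocycle computation in the quasi-free/cyclic dg setting with all signs tracked, or, as you suggest at the end, rigorously identify $\mathcal{G}(\omega,\theta)$ with a deformed Calabi-Yau completion in Keller's sense and verify that the deformation class preserves exactness; neither step is automatic.
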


\subsection{Complexes with $S^1$-actions}\label{section:S1cpx}

This section follows closely $\cite{sg1}$, Section 2. Let $C_{-\ast}(S^1)$ be the dg algebra of chains on $S^1$, there is a quasi-equivalence of dg algebras $C_{-\ast}(S^1)\cong\mathbb{K}[t]/(t^2),|t|=-1$. Note that the algebra structure on $C_{-\ast}(S^1)$ comes from the group structure of $S^1$. The following definition is introduced in $\cite{bo2}$ and $\cite{sg1}$.

\begin{definition}[$\cite{sg1}$, Definitions 1 and 2]\label{definition:S1cpx}
An $S^1$-complex, or a chain complex with an $A_\infty$ $S^1$-action, is a strictly unital $A_\infty$-module $\mathcal{P}$ over $\mathbb{K}[t]/(t^2)$. Equivalently, it is a graded $\mathbb{K}$-vector space equipped with operations $\delta_j^\mathcal{P}:\mathcal{P}\rightarrow\mathcal{P}[1-2j]$, with $\delta_0^\mathcal{P}:=d^\mathcal{P}$ being the differential on $\mathcal{P}$,  and $\delta_j^\mathcal{P}:=\mu_\mathcal{P}^{j|1}(t,\cdots,t,\cdot),j>0$ being the structure maps, such that for each $k\geq0$, the equation
\begin{equation}
\sum_{j=0}^k\delta_j^\mathcal{P}\delta_{k-j}^\mathcal{P}=0
\end{equation}
holds. If the $A_\infty$-module $\mathcal{P}$ is a dg module, i.e. $\delta_j^\mathcal{P}$ for $j\geq2$, it is called a strict $S^1$-complex.
\end{definition}
We will abbreviate $\delta_j^\mathcal{P}$ to $\delta_j$ as long as there is no confusion.
\bigskip

$S^1$-complexes form a dg category $C_{-\ast}(S^1)^{\mathit{umod}}$, whose morphisms we will now recall. Let $A:=C_{-\ast}(S^1)$ be the quadratic algebra with a degree $-1$ generator, denote by $\varepsilon:A\rightarrow\mathbb{K}$ the trivial augmentation, and by $\overline{A}:=\ker(\varepsilon)$ the augmentation ideal. Let $\mathcal{P}$ and $\mathcal{Q}$ be strictly unital $A_\infty$-modules over $A$. A \textit{unital pre-morphism} of degree $k$ from $\mathcal{P}$ to $\mathcal{Q}$ is a collection of maps
\begin{equation}
F^{d|1}:\overline{A}^{\otimes d}\otimes\mathcal{P}\rightarrow\mathcal{Q}[k-d]
\end{equation}
for each $d\geq0$. Or equivalently, it can be expressed as a set of maps $\left\{F^d\right\}$, with
\begin{equation}
F^d:=F^{d|1}(t,\cdots,t,\cdot):\mathcal{P}\rightarrow\mathcal{Q}[k-2d].
\end{equation}
The space of pre-morphisms in each degree form the graded vector space of morphisms between the objects $\mathcal{P}$ and $\mathcal{Q}$ in the dg category $A^\mathit{umod}$, which will be denoted by $R\mathrm{Hom}_{S^1}(\mathcal{P},\mathcal{Q})$. Since $A^\mathit{umod}$ is a dg category, there is a differential $\partial$ on $R\mathrm{Hom}_{S^1}(\mathcal{P},\mathcal{Q})$, which is defined by
\begin{equation}
(\partial F)^s:=\sum_{i=0}^s F^i\circ\delta_{s-i}^\mathcal{P}-(-1)^{\deg(F)}\sum_{j=0}^s\delta_{s-j}^\mathcal{Q}\circ F^j.
\end{equation}
An $S^1$-\textit{complex homomorphism} is a pre-morphism which is closed under $\partial$. A homomorphism $F:\mathcal{P}\rightarrow\mathcal{Q}$ between $S^1$-complexes is a \textit{quasi-isomorphism} if the induced map $\left[F^0\right]:H^\ast(\mathcal{P})\rightarrow H^{\ast+\deg(F)}(\mathcal{Q})$ on cohomologies is an isomorphism.

Let $\mathcal{P}$ and $\mathcal{Q}$ be $S^1$-complexes, one can define their (derived) tensor product, which is another $S^1$-complex $\mathcal{Q}\otimes_{S^1}^\mathbb{L}\mathcal{P}$. To do this, note that we can view $\mathcal{Q}$ as a right $A_\infty$-module over $A$ since $A$ is commutative. The chain complex $\mathcal{Q}\otimes_{S^1}^\mathbb{L}\mathcal{P}$ has underlying vector space
\begin{equation}
\bigoplus_{d\geq0}\mathcal{Q}\otimes\overline{A}[1]^{\otimes d}\otimes\mathcal{P},
\end{equation}
and the differential acts as
\begin{equation}
\partial(q\otimes\underbrace{t\otimes\cdots\otimes t}_d\otimes p)=\sum_{i=0}^d\left((-1)^{|m|}\delta_i^\mathcal{Q}(q)\otimes\underbrace{t\otimes\cdots\otimes t}_{d-i}\otimes p+q\otimes \underbrace{t\otimes\cdots\otimes t}_{d-i}\otimes\delta_i^\mathcal{P}(p)\right).
\end{equation}
The tensor product $\mathcal{Q}\otimes_{S^1}^\mathbb{L}\mathcal{P}$ is functorial in the sense that if $F=\left\{F^d\right\}:\mathcal{P}_0\rightarrow\mathcal{P}_1$ is a pre-morphism of $S^1$-complexes, then there are induced maps
\begin{equation}
F_\#:\mathcal{Q}\otimes_{S^1}^\mathbb{L}\mathcal{P}_0\rightarrow\mathcal{Q}\otimes_{S^1}^\mathbb{L}\mathcal{P}_1,\textrm{ }{}_\#F:\mathcal{P}_0\otimes_{S^1}^\mathbb{L}\mathcal{Q}\rightarrow\mathcal{P}_1\otimes_{S^1}^\mathbb{L}\mathcal{Q}
\end{equation}
given by
\begin{equation}
F_\#(q\otimes\underbrace{t\otimes\cdots\otimes t}_{d}\otimes p)=\sum_{j=0}^d q\otimes\underbrace{t\otimes\cdots\otimes t}_{d-j}\otimes F^j(p)
\end{equation}
and
\begin{equation}
{}_\#F(p\otimes\underbrace{t\otimes\cdots\otimes t}_d\otimes q)=\sum_{j=0}^d(-1)^{\deg(F)\cdot|q|}F^j(p)\otimes\underbrace{t\otimes\cdots\otimes t}_{d-j}\otimes q,
\end{equation}
which are chain maps if $F$ is closed.

\begin{proposition}[$\cite{sg1}$, Proposition 1]
Let $F:\mathcal{P}\rightarrow\mathcal{P}'$ be a quasi-isomorphism of $S^1$-complexes, then we have induced quasi-isomorphisms between $\hom$ spaces and tensor products, i.e. we have quasi-isomorphisms
\begin{equation}
F\circ:R\mathrm{Hom}_{S^1}(\mathcal{P}',\mathcal{Q})\cong R\mathrm{Hom}_{S^1}(\mathcal{P},\mathcal{Q}),\textrm{ }\circ F:R\mathrm{Hom}_{S^1}(\mathcal{Q},\mathcal{P})\cong R\mathrm{Hom}_{S^1}(\mathcal{Q},\mathcal{P}'),
\end{equation}
\begin{equation}
{}_\#F:\mathcal{Q}\otimes_{S^1}^\mathbb{L}\mathcal{P}\cong\mathcal{Q}\otimes_{S^1}^\mathbb{L}\mathcal{P}',\textrm{ }F_\#:\mathcal{P}\otimes_{S^1}^\mathbb{L}\mathcal{Q}\cong\mathcal{P}'\otimes_{S^1}^\mathbb{L}\mathcal{Q}.
\end{equation}
\end{proposition}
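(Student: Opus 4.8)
The statement to prove is Proposition 1 of Ganatra's paper (here quoted), asserting that a quasi-isomorphism $F\colon\mathcal{P}\to\mathcal{P}'$ of $S^1$-complexes induces quasi-isomorphisms on the $R\mathrm{Hom}_{S^1}$-complexes (in both slots) and on the derived tensor products $\otimes_{S^1}^{\mathbb{L}}$ (in both slots).

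\medskip

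\textbf{Approach.} The plan is to exploit the fact, recorded just before the statement, that $S^1$-complexes are precisely strictly unital $A_\infty$-modules over the formal dg algebra $A := C_{-\ast}(S^1)\simeq \mathbb{K}[t]/(t^2)$, so all four assertions are instances of standard homological algebra over $A$ once one sets up the bar-type filtrations correctly. Concretely, both $R\mathrm{Hom}_{S^1}(\mathcal{P},\mathcal{Q})$ and $\mathcal{Q}\otimes_{S^1}^{\mathbb{L}}\mathcal{P}$ carry an increasing filtration by the tensor-power $d$ (the number of $\overline{A}[1]$-factors, equivalently the number of inserted $t$'s): for the Hom complex, $\mathcal{F}_{\leq N}$ consists of pre-morphisms $\{F^{d|1}\}$ with $F^{d|1}=0$ for $d>N$; for the tensor product, $\mathcal{F}_{\leq N}$ is the sub-sum over $d\leq N$. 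In each case the filtration is exhaustive and, for the purpose of the spectral-sequence comparison argument, one works with the associated graded, on which the differential $\partial$ reduces to the ``internal'' piece built only from $\delta_0^{\mathcal{P}}=d^{\mathcal{P}}$, $\delta_0^{\mathcal{Q}}=d^{\mathcal{Q}}$ (and, for the tensor product, the internal differential of $A$, which vanishes on $\overline{A}[1]=\mathbb{K}\langle t\rangle$). Thus on the $E_1$-page one has, schematically, $\prod_d \mathrm{Hom}_{\mathbb{K}}\bigl(\overline{A}[1]^{\otimes d}\otimes\mathcal{P},\,\mathcal{Q}\bigr)$ with differential induced purely by $d^{\mathcal{P}}$ and $d^{\mathcal{Q}}$, respectively $\bigoplus_d \mathcal{Q}\otimes\overline{A}[1]^{\otimes d}\otimes\mathcal{P}$ with differential $d^{\mathcal{Q}}\otimes 1 \pm 1\otimes d^{\mathcal{P}}$.

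\medskip

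\textbf{Key steps, in order.} First I would record the filtrations above and check they are compatible with $\partial$ (the formula $(\partial F)^s=\sum_i F^i\circ\delta_{s-i}^{\mathcal{P}}-(-1)^{\deg F}\sum_j \delta_{s-j}^{\mathcal{Q}}\circ F^j$ only increases or preserves the $d$-filtration because each $\delta_{\geq 1}$ strictly raises the tensor-power count — this is exactly the point where the grading conventions $\delta_j\colon\mathcal{P}\to\mathcal{P}[1-2j]$ matter). Second, identify the associated graded differential: only the $\delta_0$-terms survive, so $\mathrm{gr}\,R\mathrm{Hom}_{S^1}(\mathcal{P},\mathcal{Q})$ is a product over $d\geq 0$ of Hom-complexes of $\mathbb{K}$-modules with differential induced by $d^{\mathcal{P}}, d^{\mathcal{Q}}$, and similarly for the tensor product. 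Third, invoke that $F$ is a quasi-isomorphism, i.e. $[F^0]\colon H^\ast(\mathcal{P})\xrightarrow{\cong} H^\ast(\mathcal{P}')$: since $\overline{A}[1]$ is a finite-dimensional free $\mathbb{K}$-module and $\mathbb{K}$ is a field, tensoring with $\overline{A}[1]^{\otimes d}$ (for the tensor product) or applying $\mathrm{Hom}_{\mathbb{K}}(\overline{A}[1]^{\otimes d}\otimes(-),-)$ and $\mathrm{Hom}_{\mathbb{K}}(-, \overline{A}[1]^{\otimes d}\otimes(-))$ (for Hom) preserves quasi-isomorphisms in each variable — using the Künneth formula / universal coefficients over a field — so $F_\#$, ${}_\#F$, $F\circ$, $\circ F$ all induce isomorphisms on the associated-graded homology, degreewise in $d$. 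Fourth, conclude via a standard comparison of spectral sequences (the Eilenberg–Moore / mapping-cone filtration argument): a filtered chain map inducing an isomorphism on $E_1$ induces an isomorphism on homology, provided the filtration is bounded below and exhaustive and the spectral sequence converges — for the Hom complexes one must be slightly careful and use completeness of the product filtration, or equivalently run the argument on the cone of $F\circ$ (resp.\ $\circ F$) and show it is acyclic by the same $E_1$-vanishing. For the tensor products the filtration is bounded below ($d\geq 0$) and exhaustive, so convergence is immediate.

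\medskip

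\textbf{Main obstacle.} The genuinely delicate point is the convergence/completeness issue for $R\mathrm{Hom}_{S^1}$, whose underlying complex is a \emph{product} $\prod_{d\geq 0}$ rather than a sum, so the associated filtration is complete but not bounded above, and one cannot blindly quote ``$E_1$-isomorphism $\Rightarrow$ abutment-isomorphism.'' The clean way around this is to argue with the mapping cone: form $\mathrm{Cone}(\circ F)$ (resp.\ $\mathrm{Cone}(F\circ)$), which inherits the same complete exhaustive $d$-filtration; its associated graded is a product of cones of quasi-isomorphisms of $\mathbb{K}$-complexes, hence acyclic in every filtration degree; then a complete-filtration comparison lemma (the ``eventual'' or ``conditional'' convergence theorem for complete filtered complexes — e.g. Boardman's criterion, or simply the observation that a complete filtered complex with acyclic associated graded is acyclic) gives acyclicity of the cone, hence $\circ F$ is a quasi-isomorphism. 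The tensor-product statements, by contrast, are routine once the filtration and Künneth input are in place. One should also double-check the sign in the tensor-product differential and in ${}_\#F$ (the $(-1)^{\deg(F)\cdot|q|}$ factor) so that these are genuinely chain maps when $F$ is closed, but that is a bookkeeping verification rather than a conceptual difficulty.
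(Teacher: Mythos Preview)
The paper does not actually prove this proposition: it is quoted verbatim as Proposition~1 of \cite{sg1} and used as a black box, with no proof supplied in the paper itself. So there is no ``paper's own proof'' to compare against. Your argument is the standard length-filtration (bar-filtration) spectral sequence argument for $A_\infty$-modules, and it is essentially what one finds in Ganatra's paper and in the $A_\infty$-module literature more broadly; it is correct in outline and is the expected proof.

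One genuine slip worth fixing: your stated filtration on $R\mathrm{Hom}_{S^1}$ is the wrong one. You take $\mathcal{F}_{\leq N}=\{F:F^{d}=0\text{ for }d>N\}$ and then assert this is preserved by $\partial$ because ``each $\delta_{\geq 1}$ strictly raises the tensor-power count.'' But that observation cuts the other way: if $F^{N}\neq 0$ then $(\partial F)^{s}$ can be nonzero for $s>N$ (e.g.\ via $F^{N}\circ\delta_{s-N}^{\mathcal{P}}$), so $\mathcal{F}_{\leq N}$ is \emph{not} a subcomplex. The filtration that is preserved is the \emph{decreasing} one $\mathcal{F}^{\geq N}=\{F:F^{d}=0\text{ for }d<N\}$, and on its associated graded only the $\delta_0$-terms survive, exactly as you want. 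You clearly already understand this, since your ``main obstacle'' paragraph correctly switches to the product/complete-filtration viewpoint and the cone argument; just make the earlier description consistent with it. With that correction, your proof goes through: bounded-below exhaustive filtration plus K\"unneth over a field handles the tensor products, and the complete decreasing filtration plus acyclicity of the associated graded of the cone handles the $R\mathrm{Hom}$ cases.
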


Let $\mathcal{P}$ and $\mathcal{Q}$ be two $S^1$-complexes, and $f:\mathcal{P}\rightarrow\mathcal{Q}$ a chain map. We say that a homomorphism $F=\left\{F^d\right\}$ from $\mathcal{P}$ to $\mathcal{Q}$ an $S^1$-\textit{equivariant enhancement} of $f$ if $\left[F^0\right]=[f]$. In particular, $F$ has degree $\deg(f)$.

Finally, we notice that if $\mathcal{P}$ and $\mathcal{Q}$ are $S^1$-complexes, then their (linear) tensor product $\mathcal{P}\otimes\mathcal{Q}$ can also be equipped with an $S^1$-complex structure with
\begin{equation}\label{eq:diagonal}
\delta_k^{\mathcal{P}\otimes\mathcal{Q}}(p\otimes q):=(-1)^{|q|}\delta_k^\mathcal{P}(p)\otimes q+p\otimes\delta_k^\mathcal{Q}(q).
\end{equation}
We call this $S^1$-action on $\mathcal{P}\otimes\mathcal{Q}$ the \textit{diagonal $S^1$-action}.

\begin{definition}[$\cite{sg1}$, Definitions 4 and 5]
The homotopy orbit complex of $\mathcal{P}$ is the derived tensor product
\begin{equation} 
\mathcal{P}_{hS^1}:=\mathbb{K}\otimes_{S^1}^\mathbb{L}\mathcal{P}.
\end{equation}
The homotopy fixed point complex of $\mathcal{P}$ is the chain complex of morphisms
\begin{equation}
\mathcal{P}^{hS^1}:=R\mathrm{Hom}_{S^1}(\mathbb{K},\mathcal{P}).
\end{equation}
Since $R\mathrm{Hom}_{S^1}(\mathbb{K},\mathbb{K})\cong\mathbb{K}[u]$, $\mathcal{P}^{hS^1}$ carries the structure of a $\mathbb{K}[u]$-module. The Tate complex of $\mathcal{P}$ is defined as the localization of $\mathcal{P}^{hS^1}$ at $u$
\begin{equation}
\mathcal{P}^\mathit{Tate}:=\mathcal{P}^{hS^1}\otimes_{\mathbb{K}[u]}\mathbb{K}[u,u^{-1}],
\end{equation}
where $u$ is a formal variable with $|u|=2$.
\end{definition}

Let $F:\mathcal{P}\rightarrow\mathcal{Q}$ be a homomorphism of $S^1$-complexes, it induces chain maps
\begin{equation}\label{eq:induced}
F^{hS^1}:\mathcal{P}^{hS^1}\rightarrow\mathcal{Q}^{hS^1},\textrm{ }F_{hS^1}:\mathcal{P}_{hS^1}\rightarrow\mathcal{Q}_{hS^1},\textrm{ }F^\mathit{Tate}:\mathcal{P}^\mathit{Tate}\rightarrow\mathcal{Q}^\mathit{Tate}.
\end{equation}
When $F$ is a quasi-isomorphism (of $S^1$-complexes), $F^{hS^1}$, $F_{hS^1}$ and $F^\mathit{Tate}$ are quasi-isomorphisms (of chain complexes).

\begin{proposition}[$\cite{sg1}$, Proposition 2]\label{proposition:intert}
If $F:\mathcal{P}\rightarrow\mathcal{Q}$ is a homomorphism between $S^1$-complexes, then the various induced maps in (\ref{eq:induced}) intertwine all of the long exact sequences for equivariant homology groups of $\mathcal{P}$ with those for $\mathcal{Q}$. 
\end{proposition}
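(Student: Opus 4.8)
The plan is to reduce the statement to the single observation that all of the long exact sequences in question — the homotopy orbit, homotopy fixed point, Tate, and the connecting ("Gysin"/Connes) sequences relating them — arise from \emph{functorial} short exact (or distinguished) triangles of chain complexes built out of the $S^1$-complex structure, and that a homomorphism $F$ of $S^1$-complexes induces a morphism of all the underlying functorial data. Concretely, recall that $A=C_{-\ast}(S^1)\simeq\mathbb{K}[t]/(t^2)$ and set $\mathbb{K}[u]:=H_{-\ast}^{S^1}(\mathrm{pt})$ with $|u|=2$, sitting in the Koszul-type short exact sequence $0\to\mathbb{K}[u]\xrightarrow{\cdot u}\mathbb{K}[u]\to\mathbb{K}\to 0$ of $A$-modules (resp. the two-term filtration on $\overline{A}[1]$). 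Tensoring with $\mathcal{P}$ over $S^1$, or applying $R\mathrm{Hom}_{S^1}(-,\mathcal{P})$, produces the standard sequences: for instance $0\to\mathcal{P}^{hS^1}\xrightarrow{\cdot u}\mathcal{P}^{hS^1}\to\mathcal{P}\to 0$ gives, on cohomology, the Gysin sequence whose connecting map is the $\mathbf{B}$/$B$-type operator, and the localization $\mathcal{P}^{hS^1}\to\mathcal{P}^{\mathit{Tate}}$ with cone $\mathcal{P}_{hS^1}[?]$ gives the Connes-type sequence relating the three equivariant theories.

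First I would record, following the formulas already displayed in the excerpt for $F^{hS^1}$, $F_{hS^1}$, $F^{\mathit{Tate}}$ and for the maps ${}_\#F$, $F_\#$ on tensor products, that each of these is a \emph{chain map} when $F$ is closed, and that they are compatible with the structure maps $\cdot u$, the inclusion $\mathcal{P}^{hS^1}\hookrightarrow\mathcal{P}^{\mathit{Tate}}$, and the projection $\mathcal{P}^{hS^1}\twoheadrightarrow\mathcal{P}$ (resp. $\mathcal{P}\hookrightarrow\mathcal{P}_{hS^1}$). This compatibility is immediate from the explicit definitions: $F^{hS^1}$ is $u$-linear because it only modifies the $\mathcal{P}$-tensor-factor and commutes with padding the bar-resolution by extra $t$'s; the comparison map to $\mathcal{P}$ is reading off the $d=0$ term $F^0$; and $F^{\mathit{Tate}}$ is the unique $\mathbb{K}[u,u^{-1}]$-linear extension of $F^{hS^1}$. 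Second, I would assemble these into commuting ladders of short exact sequences of complexes — one ladder per sequence — with vertical arrows the various $F$-induced maps and with the middle (or outer) verticals literally equal to $F^{hS^1}$, $F_{hS^1}$, $F^{\mathit{Tate}}$, or $F^0$. Passing to the associated long exact sequences in cohomology then yields commuting ladders of long exact sequences by the naturality of the connecting homomorphism in the snake lemma; this is exactly the assertion that $F$ ``intertwines all of the long exact sequences''. For the claim that a quasi-isomorphism $F$ induces quasi-isomorphisms on $\mathcal{P}^{hS^1}$, $\mathcal{P}_{hS^1}$, $\mathcal{P}^{\mathit{Tate}}$, one invokes the preceding proposition (Proposition~1 of \cite{sg1}, stated in the excerpt) together with the five lemma applied to these ladders, or more directly the fact that $\otimes_{S^1}^{\mathbb{L}}$ and $R\mathrm{Hom}_{S^1}$ are derived functors and hence preserve quasi-isomorphisms.

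The only real subtlety — and the step I expect to require the most care — is bookkeeping of signs and of the degree shifts $[1]$ in the bar complex $\overline{A}[1]^{\otimes d}$, so that the three sequences (orbit, fixed-point, Tate) are genuinely identified with the algebraic Connes/Gysin sequences with their standard differentials, and so that the vertical maps in the ladders are honestly chain maps in the given degrees rather than up to a sign that varies with $d$. One must also be slightly careful that $\mathcal{P}^{\mathit{Tate}}$ is defined by an \emph{inverse} limit / localization, so exactness of the localized sequence uses flatness of $\mathbb{K}[u,u^{-1}]$ over $\mathbb{K}[u]$ — harmless, but worth stating. Once the sign conventions are fixed consistently with those in the definitions of $\partial$, $F_\#$, ${}_\#F$ recalled above, the proposition follows formally; there is no geometric input here, only the naturality of the snake lemma and the explicit functoriality of the constructions. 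I would therefore keep the written proof short, citing \cite{sg1} for the sign conventions and emphasizing only the ladder-of-short-exact-sequences argument.
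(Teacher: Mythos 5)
Your proposal is correct and follows exactly the standard argument that the cited reference (and, implicitly, the displayed commutative ladder immediately after the proposition in this paper) relies on: the three equivariant complexes sit in functorial short exact sequences with $u$-linear structure maps, the induced maps $F^{hS^1}$, $F_{hS^1}$, $F^{\mathit{Tate}}$ form chain-map ladders by inspection of the explicit formulas, and naturality of the connecting homomorphism does the rest. The only caveats you flag (degree shifts, sign conventions, flatness of $\mathbb{K}[u,u^{-1}]$ for the Tate localization) are indeed the only real bookkeeping required, and your treatment of them is adequate.
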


For instance, we have the Gysin exact triangle (\cite{sg1}, Remark 23)
\begin{equation}
\mathcal{P}\rightarrow\mathcal{P}_{hS^1}\rightarrow\mathcal{P}_{hS^1}[2]\xrightarrow{[1]}.
\end{equation}
Taking $\mathcal{P}=\mathit{CH}_\ast(\mathcal{A})$ to be the Hochschild chain complex of some strictly unital $A_\infty$-algebra $\mathcal{A}$ recovers Connes' long exact sequence (\ref{eq:Connes-LES}). If $F:\mathcal{P}\rightarrow\mathcal{Q}$ is an $S^1$-complex homomorphism, there is a commutative diagram
\begin{equation}
\begin{tikzcd}
\mathcal{P} \arrow[d] \arrow[r] & \mathcal{P}_{hS^1} \arrow[d] \arrow[r] & \mathcal{P}_{hS^1}[2] \arrow[d] \arrow[r,"{[1]}"] & \textrm{} \\
\mathcal{Q} \arrow[r] & \mathcal{Q}_{hS^1} \arrow[r] &  \mathcal{Q}_{hS^1}[2] \arrow[r,,"{[1]}"] & \textrm{}
\end{tikzcd}
\end{equation}

\bigskip

The structure of an $S^1$-complex $\left(\mathcal{P},\{\delta_j\}_{j\geq0}\right)$ admits an alternative description by implementing the $u$-linear model. Let $u$ be a formal variable of degree 2, consider the $u$-adically completed tensor product
\begin{equation}\label{eq:positive_eq}
\mathcal{P}[[u]]:=\mathcal{P}\widehat{\otimes}_\mathbb{K}\mathbb{K}[[u]]
\end{equation}
in the category of graded vector spaces. An $S^1$-complex can be equivalently formulated as a graded $\mathbb{K}$-vector space $\mathcal{P}$ equipped with a map $\delta_\mathit{eq}:\mathcal{P}\rightarrow\mathcal{P}[[u]]$ of total degree 1 defined by
\begin{equation}\label{eq:Tate}
\delta_\mathit{eq}:=\sum_{j=0}^\infty\delta_iu^i,
\end{equation}
which satisfies $\delta_\mathit{eq}^2=0$. The map $\delta_\mathit{eq}$ is called an \textit{equivariant differential}, note that it extends $u$-linearly to a map $\mathcal{P}[[u]]\rightarrow\mathcal{P}[[u]]$, which we will still denote by $\delta_\mathit{eq}$.

The \textit{(positive) $S^1$-equivariant homology}, \textit{negative $S^1$-equivariant homology} and \textit{periodic $S^1$-equivariant homology} are defined respectively as homologies of the following complexes:
\begin{equation}\label{eq:po-S1}
\mathcal{P}_{hS^1}:=\left(\mathcal{P}((u))/u\mathcal{P}[[u]],\delta_\mathit{eq}\right),
\end{equation}
\begin{equation}
\mathcal{P}^{hS^1}:=\left(\mathcal{P}[[u]],\delta_\mathit{eq}\right),
\end{equation}
\begin{equation}
\mathcal{P}^\mathit{Tate}:=\left(\mathcal{P}((u)),\delta_\mathit{eq}\right).
\end{equation}

\subsection{Non-unital Hochschild chain complex}\label{section:non-unital}

Our exposition here follows $\cite{sg1}$, Section 3.1. Let $\mathcal{A}$ be an $A_\infty$-algebra over the semisimple ring $\Bbbk:=\bigoplus_{i=1}^r\mathbb{K}e_i$. For the usual Hochschild chain complex $\mathit{CH}_\ast(\mathcal{A})$ to be a strict $S^1$-complex in the sense of Definition \ref{definition:S1cpx}, one needs to assume that $\mathcal{A}$ is strictly unital. However, in the geometric context, the Fukaya category is in general only cohomologically unital, we therefore need a replacement of the usual Hochschild chain complex, so that it possess the structure of a strict $S^1$-complex and is quasi-isomorphic to the usual Hochschild complex $\mathit{CH}_\ast(\mathcal{A})$. This construction, known as the \textit{non-unital Hochschild chain complex}, will be recalled below.
\bigskip

Let $\mathcal{A}$ be a cohomologically unital $A_\infty$-algebra over $\Bbbk$. As a graded vector space, the non-unital Hochschild chain complex consists of two copies of the ordinary Hochschild chain complex, with the grading of the second copy shifted down by 1, i.e.
\begin{equation}\label{eq:CHnu}
\mathit{CH}^\mathit{nu}_\ast(\mathcal{A}):=\mathit{CH}_\ast(\mathcal{A})\oplus\mathit{CH}_\ast(\mathcal{A})[1].
\end{equation}
With respect to the decomposition (\ref{eq:CHnu}), elements in the complex $\mathit{CH}^\mathit{nu}_\ast(\mathcal{A})$ can be written as $\check{\alpha}+\hat{\beta}$, where $\check{\alpha}\in\mathit{CH}_\ast(\mathcal{A})$ and $\hat{\beta}\in\mathit{CH}_\ast(\mathcal{A})[1]$. As a convention, we will refer to the left factor in $\mathit{CH}^\mathit{nu}_\ast(\mathcal{A})$ as the \textit{check factor} and the right factor in $\mathit{CH}^\mathit{nu}_\ast(\mathcal{A})$ as the \textit{hat factor}.

The differential $b^\mathit{nu}$ on the complex $\mathit{CH}^\mathit{nu}_\ast(\mathcal{A})$ can therefore be expressed as a block matrix
\begin{equation}
b^\mathit{nu}:=\left[
\begin{array}{ll}
b & d_{\wedge\vee} \\ 0 & b'
\end{array}
\right]
\end{equation}
where 
\begin{equation}
\begin{split}
b(\check{\alpha})=\sum&(-1)^{\maltese_1^k\cdot(1+\maltese_{k+1}^d)+\maltese_{k+1}^{d-1}+1}\mu^{d-i}(x_k\otimes\cdots\otimes x_1\otimes x_d\otimes x_{d-1}\otimes\cdots\otimes x_{k+i+1})\otimes \\
&x_{k+i}\otimes\cdots\otimes x_{k+1}+\sigma(-1)^{\maltese_1^s}x_d\otimes\cdots\otimes\mu^j(x_{s+j+1}\otimes\cdots\otimes x_{s+1})\otimes x_s\otimes\cdots\otimes x_1
\end{split}
\end{equation}
is the usual Hochschild differential on the check factor $\mathit{CH}_\ast(\mathcal{A})$, with $\check{\alpha}=x_d\otimes\cdots\otimes x_1$, $b'$ is the \textit{bar differential} on the hat factor defined by
\begin{equation}
\begin{split}
b'(\hat{\beta})&=\sum(-1)^{\maltese_1^s}x_d\otimes\cdots\otimes x_{s+j+1}\otimes\mu_\mathcal{A}^j(x_{s+j}\otimes\cdots x_{s+1})\otimes x_s\otimes\cdots\otimes x_1 \\
&+\sum(-1)^{\maltese_1^{d-j}}\mu_\mathcal{A}^j(x_d\otimes\cdots\otimes x_{d-j+1})\otimes x_{d-j}\otimes\cdots\otimes x_1
\end{split}
\end{equation}
where $\hat{\beta}=x_d\otimes\cdots\otimes x_1$, and $d_{\wedge\vee}:\mathit{CH}_\ast(\mathcal{A})[1]\rightarrow\mathit{CH}_\ast(\mathcal{A})[1]$ is defined by
\begin{equation}
d_{\wedge\vee}(\hat{\beta}):=(-1)^{\maltese_2^d+||x_1||\cdot\maltese_2^d+1}x_1\otimes x_d\otimes\cdots\otimes x_2+(-1)^{\maltese_1^{d-1}}x_d\otimes\cdots\otimes x_1.
\end{equation}
In the above, we have followed the convention of $\cite{ps1}$, in particular the symbol
\begin{equation}
\maltese_i^j:=\sum_{k=i}^j||x_k||
\end{equation}
is used to abbreviate the signs, where $||x_k||:=|x_k|-1$ is the reduced grading.

\begin{remark}
The idea of non-unital Hochschild complex also appears in the geometric context, for example, in the Legendrian surgery description of the symplectic cohomology $\cite{bee}$.
\end{remark}

The natural inclusion $\mathit{CH}_\ast(\mathcal{A})\hookrightarrow\mathit{CH}_\ast^\mathit{nu}(\mathcal{A})$ is a quasi-isomorphism, since the quotient complex is acyclic, see \cite{sg1}, Lemma 2. Associated to $\mathit{CH}^\mathit{nu}_\ast(\mathcal{A})$ there is a corresponding non-unital version of the (chain level representative of) Connes' operator $B^\mathit{nu}:\mathit{CH}_\ast^\mathit{nu}(\mathcal{A})\rightarrow\mathit{CH}_{\ast}^\mathit{nu}(\mathcal{A})[1]$, which is defined explicitly by
\begin{equation}\label{eq:Bnu}
B^\mathit{nu}(x_k\otimes\cdots\otimes x_1,y_l\otimes\cdots\otimes y_1):=\sum_i(-1)^{\maltese_1^i\maltese_{i+1}^k+||x_k||+\maltese_1^k+1}(0,x_i\otimes\cdots\otimes x_1\otimes x_k\otimes\cdots\otimes x_{i+1}).
\end{equation}
Note that we can write $B^\mathit{nu}=s^\mathit{nu}N$, where
\begin{equation}
s^\mathit{nu}(x_k\otimes\cdots\otimes x_1,y_l\otimes\cdots\otimes y_1):=(-1)^{\maltese_1^k+||x_k||+1}(0,x_k\otimes\cdots\otimes x_1),
\end{equation}
and
\begin{equation}
N(x_k\otimes\cdots\otimes x_1):=(1+\lambda+\cdots+\lambda^{k-1})(x_k\otimes\cdots\otimes x_1)
\end{equation}
is the \textit{norm} of the \textit{cyclic permutation operator}
\begin{equation}\label{eq:permu}
\lambda(x_k\otimes\cdots\otimes x_1):=(-1)^{||x_1||\cdot\maltese_2^k+||x_1||+||x_k||}x_1\otimes x_k\otimes\cdots\otimes x_2.
\end{equation}
One can verify that $(B^\mathit{nu})^2=0$ and $b^\mathit{nu}B^\mathit{nu}+B^\mathit{nu}b^\mathit{nu}=0$, which shows that
\begin{lemma}[$\cite{sg1}$, Lemma 3]
$\mathit{CH}_\ast^\mathit{nu}(\mathcal{A})$ is a strict $S^1$-complex.
\end{lemma}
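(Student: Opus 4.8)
\emph{Proof sketch.} The plan is to verify the defining conditions of Definition \ref{definition:S1cpx} directly. Recall that a strict $S^1$-complex is a dg module over $C_{-\ast}(S^1)\simeq\mathbb{K}[t]/(t^2)$ with $|t|=-1$; equivalently, it is a graded vector space carrying operations $\delta_j$ for $j\geq0$, with $\delta_0$ the differential, all $\delta_j$ with $j\geq2$ vanishing (this is the strictness), subject to $\sum_{j=0}^{k}\delta_j\delta_{k-j}=0$ for every $k\geq0$. So I would set $\delta_0:=b^\mathit{nu}$ and $\delta_1:=B^\mathit{nu}$ on $\mathit{CH}^\mathit{nu}_\ast(\mathcal{A})$, with $\delta_j:=0$ for $j\geq2$; then the relations with $k\geq3$ are vacuous, and the entire content reduces to the three identities
\[
(b^\mathit{nu})^2=0,\qquad b^\mathit{nu}B^\mathit{nu}+B^\mathit{nu}b^\mathit{nu}=0,\qquad (B^\mathit{nu})^2=0.
\]

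For $(b^\mathit{nu})^2=0$ I would expand the block-matrix square: its diagonal entries are $b^2$ and $(b')^2$, both of which vanish by the $A_\infty$-associativity equations of $\mathcal{A}$ (the standard facts that the Hochschild and bar differentials square to zero), while the remaining entry is $b\,d_{\wedge\vee}+d_{\wedge\vee}\,b'$, which one checks is zero by a direct computation from the explicit formula for $d_{\wedge\vee}$ together with the same $A_\infty$-equations; this compatibility is precisely what dictates the particular form of $d_{\wedge\vee}$. For $(B^\mathit{nu})^2=0$ I would use the factorization $B^\mathit{nu}=s^\mathit{nu}N$: the operator $s^\mathit{nu}$ reads only the check component of its argument and returns an element whose check component is zero, while $N=1+\lambda+\cdots+\lambda^{k-1}$ permutes tensor letters and so preserves the check/hat splitting; hence $s^\mathit{nu}\,N\,s^\mathit{nu}=0$, and $(B^\mathit{nu})^2=0$ follows immediately.

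The identity $b^\mathit{nu}B^\mathit{nu}+B^\mathit{nu}b^\mathit{nu}=0$ is the substantial one, and I would derive it as the non-unital analogue of Connes' classical computation that $B$ anticommutes with the Hochschild differential \cite{jll}. Using $B^\mathit{nu}=s^\mathit{nu}N$, the identity reduces to a short list of relations among $b$, the bar differential $b'$, the cyclic permutation operator $\lambda$, its norm $N$, and the extra degeneracy $s^\mathit{nu}$: an intertwining relating $N$ with the pair $(b,b')$; a relation comparing $(1-\lambda)\circ b'$ with $b\circ(1-\lambda)$; the standard identities satisfied by $N$ and $1-\lambda$ on length-$k$ tensors; and a homotopy relation for $s^\mathit{nu}$ expressing $b\,s^\mathit{nu}+s^\mathit{nu}\,b'$ as the identity minus a ``wrap-around'' term, where this wrap-around term --- the only place strict unitality would otherwise intervene --- is exactly what the off-diagonal map $d_{\wedge\vee}$ in $b^\mathit{nu}$ absorbs. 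Substituting these into $b^\mathit{nu}(s^\mathit{nu}N)+(s^\mathit{nu}N)b^\mathit{nu}$ and regrouping, the cross terms telescope and cancel. The main obstacle is purely one of sign bookkeeping: each of the auxiliary relations holds ``up to sign'' for essentially formal reasons, and the actual work is to confirm that, with the reduced-grading conventions encoded by the $\maltese$ symbols and the precise formulas for $d_{\wedge\vee}$, $s^\mathit{nu}$, and $\lambda$, the signs are consistent throughout; this is lengthy but mechanical and requires no further input. Alternatively, since the excerpt already records $(B^\mathit{nu})^2=0$ and $b^\mathit{nu}B^\mathit{nu}+B^\mathit{nu}b^\mathit{nu}=0$, one may simply invoke Lemma 4 of \cite{sg1}.
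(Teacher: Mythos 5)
Your plan matches the paper's treatment: the paper itself does not spell out a proof but simply records that $(b^\mathit{nu})^2=0$, $(B^\mathit{nu})^2=0$, and $b^\mathit{nu}B^\mathit{nu}+B^\mathit{nu}b^\mathit{nu}=0$ and refers to Lemma~4 of \cite{sg1}, which is exactly the reduction you carry out. Your block-triangular computation for $(b^\mathit{nu})^2=0$, the observation that $s^\mathit{nu}$ outputs into the hat factor while $B^\mathit{nu}$ reads only the check factor (giving $(B^\mathit{nu})^2=0$), and the identification of $d_{\wedge\vee}$ as absorbing the wrap-around term so that the anticommutation survives without strict unitality, are all correct and give a more explicit account than the paper chooses to include.
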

When $\mathcal{A}$ is strictly unital, there is an $S^1$-equivariant enhancement of the natural inclusion $\mathit{CH}_\ast(\mathcal{A})\hookrightarrow\mathit{CH}_\ast^\mathit{nu}(\mathcal{A})$, which is a quasi-isomorphism of $S^1$-complexes. Again, one can package everything in the $u$-linear model, and define the equivariant differential on $\mathit{CH}_\ast^\mathit{nu}(\mathcal{A})$ as
\begin{equation}
b_\mathit{eq}:=b^\mathit{nu}+uB^\mathit{nu}.
\end{equation}
The positive, negative and periodic cyclic homologies of a cohomoloically unital $A_\infty$-algebra $\mathcal{A}$ are then defined respectively as the homologies of the following complexes:
\begin{equation}
\mathit{CC}_\ast(\mathcal{A}):=\left(\mathit{CH}_\ast^\mathit{nu}(\mathcal{A})\otimes_\mathbb{K}\mathbb{K}((u))/u\mathbb{K}[[u]],b_\mathit{eq}\right),
\end{equation} 
\begin{equation}
\mathit{CC}_\ast^-(\mathcal{A}):=\left(\mathit{CH}_\ast^\mathit{nu}(\mathcal{A})\otimes_\mathbb{K}\mathbb{K}[[u]],b_\mathit{eq}\right),
\end{equation}
\begin{equation}
\mathit{CC}_\ast^\mathit{per}:=\left(\mathit{CH}_\ast^\mathit{nu}(\mathcal{A})\otimes_\mathbb{K}\mathbb{K}((u)),b_\mathit{eq}\right).
\end{equation}

\section{Parametrized Floer theory}\label{section:PF}

As the general geometric set up of this paper, $(M,\theta_M)$ will be a $2n$-dimensional Liouville manifold. In order to have $\mathbb{Z}$-gradings on various Floer cochain complexes, we assume that $c_1(M)=0$, and will in fact fix the choice of a trivialization of the canonical bundle $K_M$.

\subsection{Equivariant symplectic cohomology}\label{section:equi-symp}

We recall the definition of the $S^1$-equivariant symplectic cohomology $\mathit{SH}_{S^1}^\ast(M)$, whose construction is sketched in $\cite{ps2}$ and later carried out in detail in $\cite{bo2}$. We will actually follow closely the general framework of $\cite{sg1}$, which has the advantage of being coordinate-free. The idea behind the construction will also be used in Appendix \ref{section:product} to define higher order operations involving the pair-of-pants surface.

The construction of $\mathit{SH}_{S^1}^\ast(M)$ is more involved than its non-equivariant version $\mathit{SH}^\ast(M)$ in the sense that besides the Floer differential $\delta_0=d$, there is now a sequence of higher order corrections $\delta_k$, $k\geq1$, whose definitions make use of parametrized moduli spaces. We now recall the definition of the domains of these parametrized maps.

\begin{definition}[$\cite{sg1}$, Definition 7]\label{definition:p-cyl}
A $k$-point angle-decorated cylinder consists of a cylinder $Z=\mathbb{R}\times S^1$, together with a collection of auxiliary marked points $p_1,\cdots,p_k\in Z$, such that their $s\in\mathbb{R}$ coordinates $(p_i)_s,1\leq i\leq k$ satisfy
\begin{equation}\label{eq:radial1}
(p_1)_s\leq\cdots\leq(p_k)_s.
\end{equation}
We call these coordinates the heights of the marked points, and denote them by
\begin{equation}
h_i:=(p_i)_s,i=1,\cdots,k.
\end{equation}
Similarly, the $t\in S^1$ coordinates $(p_i)_t,1\leq i\leq k$ of the marked points are called angles, and we introduce the notations
\begin{equation}
\theta_i:=(p_i)_t,i=1,\cdots,k.
\end{equation}
\end{definition}

Denote by $\mathcal{M}_k$ the moduli space of $k$-point angle-decorated cylinders, modulo translation in the $s$-direction. Given a marked cylinder $(Z,p_1,\cdots,p_k)$ representing an element of $\mathcal{M}_k$. For a fixed constant $\xi>0$, define positive and negative cylindrical ends
\begin{equation}\label{eq:cyl-ends}
\varepsilon^+:[0,\infty)\times S^1\rightarrow Z \textrm{ and } \varepsilon^-:(-\infty,0]\times S^1\rightarrow Z
\end{equation}
by
\begin{equation}\label{eq:cyl-ends1}
\varepsilon^+(s,t)=(s+h_k+\xi,t) \textrm{ and } \varepsilon^-(s,t)=(s-h_1+\xi,t+\theta_1)
\end{equation}
respectively. Note that the negative cylindrical end involves a twist by $\theta_1$. If we regard $Z$ as a sphere $S^2$ with two punctures $\zeta_{\mathit{in}}$, which is the input, and $\zeta_{\mathit{out}}$, which is the output, the parametrizations of $\varepsilon^+$ and $\varepsilon^-$ in the $t$-component can be equivalently described by putting \textit{asymptotic markers} $\ell_{\mathit{in}}$ and $\ell_\mathit{out}$ at $\zeta_{\mathit{in}}$ and $\zeta_{\mathit{out}}$ respectively. These are half-lines in the real projectivized tangent spaces $\mathbb{P}T_{\zeta_\mathit{in}}S^2$ and $\mathbb{P}T_{\zeta_\mathit{out}}S^2$. In the present situation, $\ell_{\mathit{in}}$ is pointing along the arc $\{\varepsilon^+(s,0)\}$, while $\ell_\mathit{out}$ is pointing towards $p_1$, the closest auxiliary marked point to $\zeta_{\mathit{out}}$. Note that there is a non-canonical isomorphism $\mathcal{M}_k\cong(S^1)^k\times[0,1)^{k-1}$.

$\mathcal{M}_k$ can be compactified to a manifold with corners $\overline{\mathcal{M}}_k$ by adding broken $k$-point angle-decorated cylinders, by which we mean
\begin{equation}
\bigsqcup_s\bigsqcup_{j_1,\cdots,j_s;j_i>0,\sum{j_i}=k}\mathcal{M}_{j_1}\times\cdots\times\mathcal{M}_{j_s}.
\end{equation}
In particular, the codimension 1 boundary of $\overline{\mathcal{M}}_k$ is covered by the images of the natural inclusions
\begin{equation}\label{eq:break1}
\overline{\mathcal{M}}_{k-j}\times\overline{\mathcal{M}}_j\hookrightarrow\partial\overline{\mathcal{M}}_k,0<j<k;
\end{equation}
\begin{equation}\label{eq:coin1}
\overline{\mathcal{M}}_k^{i,i+1}\hookrightarrow\partial\overline{\mathcal{M}}_k,
\end{equation}
where $\overline{\mathcal{M}}_k^{i,i+1}$ is the compactification of the locus where $h_i=h_{i+1}$. On $\mathcal{M}_k^{i,i+1}$ there is a forgetful map
\begin{equation}\label{eq:forget1}
\pi_i:\mathcal{M}_k^{i,i+1}\rightarrow\mathcal{M}_{k-1}
\end{equation}
which remembers only the first one of the angles of the interior marked points with coincident heights. Since $\pi_i$ is compatible with the choices of cylindrical ends $\varepsilon^\pm$ specified by (\ref{eq:cyl-ends1}), it extends to a map $\bar{\pi}_i:\overline{\mathcal{M}}_k^{i,i+1}\rightarrow\overline{\mathcal{M}}_{k-1}$ defined on the compactifications.
\bigskip

In order to write down the Floer equations for the parametrized maps, we need to introduce Floer data on the domain cylinders. We start by specifying the sets of Hamiltonian functions and almost complex structures to work with. We say that a time-dependent Hamiltonian $H_t:S^1\times M\rightarrow\mathbb{R}$ is \textit{admissible} if $H_t=H+F_t$ is the sum of an autonomous Hamiltonian $H:M\rightarrow\mathbb{R}$ which is quadratic at infinity, namely
\begin{equation}
H(r,y)=r^2
\end{equation}
on the cylindrical end $[r_0,\infty)\times\partial\overline{M}$, where $r\in\mathbb(1,\infty)$ is the radial coordinate and $r_0\gg1$, and a time-dependent perturbation $F_t:S^1\times M\rightarrow\mathbb{R}$. We require that on $M\setminus\overline{M}$, we have that for any $r_1\gg1$, there exists an $r>r_1$ such that $F_t$ vanishes in a neighborhood of the hypersurface $\{r\}\times\partial\overline{M}\subset M$. For instance, one can take $F_t$ to be a function supported near non-constant orbits of the Hamiltonian vector field $X_H$, where it is modelled on a Morse function on $S^1$.

Denote by $\mathcal{H}(M)$ the set of admissible Hamiltonians $H_t$ on $M$ such that all 1-periodic orbits of the Hamiltonian vector field $X_{H_t}$ are non-degenerate, and write $\mathcal{O}_M$ for the set of 1-periodic orbits of $X_{H_t}$. For an orbit $y\in\mathcal{O}_M$, we define its degree to be $\deg(y):=n-\mathit{CZ}(y)$, where $\mathit{CZ}(y)$ is the Conley-Zehnder index of $y$. With these data one can define a $\mathbb{Z}$-graded Floer cochain complex of $H_t$, which is called the \textit{symplectic cochain complex}. It will be denoted by $\mathit{SC}^\ast(M)$, with its degree $i$ piece given by
\begin{equation}
\mathit{SC}^i(M):=\bigoplus_{y\in\mathcal{O}_M,\deg(x)=i}|o_y|_\mathbb{K},
\end{equation}
where $|o_y|_\mathbb{K}$ is the $\mathbb{K}$-normalization of the orientation line $o_y$ defined via index theory.

\begin{remark}\label{remark:o-line}
For convenience, we will often choose generators of the orientation lines $o_y$ associated to each Hamiltonian orbit and denote them by $y$ in a slight abuse of notation. The same convention applies to orientation lines associated Hamiltonian chords when dealing with open-string invariants.
\end{remark}

Let $J_t$ be a time-dependent almost complex structure on $M$, we say that it is \textit{of weak contact type} on the conical end if there is a sequence $\{r_i\}$ with $\lim_{i\rightarrow\infty}r_i=\infty$ so that near $\{r_i\}\times\partial\overline{M}$ we have $dr\circ J_t=-\theta_M$. Denote by $\mathcal{J}(M)$ the set of $d\theta_M$-compatible alsmost complex structures on $M$ which are of weak contact type on the conical end. Recall that the usual Floer differential $d:\mathit{SC}^\ast(M)\rightarrow\mathit{SC}^{\ast+1}(M)$ is defined by counting rigid $J_t$-holomorphic cylinders $u:Z\rightarrow M$ with asymptotics at some Hamiltonian orbits $y^+,y^-\in\mathcal{O}_M$. This can be regarded as the special case $k=0$ of the operations $\delta_k$ on the complex $\mathit{SC}^\ast(M)$ defined below.

\begin{definition}[$\cite{sg1}$]\label{definition:data-cylinder}
A Floer datum for a $k$-point angle-decorated cylinder $(Z,p_1,\cdots,p_k)$ consists of the following:
\begin{itemize}
	\item choices of positive and negative cylindrical ends for $Z$, as in (\ref{eq:cyl-ends});
	\item a 1-form $\alpha_Z=dt$ on $Z$;
	\item a surface-dependent Hamiltonian function $H_Z:Z\rightarrow\mathcal{H}(M)$ which satisfies
	\begin{equation}
	(\varepsilon^\pm)^\ast H_Z=\lambda_\pm H_t
	\end{equation}
	for some $\lambda_->\lambda_+>0$, where $H_t\in\mathcal{H}(M)$ is some fixed choice of an admissible Hamiltonian;
	\item a surface-dependent almost complex structure $J_Z:Z\rightarrow\mathcal{J}(M)$ such that
	\begin{equation}
	(\varepsilon^\pm)^\ast J_Z=J_t
	\end{equation}
	for some fixed choice of $J_t\in\mathcal{J}(M)$.
\end{itemize}
\end{definition}

\begin{remark}
In the definition of Floer datum, we require the domain-dependent Hamiltonian $H_Z$ to be conformally equivalent, instead of equivalent on the two cylindrical ends. This is due to the fact that in order for the maximum principle to hold in the definition of the equivariant symplectic cohomology, we need an extra condition $\partial_sH_Z\leq0$. This fact was not taken into account in $\cite{ma1}$, Appendix B, as the energy computations there leaves out the derivatives of the Hamiltonian with respect to the domain.
\end{remark}

\textit{Universal and consistent} choices of Floer data over the compactified moduli spaces $\overline{\mathcal{M}}_k$ for all $k\geq1$ can be constructed in an inductive way. In our specific situation, this means that
\begin{itemize}
	\item At the boundary strata (\ref{eq:break1}), the choices of Floer data should coincide with the product of the Floer data chosen on lower dimensional moduli spaces. Moreover, the choices vary smoothly with respect to the gluing charts.
	\item At a boundary stratum of the form (\ref{eq:coin1}), the Floer datum chosen for a representative $(Z,p_1,\cdots,p_k)$ of an element of $\overline{\mathcal{M}}^{i,i+1}_k$ coincides with the one pulled back from the corresponding element of $\overline{\mathcal{M}}_{k-1}$ via the forgetful map $\bar{\pi}_i$.
\end{itemize}

\bigskip

Fix a universal and consistent choice of Floer data, whose existence is ensured by an inductive argument. For any pair of orbits $y^+,y^-\in\mathcal{O}_M$ and any integer $k\geq1$, we introduce the moduli space $\mathcal{M}_k(y^+;y^-)$ of pairs
\begin{equation}
\left((Z,p_1,\cdots,p_k),u\right),
\end{equation}
where $(Z,p_1,\cdots,p_k)\in\mathcal{M}_k$ and $u:Z\rightarrow M$ is a map which satisfies Floer's equation
\begin{equation}\label{eq:Floer}
\left(du-X_{H_Z}\otimes dt\right)^{0,1}=0,
\end{equation}
where the $(0,1)$-part is taken with respect to the domain-dependent almost complex structure $J_Z$ chosen above as part of our Floer data. Additionally, $u$ is required to satisfy the asymptotic conditions
\begin{equation}
\lim_{s\rightarrow\pm\infty}(\varepsilon^\pm)^\ast u(s,\cdot)=y^\pm.
\end{equation}
The boundary of the Gromov compactification $\overline{\mathcal{M}}_k(y^+;y^-)$ is covered by the images of the natural inclusions
\begin{equation}
\overline{\mathcal{M}}_j(y;y^-)\times\overline{\mathcal{M}}_{k-j}(y^+;y)\hookrightarrow\partial\overline{\mathcal{M}}_k(y^+;y^-),
\end{equation}
\begin{equation}
\overline{\mathcal{M}}_k^{i,i+1}(y^+;y^-)\hookrightarrow\partial\overline{\mathcal{M}}_k(y^+;y^-),
\end{equation}
which come from the boundary strata of $\overline{\mathcal{M}}_k$, along with the strata coming from the usual semi-stable strip breaking
\begin{equation}
\overline{\mathcal{M}}_k(y;y^-)\times\overline{\mathcal{M}}(y^+;y)\hookrightarrow\partial\overline{\mathcal{M}}_k(y^+;y^-),
\end{equation}
\begin{equation}
\overline{\mathcal{M}}(y;y^-)\times\overline{\mathcal{M}}_k(y^+;y)\hookrightarrow\partial\overline{\mathcal{M}}_k(y^+;y^-).
\end{equation}
For generic choices of Floer data, the moduli spaces $\overline{\mathcal{M}}_k(y^+;y^-)$ are compact manifolds-with-corners of dimension
\begin{equation}
\deg(y^+)-\deg(y^-)+2k-1.
\end{equation}
Let $((Z,p_1,\cdots,p_k),u)$ be a rigid element of $\overline{\mathcal{M}}_k(y^+;y^-)$, so we have $\deg(y^+)=\deg(y^-)-2k+1$. In this case, there are natural isomorphisms
\begin{equation}
\mu_u:o_{y^-}\rightarrow o_{y^+}
\end{equation}
between the orientation lines defined via index theory. One defines the operation
\begin{equation}\label{eq:del_k}
\delta_k:\mathit{SC}^\ast(M)\rightarrow\mathit{SC}^{\ast-2k+1}(M)
\end{equation}
by a signed count of rigid elements of the moduli spaces $\overline{\mathcal{M}}_k(y^+;y^-)$ for varying asymptotics $y^+$ and $y^-$. Our choices of Floer data ensures that the elements of $\overline{\mathcal{M}}_k^{i,i+1}(y^+,y^-)$ are never rigid, from which the identity
\begin{equation}
\sum_{i=0}^k\delta_i\delta_{k-i}=0
\end{equation}
follows, see $\cite{sg1}$, Lemma 9 for details. This shows that $\left(\mathit{SC}^\ast(M),\{\delta_k\}_{k\geq0}\right)$ is an $S^1$-complex (usually not strict) in the sense of Definition \ref{definition:S1cpx}.

As in (\ref{eq:positive_eq}), the $S^1$-\textit{equivariant symplectic cohomology} $\mathit{SH}_{S^1}^\ast(M)$ is defined to be the cohomology of the complex
\begin{equation}
\left(\mathit{SC}^\ast(M)\otimes_\mathbb{K}\mathbb{K}((u))/u\mathbb{K}[[u]],\delta_\mathit{eq}\right),
\end{equation}
where $\delta_\mathit{eq}:=\sum_{k=0}^\infty\delta_ku^k$ and $u$ is a formal variable of degree 2. Note that this is the $\mathcal{P}_{hS^1}$ defined in (\ref{eq:po-S1}).
\bigskip

One can replace the autonomous Hamiltonian $H$ in the above construction (which is quadratic on the conical end) with a Hamiltonian which is linear at infinity, i.e.
\begin{equation}\label{eq:linear-Ham}
H_\lambda(r,y)=\lambda r+C
\end{equation}
on $[r_0,\infty)\times\partial\overline{M}$, where $\lambda\in\mathbb{R}$ and $C$ is some constant, and consider its time-dependent perturbation $H_{\lambda,t}=H_\lambda+F_t$ as above, so that all the time-1 orbits of $X_{H_{\lambda,t}}$ are non-degenerate. The space of such Hamiltonians $H_{\lambda,t}:S^1\times M\rightarrow\mathbb{R}$ with varying $\lambda$ will be denoted by $\mathcal{H}_\ell(M)$, and we have
\begin{equation}
\mathcal{H}_\ell(M)=\bigcup_{\lambda\in\mathbb{R}}\mathcal{H}_\lambda(M),
\end{equation}
where $\mathcal{H}_\lambda(M)$ is the space of Hamiltonians of the form $H_{\lambda,t}$ with fixed slope $\lambda$ at infinity. We also require that $\lambda\notin\mathcal{P}_M$, where $\mathcal{P}_M\subset\mathbb{R}$ is the collection of those $\lambda$ such that the set of 1-periodic orbits $\mathcal{O}_{M,\lambda}$ of $X_{H_{\lambda,t}}$ is not contained in any compact subset of $M$. The same construction as above gives rise to the $S^1$-equivariant Floer cochain complex
\begin{equation}\label{eq:equi-cpx}
\mathit{CF}_{S^1}^\ast(M,\lambda):=\left(\mathit{CF}^\ast(M,\lambda)\otimes_\mathbb{K}\mathbb{K}((u))/u\mathbb{K}[[u]],\delta_\mathit{eq}\right),
\end{equation} 
where $\mathit{CF}^\ast(M,\lambda)$ is the Floer cochain complex of $H_{\lambda,t}$. When it is clear from the context which manifold we are working with, we shall simply write these Floer complexes as $\mathit{CF}_{S^1}^\ast(\lambda)$ and $\mathit{CF}^\ast(\lambda)$. The cohomology of (\ref{eq:equi-cpx}) will be denoted by $\mathit{HF}_{S^1}^\ast(\lambda)$. Just as in the non-equivariant case, for $\lambda_1<\lambda_2$ one can build equivariant continuation maps
\begin{equation}\label{eq:continuation}
\kappa_\mathit{eq}^{\lambda_1,\lambda_2}:=\sum_{k=0}^\infty\kappa_k^{\lambda_1,\lambda_2}u^k:\mathit{CF}_{S^1}^\ast(\lambda_1)\rightarrow\mathit{CF}_{S^1}^\ast(\lambda_2)
\end{equation}
via counting 1-parameter families of $k$-point angle decorated cylinders equipped with monotone homotopies connecting elements of $\mathcal{H}_{\lambda_1}(M)$ and $\mathcal{H}_{\lambda_2}(M)$. The maps $\left\{\kappa_k^{\lambda_1,\lambda_2}\right\}$ constitute an $S^1$-complex homomorphism between $\mathit{CF}^\ast(\lambda_1)$ and $\mathit{CF}^\ast(\lambda_2)$. Passing to direct limit yields an alternative definition of the equivariant symplectic cohomology
\begin{equation}\label{eq:limit-def}
\mathit{SH}_{S^1}^\ast(M):={\varinjlim}_\lambda\mathit{HF}^\ast_{S^1}(\lambda).
\end{equation}
This approach of defining $\mathit{SH}_{S^1}^\ast(M)$ is carried out in detail in $\cite{bo2}$. Note that for the purpose of defining $\mathit{SH}_{S^1}^\ast(M)$, one can assume that $\lambda>0$. However, the flexibility of allowing negative $\lambda$ will be needed for our later purposes, see Section \ref{section:disjoint}.
\bigskip

Return to our original set up. One can arrange so that $H_t\in\mathcal{H}(M)$ is a $C^2$-small and time-independent Morse function in the interior $M^\mathit{in}$ of $\overline{M}$, this produces a copy of the Morse complex $\mathit{CM}^\ast(H_t)$, which sits in $\mathit{SC}^\ast(M)$ as a subcomplex. When passing to cohomology, the inclusion $\mathit{CM}^\ast(H_t)\hookrightarrow\mathit{SC}^\ast(M)$ induces the classical PSS map
\begin{equation}\label{eq:PSS}
\mathit{PSS}:H^\ast(M;\mathbb{K})\rightarrow\mathit{SH}^\ast(M).
\end{equation}
Since the $S^1$-action on the symplectic cochain complex $\mathit{SC}^\ast(M)$ comes from reparametrizing the Hamiltonian orbits, one may expect that it becomes trivial when restricted to the subcomplex $\mathit{CM}^\ast(H_t)$, i.e. all the operations $\delta_k,k\geq1$ should vanish on it. This is indeed the case with appropriate choices of Floer data over the moduli spaces $\overline{\mathcal{M}}_k$. As a consequence, the natural inclusion $\mathit{CM}^\ast(H_t)\hookrightarrow\mathit{SC}^\ast(M)$ can be regarded as an $S^1$-complex homomorphism, which induces the $S^1$-\textit{equivariant PSS map}
\begin{equation}\label{eq:eq-PSS}
\widetilde{\mathit{PSS}}:H^\ast(M;\mathbb{K}((u))/u\mathbb{K}[[u]])\rightarrow\mathit{SH}_{S^1}^\ast(M)
\end{equation}
after passing to cohomologies.

As a variant, one can also use a $H_{\lambda,t}\in\mathcal{H}_\lambda(M)$ which is $C^2$-small and Morse in $M^\mathit{in}$, which will then give rise to maps
\begin{equation}\label{eq:PSS1}
\mathit{PSS}_\lambda:H^\ast(M;\mathbb{K})\rightarrow\mathit{HF}^\ast(\lambda),
\end{equation}
\begin{equation}\label{eq:eq-PSS1}
\widetilde{\mathit{PSS}}_\lambda:H^\ast(M;\mathbb{K}((u))/u\mathbb{K}[[u]])\rightarrow\mathit{HF}^\ast_{S^1}(\lambda).
\end{equation}
The maps (\ref{eq:PSS}) and (\ref{eq:eq-PSS}) can then be thought of as (\ref{eq:PSS1}) and (\ref{eq:eq-PSS1}) composed with the continuation maps and equivariant continuation maps respectively.

\subsection{Cyclic dilations}\label{section:Gysin}

We consider in this subsection the geometric counterpart of an exact Calabi-Yau on a homologically smooth $A_\infty$-category. The precise relationship between these two notions will be established in the next subsection, using Ganatra's construction of the cyclic open-closed string map.
\bigskip

Take any $\lambda\in[0,\infty]$ with $\lambda\notin\mathcal{P}_M$ if $\lambda<\infty$. Recall that the $S^1$-equivariant Hamiltonian Floer cohomology $\mathit{HF}_{S^1}^\ast(\lambda)$ fits into the following Gysin type long exact sequence:
\begin{equation}\label{eq:Gysin}
\cdots\rightarrow\mathit{HF}^{\ast-1}(\lambda)\xrightarrow{\mathbf{I}}\mathit{HF}^{\ast-1}_{S^1}(\lambda)\xrightarrow{\mathbf{S}}\mathit{HF}_{S^1}^{\ast+1}(\lambda)\xrightarrow{\mathbf{B}}\mathit{HF}^\ast(\lambda)\rightarrow\cdots,
\end{equation}
see $\cite{bo1,bo2,bo3}$ for a detailed discussion. The BV operator $\Delta=[\delta_1]:\mathit{HF}^\ast(\lambda)\rightarrow\mathit{HF}^{\ast-1}(\lambda)$ coincides on the cohomology level with the composition $\mathbf{B}\circ\mathbf{I}$ (in the non-trivial order). When $M=T^\ast Q$ is the cotangent bundle of a compact $\mathit{Spin}$ manifold $Q$, and $\lambda=\infty$, the above Gysin exact sequence (\ref{eq:Gysin}) reduces to the well-known long exact sequence in string topology $\cite{cs}$:
\begin{equation}\label{eq:string}
\cdots\rightarrow H_{\ast+1}(\mathcal{L}Q;\mathbb{K})\xrightarrow{\mathbf{I}}H_{\ast+1}^{S^1}(\mathcal{L}Q;\mathbb{K})\xrightarrow{\mathbf{S}}H^{S^1}_{\ast-1}(\mathcal{L}Q;\mathbb{K})\xrightarrow{\mathbf{B}}H_{\ast}(\mathcal{L}Q;\mathbb{K})\rightarrow\cdots,
\end{equation}
where the maps $\mathbf{I}$ and $\mathbf{B}$ are known as the \textit{erasing map} and the \textit{marking map} respectively. Because of this, the same terminology will be used to refer to $\mathbf{I}$ and $\mathbf{B}$ in the general case. 

Recall that in string topology, the map $\mathbf{B}$ is defined as follows: by definition, an equivariant homology class $\gamma\in H_{\ast-1}^{S^1}(\mathcal{L}Q;\mathbb{K})$ can be equivalently realized as a homology class of $H_{\ast-1}(S^\infty\times_{S^1}\mathcal{L}Q;\mathbb{K})$, whose lift in $H_\ast(S^\infty\times\mathcal{L}Q;\mathbb{K})$ will be denoted by $\tilde{\gamma}$. Let $\Pi$ be the trivial projection $S^\infty\times\mathcal{L}Q\rightarrow\mathcal{L}Q$, then $\mathbf{B}(\gamma)=\Pi_\ast(\tilde{\gamma})$.

Since the natural inclusion $\mathit{CM}^\ast(H_t)\hookrightarrow\mathit{SC}^\ast(M)$ is an $S^1$-complex homomorphism, by Proposition \ref{proposition:intert} the long exact sequence (\ref{eq:Gysin}) is compatible with the ordinary and equivariant PSS maps (cf. (\ref{eq:PSS1}) and (\ref{eq:eq-PSS1})), which leads to the following commutative diagram:
\begin{equation}\label{eq:Gysin-diagram}
\begin{tikzcd}
\cdots \arrow[r] &H^{\ast-1}(M;\mathbb{K}) \arrow[d, "\mathit{PSS}_\lambda"] \arrow[r] &H^{\ast-1}_{S^1}(M;\mathbb{K}) \arrow[d, "\widetilde{\mathit{PSS}}_\lambda"] \arrow[r] &H_{S^1}^{\ast+1}(M;\mathbb{K}) \arrow[d, "\widetilde{\mathit{PSS}}_\lambda"] \arrow[r] &H^\ast(M;\mathbb{K}) \arrow[r] \arrow[d, "\mathit{PSS}_\lambda"] &\cdots \\
\cdots \arrow[r] &\mathit{HF}^{\ast-1}(\lambda) \arrow[r, "\mathbf{I}"] &\mathit{HF}^{\ast-1}_{S^1}(\lambda) \arrow[r, "\mathbf{S}"] &\mathit{HF}_{S^1}^{\ast+1}(\lambda) \arrow[r, "\mathbf{B}"] &\mathit{HF}^\ast(\lambda) \arrow[r] &\cdots
\end{tikzcd}
\end{equation}
where the upper row is the usual Gysin long exact sequence for the trivial $S^1$-action on $M$, so in particular there is an isomorphism $H^\ast_{S^1}(M;\mathbb{K})\cong H^\ast(M;\mathbb{K}((u))/u\mathbb{K}[[u]])$.
\bigskip

We proceed to give a chain level interpretation of the condition (\ref{eq:cyclic}) appeared in the introduction. To do this, we need to find the chain level expression of the coboundary map $\mathbf{B}:\mathit{HF}_{S^1}^1(\lambda)\rightarrow\mathit{HF}^0(\lambda)$ in the Gysin sequence (\ref{eq:Gysin}). Write a degree 1 cocycle $\tilde{\beta}$ in the $S^1$-equivariant Hamiltonian Floer cochain complex $\mathit{CF}^\ast(\lambda)\otimes_\mathbb{K}\mathbb{K}((u))/u\mathbb{K}[[u]]$ as $\sum_{k=0}^\infty\beta_k\otimes u^{-k}$, where $\beta_k\in\mathit{CF}^{2k+1}(\lambda)$ and only finitely many terms in the infinite sum are non-zero. If we use $\mathbf{B}_c$ to denote the underlying chain level map of the marking map $\mathbf{B}$, standard diagram chasing argument enables us to find that (see $\cite{bo2}$, Proposition 2.9)
\begin{equation}\label{eq:Connesmap}
\mathbf{B}_c\left(\sum_{k=0}^\infty\beta_k\otimes u^{-k}\right)=\sum_{k=0}^\infty\delta_{k+1}(\beta_k).
\end{equation}

\begin{definition}\label{definition:cyclic-dilation}
A cyclic dilation is a cohomology class $\tilde{b}\in\mathit{SH}^1_{S^1}(M)$ whose representing cocycle $\tilde{\beta}\in\mathit{SC}^1_{S^1}(M)$ consists of a sequence of odd degree Floer cochains $\{\beta_k\}_{k\geq0}$ with $\beta_k\in\mathit{SC}^{2k+1}(M)$, and $\beta_k\neq0$ for only finitely many $k$, so that the cocycle $\sum_{k=0}^\infty\delta_{k+1}(\beta_k)$, after passing to cohomology, defines an invertible element $h\in\mathit{SH}^0(M)^\times$.
\end{definition}
By definition, there exists a $\lambda\in\mathbb{R}_{>0}\setminus\mathcal{P}_M$ so that $\tilde{\beta}$ lies in the image of the equivariant continuation map (cf. (\ref{eq:continuation}))
\begin{equation}
\kappa^{\lambda,\infty}_\mathit{eq}:\mathit{CF}^1_{S^1}(\lambda)\rightarrow\mathit{CF}^1_{S^1}(\infty):=\mathit{SC}^1_{S^1}(M).
\end{equation}
By slight abuse of notation, the preimage $\left(\kappa^{\lambda,\infty}_\mathit{eq}\right)^{-1}(\tilde{\beta})\in\mathit{CF}_{S^1}^1(\lambda)$ will still be denoted by $\tilde{\beta}$, and we will often refer to its cohomology class $\tilde{b}\in\mathit{HF}_{S^1}^1(\lambda)$ as a cyclic dilation.
\bigskip

Let $\lambda\gg0$ be sufficiently large. We consider an important special case of the above definition, namely when $h=1$ is the identity of $\mathit{SH}^0(M)$. It follows from the exactness of (\ref{eq:Gysin}) that there is a cohomology class $\tilde{b}\in\mathit{HF}^1_{S^1}(\lambda)$ satisfying $\mathbf{B}(\tilde{b})=1$ if and only if $1\in\mathit{HF}^0(\lambda)$ vanishes under the erasing map $\mathbf{I}:\mathit{HF}^0(\lambda)\rightarrow\mathit{HF}_{S^1}^0(\lambda)$.

In view of the commutative diagram (\ref{eq:Gysin-diagram}), this is precisely the case when the image of the (locally finite) fundamental class $1\in H^0(M;\mathbb{K})$ vanishes under the composition
\begin{equation}\label{eq:compo}
H^\ast(M;\mathbb{K})\hookrightarrow H^\ast(M;\mathbb{K}((u))/u\mathbb{K}[[u]])\xrightarrow{\widetilde{\mathit{PSS}}_\lambda}\mathit{HF}_{S^1}^\ast(\lambda).
\end{equation}
On the other hand, it follows from $\cite{jz}$, Lemma 4.2.4 that if $M$ admits a dilation in $\mathit{HF}^1(\lambda)$, i.e. a class $b\in\mathit{HF}^1(\lambda)$ which becomes a dilation in $\mathit{SH}^1(M)$ under the continuation map
\begin{equation}
\kappa^{\lambda,\infty}:\mathit{CF}^1(\lambda)\rightarrow\mathit{CF}^1(\infty):=\mathit{SC}^1(M),
\end{equation}
then $1\in H^0(M;\mathbb{K})$ lies in the kernel of (\ref{eq:compo}). This observation enables us to relate cyclic dilations to the following notion introduced by Zhao $\cite{jz}$.

\begin{definition}[$\cite{jz}$, Definition 4.2.1]
We say that a Liouville manifold $M$ admits a higher dilation if the identity $1\in H^\ast(M;\mathbb{K})$ lies in the kernel of the localized equivariant PSS map
\begin{equation}
\widehat{\mathit{PSS}}:H^\ast(M;\mathbb{K}((u)))\rightarrow\widehat{\mathit{PSH}}^\ast(M),
\end{equation}
where $\widehat{\mathit{PSH}}^\ast(M)$ is the completed periodic symplectic cohomology, which is the cohomology of the Tate complex (cf. (\ref{eq:Tate}))
\begin{equation}
\left(\mathit{SC}^\ast(M)\otimes_\mathbb{K}\mathbb{K}((u)),\delta_\mathit{eq}\right).
\end{equation}
\end{definition}

By $\cite{jz}$, Lemma 4.2.5, a higher dilation can be equivalently interpreted using the equivariant PSS map (\ref{eq:eq-PSS}). Precisely, $M$ admits a higher dilation if and only if $1\otimes u^{-k}\in H^\ast(M;\mathbb{K}((u))/u\mathbb{K}[[u]])$ lies in the kernel of the map $\widetilde{\mathit{PSS}}$ (cf. (\ref{eq:eq-PSS})) for every $k\geq0$. In view of the above discussions, we get the following:

\begin{proposition}\label{proposition:comparison}
If $M$ admits a higher dilation, then it admits a cyclic dilation with $h=1$.
\end{proposition}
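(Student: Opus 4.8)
The plan is to chain together the reformulations already recorded above. Lemma 4.2.5 of \cite{jz} translates the higher dilation hypothesis into the statement that $1\otimes u^0\in H^\ast(M;\mathbb{K}((u))/u\mathbb{K}[[u]])$ lies in the kernel of the equivariant PSS map $\widetilde{\mathit{PSS}}$ of (\ref{eq:eq-PSS}); and the discussion preceding the proposition shows that, for any sufficiently large $\lambda\notin\mathcal{P}_M$, the existence of a class $\tilde b\in\mathit{HF}^1_{S^1}(\lambda)$ with $\mathbf{B}(\tilde b)=1$ is equivalent to the vanishing of the fundamental class $1\in H^0(M;\mathbb{K})$ under the composition (\ref{eq:compo}), which by the left-hand square of (\ref{eq:Gysin-diagram}) equals $\mathbf{I}\circ\mathit{PSS}_\lambda$. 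So the first task is to descend from the colimit model of $\mathit{SH}^\ast_{S^1}(M)$ to a finite slope $\lambda$, and the second is to promote the resulting cohomological class $\tilde b$ to an honest chain-level cyclic dilation.

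For the first task I would use the fact that $\widetilde{\mathit{PSS}}$ factors, for every $\lambda$ and compatibly with the equivariant continuation maps, as $\widetilde{\mathit{PSS}}_\lambda$ followed by the tautological map $\mathit{HF}^\ast_{S^1}(\lambda)\to\mathit{SH}^\ast_{S^1}(M)$ into the colimit (\ref{eq:limit-def}). Thus the classes $\widetilde{\mathit{PSS}}_\lambda(1)\in\mathit{HF}^0_{S^1}(\lambda)$ form a compatible system whose colimit is $\widetilde{\mathit{PSS}}(1\otimes u^0)=0$; hence there is some $\lambda_0$ with $\widetilde{\mathit{PSS}}_{\lambda_0}(1)=0$, and since for $\lambda\geq\lambda_0$ the class $\widetilde{\mathit{PSS}}_\lambda(1)$ is the image of $\widetilde{\mathit{PSS}}_{\lambda_0}(1)$ under the continuation map, it vanishes for all such $\lambda$. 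Fixing $\lambda\gg0$ with $\widetilde{\mathit{PSS}}_\lambda(1)=0$ and invoking $\mathit{PSS}_\lambda(1)=1\in\mathit{HF}^0(\lambda)$ together with (\ref{eq:Gysin-diagram}) gives $\mathbf{I}(1)=0$ in $\mathit{HF}^0_{S^1}(\lambda)$, and exactness of the Gysin sequence (\ref{eq:Gysin}) yields a class $\tilde b\in\mathit{HF}^1_{S^1}(\lambda)$ with $\mathbf{B}(\tilde b)=1$.

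For the second task I would pick a cocycle representative $\tilde\beta$ of $\tilde b$ and expand it, inside $\mathit{CF}^\ast(\lambda)\otimes_\mathbb{K}\mathbb{K}((u))/u\mathbb{K}[[u]]$, as a finite sum $\sum_{k\geq0}\beta_k\otimes u^{-k}$; since $|u|=2$, having total degree $1$ forces $\beta_k\in\mathit{CF}^{2k+1}(\lambda)$, so $\tilde\beta$ has precisely the shape demanded by Definition \ref{definition:cyclic-dilation}. By the chain-level formula (\ref{eq:Connesmap}), $\sum_{k\geq0}\delta_{k+1}(\beta_k)=\mathbf{B}_c(\tilde\beta)$ is a cocycle in $\mathit{CF}^0(\lambda)$ representing $\mathbf{B}(\tilde b)=1\in\mathit{HF}^0(\lambda)$, and pushing forward by the unital continuation map $\kappa^{\lambda,\infty}$ shows $[\kappa^{\lambda,\infty}(\sum_{k\geq0}\delta_{k+1}(\beta_k))]=1\in\mathit{SH}^0(M)^\times$. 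Hence $\tilde\beta$ is a cyclic dilation with $h=1$. The only step that is more than bookkeeping is the colimit-to-finite-slope reduction in the second paragraph --- the mild subtlety being that vanishing in a directed colimit only guarantees vanishing at some a priori unknown finite stage --- but this is harmless here because the continuation maps are compatible with PSS, so once the class dies it stays dead for all larger $\lambda$; everything else is a direct translation of statements already established in the excerpt.
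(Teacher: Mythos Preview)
Your proof is correct and follows essentially the same approach as the paper: the paper's own argument is just the sentence ``In view of the above discussions, we get the following'' together with the Lemma~4.2.5 reformulation of higher dilations, and you have simply unpacked those discussions explicitly, including the (genuine but easy) passage from the colimit $\widetilde{\mathit{PSS}}$ to the finite-slope $\widetilde{\mathit{PSS}}_\lambda$ and the chain-level verification of Definition~\ref{definition:cyclic-dilation}. There is nothing to correct.
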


\begin{remark}\label{remark:zig-zag}
In fact, it is an observation made in $\cite{ss}$, Remark 6.5 that the existence of a dilation in $\mathit{HF}^1(\lambda)$ is equivalent to the existence of a cocycle $\beta_0\in\mathit{CF}^1(\lambda)$ and a cochain $\beta_{-1}\in\mathit{CF}^{-1}(\lambda)$, so that $\delta_\mathit{eq}(\beta_{-1}+\beta_0\otimes u^{-1})=e$, where $e\in\mathit{CF}^0(\lambda)$ is the chain level representative of the identity. It is therefore natural to consider a sequence of cochains $\{\beta_j\}_{j\geq-1}$ with arbitrary length, where $\beta_j\in\mathit{CF}^{2j+1}(\lambda)$, so that
\begin{equation}\label{eq:zig-zag}
\delta_\mathit{eq}\left(\sum_{j=-1}^\infty\beta_j\otimes u^{-j-1}\right)=e,
\end{equation}
where only finitely many $\beta_j$ can be nonzero. Our discussions above shows that (\ref{eq:zig-zag}) holds for $\lambda\gg0$ if and only if $M$ admits a cyclic dilation with $h=1$. On the other hand, the notion of a higher dilation imposes the additional restriction that $e\otimes u^{-k}$ need to be coboundaries in the complexes $\mathit{CF}_{S^1}^{-2k}(\lambda)$. 
\end{remark}

This observation enables us to get some first examples of Liouville manifolds with cyclic dilations. For any closed manifold $Q$, one can consider the classifying map $f:Q\rightarrow B\pi_1(Q)$ for its universal cover. $Q$ is called \textit{rationally inessential} if the fundamental class $[Q]\in H_n(Q;\mathbb{Q})$ vanishes under the pushforward
\begin{equation}
f_\ast:H_n(Q;\mathbb{Q})\rightarrow H_n(B\pi_1(Q);\mathbb{Q}).
\end{equation}
In particular, every simply connected closed manifold is rationally inessential. It follows from $\cite{jz}$, Corollary 1.1.6 and Proposition \ref{proposition:comparison} stated above that for any rationally inessential manifold $Q$, $M=T^\ast Q$ admits a cyclic dilation over $\mathbb{Q}$. It is, however, not clear whether such a cotangent bundle admits a quasi-dilation. In fact, it is even unknown whether $T^\ast Q$ admits a dilation over $\mathbb{Q}$ for any simply connected \textit{formal} manifold $Q$, see $\cite{ps5}$, Lecture 18. More interesting examples of Liouville manifolds which admit cyclic dilations will be established in Section \ref{section:existence}.

\begin{remark}\label{remark:k-dilation}
Related notions are introduced in $\cite{zz2}$, where the author considers the spectral sequence associated to the $u$-adic filtration on the equivariant Floer cochain complex $\mathit{CF}_{S^1}^\ast(\lambda)$, and $M$ is said to admit a $k$-dilation if for sufficiently large $\lambda\notin\mathcal{P}_M$, the identity $e\in\mathit{CF}^0(\lambda)$ is killed in the $(k+1)$-th page of the spectral sequence. In particular, any flexible Weinstein manifold admits a 0-dilation, and a dilation in the sense of Seidel-Solomon $\cite{ss}$ is a 1-dilation. In general, having a $k$-dilation for $k\geq1$ is equivalent to requiring that $\sum_{j=0}^{k-1}\beta_j\otimes u^{-j}\in\mathit{CF}_{S^1}^1(\lambda)$ defines a cyclic dilation with $h=1$.
\end{remark}

\subsection{Cyclic open-closed map}\label{section:OC}

We briefly summarize the construction of the cyclic open-closed string map due to Ganatra $\cite{sg1}$. Details can be found in $\cite{sg1}$, Section 5. See also $\cite{gps1}$ for applications of the cyclic open-closed string map in the study of mirror symmetry of closed symplectic manifolds.
\bigskip

Roughly speaking, the cyclic open-closed map is a parametrized version of the usual open-closed map
\begin{equation}\label{eq:OC}
\mathit{OC}:\mathit{CH}_\ast(\mathcal{W}(M))\rightarrow\mathit{SC}^{\ast+n}(M)
\end{equation}
considered in $\cite{ma1}$ and $\cite{ps6}$, which keeps track of the $S^1$-complex structures on both sides.

However, as we have already noticed in Section \ref{section:non-unital}, in order to keep track of the $S^1$-action on the open-string side, one needs to consider the non-unital Hochschild complex $\mathit{CH}_\ast^\mathit{nu}(\mathcal{W}(M))$ instead of the usual Hochschild complex $\mathit{CH}_\ast(\mathcal{W}(M))$. Thus the first step towards the construction of an ``$S^1$-equivariant enhancement" of the usual open-closed map $\mathit{OC}$ would be to replace $\mathit{OC}$ by a map
\begin{equation}
\mathit{OC}^\mathit{nu}:\mathit{CH}_\ast^\mathit{nu}(\mathcal{W}(M))\rightarrow\mathit{SC}^{\ast+n}(M)
\end{equation}
defined on the non-unital Hochschild complex. Following Ganatra, we will call $\mathit{OC}^\mathit{nu}$ the \textit{non-unital open-closed string map}. In view of the definition of $\mathit{CH}_\ast^\mathit{nu}(\mathcal{W}(M))$ recalled in Section \ref{section:non-unital}, the map $\mathit{OC}^\mathit{nu}$ should consist of the check component $\widecheck{\mathit{OC}}:\mathit{CH}_\ast(\mathcal{W}(M))\rightarrow\mathit{SC}^{\ast+n}(M)$ and the hat component $\widehat{\mathit{OC}}:\mathit{CH}_\ast(\mathcal{W}(M))[1]\rightarrow\mathit{SC}^{\ast+n}(M)$, which act respectively on the check and hat factors of the non-unital Hochschild complex, and
\begin{equation}
\mathit{OC}^\mathit{nu}(\check{\alpha},\hat{\beta})=\widecheck{\mathit{OC}}(\check{\alpha})+\widehat{\mathit{OC}}(\hat{\beta}),
\end{equation}
where $\check{\alpha}\in\mathit{CH}_\ast(\mathcal{W}(M))$ and $\hat{\beta}\in\mathit{CH}_\ast(\mathcal{W}(M))[1]$. 

The map $\widecheck{\mathit{OC}}$ is defined on the ordinary Hochschild chain complex, and its definition in fact coincides with the ordinary open-closed string map (\ref{eq:OC}). Recall that $\mathit{OC}$ is defined by considering closed discs $\overline{S}$ equipped with boundary marked points $\zeta_1,\cdots,\zeta_d\in\partial\overline{S}$ which serve as inputs, and an interior marked point $\zeta_\mathit{out}$, which is an output. There is also an asymptotic marker $\ell_{\mathit{out}}$ at $\zeta_\mathit{out}$ pointing towards $\zeta_d$. One can assign Floer data to such discs $\left(\overline{S};\zeta_1,\cdots,\zeta_d;\zeta_\mathit{out},\ell_{\mathit{out}}\right)$ in the usual way, and when forming the moduli space of Floer trajectories, the boundary components of $\partial\overline{S}\setminus\{\zeta_1,\cdots,\zeta_d\}$ will be labelled with Lagrangian submanifolds $L_1,\cdots,L_d$ which are objects of the wrapped Fukaya category $\mathcal{W}(M)$, so that $L_i$ is the label of the arc along the boundary between $\zeta_i$ and $\zeta_{i+1\textrm{ mod }d}$, and the marked points $\zeta_1,\cdots,\zeta_d$ are associated with asymptotics $x_1,\cdots,x_d$, which are time-1 chords of the Hamiltonian vector field $X_{H_t}$ from $L_{i-1}$ to $L_{i\textrm{ mod }d}$, for some $H_t\in\mathcal{H}(M)$. For any $y_\mathit{out}\in\mathcal{O}_M$, the coefficient before $|o_{y_\mathit{out}}|_\mathbb{K}$ in $\widecheck{\mathit{OC}}\left(|o_{x_d}|_\mathbb{K},\cdots,|o_{x_1}|_\mathbb{K}\right)$ is determined by a signed count of rigid Floer trajectories $u:S\rightarrow M$ which satisfy the relevant Floer equation, with boundary conditions determined by the Lagrangian labellings $(L_1,\cdots,L_d)$ and asymptotic conditions specified by $(\vec{x}:=(x_d,\cdots,x_1);y_\mathit{out})$.

The definition of the map $\widehat{\mathit{OC}}$ differs from $\widecheck{\mathit{OC}}$ in the sense that one now considers the same closed discs $\left(\overline{S};\zeta_1,\cdots,\zeta_d;\zeta_\mathit{out},\ell_{\mathit{out}}\right)$ with $d$ boundary marked points, and an interior marked point as the domains, but the asymptotic marker $\ell_{\mathit{out}}$ is required to point between $\zeta_1$ and $\zeta_d$. The collection $(\vec{x};y_\mathit{out})$ of Hamiltonian chords and orbits as above still determines the asymptotic conditions for the corresponding Floer equation. Since the direction of $\ell_{\mathit{out}}$ remembers the position of $\zeta_f$, its freedom to vary increases the degree of the map by 1, which explains why $\widehat{\mathit{OC}}$ is a map defined on the shifted Hochschild chain complex $\mathit{CH}_\ast(\mathcal{W}(M))[1]$.

\begin{proposition}[$\cite{sg1}$, Lemma 11]
The non-unital open-closed map $\mathit{OC}^\mathit{nu}=\widecheck{\mathit{OC}}\oplus\widehat{\mathit{OC}}$ is a chain map.
\end{proposition}

Consider the natural inclusion $\iota:\mathit{CH}_\ast(\mathcal{W}(M))\hookrightarrow\mathit{CH}_\ast^\mathit{nu}(\mathcal{W}(M))$, whose composition with the non-unital open-closed map gives rise to a chain map $\mathit{OC}^\mathit{nu}\circ\iota:\mathit{CH}_\ast(\mathcal{W}(M))\rightarrow\mathit{SC}^{\ast+n}(M)$, which coincides on the chain level with the usual open-closed string map $\mathit{OC}$. Since we have learned from Section \ref{section:non-unital} that $\iota$ is a quasi-isomorphism, it follows that as homology level maps, $[\mathit{OC}^\mathit{nu}]=[\mathit{OC}]$.
\bigskip

The cyclic open-closed string map $\widetilde{\mathit{OC}}$ will be defined as an $S^1$-equivariant enhancement of $\mathit{OC}^\mathit{nu}$, by including higher cyclic chain homotopies. More precisely, it consists of a sequence of maps
\begin{equation}
\mathit{OC}^k=\widecheck{\mathit{OC}}^k\oplus\widehat{\mathit{OC}}^k:\mathit{CH}_\ast^\mathit{nu}(\mathcal{W}(M))\rightarrow\mathit{SC}^{\ast+n-2k}(M)
\end{equation}
for each $k\geq0$, such that $\widecheck{\mathit{OC}}^0=\widecheck{\mathit{OC}}$,  $\widehat{\mathit{OC}}^0=\widehat{\mathit{OC}}$, and for any $k\geq1$, we have
\begin{equation}\label{eq:chain3}
(-1)^n\sum_{i=0}^k\delta_i\circ\widecheck{\mathit{OC}}^{k-i}=\widehat{\mathit{OC}}^{k-1}\circ\mathbb{B}^\mathit{nu}+\widecheck{\mathit{OC}}^k\circ b,
\end{equation}
\begin{equation}\label{eq:chain4}
(-1)^n\sum_{i=0}^k\delta_i\circ\widehat{\mathit{OC}}^{k-i}=\widehat{\mathit{OC}}^{k}\circ b'+\widecheck{\mathit{OC}}^k\circ(1-\lambda),
\end{equation}
where $\mathbb{B}^\mathit{nu}:\mathit{CH}_\ast^\mathit{nu}(\mathcal{W}(M))\rightarrow\mathit{CH}_{\ast-1}^\mathit{nu}(\mathcal{W}(M))$ is the map (\ref{eq:Bnu}) applied to the wrapped Fukaya category, and $\lambda$ is the cyclic permutation operator defined in (\ref{eq:permu}). Roughly speaking, the maps $\widecheck{\mathit{OC}}^k$ and $\widehat{\mathit{OC}}^k$ are defined in the same way as $\widecheck{\mathit{OC}}$ and $\widehat{\mathit{OC}}$, but with additional interior marked points $p_1,\cdots,p_k$ included in the respective domains, which are located near $\zeta_\mathit{out}$ and are \textit{strictly radially ordered} in the sense that
\begin{equation}\label{eq:radial}
0<|p_1|<\cdots<|p_k|<\frac{1}{2}
\end{equation}
with respect to the standard complex coordinate near $\zeta_\mathit{out}$. Note that now the cylinders in the moduli spaces $\mathcal{M}_i$, where $0\leq i\leq k$, break ``below" the surface $S$ at $\zeta_\mathit{out}$ instead of ``above" it. As before, $\ell_{\mathit{out}}$ is required to point towards $p_1$ if $k>0$.

Define
\begin{equation}
\widetilde{\mathit{OC}}:=\sum_{k=0}^\infty\left(\widecheck{\mathit{OC}}^k\oplus\widehat{\mathit{OC}}^k\right)u^k,
\end{equation}
it follows from (\ref{eq:chain3}) and (\ref{eq:chain4}) that:

\begin{theorem}[$\cite{sg1}$, Theorem 1]\label{theorem:cyclic-OC}
The non-unital open-closed map $\mathit{OC}^\mathit{nu}$ admits a geometrically defined $S^1$-equivariant enhancement $\widetilde{\mathit{OC}}\in R\mathrm{Hom}_{S^1}\left(\mathit{CH}_\ast^\mathit{nu}(\mathcal{W}(M))[n],\mathit{SC}^\ast(M)\right)$.
\end{theorem}

Combining $\cite{sg2}$, Theorem 1.1 with $\cite{sg1}$, Corollary 1, we have the following:

\begin{theorem}[Ganatra]\label{theorem:iso}
Let $M$ be a Liouville manifold which is non-degenerate, then the homology level maps
\begin{equation}
[\mathit{OC}]:\mathit{HH}_\ast(\mathcal{W}(M))\rightarrow\mathit{SH}^{\ast+n}(M),[\widetilde{\mathit{OC}}]:\mathit{HC}_\ast(\mathcal{W}(M))\rightarrow\mathit{SH}_{S^1}^{\ast+n}(M)
\end{equation}
are isomorphisms.
\end{theorem}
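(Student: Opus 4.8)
The plan is to reduce the assertion to the non-equivariant isomorphism $[\mathit{OC}]:\mathit{HH}_\ast(\mathcal{W}(M))\to\mathit{SH}^{\ast+n}(M)$, which is Theorem 1.1 of \cite{sg2}, and then propagate it to the $S^1$-equivariant level using only the formal homological algebra of $S^1$-complexes recalled in Section \ref{section:S1cpx}. For the non-equivariant statement the essential geometric input is the non-degeneracy hypothesis itself: it provides a finite collection $\{L_i\}$ for which the open-closed map restricted to the subcategory $\mathcal{L}(M)\subset\mathcal{W}(M)$ already hits the unit $1\in\mathit{SH}^0(M)$. One then uses the compatibility of $\mathit{OC}$ and the closed-open map $\mathit{CO}:\mathit{SH}^\ast(M)\to\mathit{HH}^\ast(\mathcal{W}(M))$ with the relevant module structures: $\mathit{CO}$ is a unital ring homomorphism, $\mathit{HH}_\ast(\mathcal{W}(M))$ is a module over $\mathit{HH}^\ast(\mathcal{W}(M))$, $\mathit{SH}^\ast(M)$ is a module over itself, and $[\mathit{OC}]\bigl(\mathit{CO}(x)\cdot\alpha\bigr)=x\cdot[\mathit{OC}](\alpha)$. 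Writing $1=[\mathit{OC}](\alpha_0)$ gives $x=x\cdot 1=[\mathit{OC}]\bigl(\mathit{CO}(x)\cdot\alpha_0\bigr)$, so $[\mathit{OC}]$ is surjective, and a Calabi--Yau/Poincar\'{e}-duality pairing argument (also carried out in \cite{sg2}) upgrades surjectivity to bijectivity. I would simply invoke this; reproving it is not the point here.

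Next I would pass to the non-unital model. By Lemma 3 of \cite{sg1} the inclusion $\iota:\mathit{CH}_\ast(\mathcal{W}(M))\hookrightarrow\mathit{CH}_\ast^\mathit{nu}(\mathcal{W}(M))$ is a quasi-isomorphism, and $[\mathit{OC}^\mathit{nu}\circ\iota]=[\mathit{OC}]$ (see Section \ref{section:OC}); hence the $0$-th component $\mathit{OC}^0=\mathit{OC}^\mathit{nu}$ of the cyclic open-closed map induces an isomorphism $H_\ast\bigl(\mathit{CH}_\ast^\mathit{nu}(\mathcal{W}(M))\bigr)\to\mathit{SH}^{\ast+n}(M)$.

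The final step is pure $S^1$-complex formalism. By Theorem \ref{theorem:cyclic-OC}, $\widetilde{\mathit{OC}}$ is a genuine homomorphism of $S^1$-complexes from $\mathit{CH}_\ast^\mathit{nu}(\mathcal{W}(M))[n]$ to $\mathit{SC}^\ast(M)$ whose $0$-th component is $\mathit{OC}^\mathit{nu}$; since that component is a quasi-isomorphism of the underlying chain complexes, $\widetilde{\mathit{OC}}$ is a quasi-isomorphism of $S^1$-complexes in the sense of Section \ref{section:S1cpx}. Applying the functoriality recalled there, the induced map on homotopy orbit complexes is a quasi-isomorphism --- equivalently, apply Proposition \ref{proposition:intert} and the five lemma to the associated Gysin/Connes long exact sequences, using the $u$-adic filtration for convergence. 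Since the homotopy orbit complex of $\mathit{CH}_\ast^\mathit{nu}(\mathcal{W}(M))$ computes $\mathit{HC}_\ast(\mathcal{W}(M))$ and that of $\mathit{SC}^\ast(M)$ computes $\mathit{SH}_{S^1}^\ast(M)$ (with the degree shift $[n]$ accounting for the $+n$ in the statement), we conclude that $[\widetilde{\mathit{OC}}]:\mathit{HC}_\ast(\mathcal{W}(M))\to\mathit{SH}_{S^1}^{\ast+n}(M)$ is an isomorphism.

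The hard part is entirely the non-equivariant isomorphism: it is the only place the non-degeneracy hypothesis is used, and its proof requires the full package of algebraic structures on $\mathit{OC}$ and $\mathit{CO}$ established in \cite{sg2} (module structure, ring structure, the duality pairing). Once that is granted, the passage to the cyclic theory is formal, resting only on the fact --- already built into Ganatra's construction --- that $\widetilde{\mathit{OC}}$ is an honest $S^1$-equivariant enhancement of $\mathit{OC}^\mathit{nu}$, and not merely a chain map intertwining the two Gysin sequences up to homotopy.
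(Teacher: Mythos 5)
Your proposal is correct and follows essentially the same route as the source the paper cites: the paper does not reprove the theorem but simply invokes Theorem 1.1 of \cite{sg2} for the non-equivariant isomorphism and Corollary 1 of \cite{sg1} for its cyclic upgrade, and your sketch accurately unpacks both of these --- the generation-criterion argument via the module structure of $\mathit{HH}_\ast$ over $\mathit{HH}^\ast$ and the Calabi--Yau pairing for the first, and the formal fact (Proposition 1 of \cite{sg1}, restated in Section \ref{section:S1cpx}) that a quasi-isomorphism of $S^1$-complexes induces a quasi-isomorphism on homotopy orbits for the second. One small simplification: rather than appealing to the five lemma and $u$-adic convergence, you can cite directly the statement already in Section \ref{section:S1cpx} that ${}_\#F:\mathcal{Q}\otimes_{S^1}^{\mathbb{L}}\mathcal{P}\to\mathcal{Q}\otimes_{S^1}^{\mathbb{L}}\mathcal{P}'$ is a quasi-isomorphism whenever $F$ is, applied with $\mathcal{Q}=\mathbb{K}$.
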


We can now fulfil our promise at the beginning of Section \ref{section:Gysin}, namely to explain the relationship between exact Calabi-Yau structures on $\mathcal{W}(M)$ and cyclic dilations.

\begin{proposition}
For any Liouville manifold $M$, there is a commutative diagram
\begin{equation}\label{eq:comm1}
\xymatrix{
	\mathit{HC}_{\ast+1}(\mathcal{W}(M)) \ar[d]_{[\widetilde{\mathit{OC}}]} \ar[r]^-{\mathbb{B}}
	& \mathit{HH}_{\ast}(\mathcal{W}(M)) \ar[d]^{[\mathit{OC}]} \\
	\mathit{SH}_{S^1}^{\ast+n+1}(M) \ar[r]^-{\mathbf{B}}
	& \mathit{SH}^{\ast+n}(M)
}
\end{equation}
where $\mathbb{B}$ is the cohomology level map associated to $\mathbb{B}^\mathit{nu}$.
\end{proposition}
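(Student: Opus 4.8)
The plan is to obtain the commutativity of (\ref{eq:comm1}) as a formal consequence of two facts already in hand: Theorem \ref{theorem:cyclic-OC}, which says that $\widetilde{\mathit{OC}}$ is a homomorphism of $S^1$-complexes, and Proposition \ref{proposition:intert}, which guarantees that any such homomorphism intertwines the associated Gysin long exact sequences. No new moduli spaces or geometric input are required; once the four corners of the square are identified with (co)homologies of homotopy orbit complexes, the whole argument is homological algebra in the category of $S^1$-complexes.

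First I would fix notation. Regard $\mathcal{P}:=\mathit{CH}_\ast^\mathit{nu}(\mathcal{W}(M))$ and $\mathcal{Q}:=\mathit{SC}^\ast(M)$ as $S^1$-complexes via the equivariant differentials $b_\mathit{eq}$ and $\delta_\mathit{eq}$ of Sections \ref{section:non-unital} and \ref{section:equi-symp}. By construction, the homology of $\mathcal{P}$ is $\mathit{HH}_\ast(\mathcal{W}(M))$ (via the quasi-isomorphism $\iota$, Lemma 3 of \cite{sg1}), the homology of $\mathcal{P}_{hS^1}$ is $\mathit{HC}_\ast(\mathcal{W}(M))$, the homology of $\mathcal{Q}$ is $\mathit{SH}^\ast(M)$, and the homology of $\mathcal{Q}_{hS^1}$ is $\mathit{SH}_{S^1}^\ast(M)$. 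By Theorem \ref{theorem:cyclic-OC}, $F:=\widetilde{\mathit{OC}}=\sum_{k\geq0}(\widecheck{\mathit{OC}}^k\oplus\widehat{\mathit{OC}}^k)u^k$ is a homomorphism $F\colon\mathcal{P}[n]\to\mathcal{Q}$ of $S^1$-complexes whose degree-zero component $F^0=\mathit{OC}^\mathit{nu}$ represents $[\mathit{OC}]$ on homology (using the identification $[\mathit{OC}^\mathit{nu}]=[\mathit{OC}]$), and whose induced map $F_{hS^1}$ on homotopy orbit complexes represents $[\widetilde{\mathit{OC}}]$ of Theorem \ref{theorem:iso}.

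Next I would invoke Proposition \ref{proposition:intert}: applied to $F$ it produces a commutative ladder of Gysin exact triangles
\[
\begin{tikzcd}
\mathcal{P}_{hS^1} \arrow[d] \arrow[r] & \mathcal{P}_{hS^1}[2] \arrow[d] \arrow[r,"B"] & \mathcal{P} \arrow[d] \arrow[r,"{[1]}"] & {} \\
\mathcal{Q}_{hS^1} \arrow[r] & \mathcal{Q}_{hS^1}[2] \arrow[r,"\mathbf{B}"] & \mathcal{Q} \arrow[r,"{[1]}"] & {}
\end{tikzcd}
\]
which is precisely the commutative diagram displayed just after Proposition \ref{proposition:intert}. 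On homology the upper connecting map $B$ becomes Connes' operator $\mathbb{B}$ of (\ref{eq:Gysin-wrap}), whose chain-level model is $\mathbb{B}^\mathit{nu}$ from (\ref{eq:Bnu}); the lower connecting map $\mathbf{B}$ becomes the marking map of (\ref{eq:Gysin-SH}), with chain-level model $\mathbf{B}_c$ from (\ref{eq:Connesmap}); the left vertical arrow becomes $[\widetilde{\mathit{OC}}]$ and the right vertical arrow becomes $[\mathit{OC}]$. Reading off the square bounded by the two connecting maps gives exactly (\ref{eq:comm1}).

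The one step that needs genuine care---and the one I expect to be the main obstacle in writing out the details---is the grading bookkeeping: one must check that the shift $[n]$ in the source of $\widetilde{\mathit{OC}}$ propagates correctly through the homotopy orbit construction, so that $[\widetilde{\mathit{OC}}]$ lands in $\mathit{SH}_{S^1}^{\ast+n+1}(M)$ and $[\mathit{OC}]$ in $\mathit{SH}^{\ast+n}(M)$ with the conventions of (\ref{eq:comm1}), and that the explicit chain maps $\mathbb{B}^\mathit{nu}$ and $\mathbf{B}_c$ genuinely compute the connecting homomorphisms of the two Gysin triangles. Both of these are built into the formalism recalled in Section \ref{section:preliminary} together with the construction of $\widetilde{\mathit{OC}}$ via the relations (\ref{eq:chain3})--(\ref{eq:chain4}), so once they are pinned down the commutativity of (\ref{eq:comm1}) follows with no further work.
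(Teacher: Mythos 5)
Your proposal is correct and takes the same route as the paper: the paper's proof reads simply ``This is a direct consequence of Theorem \ref{theorem:cyclic-OC} and Proposition \ref{proposition:intert},'' which is exactly the argument you spell out. Your exposition merely fills in the bookkeeping (identifying the four corners, noting $[\mathit{OC}^\mathit{nu}]=[\mathit{OC}]$, and reading off the relevant square from the ladder of Gysin triangles) that the paper leaves implicit.
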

\begin{proof}
This is a direct consequence of Theorem \ref{theorem:cyclic-OC} and Proposition \ref{proposition:intert}.
\end{proof}

As a corollary, we get the following proof of Proposition \ref{proposition:geom-inter}.

\begin{corollary}\label{corollary:non-degenerate}
Let $M$ be a non-degenerate Liouville manifold, its wrapped Fukaya category $\mathcal{W}(M)$ is exact Calabi-Yau if and only if	there exists a cyclic dilation $\tilde{b}\in\mathit{SH}_{S^1}^1(M)$.
\end{corollary}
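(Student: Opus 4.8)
The plan is to reduce the corollary to Theorem \ref{theorem:iso} together with the commutative square (\ref{eq:comm1}), isolating the one genuinely non-formal ingredient as a \textbf{bridge lemma}: a Hochschild homology class $[\eta]\in\mathit{HH}_{-n}(\mathcal{W}(M))$ defines a weak smooth $n$-Calabi-Yau structure on $\mathcal{W}(M)$ (i.e.\ induces a bimodule isomorphism $\mathcal{W}(M)^\vee[n]\cong\mathcal{W}(M)$) if and only if its image $[\mathit{OC}]([\eta])\in\mathit{SH}^0(M)$ is invertible. First I would recall that non-degeneracy of $M$ forces $\mathcal{W}(M)$ to be homologically smooth, so that $\mathcal{W}(M)^\vee$ and the class $[\eta]$ make sense and Definition \ref{definition:key} applies. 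For the bridge lemma I would use that $[\mathit{OC}]$ is a morphism of modules over the ring isomorphism $[\mathit{CO}]\colon\mathit{SH}^\ast(M)\xrightarrow{\;\sim\;}\mathit{HH}^\ast(\mathcal{W}(M))$ (both being isomorphisms here by \cite{sg1,sg2}); consequently the cap-product operation $-\cap[\eta]\colon\mathit{HH}^\ast(\mathcal{W}(M))\rightarrow\mathit{HH}_{\ast-n}(\mathcal{W}(M))$, which (by the standard characterization) is an isomorphism exactly when $[\eta]$ is non-degenerate, is intertwined with multiplication by $[\mathit{OC}]([\eta])$ on the unital ring $\mathit{SH}^\ast(M)$. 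Since multiplication by an element of $\mathit{SH}^0(M)$ is an isomorphism precisely when that element is invertible, the lemma follows.

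Granting the bridge lemma, the two implications are the same diagram chase. For the ``only if'' direction, let $\eta\in\mathit{CH}_{-n}(\mathcal{W}(M))$ be a non-degenerate Hochschild cycle with $[\eta]=\mathbb{B}(\tilde\eta)$ for some $\tilde\eta\in\mathit{HC}_{-n+1}(\mathcal{W}(M))$, as in Definition \ref{definition:key}. Put $\tilde b:=[\widetilde{\mathit{OC}}](\tilde\eta)\in\mathit{SH}_{S^1}^1(M)$; then (\ref{eq:comm1}) gives
\[
\mathbf{B}(\tilde b)=\mathbf{B}\bigl([\widetilde{\mathit{OC}}](\tilde\eta)\bigr)=[\mathit{OC}]\bigl(\mathbb{B}(\tilde\eta)\bigr)=[\mathit{OC}]([\eta]),
\]
which is invertible by the bridge lemma, so $\tilde b$ is a cyclic dilation in the sense of (\ref{eq:cyclic}). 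For the converse, given a cyclic dilation, i.e.\ a class $\tilde b\in\mathit{SH}_{S^1}^1(M)$ with $\mathbf{B}(\tilde b)=h\in\mathit{SH}^0(M)^\times$, set $\tilde\eta:=[\widetilde{\mathit{OC}}]^{-1}(\tilde b)$ (using that $[\widetilde{\mathit{OC}}]$ is an isomorphism, Theorem \ref{theorem:iso}) and $[\eta]:=\mathbb{B}(\tilde\eta)\in\mathit{HH}_{-n}(\mathcal{W}(M))$. The same square yields $[\mathit{OC}]([\eta])=\mathbf{B}(\tilde b)=h$, invertible, so the bridge lemma makes $[\eta]$ a weak smooth $n$-Calabi-Yau structure; it lies in the image of Connes' map $\mathbb{B}$ by construction, hence is exact.

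Finally I would reconcile this homology-level statement with the chain-level Definition \ref{definition:cyclic-dilation}: there a cyclic dilation is a cocycle $\tilde\beta\in\mathit{CF}^1_{S^1}(\lambda)$ whose image under the chain-level marking map $\mathbf{B}_c$ (computed by (\ref{eq:Connesmap})) represents an invertible class in $\mathit{SH}^0(M)$ after $\kappa^{\lambda,\infty}$; passing to the direct limit (\ref{eq:limit-def}) and using compatibility of $\mathbf{B}$ with continuation maps (Proposition \ref{proposition:intert}), this is precisely the data of a class $\tilde b\in\mathit{SH}_{S^1}^1(M)$ with $\mathbf{B}(\tilde b)\in\mathit{SH}^0(M)^\times$. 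The main obstacle is the bridge lemma: making precise that the geometric open-closed map intertwines the algebraic operation ``$-\cap[\eta]$'', which detects whether a prospective Calabi-Yau class is non-degenerate, with module multiplication on symplectic cohomology, so that non-degeneracy of $\eta$ corresponds exactly to invertibility of $h=\mathbf{B}(\tilde b)$. Everything else is a formal consequence of Ganatra's isomorphism theorem and the commutative square (\ref{eq:comm1}) already established.
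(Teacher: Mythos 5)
Your argument follows essentially the same route as the paper's proof: both use the commutative square (\ref{eq:comm1}), Ganatra's isomorphism theorem (Theorem \ref{theorem:iso}), and the equivalence between non-degeneracy of a Hochschild class $[\eta]\in\mathit{HH}_{-n}(\mathcal{W}(M))$ and invertibility of $[\mathit{OC}]([\eta])\in\mathit{SH}^0(M)$. Where the paper simply cites this last equivalence as a corollary of Theorem 3 of $\cite{sg1}$, you sketch the underlying mechanism (the open-closed map as a module homomorphism intertwining the cap product with multiplication by $[\mathit{OC}]([\eta])$), which is indeed the content of that cited result.
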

\begin{proof}
Since $M$ is non-degenerate, it follows from Theorem \ref{theorem:iso} that both of the maps $[\mathit{OC}]$ and $[\widetilde{\mathit{OC}}]$ in the commutative diagram (\ref{eq:comm1}) are isomorphisms. The corollary would then follow from the fact that $[\eta]\in\mathit{HH}_{-n}(\mathcal{W}(M))$ is non-degenerate if and only if its image under the open-closed map $[\mathit{OC}]$ is an invertible element $h\in\mathit{SH}^0(M)^\times$. 

The proof of $\cite{sg1}$, Theorem 3 shows that the geometrically defined smooth Calabi-Yau structure $[\tilde{\eta}_\mathit{std}]\in\mathit{HC}_{-n}^-(\mathcal{W}(M))$ induces a non-degenerate class $[\eta_\mathit{std}]\in\mathit{HH}_{-n}(\mathcal{W}(M))$, which is mapped under $[\mathit{OC}]$ to the identity $1\in\mathit{SH}^0(M)$. According to the definition of Calabi-Yau structures, any two such structures differ by an automorphism of the diagonal bimodule. In our case, any non-degenerate class $[\eta]\in\mathit{HH}_{-n}(\mathcal{W}(M))$ differs from $[\eta_\mathit{std}]$ by an automorphism of the diagonal bimodule, which is an invertible element of $\mathit{HH}^0(\mathcal{W}(M))$. By $\cite{sg2}$, Theorem 1.1, its image under the inverse of the closed-open map $[\mathit{CO}]^{-1}:\mathit{HH}^\ast(\mathcal{W}(M))\rightarrow\mathit{SH}^\ast(M)$ is an element $h\in\mathit{SH}^0(M)^\times$, so is $[\mathit{OC}]([\eta])$.

On the other hand, if $[\eta]\in\mathit{HH}_{-n}(\mathcal{W}(M))$ is mapped to a class $h\in\mathit{SH}^0(M)^\times$. By applying the closed-open map we get a class $[\mathit{CO}](h)\in\mathit{HH}^0(\mathcal{W}(M))^\times$, which induces an automorphism of the diagonal bimodule. Composing this with the isomorphism $\mathcal{W}(M)^\vee[n]\cong\mathcal{W}(M)$ between the diagonal bimodule and its shifted dual induced by $[\eta_\mathit{std}]$, we get another isomorphism $\mathcal{W}(M)^\vee[n]\xrightarrow{\cong}\mathcal{W}(M)$, which induced by $[\eta]$. This shows that $[\eta]$ is non-degenerate.
\end{proof}

\section{Lagrangian submanifolds}\label{section:Lag}

Let $M$ be a Liouville manifold with $c_1(M)=0$, and fix a trivialization of its canonical bundle $K_M$. We consider in this chapter the open string implications of the existence of a cyclic dilation. To be precise, we shall consider Lagrangian submanifolds in $M$ which are objects of the compact Fukaya category $\mathcal{F}(M)$, namely they satisfy the following:

\begin{assumption}\label{assumption:Lag}
$L\subset M$ is closed, connected, exact, graded, and $\mathit{Spin}$.
\end{assumption}

We shall actually fix the choice of a grading on $L$, so that the Lagrangian Floer cohomology $\mathit{HF}^\ast(L_0,L_1)$ of two Lagrangian submanifolds $L_0,L_1\subset M$ is well-defined as a $\mathbb{Z}$-graded algebra over $\mathbb{K}$.

\subsection{The Cieliebak-Latschev map}\label{section:CL}

Let $L\subset M$ be an exact Lagrangian submanifold satisfying Assumption \ref{assumption:Lag}. As a consequence of the Viterbo functoriality $\cite{ma2,cv}$, we have a map
\begin{equation}\label{eq:Viterbo}
\mathit{SH}^\ast(M)\rightarrow\mathit{SH}^\ast(T^\ast L)\cong H_{n-\ast}(\mathcal{L}L;\nu),
\end{equation}
where the latter isomorphism is established in $\cite{abs}$ in the case when $L$ is $\mathit{Spin}$, and in $\cite{ma2}$ in the general case. Since we have required in Assumption \ref{assumption:Lag} that $L$ is $\mathit{Spin}$, the local system $\nu:\pi_1(\mathcal{L}L)\rightarrow\mathbb{K}$ can be dropped out from our notations.

There is an $S^1$-equivariant analogue of (\ref{eq:Viterbo}) constructed by Cohen-Ganatra $\cite{cg}$ (see also $\cite{jz}$, Section 4.4.1 for a detailed exposition), which is an infinite sum
\begin{equation}
\widetilde{\mathit{CL}}:=\sum_{k=0}^\infty\mathit{CL}_ku^k:\mathit{SC}^\ast(M)\otimes_\mathbb{K}\mathbb{K}((u))/u\mathbb{K}[[u]]\rightarrow C_{n-\ast}^\lozenge\left(\mathcal{L}L;\mathbb{K}((u))/u\mathbb{K}[[u]]\right)
\end{equation}
whose degree 0 piece arises from relevant considerations by Cieliebak-Latschev in $\cite{cl}$. In the above, $C_{n-\ast}^\lozenge(\mathcal{L}L;\mathbb{K})$ is a quotient of the dg algebra $C_{n-\ast}(\mathcal{L}L;\mathbb{K})$ constructed by Cohen-Ganatra in $\cite{cg}$, Appendix A.1. See also \cite{jz}, Appendix B. It has the property that the projection $C_{n-\ast}(\mathcal{L}L;\mathbb{K})\rightarrow C_{n-\ast}^\lozenge(\mathcal{L}L;\mathbb{K})$ is a quasi-isomorphism, and $C_{n-\ast}^\lozenge(\mathcal{L}L;\mathbb{K})$ carries the structure of a strict $S^1$-complex. $\widetilde{\mathit{CL}}$ defines an $S^1$-complex morphism, so it descends to the map (\ref{eq:CL}) on the cohomology level. We shall give a brief account of Cohen-Ganatra's construction in this section, and explain its implications for Lagrangian submanifolds in Liouville manifolds with cyclic dilations.
\bigskip

The construction of the maps $\mathit{CL}_k:\mathit{SC}^\ast(M)\rightarrow C_{n-\ast+2k}^\lozenge(\mathcal{L}L;\mathbb{K})$ is in some sense parallel to the construction of the maps $\{\delta_k\}$ in Section \ref{section:equi-symp}, but we now consider half-cylinders instead of cylinders as our domains. A $k$-\textit{point angle decorated half-cylinder} is a (positive) half-cylinder $Z^+\subset Z$ together with a collection of auxiliary interior marked points $p_1,\cdots,p_k\in Z^+$ satisfying (\ref{eq:radial1}). Denote by $\mathcal{M}_{k,+}$ the moduli space of such half-cylinders. Every element of $\mathcal{M}_{k,+}$ is equipped with a positive cylindrical end
\begin{equation}\label{eq:pce}
\varepsilon^+:[0,\infty)\times S^1\rightarrow Z^+, (s,t)\mapsto(s+(p_k)_s+\xi,t),
\end{equation}
for some fixed $\xi>0$. Note that unlike the case of $\mathcal{M}_k$, there is no free $\mathbb{R}$-action on the moduli space $\mathcal{M}_{k,+}$.

$\mathcal{M}_{k,+}$ can be compactified to a manifold with corners $\overline{\mathcal{M}}_{k,+}$ by including broken trajectories in the moduli space. The codimension 1 boundary strata of $\overline{\mathcal{M}}_{k,+}$ is covered by the images of the natural inclusions
\begin{equation}\label{eq:boundary1}
\overline{\mathcal{M}}_{j,+}\times\overline{\mathcal{M}}_{k-j}\hookrightarrow\partial\overline{\mathcal{M}}_{k,+}, 0\leq j\leq k,
\end{equation}
\begin{equation}\label{eq:boundary2}
\overline{\mathcal{M}}_{k,+}^{i,i+1}\hookrightarrow\partial\overline{\mathcal{M}}_{k,+}, 1\leq i<k,
\end{equation}
\begin{equation}\label{eq:boundary3}
\overline{\mathcal{M}}_{k-1,+}^{S^1}\hookrightarrow\partial\overline{\mathcal{M}}_{k,+},
\end{equation}
where $\mathcal{M}_{k,+}^{i,i+1}$ is the locus where the $i$-th and $(i+1)$-th height coordinates coincide, and $\mathcal{M}_{k-1,+}^{S^1}$ is the locus where $h_1=0$. There exist forgetful maps
\begin{equation}
\pi_i:\mathcal{M}_{k,+}^{i,i+1}\rightarrow\mathcal{M}_{k-1,+}, 1\leq i\leq k-1,
\end{equation}
\begin{equation}
\pi_{S^1}:\mathcal{M}_{k-1,+}^{S^1}\rightarrow\mathcal{M}_{k-1,+},
\end{equation}
where the first map has been considered in (\ref{eq:forget1}), which forgets the point $p_{i+1}$, while the second map forgets $p_1$. Note that the maps $\pi_i$ for $i\geq1$ extend as maps $\bar{\pi}_i:\overline{\mathcal{M}}_{k,+}^{i,i+1}\rightarrow\overline{\mathcal{M}}_{k-1,+}$ on the compactifications. The same holds for $\pi_{S^1}$, and we denote its extension by $\bar{\pi}_{S^1}:\overline{\mathcal{M}}_{k-1,+}^{S^1}\rightarrow\overline{\mathcal{M}}_{k-1,+}$.
\bigskip

The definition of a Floer datum for a $k$-point angle-decorated half-cylinder $(Z^+,p_1,\cdots,p_k)$ is completely analogous to that of Definition \ref{definition:data-cylinder}, and will therefore be omitted. Inductively, there exist universal and consistent choices of Floer data for each $k\geq0$ and each $k$-point angle decorated half-cylinder in the sense that:
\begin{itemize}
	\item In a sufficiently small neighborhood of $L\subset M$, the Hamiltonian $H_{Z^+}=0$ near $s=0$.
	\item Near the boundary stratum (\ref{eq:boundary1}), the Floer datum coincides with the product of the Floer data chosen on lower dimensional strata up to conformal equivalence. The Floer data vary smoothly with respect to the gluing charts for the product Floer data.
	\item Near the boundary strata (\ref{eq:boundary2}), the Floer data are conformally equivalent to the ones obtained by pulling back from $\overline{\mathcal{M}}_{k-1,+}$ via the forgetful maps $\bar{\pi}_i$ for $i=1,\cdots,k-1$.
\end{itemize}

Fixing a universal and consistent choice of Floer data, for each $y\in\mathcal{O}_M$, define $\mathcal{M}_{k,+}(y,L)$ to be the moduli space of pairs $\left((Z^+,p_1,\cdots,p_k),u\right)$, where $(Z^+,p_1,\cdots,p_k)\in\mathcal{M}_{k,+}$, and $u:Z^+\rightarrow M$ is a map satisfying the parametrized Floer equation
\begin{equation}\label{eq:FE}
\left(du-X_{H_{Z^+}}\otimes dt\right)^{0,1}=0,
\end{equation}
where the $(0,1)$-part is taken with respect to $J_{Z^+}$, together with asymptotic and boundary conditions
\begin{equation}
\lim_{s\rightarrow\infty}(\varepsilon^+)^\ast u(s,\cdot)=y,
\end{equation}
\begin{equation}
u(0,t)=\gamma\textrm{ for some }\gamma\in\mathcal{L}L.
\end{equation}
For generic choices of $H_{Z^+}$, $\mathcal{M}_{k,+}(y,L)$ is a smooth manifold of dimension
\begin{equation}
n-\deg(y)+2k,
\end{equation}
which admits a well-defined Gromov bordification $\overline{\mathcal{M}}_{k,+}(y,L)$, whose codimension 1 boundary is covered by the inclusions
\begin{equation}\label{eq:st1}
\overline{\mathcal{M}}_{k-j,+}(y',L)\times\overline{\mathcal{M}}_j(y,y')\hookrightarrow\partial\overline{\mathcal{M}}_{k,+}(y,L),
\end{equation}
\begin{equation}\label{eq:st2}
\overline{\mathcal{M}}_{k,+}^{i,i+1}(y,L)\hookrightarrow\partial\overline{\mathcal{M}}_{k,+}(y,L),
\end{equation}
\begin{equation}\label{eq:st3}
\overline{\mathcal{M}}_{k-1,+}^{S^1}(y,L)\hookrightarrow\partial\overline{\mathcal{M}}_{k,+}(y,L).
\end{equation}
Choose some Riemannian metric $g$ on $L$. There is an evaluation map $\mathit{ev}:\mathcal{M}_{k,+}(y,L)\rightarrow\mathcal{L}L$ is defined by restricting $u\in\mathcal{M}_{k,+}(y,L)$ to $\{0\}\times S^1$ and taking the arc length parametrization of the boundary of $u$ with respect to $g$ and a base point determined by the position of $p_1$. The map $\mathit{ev}$ admits an extension $\overline{\mathit{ev}}:\overline{\mathcal{M}}_{k,+}(y,L)\rightarrow\mathcal{L}L$ to the boundary strata, and the $k$-th order Cieliebak-Latschev map
\begin{equation}
\mathit{CL}_k:\mathit{SC}^\ast(M)\rightarrow C_{n-\ast+2k}^\lozenge(\mathcal{L}L;\mathbb{K}).
\end{equation}
is defined as
\begin{equation}
\mathit{CL}_k(|o_y|_\mathbb{K})=(-1)^{\deg(y)}\overline{\mathit{ev}}_\ast\left(\left[\overline{\mathcal{M}}_{k,+}(y,L)\right]\right),
\end{equation}
where $\left[\overline{\mathcal{M}}_{k,+}(y,L)\right]$ denotes the fundamental chain.

\begin{proposition}[$\cite{jz}$, Proposition 4.4.12]\label{proposition:CL}
$\widetilde{\mathit{CL}}=\sum_{k=0}^\infty\mathit{CL}_ku^k$ defines a morphism of $S^1$-complexes, and therefore it is an $S^1$-equivariant enhancement of $\mathit{CL}_0$.
\end{proposition}

The proof follows from an analysis of the boundary strata of $\overline{\mathcal{M}}_{k,+}(y,L)$. In particular, our choice of Floer data ensures that the elements in the strata $\overline{\mathcal{M}}_{k,+}^{i,i+1}(y,L)$ will never contribute, since they are not rigid. For details, see the proof of \cite{jz}, Proposition 4.4.12. We will encounter a similar situation in the proof of Proposition \ref{proposition:prod}. On the other hand, the contribution from the stratum $\overline{\mathcal{M}}_{k-1,+}^{S^1}(y,L)$ is non-trivial, and can actually be identified with $\delta_1^\mathit{top}\circ\mathit{CL}_{k-1}$, where $\delta_1^\mathit{top}$ denotes the chain level BV operator on $C_{n-\ast}(\mathcal{L}L;\mathbb{K})$ defined by rotating the loops, which descends to a BV operator on the quotient dg algebra $C_{n-\ast}^\lozenge(\mathcal{L}L;\mathbb{K})$.

On the cohomology level, $\widetilde{\mathit{CL}}$ induces a map
\begin{equation}
[\widetilde{\mathit{CL}}]:\mathit{SH}_{S^1}^\ast(M)\rightarrow H_{n-\ast}^{S^1}(\mathcal{L}L;\mathbb{K}).
\end{equation}
This enables us to interpret a result of Davison ($\cite{bd}$, Corollary 6.4.4) as providing obstructions to Lagrangian embeddings in Liouville manifolds with cyclic dilations, see Proposition \ref{proposition:Davison}.

\begin{proof}[Proof of Proposition \ref{proposition:Davison}]
Let $M$ be a Liouville manifold with a cyclic dilation, and assume that there is an exact Lagrangian submanifold $L\subset M$ which is hyperbolic. It follows from Propositions \ref{proposition:intert} and \ref{proposition:CL} that there is a commutative diagram
\begin{equation}\label{eq:diagram1}
\begin{tikzcd}
&\mathit{SH}_{S^1}^{\ast}(M) \arrow[d, "{[\widetilde{\mathit{CL}}]}"'] \arrow[r,"\mathbf{B}"] &\mathit{SH}^{\ast-1}(M) \arrow[d] \\
&H_{n-\ast}^{S^1}(\mathcal{L}L;\mathbb{K}) \arrow[r, "\mathbf{B}"] &H_{n-\ast+1}(\mathcal{L}L;\mathbb{K})
\end{tikzcd}
\end{equation}
where the vertical arrow on the right is the usual Viterbo map (\ref{eq:Viterbo}). By our assumption, there is a class $\tilde{b}\in\mathit{SH}^1_{S^1}(M)$ whose image under the Connes' map $\mathbf{B}$ is an invertible element $h\in\mathit{SH}^0(M)^\times$. By the commutativity of (\ref{eq:diagram1}), and our assumption that $L$ is a $K(\pi,1)$ space, such a class induces an exact Calabi-Yau structure on the fundamental group algebra $\mathbb{K}[\pi_1(L)]$, which contradicts the main result of $\cite{bd}$.

This completes the proof of Proposition \ref{proposition:Davison}, under the additional assumption that $L$ is $\mathit{Spin}$. In general, we can use ($S^1$-equivariant) symplectic cohomologies with local coefficients and argue as above. Since Davison's result holds for any closed, orientable $L$, this enables us to remove the $\mathit{Spin}$ assumption on $L$.
\end{proof}

It would also be interesting to take a look at the special case when $h=1$ in the definition of a cyclic dilation, which leads to the following generalization of $\cite{ss}$, Corollary 6.3.

\begin{corollary}\label{corollary:h=1}
Suppose that the marking map $\mathbf{B}:\mathit{SH}^1_{S^1}(M)\rightarrow\mathit{SH}^0(M)$ hits the identity $1\in\mathit{SH}^0(M)$, then $M$ cannot contain a closed exact Lagrangian submanifold $L$ which is a $K(\pi,1)$ space.
\end{corollary}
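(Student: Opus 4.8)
The plan is to follow the proof of Proposition~\ref{proposition:Davison} as far as the commutative square~(\ref{eq:diagram1}), and then, in place of the appeal to Davison's theorem $\cite{bd}$, to exploit the hypothesis $h=1$ together with an elementary computation on the free loop space. So suppose for contradiction that $L\subset M$ is a closed exact Lagrangian submanifold --- satisfying Assumption~\ref{assumption:Lag}, or more generally equipped with the rank $1$ local system $\nu$ as in Section~\ref{section:CL} --- which is a $K(\pi,1)$, with $\pi:=\pi_1(L)$. Applying the $S^1$-equivariant Cieliebak--Latschev map to the cyclic dilation $\tilde{b}\in\mathit{SH}_{S^1}^1(M)$ with $\mathbf{B}(\tilde{b})=1$ and chasing~(\ref{eq:diagram1}) (whose commutativity follows from Propositions~\ref{proposition:intert} and~\ref{proposition:CL}), I obtain a class $\tilde{c}:=[\widetilde{\mathit{CL}}](\tilde{b})\in H^{S^1}_{n-1}(\mathcal{L}L;\mathbb{K})$ with
\begin{equation*}
\mathbf{B}(\tilde{c})=\mathrm{Vit}(1)\in H_n(\mathcal{L}L;\mathbb{K}),
\end{equation*}
where $\mathrm{Vit}$ denotes the ordinary Viterbo map~(\ref{eq:Viterbo}). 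Since $\mathrm{Vit}$ is a unital ring homomorphism and the isomorphism $\mathit{SH}^\ast(T^\ast L)\cong H_{n-\ast}(\mathcal{L}L;\mathbb{K})$ of $\cite{abs}$ intertwines the product on $\mathit{SH}^\ast$ with the Chas--Sullivan loop product, the element $\mathrm{Vit}(1)$ is the unit of the loop product, that is, the image of the (possibly $\nu$-twisted) fundamental class $[L]\in H_n(L;\mathbb{K})$ under the inclusion of constant loops $L\hookrightarrow\mathcal{L}L$. In particular $\mathrm{Vit}(1)$ is nonzero and supported entirely on the component $\mathcal{L}_0L\subset\mathcal{L}L$ of contractible loops.

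The next step is to show that the marking map $\mathbf{B}$ vanishes on the part of $H^{S^1}_\ast(\mathcal{L}L;\mathbb{K})$ coming from $\mathcal{L}_0L$. The loop rotation $S^1$-action preserves the decomposition $\mathcal{L}L=\bigsqcup_{[g]}\mathcal{L}_{[g]}L$ indexed by the conjugacy classes of $\pi$, so the entire string topology Gysin sequence~(\ref{eq:string}) for $\mathcal{L}L$, and in particular $\mathbf{B}$, respects this splitting. On the summand labelled by the trivial conjugacy class $[e]$ I would invoke Goodwillie's identification of the $S^1$-equivariant homology of $\mathcal{L}(K(\pi,1))$ with the cyclic homology of $\mathbb{K}[\pi]$, which matches~(\ref{eq:string}) with Connes' sequence~(\ref{eq:Connes-LES}) for $\mathbb{K}[\pi]$ compatibly with the conjugacy-class decomposition; by Burghelea's computation (see $\cite{jll}$) the $[e]$-summands are $\mathit{HH}_\ast(\mathbb{K}[\pi])_{[e]}\cong H_\ast(\pi;\mathbb{K})$ and $\mathit{HC}_\ast(\mathbb{K}[\pi])_{[e]}\cong\bigoplus_{j\geq0}H_{\ast-2j}(\pi;\mathbb{K})$, with the map $I$ on these summands being the obvious inclusion, hence injective; exactness of Connes' sequence then forces $B=0$ on the $[e]$-summand. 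Heuristically this reflects the fact that $\mathcal{L}_0L$ retracts onto its (pointwise $S^1$-fixed) constant loops $L$ because $L$ is aspherical.

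Putting the two computations together produces the contradiction: on one hand the $[e]$-component of $\mathbf{B}(\tilde{c})$ equals $\mathbf{B}$ applied to the $[e]$-component of $\tilde{c}$, which is zero by the previous paragraph; on the other hand it equals $\mathrm{Vit}(1)$, which is nonzero and lies precisely in the $[e]$-component of $H_n(\mathcal{L}L;\mathbb{K})$. Hence no such $L$ can exist, which is the assertion of the corollary. The point I expect to require the most care is the middle paragraph: one must verify that the strict $S^1$-complex $C^\lozenge_{-\ast}(\mathcal{L}L;\mathbb{K})$ used by Cohen--Ganatra in the construction of $[\widetilde{\mathit{CL}}]$ computes the same $S^1$-equivariant homology as the Goodwillie/Burghelea model, compatibly with the Gysin and Connes long exact sequences and with the conjugacy-class splitting, so that the vanishing of $B$ on the $[e]$-part transports verbatim; the orientation subtlety, when $L$ is not $\mathit{Spin}$ or not orientable, is dealt with using the local system $\nu$ exactly as in Proposition~\ref{proposition:Davison}. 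Note that, compared with Proposition~\ref{proposition:Davison}, this argument upgrades the conclusion from hyperbolic to arbitrary $K(\pi,1)$ Lagrangians at the expense of strengthening the hypothesis from ``$h$ invertible'' to ``$h=1$''.
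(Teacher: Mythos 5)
Your proposal reaches the correct conclusion and the geometric set-up (apply $[\widetilde{\mathit{CL}}]$ to $\tilde b$ and chase the commutative square~(\ref{eq:diagram1})) is identical to the paper's. Where you genuinely diverge is at the algebraic step: the paper's proof simply invokes Davison's Theorem~6.1.3 of $\cite{bd}$ as a black box to conclude that the marking map $\mathbf{B}:\mathit{SH}_{S^1}^1(T^\ast L)\to\mathit{SH}^0(T^\ast L)$ cannot hit the identity for a $K(\pi,1)$ Lagrangian $L$, whereas you re-derive that statement from first principles. Your derivation is sound: the conjugacy-class decomposition of $\mathcal{L}L$ is $S^1$-equivariant, so the Gysin/Connes sequence splits; $\mathrm{Vit}(1)=\iota_\ast[L]$ is a nonzero class concentrated on the $[e]$-component because $\iota:L\hookrightarrow\mathcal{L}L$ is split by evaluation; on the $[e]$-summand the inclusion of constant loops is an $S^1$-equivariant map that is a non-equivariant homotopy equivalence (since $L$ is aspherical), hence induces an isomorphism on Borel equivariant homology, reducing to the trivial-action case where $I$ is injective and exactness forces $B=0$. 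That is essentially the content of Burghelea's computation on the trivial conjugacy class, which is presumably also the engine inside Davison's theorem, so in spirit the two proofs are the same argument at different levels of unpacking. What your route buys is self-containedness and a clear view of \emph{why} the identity cannot be hit; what the paper's citation buys is brevity and avoidance of the compatibility issue you rightly flag (matching the Cohen--Ganatra $S^1$-complex $C^\lozenge_{-\ast}(\mathcal{L}L)$ with the algebraic model for $\mathit{HC}_\ast(\mathbb{K}[\pi])$ compatibly with the conjugacy splitting), which Davison's theorem already absorbs. Neither version can escape the local-system subtlety for non-$\mathit{Spin}$ $L$, which you handle the same way as Proposition~\ref{proposition:Davison}.
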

\begin{proof}
Let $L\subset M$ be an exact Lagrangian submanifold which is topologically a $K(\pi,1)$ space. Since $T^\ast L$ is a Weinstein manifold, so Corollary \ref{corollary:non-degenerate} applies. It follows from the proof of $\cite{bd}$, Theorem 6.1.3 that the marking map $\mathbf{B}:\mathit{SH}_{S^1}^1(T^\ast L)\rightarrow\mathit{SH}^0(T^\ast L)$ cannot hit the identity. Suppose $M$ admits a cyclic dilation with $h=1$, one can then use the commutative diagram (\ref{eq:diagram1}) to get a contradiction.
\end{proof}

Let $\mathbb{K}=\mathbb{Q}$. It is proved in $\cite{zz2}$, Theorem A that the Milnor fibers $M_{a,\cdots,a}\subset\mathbb{C}^{n+1}$ associated to the Brieskorn singularities
\begin{equation}
z_1^a+\cdots+z_{n+1}^a=0\textit{ with }n\geq a
\end{equation}
admit cyclic dilations over $\mathbb{Q}$ with $h=1$. By Corollary \ref{corollary:h=1}, it implies the following:

\begin{corollary}
The Milnor fibers $M_{a,\cdots,a}$ do not contain exact Lagrangian tori.
\end{corollary}

The non-existence of exact Lagrangian tori has been proved for many Milnor fibers. For a recent account, see \cite{yl3}.

Corollary \ref{corollary:h=1} can also be applied to deduce non-existence results concerning cyclic dilations. For example, consider the Weinstein 4-manifold $T_{1,1,0}\subset\mathbb{C}^3$ defined by the equation
\begin{equation}
x+y+xyz=1,
\end{equation}
which is the complement of a nodal elliptic curve $\Sigma\subset\mathbb{CP}^2$. This manifold is studied in $\cite{cm}$, Section 4.1, and it follows from the computation loc. cit that
\begin{equation}
\mathit{SH}^0(T_{1,1,0})\cong\mathbb{K}[x,y,z]/(x+y+xyz-1)
\end{equation}
as $\mathbb{K}$-algebras. Since the polynomial $x+y+xyz-1$ is irreducible over $\mathbb{K}$, the only invertible element in $\mathit{SH}^0(T_{1,1,0})$ is the identity. If $T_{1,1,0}$ admits a cyclic dilation, then $\mathbf{B}:\mathit{SH}_{S^1}^1(T_{1,1,0})\rightarrow\mathit{SH}^0(T_{1,1,0})$ hits the identity. On the other hand, from the perspective of Legendrian surgery, $T_{1,1,0}$ can be constructed by attaching two 2-handles to the disc cotangent bundle $D^\ast T^2$, so there is an exact Lagranian torus $L\subset T_{1,1,0}$. Now Corollary \ref{corollary:h=1} shows that $T_{1,1,0}$ does not admit a cyclic dilation.

One can attach one more 2-handle to $D^\ast T^2$ to get the Liouville domain associated to the affine surface $T_{1,1,1}\subset\mathbb{C}^3$ defined by the equation
\begin{equation}
x+y+z+xyz=1.
\end{equation}
Since $\overline{T}_{1,1,0}$ embeds in $\overline{T}_{1,1,1}$ as a Liouville subdomain, we conclude that $T_{1,1,1}$ does not admit a cyclic dilation. In general, let $\overline{M}_0\subset\overline{M}_1$ be a Liouville subdomain, there is a commutative diagram
\begin{equation}\label{eq:diagram2}
\begin{tikzcd}
&\mathit{SH}_{S^1}^{\ast}(M_1) \arrow[d,"{[\tilde{v}^!]}"'] \arrow[r,"\mathbf{B}"] &\mathit{SH}^{\ast-1}(M_1) \arrow[d,"{[v^!]}"] \\
&\mathit{SH}_{S^1}^\ast(M_0) \arrow[r, "\mathbf{B}"] &\mathit{SH}^{\ast-1}(M_0)
\end{tikzcd}
\end{equation}
generalizing (\ref{eq:diagram1}), from which we see that if $M_1$ admits a cyclic dilation, then so is $M_0$. In (\ref{eq:diagram2}), the map
\begin{equation}\label{eq:equi-Vit}
\tilde{v}^!=\sum_{i=0}^\infty v^!_ku^k:\mathit{SC}^\ast(M_1)\otimes_\mathbb{K}\mathbb{K}((u))/u\mathbb{K}[[u]]\rightarrow\mathit{SC}^\ast(M_0)\otimes_\mathbb{K}\mathbb{K}((u))/u\mathbb{K}[[u]]
\end{equation}
is the $S^1$-equivariant enhancement of Viterbo functoriality $v^!=v^!_0$. We refer the reader to $\cite{jz}$, Appendix C for its detailed construction.

More generally, attaching 2-handles to $D^\ast T^2$ yields a sequence of Weinstein 4-manifolds $T_{p,q,r}$, with $p\geq q\geq r\geq0$. When $\frac{1}{p}+\frac{1}{q}+\frac{1}{r}\leq1$, these are the Milnor fibers of parabolic and hyperbolic unimodal singularities studied by Keating in $\cite{ak1,ak2}$. Our discussions above imply the following:

\begin{proposition}\label{proposition:pqr}
The Weinstein manifold $T_{p,q,r}$ admits a cyclic dilation if and only if $q=r=0$.
\end{proposition}
\begin{proof}
Note that $T_{0,0,0}$ is symplectomorphic to $T^\ast T^2$, so it admits a quasi-dilation. $T_{1,0,0}$ is symplectomorphic to $\mathbb{C}^2\setminus\{xy=1\}$, it follows from $\cite{ps5}$, Corollary 19.8 that there is a quasi-dilation in $\mathit{SH}^1(T_{1,0,0})$. Alternatively, one can compute its wrapped Fukaya category explicitly using the techniques developed in $\cite{bee}$, whose endomorphism algebra turns out to be formal, and is quasi-isomorphic to the associative algebra $\mathbb{K}[x,y][(xy-1)^{-1}]$, see Example \ref{example:110} for its superpotential description. The case when $p>1$ can be argued similarly, since there are Lefschetz fibrations $T_{p,0,0}\rightarrow\mathbb{C}^\ast$ whose smooth fibers are $T^\ast S^1$. In fact, $T_{p,0,0}$ is symplectomorphic to the $\widetilde{A}_p$ plumbing of $T^\ast S^2$'s. On the other hand, we have seen in the above that $T_{1,1,0}$ and $T_{1,1,1}$ do not admit cyclic dilations. Since any Weinstein manifold $T_{p,q,r}$ with $p\geq1$ and $q\geq1$ contains $\overline{T}_{1,1,0}$ as its Liouville subdomain, the non-existence of cyclic dilations follows from the commutative diagram (\ref{eq:diagram2}).
\end{proof}

Observe that among the examples $T_{p,q,r}$ considered above, the existence of a cyclic dilation is in fact equivalent to the existence of a quasi-dilation. This is not surprising in view of Proposition \ref{proposition:log-CY}. More interestingly, Proposition \ref{proposition:pqr} implies the following:

\begin{corollary}\label{corollary:non-simple}
Let $M$ be any 4-dimensional Milnor fiber associated to a non-simple singularity, then $M$ does not admit a cyclic dilation.
\end{corollary}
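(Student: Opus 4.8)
The plan is to deduce this from Proposition \ref{proposition:pqr} by exhibiting, inside any such non-simple Milnor fiber, a Liouville subdomain symplectomorphic to one of the parabolic Weinstein four-manifolds $T_{3,3,3}$, $T_{2,4,4}$ or $T_{2,3,6}$, and then applying the monotonicity of cyclic dilations under restriction to Liouville subdomains recorded in the commutative diagram (\ref{eq:diagram2}).

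First I would recall Arnold's classification of simple singularities: an isolated hypersurface singularity $f\colon(\mathbb{C}^3,0)\to(\mathbb{C},0)$ is of type $ADE$ if and only if it is not \emph{adjacent} to any of the three parabolic (simple elliptic) singularities $\widetilde{E}_6$, $\widetilde{E}_7$, $\widetilde{E}_8$, whose Milnor fibers are precisely $T_{3,3,3}$, $T_{2,4,4}$ and $T_{2,3,6}$ in the notation of the excerpt. Thus a non-simple $f$ is adjacent to one of these three singularities, say $g$. Next I would invoke the standard symplectic avatar of adjacency: if $f$ is adjacent to $g$, then a Weinstein domain $\overline{N}$ representing the Milnor fiber of $g$ embeds into the Milnor fiber $\overline{M}$ of $f$ as a Liouville (in fact Weinstein) subdomain --- concretely, choose a small generic deformation $f_\varepsilon$ of $f$ carrying a critical point of type $g$, so that $\overline{N}$ appears as the part of the nearby smooth fiber localised around that critical point, its distinguished collection of vanishing cycles forming a subcollection of one for $f$. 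Hence $\overline{M}$ contains a Liouville subdomain $\overline{N}$ with $N$ symplectomorphic to one of $T_{3,3,3}$, $T_{2,4,4}$, $T_{2,3,6}$.

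By Proposition \ref{proposition:pqr}, none of $T_{3,3,3}$, $T_{2,4,4}$, $T_{2,3,6}$ admits a cyclic dilation. Now suppose, for contradiction, that $M$ carried a cyclic dilation $\tilde{b}\in\mathit{SH}^1_{S^1}(M)$ with $\mathbf{B}(\tilde{b})=h\in\mathit{SH}^0(M)^\times$. Applying the commutative diagram (\ref{eq:diagram2}) to $\overline{N}\subset\overline{M}$, the image $\tilde{b}'$ of $\tilde{b}$ under the equivariant Viterbo transfer map satisfies $\mathbf{B}(\tilde{b}')=h'$, where $h'$ is the image of $h$ under the Viterbo restriction $\mathit{SH}^0(M)\to\mathit{SH}^0(N)$. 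Since the latter is a unital ring homomorphism it carries units to units, so $h'\in\mathit{SH}^0(N)^\times$ and $\tilde{b}'$ is a cyclic dilation on $N$, contradicting Proposition \ref{proposition:pqr}. Therefore $M$ admits no cyclic dilation.

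The step I expect to be the real content is the passage from the purely algebraic adjacency relation to an honest Weinstein subdomain inclusion of Milnor fibers, so that the transfer maps of (\ref{eq:diagram2}) apply verbatim; this combines Arnold's classification with the standard (but slightly folklore) fact that adjacency is realised by inclusion of Milnor fibers as Liouville subdomains. A minor additional point is that the parabolic singularities form one-parameter families, so one should note that the invariants used in Proposition \ref{proposition:pqr} --- namely $\mathit{SH}^0$ and the presence of an exact Lagrangian torus --- do not depend on the modulus; this is already built into the statement of Proposition \ref{proposition:pqr}.
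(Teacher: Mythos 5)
Your proposal is correct and follows the same route as the paper: adjacency of a non-simple singularity to one of the parabolic singularities $\widetilde{E}_6,\widetilde{E}_7,\widetilde{E}_8$ yields a Liouville subdomain $\overline{T}_{p,q,r}$ with $\tfrac{1}{p}+\tfrac{1}{q}+\tfrac{1}{r}=1$, and Proposition \ref{proposition:pqr} together with the transfer diagram (\ref{eq:diagram2}) forbids a cyclic dilation on $M$. You merely spell out the Arnold classification and the symplectic realisation of adjacency that the paper invokes implicitly.
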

\begin{proof}
This follows from the commutative diagram (\ref{eq:diagram2}) and the adjacency of singularities. The latter implies the existence of some triple $(p,q,r)$ with $\frac{1}{p}+\frac{1}{q}+\frac{1}{r}=1$, so that $M$ contains $\overline{T}_{p,q,r}$ as a Liouville subdomain. A detailed explanation of this fact can be found in \cite{ak1}, Section 2.2. However, it follows from Proposition \ref{proposition:pqr} that any such $T_{p,q,r}$ cannot admit a cyclic dilation.
\end{proof}

\subsection{Parametrized closed-open maps}\label{section:CO}

Let $M$ be a $2n$-dimensional Liouville manifold. As is observed by Seidel in $\cite{ps6}$, if one considers only closed exact Lagrangian submanifolds $L\subset M$ satisfying Assumption \ref{assumption:Lag}, then one can define a chain map
\begin{equation}
\mathit{CO}_\mathit{cpt}:\mathit{SC}^\ast(M)\rightarrow\mathit{CH}^\ast(\mathcal{F}(M)).
\end{equation}
This is usually referred to as the \textit{closed-open string map} in literature. On the cohomology level, it is obtained by composing the closed-open map $\mathit{CO}:\mathit{SC}^\ast(M)\rightarrow\mathit{CH}^\ast(\mathcal{W}(M))$ considered in $\cite{sg2}$ with the restriction morphism induced by the cohomologically full and faithful embedding $\mathcal{F}(M)\hookrightarrow\mathcal{W}(M)$. Ganatra shows in $\cite{sg1}$, Proposition 14 that this map also admits an $S^1$-equivariant enhancement, which can be written as a homomorphism between $S^1$-complexes
\begin{equation}\label{eq:cyclic-CO}
\widetilde{\mathit{CO}}_\mathit{cpt}^\vee\in R\mathrm{Hom}_{S^1}\left(\mathit{CH}_\ast^\mathit{nu}(\mathcal{F}(M))\otimes\mathit{SC}^\ast(M),\mathbb{K}[-n]\right),
\end{equation}
where the complex $\mathit{CH}_\ast^\mathit{nu}(\mathcal{F}(M))\otimes\mathit{SC}^\ast(M)$ is equipped with the diagonal $S^1$-action (\ref{eq:diagonal}), while $\mathbb{K}[-n]$ is regarded as a trivial $S^1$-complex. The construction of $\widetilde{\mathit{CO}}_\mathit{cpt}^\vee$ is completely parallel to the cyclic open-closed string map $\widetilde{\mathit{OC}}$ recalled in Section \ref{section:OC}. In particular, it consists of an infinite sequence of maps
\begin{equation}
\mathit{CO}_\mathit{cpt}^{k,\vee}:=\widecheck{\mathit{CO}}_\mathit{cpt}^{k,\vee}\oplus\widehat{\mathit{CO}}_\mathit{cpt}^{k,\vee}
\end{equation}
for each $k\geq0$, where $\widecheck{\mathit{CO}}_\mathit{cpt}^{k,\vee}$ and $\widehat{\mathit{CO}}_\mathit{cpt}^{k,\vee}$ are maps acting on the check factor $\mathit{CH}_\ast(\mathcal{F}(M))\otimes\mathit{SC}^\ast(M)$ and the hat factor $\mathit{CH}_\ast(\mathcal{F}(M))[1]\otimes\mathit{SC}^\ast(M)$ respectively.
\bigskip

Here, instead of using the full construction of Ganatra, only the check components $\left\{\widecheck{\mathit{CO}}_\mathit{cpt}^{k,\vee}\right\}_{k\geq0}$ of the cyclic refinement of $\mathit{CO}_\mathit{cpt}$ will be relevant for our purposes. For the sake of readability, it seems to be appropriate to recall here the definitions as well as the basic properties of the first few of these maps. Since the reader should already be familiar with the operations associated to parametrized moduli spaces from our discussions in Section \ref{section:PF}, our exposition here will be very sketchy, focusing mainly on the underlying TCFT (Topological Conformal Field Theory) structures and describing primarily the domains defining various operations. We follow the general framework of $\cite{ps4}$ and $\cite{ss}$, while our set up is slightly more complicated as it involves one additional piece of data: an ordered set of interior auxiliary marked points, whose flexibility constitutes the parameter spaces of our families of Riemann surfaces.
\bigskip

Let $S=\overline{S}\setminus\Sigma$, where $\overline{S}$ is a bordered Riemann surface, and $\Sigma\subset\overline{S}$ is a finite set of points. Write $\Sigma=\Sigma^\mathit{op}\cup\Sigma^\mathit{cl}$, where $\Sigma^\mathit{op}\subset\partial\overline{S}$ is the set of boundary marked points, and $\Sigma^\mathit{cl}\subset S$ is the set of interior marked points. There is also a strictly ordered set of auxiliary marked points $\Sigma^\mathit{aux}\subset S$ lying in small neighborhoods of the interior marked points $\Sigma^\mathit{cl}$. For simplicity, we assume that all of the points in $\Sigma^\mathit{aux}$ lie in a small disc centered a particular point $\zeta_\bullet\in\Sigma^\mathit{cl}$ of radius $\varepsilon>0$. The points in $\Sigma^\mathit{aux}$ are ordered according to their distances to $\zeta_\bullet$. With respect to a fixed choice of complex coordinate near $\zeta_\bullet$, they should be strictly radially ordered, i.e. satisfying
\begin{equation}\label{eq:radial2}
0<|p_k|<\cdots<|p_1|<\varepsilon
\end{equation}
if $\zeta_\bullet$ is an input, and
\begin{equation}\label{eq:radial3}
0<|p_1|<\cdots<|p_k|<\varepsilon
\end{equation}
if $\zeta_\bullet$ is an output. As a convention, when $\overline{S}$ is the closed unit disc, we will take $\varepsilon=\frac{1}{2}$. At each point of $\Sigma^\mathit{cl}$, there a preferred tangent direction $\ell_\zeta$, which is fixed if $\zeta\in\Sigma^\mathit{cl}\setminus\{\zeta_\bullet\}$, and points towards the point in $\Sigma^\mathit{aux}$ which is closest from $\zeta$ if $\zeta=\zeta_\bullet$. Furthermore, we can divide the sets $\Sigma^\mathit{op}$ and $\Sigma^\mathit{cl}$ into inputs and outputs, namely $\Sigma^\mathit{op}=\Sigma^{\mathit{op},\mathit{in}}\cup\Sigma^{\mathit{op},\mathit{out}}$ and $\Sigma^\mathit{cl}=\Sigma^{\mathit{cl},\mathit{in}}\cup\Sigma^{\mathit{cl},\mathit{out}}$.

Additionally, $\overline{S}$ comes with a sub-closed 1-form $\nu_S\in\Omega^1(S)$ which satisfies $d\nu_S=0$ near $\Sigma$, and $\nu_S|_{\partial S}=0$ near $\Sigma^\mathit{op}$. One can associate a real number $\lambda_\zeta$ to every point $\zeta\in\Sigma$, by integrating $\nu_S$ along a small loop around $\zeta$ if $\zeta\in\Sigma^\mathit{cl}$, or by integrating $\nu_S$ along a small path connecting one component of $\partial S$ to the other one, if $\zeta\in\Sigma^\mathit{op}$. For $\zeta\in\Sigma^\mathit{cl}$, we require that $\lambda_\zeta\notin\mathcal{P}_M$.

For each boundary component $C\subset\partial S$, we want to have a label $L_C$, which is a Lagrangian submanifold in $M$ satisfying Assumption \ref{assumption:Lag}. For any point $\zeta\in\Sigma^\mathit{op}$, this determines a pair of Lagrangian submanifolds $(L_{\zeta,0},L_{\zeta,1})$. The convention is that if $\zeta\in\Sigma^{\mathit{op},\mathit{in}}$, then $L_{\zeta,0}$ is the Lagrangian submanifold associated to the boundary component preceding $\zeta$ with respect to the orientation of $\partial\overline{S}$, and $L_{\zeta,1}$ is the successive one. When $\zeta\in\Sigma^{\mathit{op},\mathit{out}}$, then one uses the opposite convention. The operation associated to the marked bordered Riemann surfaces $\left(\overline{S};\Sigma;\Sigma^\mathit{aux}\right)$ is a map
\begin{equation}\label{eq:TQFT}
\bigotimes_{\zeta\in\Sigma^{\mathit{cl},\mathit{in}}}\mathit{CF}^\ast(\lambda_\zeta)\otimes\bigotimes_{\zeta\in\Sigma^{\mathit{op},\mathit{in}}}\mathit{CF}^\ast(L_{\zeta,0},L_{\zeta,1})\rightarrow\bigotimes_{\zeta\in\Sigma^{\mathit{cl},\mathit{out}}}\mathit{CF}^\ast(\lambda_\zeta)\otimes\bigotimes_{\zeta\in\Sigma^{\mathit{op},\mathit{out}}}\mathit{CF}^\ast(L_{\zeta,0},L_{\zeta,1})
\end{equation}
of degree
\begin{equation}
n\left(-\chi(\overline{S})+2|\Sigma^{\mathit{cl},\mathit{out}}|+|\Sigma^{\mathit{op},\mathit{out}}|\right)-2|\Sigma^\mathit{aux}|.
\end{equation}
We remark that due to the presence of $\Sigma^\mathit{aux}$, (\ref{eq:TQFT}) is in general \textit{not} a chain map.
\bigskip

We give a quick sketch of the definition of (\ref{eq:TQFT}). Denote by ${}_k\mathcal{R}_\Sigma$ the moduli space of domains $\left(\overline{S};\Sigma;\Sigma^\mathit{aux}\right)$, where $\Sigma^\mathit{aux}=\{p_1,\cdots,p_k\}$. It parametrizes the (potential) variations of the conformal structure on $\overline{S}$. the positions of the points in $\Sigma$ and $\Sigma^\mathit{aux}$, up to automorphism. For any representative of an element of ${}_k\mathcal{R}_\Sigma$, we can assign it with a Floer datum, which consists of
\begin{itemize}
	\item a cylindrical end $\varepsilon_\zeta^\pm$ for each point $\zeta\in\Sigma^\mathit{cl}$, which is positive if $\zeta\in\Sigma^{\mathit{cl},\mathit{in}}$, and is negative if $\zeta\in\Sigma^{\mathit{cl},\mathit{out}}$. The cylindrical ends $\varepsilon_\zeta^\pm$ are required to be compatible with the tangent directions $\ell_\zeta$ specified above at $\zeta\in\Sigma^\mathit{cl}$;
	\item a strip-like end $\tau_\zeta^\pm:\mathbb{R}_{\pm}\times[0,1]\rightarrow S$ for each point $\zeta\in\Sigma^\mathit{op}$, which is positive if $\zeta\in\Sigma^{\mathit{op},\mathit{in}}$, and is negative if $\zeta\in\Sigma^{\mathit{op},\mathit{out}}$;
	\item a sub-closed 1-form $\nu_S\in\Omega^1(S)$ such that
	\begin{equation}
	\left(\varepsilon_\zeta^\pm\right)^\ast\nu_S=\left(\tau_\zeta^\pm\right)^\ast\nu_S=dt;
	\end{equation}
	\item a domain-dependent Hamiltonian $H_S:S\rightarrow\mathcal{H}_\ell(M)$ which satisfies
	\begin{equation}
	\left(\varepsilon_\zeta^\pm\right)^\ast H_S=H_{\lambda_\zeta,t},\left(\tau_\zeta^\pm\right)^\ast H_S=H_{L_{\zeta,0},L_{\zeta,1},t},
	\end{equation}
	for some $H_{\lambda_\zeta,t}\in\mathcal{H}_{\lambda_\zeta}(M)$, and $H_{L_{\zeta,0},L_{\zeta,1},t}$ is a time-dependent Hamiltonian which takes the form $\lambda_\zeta r+C$ on the conical end of $M$, so that all the time-1 chords of $X_{H_{L_{\zeta,0},L_{\zeta,1}},t}$ between $L_{\zeta,0}$ and $L_{\zeta,1}$ are non-degenerate, but now the parameter $t\in[0,1]$. Here, the $\lambda_\zeta$'s are real numbers associated to cylindrical and strip-like ends, as mentioned above.
	\item a domain-dependent almost complex structure $J_S:S\rightarrow\mathcal{J}(M)$ such that
	\begin{equation}
	\left(\varepsilon_\zeta^\pm\right)^\ast J_S=\left(\tau_\zeta^\pm\right)^\ast J_S=J_t,
	\end{equation}
	for some $J_t\in\mathcal{J}(M)$.
\end{itemize}

The moduli space ${}_k\mathcal{R}_\Sigma$ admits a well-defined compactification ${}_k\overline{\mathcal{R}}_\Sigma$, which is usually a real blow-up at $\zeta_\bullet$ of the corresponding Deligne-Mumford compactification. The codimension 1 boundary components of ${}_k\overline{\mathcal{R}}_\Sigma$ are covered by the degenerations of domains (as we will see in the case when $\overline{S}$ is an annulus), the real blow-up loci, the loci where two of the marked points $p_i$ and $p_{i+1}$ in $\Sigma^\mathit{aux}$ share the same modulus, and the locus where $p_k$ (when $\zeta_\bullet$ in the case of an output) or $p_1$ (when $\zeta_\bullet$ is an input) goes to the boundary of the disc centered at $\zeta_\bullet$ (the last three strata correspond to (\ref{eq:boundary1}), (\ref{eq:boundary2}) and (\ref{eq:boundary3}) respectively when $S$ is a half-cylinder). A universal and consistent choice of Floer data is an inductive choice of Floer data for each $k\geq1$ and each element of ${}_k\overline{\mathcal{R}}_\Sigma$ so that it varies smoothly over ${}_k\overline{\mathcal{R}}_\Sigma$ and has specified behaviours along the boundary strata of ${}_k\overline{\mathcal{R}}_\Sigma$. In order to construct the moduli spaces defining the operations (\ref{eq:TQFT}), we need to fix such a choice.

Fix a set of Lagrangian labellings $\left(L_C\right)$ for the boundary components in $\partial S$, and asymptotics $\left(\vec{x},\vec{y}\right)$, where $\vec{x}=(x_\zeta)$ is a set of time-1 chords of $X_{H_{L_{\zeta,0},L_{\zeta,1}},t}$ between $L_{\zeta,0}$ and $L_{\zeta,1}$, one for each $\zeta\in\Sigma^\mathit{op}$; and $\vec{y}=(y_\zeta)$ is a set of time-1 Hamiltonian orbits $y_\zeta\in\mathcal{O}_{M,\lambda_\zeta}$, one for each $\zeta\in\Sigma^\mathit{cl}$. Define the moduli space
\begin{equation}\label{eq:moduli}
{}_k\mathcal{R}_\Sigma(\vec{y};\vec{x})
\end{equation}
to be the space of pairs $\left(\left(\overline{S};\Sigma;\Sigma^\mathit{aux}\right),u\right)$, where $\left(\overline{S};\Sigma;\Sigma^\mathit{aux}\right)\in{}_k\overline{\mathcal{R}}_\Sigma$, and $u:S\rightarrow M$ is a solution of
\begin{equation}
\left\{\begin{array}{l}
\left(du-X_{H_S}\otimes\nu_S\right)^{0,1}=0, \\
u(C)\subset L_C \textrm{ for each } C\subset\partial S, \\
\lim_{s\rightarrow\pm\infty}u\left(\varepsilon_\zeta^\pm(s,\cdot)\right)=y_\zeta, \\
\lim_{s\rightarrow\pm\infty}u\left(\tau_\zeta^\pm(s,\cdot)\right)=x_\zeta.
\end{array}\right.
\end{equation}
For generic choices of perturbation data, the Gromov bordification ${}_k\overline{\mathcal{R}}_\Sigma(\vec{y};\vec{x})$ has boundary components coming from semi-stable breakings, together with maps from the boundary strata of ${}_k\overline{\mathcal{R}}_\Sigma$. A signed count of rigid elements of ${}_k\overline{\mathcal{R}}_\Sigma(\vec{y};\vec{x})$ for varying asymptotics $\left(\vec{x},\vec{y}\right)$ defines the operation (\ref{eq:TQFT}).
\bigskip

The simplest example of interest for us is a closed disc $\overline{S}$ with $\Sigma^\mathit{op}=\Sigma^\mathit{aux}=\emptyset$, and an interior marked point $\zeta_\mathit{out}\in\overline{S}$ which is an output. The boundary $\partial S$ is labelled with a Lagrangian submanifold $L\subset M$ satisfying Assumption \ref{assumption:Lag}. By counting solutions $u:S\rightarrow M$ of the corresponding Floer equation (with boundary and asymptotic conditions), this defines a Floer cocycle
\begin{equation}\label{eq:cocycle}
\phi_L^{1,0}\in\mathit{CF}^n(\lambda)
\end{equation}
for any prescribed real number $\lambda\notin\mathcal{P}_M$, whose cohomology class will be denoted by $[\![L]\!]\in\mathit{HF}^n(\lambda)$. A noteworthy fact here is that the only difference between the domains defining the operation $\phi_L^{1,0}$ and the zeroth Cieliebak-Latschev map $\mathit{CL}_0$ appeared in Section \ref{section:CL} is that the interior puncture of $S$ is now treated as an output instead of an input. 
\bigskip

For the next example, let $\overline{S}$ be a closed disc with two interior marked points, i.e. $\Sigma^\mathit{cl}=\{\zeta_\mathit{in},\zeta_\mathit{out}\}$, where $\zeta_\mathit{in}$ is an input, and $\zeta_\mathit{out}$ is an output. There is no boundary marked point, and $\partial S$ is labelled by a single Lagrangian submanifold $L$. The marked points $\zeta_\mathit{in}$ and $\zeta_\mathit{out}$ are equipped with asymptotic markers $\ell_\mathit{in}$ and $\ell_\mathit{out}$, so that $\ell_\mathit{out}$ points away from $\zeta_\mathit{in}$. Furthermore, there are $k$ additional auxiliary marked points $p_1,\cdots,p_k$ lying in a small neighborhood of $\zeta_\mathit{in}$, and they are strictly radially ordered as in (\ref{eq:radial2}), with respect to the local complex coordinate near $\zeta_\mathit{in}$. When $k=0$, the asymptotic marker $\ell_{\mathit{in}}$ is required to point towards $\zeta_\mathit{out}$, but when $k\geq1$, we require that $\ell_{\mathit{in}}$ is pointing towards $p_k$. The associated cochain level operation is a map
\begin{equation}
\phi_L^{2,0;k}:\mathit{CF}^{\ast+2k}(\lambda_1)\rightarrow\mathit{CF}^{\ast+n-1}(\lambda_0),
\end{equation}
where $\lambda_0,\lambda_1\notin\mathcal{P}_M$. See Figure \ref{fig:family1} for the domain defining the operation $\phi_L^{2,0;2}$.

\begin{figure}
	\centering
	\begin{tikzpicture}[scale=1.5]
	\filldraw[draw=black,color={black!15},opacity=0.5] (0,0) circle (1);
	\draw (0,0) circle [radius=1];
	\draw (0,0.5) node {$\times$};
	\draw (0,-0.5) node {$\times$};
	\draw [teal] [->] (0,0.5) to (-0.25,0.5);
	\draw [teal] [->] (0,-0.5) to (0,-0.75);
	\node at (0,0.7) {$\zeta_\mathit{in}$};
	\node at (0,-0.3) {$\zeta_\mathit{out}$};
	\draw [orange] (-0.25,0.5) node[circle,fill,inner sep=1pt] {};
	\draw [orange] (0.3,0.5) node[circle,fill,inner sep=1pt] {};
	\draw [orange,dashed] (0,0.5) circle [radius=0.4];
	\node [orange] at (-0.25,0.35) {\small $p_2$};
	\node [orange] at (0.3,0.35) {\small $p_1$};
	\end{tikzpicture}
	\caption{Domain of the map $\phi_L^{2,0;2}$}
	\label{fig:family1}
\end{figure}
\bigskip

We now recall the definitions of the check components of the cyclic closed-open map (cf. $\cite{sg1}$, Section 5.6.2). Let $\overline{S}$ be a closed disc with $d$ boundary marked points $\Sigma^\mathit{op}=\{\zeta_1,\cdots,\zeta_d\}$ ordered anticlockwisely, among which $\zeta_d$ is the only output. There is a unique interior marked point $\zeta_\mathit{in}$ at the origin, which is an input. Moreover, there are $k$ auxiliary interior marked points, $\Sigma^\mathit{aux}=\{p_1,\cdots,p_k\}$, which are ordered so that (\ref{eq:radial2}) is satisfied. This gives rise to a moduli space of domains, which will be denoted by ${}_k\widecheck{\mathcal{R}}_{d,\mathit{cpt}}^1$. Label the components of $\partial S$ with the Lagrangian submanifolds $L_1,\cdots,L_d$, and fix an ordered set of Hamiltonian chords $\vec{x}=(x_1,\cdots,x_d)$, where $x_i$ is a time-1 chord of $X_{H_{L_i,L_{i+1\textrm{ mod }d},t}}$, together with a Hamiltonian orbit $y_\mathit{in}\in\mathcal{O}_{M,\lambda}$. As in (\ref{eq:moduli}), one can build a moduli space
\begin{equation}\label{eq:moduli-CO}
{}_k\widecheck{\mathcal{R}}_{d,\mathit{cpt}}^1(y_\mathit{in};\vec{x}).
\end{equation}
A signed count of rigid elements in the moduli space (\ref{eq:moduli-CO}) for varying asymptotics $\vec{x}$ and $y_\mathit{in}$ defines a map
\begin{equation}\label{eq:cyclic-OC}
\phi_{L_1,\cdots,L_d}^{1,d;k}:\mathit{CF}^\ast(\lambda)\otimes\mathit{CF}^\ast(L_{d-1},L_d)\otimes\cdots\otimes\mathit{CF}^\ast(L_1,L_2)\rightarrow\mathit{CF}^\ast(L_1,L_d)[1-d-2k].
\end{equation}
Note that when $k=0$, these operations reduce to the usual closed-open string maps $\phi_{L_1,\cdots,L_d}^{1,d}$ considered in $\cite{ps4}$ and $\cite{ss}$.
\bigskip

Let us write down explicitly the first few of the maps (\ref{eq:cyclic-OC}), and try to understand their basic properties. The first one of these maps is
\begin{equation}\label{eq:1,1}
\phi_L^{1,1;k}:\mathit{CF}^{\ast+2k}(\lambda)\rightarrow\mathit{CF}^{\ast}(L,L),
\end{equation}
which is defined using a closed disc $\overline{S}$ with one interior marked point $\zeta_\mathit{in}$, which is an input and carries an asymptotic marker $\ell_{\mathit{in}}$, and one boundary marked point $\zeta_1$, which is an output. Parametrization is given by $k$ interior auxiliary marked points $p_1,\cdots,p_k$, whose positions satisfy the constraint (\ref{eq:radial2}). The asymptotic marker $\ell_{\mathit{in}}$ is required to point towards $p_k$. One can also treat the boundary marked point $\zeta_1$ of $\overline{S}$ as an input, which results in a map
\begin{equation}
\left(\phi_L^{1,1;k}\right)^\vee:\mathit{CF}^{\ast+2k}(\lambda)\otimes\mathit{CF}^{n-\ast}(L,L)\rightarrow\mathbb{K}.
\end{equation}
Going one step further, adding a marked point $\zeta_2\in\partial\overline{S}$ which serves as an input to the previously considered marked bordered Riemann surfaces defining $\phi_L^{1,1;k}$ gives rise to a map
\begin{equation}
\phi_{L_1,L_2}^{1,2;k}:\mathit{CF}^{\ast}(\lambda)\otimes\mathit{CF}^\ast(L_1,L_2)\rightarrow\mathit{CF}^\ast(L_1,L_2)[-2k-1].
\end{equation}
Consider the family of Riemann surfaces parametrized by $\mathbb{R}$ as shown in Figure \ref{fig:1,2}, with the marked points $p_1,\cdots,p_k$ being fixed, so that (\ref{eq:radial2}) holds with $\varepsilon=\frac{1}{2}$. When $k=0$, this yields a homotopy $\phi_{L_1,L_2}^{1,2}$ between $\mu^2\left(\phi_{L_2}^{1,1}(y_\mathit{in}),x\right)$ and $(-1)^{|x||y_\mathit{in}|}\mu^2\left(x,\phi_{L_1}^{1,1}(y_\mathit{in})\right)$, see \cite{ss}, (2.12). When the points $p_1,\cdots,p_k$ are allowed to vary in the disc centered at $\zeta_{\mathit{in}}$ with radius $\frac{1}{2}$, one obtains a $(2k+1)$-dimensional family $\mathcal{Y}_k$ of marked Riemann surfaces fibering over $\mathbb{R}$, whose $2k$-dimensional fibers come from the freedom of moving the points $p_1,\cdots,p_k$. The fiberwise compactification $\overline{\mathcal{Y}}_k$ of this family involves the strata coming from real blow-ups, which appear when $|p_{k-j+1}|\rightarrow0$ for some $1\leq j\leq k$. Note that this forces $|p_l|\rightarrow0$ for every $k-j+1\leq l\leq k$, so there will be cylinder bubbles at the origin $\zeta_\mathit{in}$ of $S$ containing the $j$ marked points $p_{k-j+1},\cdots,p_k$, which define elements of the moduli spaces $\mathcal{M}_j$, see Figure \ref{fig:real-blp}. The two ends of the 1-parameter family in Figure \ref{fig:1,2} still define two ends in the fiberwise compactification $\overline{\mathcal{Y}}_k$, and they contribute to the first two terms on the right-hand side of (\ref{eq:homotopy1}). Moreover, in the fiber direction, there are two additional boundary strata corresponding respectively to the loci where $|p_i|=|p_{i+1}|$ for some $1\leq i\leq k-1$ and $|p_1|=\frac{1}{2}$. Using a similar argument as in the proof of Proposition \ref{proposition:prod}, one can show that the boundary strata corresponding to $|p_i|=|p_{i+1}|$ do not contribute. On the other hand, the stratum $|p_1|=\frac{1}{2}$ gives rise to an operation, which we denote by
\begin{equation}
\hat{\phi}^{1,2;k-1}_{L_1,L_2}:\mathit{CF}^{\ast}(\lambda)\otimes\mathit{CF}^\ast(L_1,L_2)\rightarrow\mathit{CF}^\ast(L_1,L_2)[-2k-2].
\end{equation}
As a consequence, we have
\begin{equation}\label{eq:homotopy1}
\begin{split}
&\mu^1\left(\phi_{L_1,L_2}^{1,2;k}(y_\mathit{in},x)\right)+\sum_{j=0}^k\phi_{L_1,L_2}^{1,2;k-j}(\delta_j(y_\mathit{in}),x)+(-1)^{|y_\mathit{in}|}\phi_{L_1,L_2}^{1,2;k}\left(y_\mathit{in},\mu^1(x)\right) \\
&=\mu^2\left(\phi_{L_2}^{1,1;k}(y_\mathit{in}),x\right)-(-1)^{|x||y_\mathit{in}|}\mu^2\left(x,\phi_{L_1}^{1,1;k}(y_\mathit{in})\right)+\hat{\phi}^{1,2;k-1}_{L_1,L_2}(y_\mathit{in},x).
\end{split}
\end{equation}
As we will see later, the appearance of the additional term $\hat{\phi}^{1,2;k-1}_{L_1,L_2}(y_\mathit{in},x)$ in the above identity is the reason for the additional complexity that rises in our situation compared to Seidel-Solomon's original construction \cite{ss}, see Section \ref{section:q}.

\begin{figure}
	\centering
	\begin{tikzpicture}
	\filldraw[draw=black,color={black!15},opacity=0.5] (-1,-2) circle (1);
	\filldraw[draw=black,color={black!15},opacity=0.5] (1,-2) circle (1);
	\filldraw[draw=black,color={black!15},opacity=0.5] (4,-2) circle (1);
	\filldraw[draw=black,color={black!15},opacity=0.5] (7,-2) circle (1);
	\filldraw[draw=black,color={black!15},opacity=0.5] (9,-2) circle (1);
	
	\draw (0,0) to (8,0);
	\node at (0,0.2) {$-\infty$};
	\node at (8,0.2) {$\infty$};
	
	\draw (-1,-2) circle [radius=1];
	\draw (1,-2) circle [radius=1];
	\draw (4,-2) circle [radius=1];
	\draw (7,-2) circle [radius=1];
	\draw (9,-2) circle [radius=1];
	\node at (4,-2.2) {\small $\zeta_\mathit{in}$};
	\node at (-1,-2.2) {\small $\zeta_\mathit{in}$};
	\node at (9,-2.2) {\small $\zeta_\mathit{in}$};
	\node at (4,-0.8) {\small $\zeta_2$};
	\node at (4,-3.2) {\small $\zeta_1$};
	\node at (1,-0.8) {\small $\zeta_2$};
	\node at (1,-3.2) {\small $\zeta_1$};
	\node at (7,-0.8) {\small $\zeta_2$};
	\node at (7,-3.2) {\small $\zeta_1$};
	\node [orange] at (-1,-1.4) {\small $p_1$};
	\node [orange] at (4,-1.4) {\small $p_1$};
	\node [orange] at (9,-1.4) {\small $p_1$};
	\node [orange] at (-1.3,-1.8) {\small $p_2$};
	\node [orange] at (3.7,-1.8) {\small $p_2$};
	\node [orange] at (8.7,-1.8) {\small $p_2$};
	\draw [orange,dashed] (-1,-2) circle [radius=0.5];
	\draw [orange,dashed] (4,-2) circle [radius=0.5];
	\draw [orange,dashed] (9,-2) circle [radius=0.5];
	
	\draw (4,-2) node {$\times$};
	\draw (4,-1) node[circle,fill,inner sep=1pt] {};
	\draw (4,-3) node[circle,fill,inner sep=1pt] {};
	\draw (1,-1) node[circle,fill,inner sep=1pt] {};
	\draw (1,-3) node[circle,fill,inner sep=1pt] {};
	\draw (-1,-2) node {$\times$};
	\draw (7,-1) node[circle,fill,inner sep=1pt] {};
	\draw (7,-3) node[circle,fill,inner sep=1pt] {};
	\draw (9,-2) node {$\times$};
	
	\draw [orange] (4,-1.6) node[circle,fill,inner sep=1pt] {};
	\draw [orange] (3.7,-2) node[circle,fill,inner sep=1pt] {};
	\draw [orange] (-1,-1.6) node[circle,fill,inner sep=1pt] {};
	\draw [orange] (-1.3,-2) node[circle,fill,inner sep=1pt] {};
	\draw [orange] (9,-1.6) node[circle,fill,inner sep=1pt] {};
	\draw [orange] (8.7,-2) node[circle,fill,inner sep=1pt] {};
	
	\draw [teal] [->] (-1,-2) to (-1.3,-2);
	\draw [teal] [->] (4,-2) to (3.7,-2);
	\draw [teal] [->] (9,-2) to (8.7,-2);
	\end{tikzpicture}
	\caption{A section of the family $\overline{\mathcal{Y}}_2$ obtained by fixing the positions of $p_1$ and $p_2$}
	\label{fig:1,2}
\end{figure}

\begin{figure}
	\centering
	\begin{tikzpicture}
	\filldraw[draw=black,color={black!15},opacity=0.5] (0,0) circle (1.5);
	\draw (0,0) circle [radius=1.5];
	\draw [orange] [dashed] (0,0) circle [radius=0.75];
	\draw (0,0) node {$\times$};
	\draw (-1.06,1.06) node[circle,fill,inner sep=1pt] {};
	\draw (1.06,-1.06) node[circle,fill,inner sep=1pt] {};
	\node at (0,0.25) {$\zeta_\mathit{in}$};
	\node at (-1.25,1.25) {$\zeta_2$};
	\node at (1.25,-1.25) {$\zeta_1$};
	\draw [orange] (0,-0.3) node[circle,fill,inner sep=1pt] {};
	\draw [orange] (-0.5,0) node[circle,fill,inner sep=1pt] {};
	\draw [orange] (0,0.65) node[circle,fill,inner sep=1pt] {};
	\draw [teal] [->] (0,0) to (0,-0.3);
	\node [orange] at (0,-0.5) {\small $p_3$};
	\node [orange] at (-0.5,-0.2) {\small $p_2$};
	\node [orange] at (0,0.85) {\small $p_1$};
	
	\filldraw[draw=black,color={black!15},opacity=0.5] (6,0) circle (1.5);
	\draw (6,0) circle [radius=1.5];
	\draw [orange] [dashed] (6,0) circle [radius=0.75];
	\draw (6,0) node {$\times$};
	\draw (4.94,1.06) node[circle,fill,inner sep=1pt] {};
	\draw (7.06,-1.06) node[circle,fill,inner sep=1pt] {};
	\node at (6,-0.25) {input};
	\node at (4.75,1.25) {$\zeta_2$};
	\node at (7.25,-1.25) {$\zeta_1$};
	\draw [orange] (6,0.65) node[circle,fill,inner sep=1pt] {};
	\node [orange] at (6,0.85) {\small $p_1$};
	\draw [teal] [->] (6,0) to (6,0.4);
	
	\filldraw[draw=black,color={black!15},opacity=0.5] (6,3.75) circle (1.5);
	\draw (6,3.75) circle [radius=1.5];
	\draw (6,3.3) node {$\times$};
	\draw (6,4.2) node {$\times$};
	\draw [blue] [dashed] (6,3.3) to (6,0);
	\draw [orange] (6,2.55) node[circle,fill,inner sep=1pt] {};
	\draw [orange] (5.5,2.85) node[circle,fill,inner sep=1pt] {};
	\node [orange] at (6.25,2.55) {\small $p_3$};
	\node [orange] at (5.5,3.05) {\small $p_2$};
	\draw [dashed] (6,3.75) ellipse (1.5 and 0.45);
	\node at (6,4.45) {$\zeta_\mathit{in}$};
	\node at (6,3.55) {output};
	
	\draw [->] (2,0) to (4,0);
	\node at (3,0.25) {$|p_2|\rightarrow0$};
	\end{tikzpicture}
	\caption{A representative of an element of the moduli space ${}_3\widecheck{\mathcal{R}}_{2,\mathit{cpt}}^1$ and the sphere bubble at the origin when $|p_2|\rightarrow0$}
	\label{fig:real-blp}
\end{figure}

For completeness, we end this subsection by recalling some known operations constructed in $\cite{ps4}$, which are defined by considering a 1-parameter family of annuli $\overline{S}$. The outer boundary of $\overline{S}$ is labelled by the Lagrangian submanifold $L_0$, and the inner boundary of $\overline{S}$ is labelled by the Lagrangian submanifold $L_1$. There is a unique boundary marked point $\zeta_1$ on $\partial\overline{S}$, which is an input. See Figure \ref{fig:annulus}. Depending on whether $\zeta_1$ lies on the boundary component labelled by $L_1$ or $L_0$, there are operations
\begin{equation}
\psi_{L_0,L_1}^{0,1}:\mathit{CF}^1(L_1,L_1)\rightarrow\mathbb{K};
\end{equation}
\begin{equation}
\left(\psi_{L_0,L_1}^{0,1}\right)^\vee:\mathit{CF}^1(L_0,L_0)\rightarrow\mathbb{K},
\end{equation}
and they satisfy
\begin{equation}\label{eq:psi1}
\psi_{L_0,L_1}^{0,1}\left(\mu^1(x)\right)=(-1)^{n(n+1)/2}\mathrm{Str}\left(\mu^2(x,\cdot)\right)-(-1)^n\left\langle\phi_{L_0}^{1,0},\left(\phi_{L_1}^{1,1}\right)^\vee(x)\right\rangle,
\end{equation}
\begin{equation}\label{eq:psi2}
\left(\psi_{L_0,L_1}^{0,1}\right)^\vee\left(\mu^1(x)\right)=(-1)^{n(n+1)/2}\mathrm{Str}\left(\mu^2(\cdot,x)\right)-\left\langle\phi_{L_1}^{1,0},\left(\phi_{L_0}^{1,1}\right)^\vee(x)\right\rangle,
\end{equation}
where by $\mathrm{Str}$ we mean the supertrace. In the second term on the right hand side of (\ref{eq:psi1}), we take $\phi_{L_0}^{1,0}\in\mathit{CF}^n(-\lambda)$ and pair it with the result of $\left(\phi_{L_1}^{1,1}\right)^\vee:\mathit{CF}^\ast(L_1,L_1)\rightarrow\mathit{CF}^{\ast+n}(\lambda)$. This is a version of the \textit{Cardy relation}, see $\cite{ps4}$, Section (4c) for details.

\begin{figure}
	\centering
	\begin{tikzpicture}
		\filldraw[draw=black,color={black!15},opacity=0.5] (0,0) circle (1.6);
		\filldraw[draw=black,color={black!0}] (0,0) circle (0.8);
		\draw (0,0) circle [radius=1.6];
		\draw (0,0) circle [radius=0.8];
		\draw (0,0.8) node[circle,fill,inner sep=1pt] {};
		\node at (0,0.55) {\small $\zeta_1$};
		\node at (0,-0.55) {\small $L_0$};
		\node at (0,-1.85) {\small $L_1$};
		
		\filldraw[draw=black,color={black!15},opacity=0.5] (5,0) circle (1.6);
		\filldraw[draw=black,color={black!0}] (5,0) circle (0.8);
		\draw (5,0) circle [radius=1.6];
		\draw (5,0) circle [radius=0.8];
		\draw (5,1.6) node[circle,fill,inner sep=1pt] {};
		\node at (5,1.85) {\small $\zeta_1$};
		\node at (5,-0.55) {\small $L_0$};
		\node at (5,-1.85) {\small $L_1$};
	\end{tikzpicture}
	\caption{Domains defining the operations $\psi_{L_0,L_1}^{0,1}$ and $\left(\psi_{L_0,L_1}^{0,1}\right)^\vee$}
	\label{fig:annulus}
\end{figure}

\subsection{Seidel-Solomon's construction}\label{section:q}

Let $M$ be a $2n$-dimensional Liouville manifold, and let $\lambda\notin\mathcal{P}_M$ be a real number. Fix an arbitrary cohomology class $\tilde{b}\in\mathit{HF}_{S^1}^1(\lambda)$, with its cochain level representative given by
\begin{equation}\label{eq:coboundary}
\tilde{\beta}:=\sum_{k=0}^\infty\beta_k\otimes u^{-k}.
\end{equation}
Note that the sum on the right-hand side of (\ref{eq:coboundary}) is actually finite since only finitely many cochains $\beta_k\in\mathit{CF}^{2k+1}(\lambda)$ are non-zero. When $\beta_k=0$ for all $k\geq1$, Seidel and Solomon constructed in $\cite{ss}$ a derivation on Floer cohomologies $\mathit{HF}^\ast(L_0,L_1)$ of any simply-connected Lagrangian submanifolds. In this subsection, we study a higher order analogue of Seidel-Solomon's construction. This is in general not well-defined, and only works in very special cases, say, when $L_0=L_1=L$ is a Lagrangian sphere of some odd dimension $n\geq3$, which turns out to be enough for the purpose of proving Theorem \ref{theorem:main}.
\bigskip

For any Lagrangian submanifold $L\subset M$ satisfying Assumption \ref{assumption:Lag}, consider the Floer cochain
\begin{equation}\label{eq:cochain}
\sum_{k=0}^\infty\phi_{L}^{1,1;k}(\beta_k)\in\mathit{CF}^1(L,L)
\end{equation}
obtained by applying the closed-open maps (\ref{eq:1,1}). This is in general not a cocycle, due to the fact that the boundary of ${}_k\overline{\widecheck{\mathcal{R}}}_{1,\mathit{cpt}}^1(y_\mathit{in};x)$ contains the stratum ${}_{k-1}\overline{\mathcal{R}}_{1,\mathit{cpt}}^{S^1}(y_\mathit{in},x)$, whose contribution cannot be ignored in general.

From now on, let $L$ be a Lagrangian sphere of some odd dimension $n\geq3$. In this case, one can arrange so that $\mathit{CF}^2(L,L)=0$, which means (\ref{eq:cochain}) defines a cocycle. However, for our purposes, we will need to show that when $\tilde{b}$ is a cyclic dilation, this is the case regardless of the Floer data used in the definition of $\mathit{HF}^\ast(L,L)$.

\begin{lemma}\label{lemma:unique}
The image of the cyclic dilation $\tilde{b}\in\mathit{SH}_{S^1}^1(M)$ under the $S^1$-equivariant Viterbo functoriality $[\tilde{v}^!]:\mathit{SH}_{S^1}^\ast(M)\rightarrow\mathit{SH}_{S^1}^\ast(T^\ast L)$ (cf. (\ref{eq:equi-Vit})) is of the form
\begin{equation}\label{eq:mod}
\alpha_L\cdot\mathbf{I}(b)\textrm{ mod }u^{-1}, 
\end{equation}
where $\alpha_L\in\mathbb{K}^\times$ and the class $b\in\mathit{SH}^1(T^\ast S^n)$ is a dilation.
\end{lemma}
\begin{proof}
It follows from the diagram (\ref{eq:diagram2}) that $[\tilde{v}^!](\tilde{b})\in\mathit{SH}_{S^1}^1(T^\ast L)$ is a cyclic dilation. On the other hand, according to the computation of $\cite{cjy}$, Theorem 2 (2), we have the isomorphism
\begin{equation}
H_{n-\ast}(\mathcal{L}S^n;\mathbb{K})\cong\frac{\mathbb{K}[x]}{(x^2)}\otimes\mathbb{K}[y],
\end{equation}
where $|x|=n$, $|y|=1-n$, and $n$ is odd. This implies that $\mathit{SH}^0(T^\ast S^n)\cong\mathbb{K}$, so the class $\tilde{b}$ satisfies $\mathbf{B}\circ[\tilde{v}^!](\tilde{b})=\alpha_L$ for some $\alpha_L\in\mathbb{K}^\times$. It follows from the computations in \cite{ft}, Section 4.1 that
\begin{equation}
H_{n-1}^{S^1}(\mathcal{L}S^n;\mathbb{K})\cong\mathbb{K}\langle\mathbf{I}(x\otimes y)\rangle\oplus\mathbb{K}\left\langle\frac{\gamma^{\frac{n-1}{2}}}{(\frac{n-1}{2})!}\right\rangle,
\end{equation}
where $\gamma$ is the degree 2 generator dual to $x\in H^2(\mathbb{CP}^\infty;\mathbb{Z})$, which gives rise to the map $\mathbf{S}:H_{\ast+1}^{S^1}(\mathcal{L}S^n;\mathbb{K})\rightarrow H_{\ast-1}^{S^1}(\mathcal{L}S^n;\mathbb{K})$ in the Gysin sequence (\ref{eq:string}). Moreover, the marking map $\mathbf{B}:H_\ast^{S^1}(\mathcal{L}S^n;\mathbb{K})\rightarrow H_{\ast+1}(\mathcal{L}S^n;\mathbb{K})$ sends $\frac{\gamma^{\frac{n-1}{2}}}{(\frac{n-1}{2})}!$ to 0, and
\begin{equation}
\mathbf{B}\circ\mathbf{I}(x\otimes y)=\Delta(x\otimes k)=1.
\end{equation}
Since $x\otimes y$ defines a dilation under the BV algebra isomorphism $H_{n-\ast}(\mathcal{L}S^n;\mathbb{K})\cong\mathit{SH}^\ast(T^\ast S^n)$ established in $\cite{ma2}$, and the $u^0$ part of $\mathit{SH}_{S^1}^\ast(T^\ast L)$ is given by the image of the erasing map, we have proved (\ref{eq:mod}).
\end{proof}

\begin{proposition}\label{proposition:cocycle}
Let $\tilde{b}\in\mathit{HF}_{S^1}^1(\lambda)$ be a cyclic dilation, then (\ref{eq:cochain}) defines a Floer cocycle for any choice of Floer data which defines $\mathit{CF}^\ast(L,L)$.
\end{proposition}
\begin{proof}
The idea of the proof is to regard the operation $\phi_L^{1,1;k}$ as (up to homotopy) the composition of the (parametrized version) of the Viterbo functoriality relating the symplectic cochain complex $\mathit{SC}^\ast(M)$ of the ambient space to that of a Weinstein neighborhood $\mathit{SC}^\ast(T^\ast L)$, and the ordinary closed-open map $\phi_L^{1,1}$ defined in the Weinstein neighborhood of $L$. Since the image of the cyclic dilation under the Viterbo map has been determined by Lemma \ref{lemma:unique}, this allows us to conclude that $\sum_{k=0}^\infty\phi_L^{1,1;k}(\beta_k)$ is a cocycle. The proof is divided into four steps.

\begin{paragraph}{Step 1: Realizing $\mathit{SH}^\ast(T^\ast L)$ as the Floer cohomology of a Hamiltonian on $M$.}
Consider a family of marked bordered Riemann surfaces
\begin{equation}\label{eq:Sq}
\left(\overline{S}_q;\zeta_{\mathit{in}},\zeta_1;\frac{p_1}{q},\cdots,\frac{p_k}{q}\right)
\end{equation}
parametrized by $q\in[1,\infty)$, where $\overline{S}_q$ is the closed unit disc centered at the origin, whose center $\zeta_{\mathit{in}}$ is an interior puncture serving as an input, and $\zeta_1$ is a boundary puncture serving as an output. The marked points $p_1,\cdots,p_k$ are ordered so that (\ref{eq:radial2}) holds. As part of our Floer data, denote by 
\begin{equation}
\varepsilon_q^+:[-\log q,\infty)\times S^1\rightarrow S_q
\end{equation}
the family of positive cylindrical ends fixed at $\zeta_{\mathit{in}}$. Note that for $q=1$, (\ref{eq:Sq}) is just the domain defining $\phi_L^{1,1,;k}$ considered in Section \ref{section:CO}. We equip these domains with a family of Hamiltonians $(H_{S_q})$ so that $H_{S_1}:S_1\rightarrow\mathcal{H}_\ell(M)$ is part of the Floer data defining the operation $\phi_L^{1,1;k}$. In particular, $(\varepsilon_1^+)^\ast H_{S_1}=H_{\lambda,t}\in\mathcal{H}_\lambda(M)$ for some $\lambda\notin\mathcal{P}_M$. For the purpose of our argument, we need to work with the Floer cochain complex $\mathit{CF}^\ast_{(a,b]}(\lambda)$ truncated in the finite action window $(a,b]$, where $-\infty<a<b<\infty$. This is defined as the quotient complex $\mathit{CF}^\ast_{>a}(\lambda)/\mathit{CF}^\ast_{>b}(\lambda)$. Assume that there are sequences of real numbers $\{\nu_i\}$, $\{r_i\}$, $\{\eta_i\}$ and $\{\lambda_i\}$ so that
\begin{itemize}
	\item $\nu_i>0$ is smaller than the distance from $\eta_i$ to $\mathcal{P}_{T^\ast L}$, and $\nu_i\rightarrow0$ as $i\rightarrow\infty$;
	\item $r_i>\max\left\{1,\frac{\eta_i-a}{\nu_i}\right\}$;
	\item $\frac{\eta_i}{4}<\lambda_i<\frac{\eta_i}{2}$ and $\lambda_i\notin\mathcal{P}_M$.
\end{itemize}
Let $H_{\eta_i,\lambda_i,t}$ be a small time-dependent perturbation of a one step Hamiltonian $H_{\eta_i,\lambda_i}$ which vanishes on the Liouville subdomain $D^\ast L\subset\overline{M}$; linear and has slope $\eta_i$ on $[1,r_i]\times\partial D^\ast L$ for some $r_i>0$; constant and equal to $\eta_i(r_i-1)$ on $\psi^{\log r_i}\left(\overline{M}\setminus D^\ast L\right)$, where $\psi^{\log r_i}$ is the Liouville flow at time $\log r_i$; and equals $\eta_i(r_i-1)+\lambda_i(r-r_i)$ on $[r_i,\infty)\times\partial\overline{M}$. Consider the Floer cochain complexes $\mathit{CF}^\ast_{(a,b]}(H_{\eta_i,\lambda_i,t})$, it follows from $\cite{co}$, Lemma 5.1 that there is an identification 
\begin{equation}\label{eq:id1}
\mathit{CF}^\ast_{(a,b]}(H_{\eta_i,\lambda_i,t})\cong\mathit{CF}^\ast_{(a,b]}(T^\ast L,\eta_i)
\end{equation}
between the Floer cochain complexes defined on $M$ and those on its Liouville subdomain $D^\ast L$. In particular,
\begin{equation}
{\varinjlim}_i\mathit{HF}^\ast_{(a,b]}(H_{\eta_i,\lambda_i,t})\cong\mathit{SH}_{(a,b]}^\ast(T^\ast L):={\varinjlim}_i\mathit{HF}_{(a,b]}^\ast(T^\ast L,\eta_i).
\end{equation}
\end{paragraph}

\begin{paragraph}{Step 2: Interpreting Viterbo functoriality as continuation maps.}
For each Hamiltonian of the form $H_{\eta,\lambda,t}$, there exists a Hamiltonian $H_{\lambda,t}\in\mathcal{H}_\lambda(M)$ such that $H_{\lambda,t}\leq H_{\eta,\lambda,t}$. Choose a monotone homotopy $h_s$ between the Hamiltonians $H_{\lambda,t}$ and $H_{\eta,\lambda,t}$ so that the continuation maps
\begin{equation}
\kappa_k:\mathit{CF}_{(a,b]}^{\ast+2k}(\lambda)\rightarrow\mathit{CF}^\ast_{(a,b]}(H_{\eta,\lambda,t}),
\end{equation}
which are variants of the components $\kappa_k^{\lambda_1,\lambda_2}$ in (\ref{eq:continuation}), are well-defined for all $k\geq0$, and altogether they form the (truncated) $S^1$-equivariant Viterbo functoriality
\begin{equation}
[\tilde{v}^!]:\mathit{SH}_{S^1,(a,b]}^\ast(M)\rightarrow\mathit{SH}_{S^1,(a,b]}^\ast(T^\ast L)
\end{equation}
after passing to direct limits. We will denote by
\begin{equation}
\tilde{v}_k^!:\mathit{SC}^{\ast+2k}(M)\rightarrow\mathit{SC}^\ast(T^\ast L)
\end{equation}
the components of the $S^1$-complex morphism $\tilde{v}^!$. Let $\rho$ be a non-negative, monotone, non-decreasing cut-off function such that
\begin{equation}
\rho(s)=\left\{\begin{array}{ll}0 & s\ll 0, \\ 1 & s\gg 0. \end{array}\right.
\end{equation}
We require that the family of Hamiltonians $(H_{S_q})_{q\geq1}$ to satisfy
\begin{equation}\label{eq:homo}
(\varepsilon_q^+)^\ast H_{S_q}=\rho(s+q)h_s.
\end{equation}
Note that this is compatible with our requirement that $(\varepsilon_1^+)^\ast H_{S_1}=H_{\lambda,t}$.
\end{paragraph}

\begin{paragraph}{Step 3: Degeneration of domain.}
Assume that the input $\zeta_{\mathit{in}}$ receives generators from the truncated complex $\mathit{CF}^\ast_{(a,b]}(\lambda)$. When $q\rightarrow\infty$, the points $p_1,\cdots,p_k$ move to the origin and a cylinder with all the auxiliary marked points bubbles off at $\zeta_{\mathit{in}}$. According to (\ref{eq:homo}), this $k$-point angle decorated cylinder is equipped with the Floer data defining the continuation map $\kappa_k$. On the other hand, the component carrying the boundary marked point defines an operation
\begin{equation}
\tilde{\phi}_L^{1,1}:\mathit{CF}^\ast_{(a,b]}(H_{\eta,\lambda,t})\rightarrow\mathit{CF}^\ast(L,L),
\end{equation}
which is a slight variation of the usual closed-open string map $\phi_L^{1,1}:\mathit{CF}^\ast(\lambda)\rightarrow\mathit{CF}^\ast(L,L)$. There are also boundary strata when $q\in(1,\infty)$, which correspond to the cases when a cylinder, which may contain several of the marked points $p_1,\cdots,p_k$ breaks off at $\zeta_{\mathit{in}}$, and when the coordinates of the auxiliary marked points satisfy $|p_i|=|p_{i+1}|$ for some $1\leq i\leq k-1$. We know from the previous discussions that the latter strata are non-rigid by appropriate choices of Floer data, which can clearly be achieved here. For the former ones, we can arrange the choices of Floer data so that $H_{S_q}$ pulls back to $H_{\lambda,t}$ under the negative cylindrical end of the marked cylinder bubbled off at $\zeta_{\mathit{in}}$. As a consequence, their contributions can be identified as
\begin{equation}
\sum_{j=0}^k t_L^{1,1;j}\left(\delta_{k-j}(y)\right),
\end{equation}
where
\begin{equation}
t_L^{1,1;j}:\mathit{CF}^{\ast+2j+1}(\lambda)\rightarrow\mathit{CF}^\ast(L,L)
\end{equation}
is the operation defined by counting rigid elements in the moduli space of triples
\begin{equation}
\left(q,\left(S_q,\frac{p_1}{q},\cdots,\frac{p_j}{q}\right),u\right),
\end{equation}
where $u:S_q\rightarrow M$ is a solution to the equation $\left(du-X_{H_{S_q}}\otimes\nu_{S_q}\right)^{0,1}=0$ with boundary on $L$, and asymptotics at $\zeta_\mathit{in}$ and $\zeta_1$ being generators of $\mathit{CF}^\ast_{(a,b]}(\lambda)$ and $\mathit{CF}^\ast(L,L)$, respectively.

To sum up, we have proved the cochain level identity
\begin{equation}\label{eq:degeneration}
\phi_L^{1,1;k}(y)-\tilde{\phi}_L^{1,1}\circ\kappa_k(y)=\mu^1\left(t_L^{1,1;k}(y)\right)+\sum_{j=0}^k t_L^{1,1;j}\left(\delta_{k-j}(y)\right)
\end{equation}
for any $y\in\mathit{CF}^\ast_{(a,b]}(\lambda)$.
\end{paragraph}

\begin{paragraph}{Step 4: Applying to the cochains $\beta_k$.}
In view of (\ref{eq:id1}) and the identification (obtained by rescaling the Hamiltonian perturbation) of $\mathit{CF}^\ast(L,L)$ with a Floer cochain complex $\mathit{CF}_{T^\ast L}^\ast(L,L)$ computed in the Liouville subdomain $D^\ast L\subset\overline{M}$, the map $\tilde{\phi}_L^{1,1}$ can be identified with the usual closed-open map
\begin{equation}\label{eq:local}
\phi^{1,1}_{L\subset T^\ast L}:\mathit{CF}^\ast_{(a,b]}(T^\ast L,\eta)\rightarrow\mathit{CF}_{T^\ast L}^\ast(L,L)
\end{equation}
for the Liouville manifold $T^\ast L$ by an appropriate version of the maximum principle. See $\cite{as}$, Lemma 7.4. Choosing the action window $(a,b]$ so that $\beta_k\in\mathit{CF}^{2k+1}_{(a,b]}(\lambda)$ for all $k\geq1$ (which is possible since $\beta_k\neq0$ for only finitely many $k$), it follows from (\ref{eq:degeneration}) and (\ref{eq:local}) that
\begin{equation}\label{eq:sum}
\sum_{k=0}^\infty\phi_L^{1,1;k}(\beta_k)-\phi^{1,1}_{L\subset T^\ast L}\left(\sum_{k=0}^\infty\kappa_k(\beta_k)\right)=\mu^1\left(\sum_{k=0}^\infty t_L^{1,1;k}(\beta_k)\right)+\sum_{k=0}^\infty\sum_{j=0}^k t_L^{1,1;j}\left(\delta_{k-j}(\beta_k)\right).
\end{equation}
The double sum on the right-hand side of (\ref{eq:sum}) vanishes by the equivariant cocycle condition satisfied by $\tilde{\beta}=\sum_{k=0}^\infty\beta_k\otimes u^{-k}$. For the reader's convenience, we will spell out the details. The equivariant differential $\delta_\mathit{eq}=\sum_{k=0}^\infty\delta_ku^k$ applied to $\tilde{\beta}$ gives
\begin{equation}
\delta_\mathit{eq}(\tilde{\beta})=\sum_{k=0}^\infty\sum_{j=0}^k\delta_{k-j}(\beta_k)\otimes u^{-j}=0,
\end{equation}
which implies that for any fixed $j$,
\begin{equation}
\sum_{k=j}^\infty t_L^{1,1;j}(\delta_{k-j}(\beta_k))=t_L^{1,1;j}\left(\sum_{k=j}^\infty\delta_{k-j}(\beta_k)\right)=0.
\end{equation}
After passing to the inverse limit $a\rightarrow-\infty$ (which is finite due to the existence of the lower bound of the action), and the direct limits $b\rightarrow\infty$, $\lambda\rightarrow\infty$ and $\eta\rightarrow\infty$, we see that $\sum_{k=0}^\infty\phi_L^{1,1;k}(\beta_k)$ is cohomologous to $\phi^{1,1}_{L\subset T^\ast L}\left(\sum_{k=0}^\infty v^!_k(\beta_k)\right)$. By Lemma \ref{lemma:unique}, the image of the cyclic dilation $\tilde{\beta}$ under the $S^1$-equivariant Viterbo functoriality is $\alpha_L\cdot\beta\otimes1$ mod $u^{-1}$, where $\alpha_L\in\mathbb{K}^\times$ and $\beta\in\mathit{SC}^1(T^\ast L)$ is a dilation. It follows that
\begin{equation}
\tilde{v}^!(\tilde{\beta})=\sum_{k=0}^\infty v^!_k(\beta_k)\in\mathit{SC}^1(T^\ast L)\subset\mathit{SC}^1(T^\ast L)\otimes_\mathbb{K}\mathbb{K}((u))/u\mathbb{K}[[u]], 
\end{equation}
which is equal to $\alpha_L\cdot\beta$. Since $\phi^{1,1}_{L\subset T^\ast L}$ is a chain map, and $\phi^{1,1}_{L\subset T^\ast L}(\alpha_L\cdot\beta)$ is a Floer cocycle, so is $\sum_{k=0}^\infty\phi_L^{1,1;k}(\beta_k)$.
\end{paragraph}
\end{proof}

Since $\mathit{HF}^1(L,L)=0$, it follows that there is a cochain $\tilde{\gamma}_L\in\mathit{CF}^0(L,L)$, which satisfies
\begin{equation}\label{eq:def}
\mu^1(\tilde{\gamma}_L)=\sum_{k=0}^\infty\phi_{L}^{1,1;k}(\beta_k).
\end{equation}
Two $\tilde{\gamma}_L$'s are considered to be equivalent if their difference is a degree 0 coboundary. Note that the set of equivalence classes of choices of $\tilde{\gamma}_L$ is an affine space over $H^0(L;\mathbb{K})\cong\mathbb{K}$. Following $\cite{ss}$, the pair $\widetilde{L}=(L,\tilde{\gamma}_L)$ will be called a \textit{$\tilde{b}$-equivariant} Lagrangian sphere.
\bigskip

Let $\widetilde{L}_0=(L_0,\tilde{\gamma}_{L_0})$, $\widetilde{L}_1=(L_1,\tilde{\gamma}_{L_1})$ be two odd-dimensional $\tilde{b}$-equivariant Lagrangian spheres. One can define a cochain level endomorphism
\begin{equation}
\phi_{\widetilde{L}_0,\widetilde{L}_1}:\mathit{CF}^\ast(L_0,L_1)\rightarrow\mathit{CF}^\ast(L_0,L_1)
\end{equation}
by
\begin{equation}\label{eq:endo}
\phi_{\widetilde{L}_0,\widetilde{L}_1}(x):=\sum_{k=0}^\infty\phi_{L_0,L_1}^{1,2;k}(\beta_k,x)-\mu^2(\tilde{\gamma}_{L_1},x)+\mu^2(x,\tilde{\gamma}_{L_0}).
\end{equation}
Note that $\phi_{\widetilde{L}_0,\widetilde{L}_1}$ is in general not a chain map. In fact, it follows from (\ref{eq:homotopy1}) and the fact that $\sum_{k=0}^\infty\beta_k\otimes u^{-k}$ defines a cocycle in $\mathit{CF}^1_{S^1}(\lambda)$ such that
\begin{equation}\label{eq:chain}
\mu^1\left(\phi_{\widetilde{L}_0,\widetilde{L}_1}(x)\right)=\phi_{\widetilde{L}_0,\widetilde{L}_1}\left(\mu^1(x)\right)+\sum_{k=1}^\infty\hat{\phi}_{L_0,L_1}^{1,2;k-1}(\beta_k,x).
\end{equation}

\begin{proposition}
Assume that $\tilde{b}\in\mathit{HF}_{S^1}^1(\lambda)$ is a cyclic dilation. Let $L_0=L_1=L$, and $\tilde{\gamma}_{L_0}=\tilde{\gamma}_{L_1}$ in the above, then $\phi_{\widetilde{L},\widetilde{L}}$ is a chain map.
\end{proposition}
\begin{proof}
The idea of the proof is similar to that of	Proposition \ref{proposition:cocycle}. Using a degeneration of domain argument, the operation $\phi_{L,L}^{1,2;k}$ can be shown to coincide up to homotopy with the composition of the (parametrized) Viterbo functoriality and the usual closed-open map $\phi_{L,L}^{1,2}$ defined in the Weinstein neighborhood $D^\ast L\subset M$. Lemma \ref{lemma:unique} then enables us to understand the map $\sum_{k=0}^\infty\phi_{\widetilde{L},\widetilde{L}}(\beta_k)$.
	
Denote by
\begin{equation}
t_{L,L}^{1,2;k}:\mathit{CF}^{\ast+2k+2}(\lambda)\otimes\mathit{CF}^\ast(L,L)\rightarrow\mathit{CF}^\ast(L,L)
\end{equation}
the operation defined by a family of domains
\begin{equation}
\left(\overline{S}_q;\zeta_{\mathit{in}},\zeta_1,\zeta_2;\frac{p_1}{q},\cdots,\frac{p_k}{q}\right)
\end{equation}
parametrized by the auxiliary marked points $p_1,\cdots,p_k$ and $q\in[1,\infty)$, which is basically the family of domains appeared in the proof of Proposition \ref{proposition:cocycle}, except that now there is an additional boundary marked point $\zeta_2$, which is an input. A similar degeneration of domain argument as in the proof of Proposition \ref{proposition:cocycle} implies that
\begin{equation}
\begin{split}
\phi_{L,L}^{1,2;k}(y,x)&=\phi_{L,L\subset T^\ast L}^{1,2}\left(v_k^!(y),x\right)+\mu^1\left(t_{L,L}^{1,2;k}(y,x)\right)+t_{L,L}^{1,2;k}\left(y,\mu^1(x)\right)\\
&+\sum_{j=0}^kt_{L,L}^{1,2;j}\left(\delta_{k-j}(y),x\right),
\end{split}
\end{equation}
where
\begin{equation}
\phi_{L,L\subset T^\ast L}^{1,2}:\mathit{SC}^\ast(T^\ast L)\otimes\mathit{CF}^\ast(L,L)\rightarrow\mathit{CF}^\ast(L,L)[-2k-1]
\end{equation}
is a closed-open map defined in the Liouville subdomain $D^\ast L\subset\overline{M}$. When applied to the cyclic dilation $\tilde{\beta}\in\mathit{SC}_{S^1}^1(M)$, we obtain by Lemma \ref{lemma:unique} that
\begin{equation}\label{eq:t}
\begin{split}
\sum_{k=0}^\infty\phi_{L,L}^{1,2;k}(\beta_k,x)&=\phi_{L,L\subset T^\ast L}^{1,2}\left(\sum_{k=0}^\infty v_k^!(\beta_k),x\right)+\mu^1\left(\sum_{k=0}^\infty t_{L,L}^{1,2;k}(\beta_k,x)\right) \\
&+\sum_{k=0}^\infty t_{L,L}^{1,2;k}\left(\beta_k,\mu^1(x)\right)+\sum_{k=0}^\infty\sum_{j=0}^kt_{L,L}^{1,2;j}(\delta_{k-j}(\beta_k),x) \\
&=\alpha_L\cdot\phi_{L,L\subset T^\ast L}^{1,2}(\beta,x)+\mu^1\left(\sum_{k=0}^\infty t_{L,L}^{1,2;k}(\beta_k,x)\right)+\sum_{k=0}^\infty t_{L,L}^{1,2;k}\left(\beta_k,\mu^1(x)\right),
\end{split}
\end{equation}
where $\alpha_L\in\mathbb{K}^\times$ and the cocycle $\beta\in\mathit{SC}^1(T^\ast L)$ represents the dilation. Note that in the computations above, the term $\sum_{k=0}^\infty\sum_{j=0}^kt_{L,L}^{1,2;j}\left(\delta_{k-j}(\beta_k),x\right)$ vanishes because $\tilde{\beta}$ defines an $S^1$-equivariant cocycle, see the proof of Proposition \ref{proposition:cocycle}. We then have
\begin{equation}\label{eq:t1}
\begin{split}
\phi_{\widetilde{L},\widetilde{L}}(x)&=\alpha_L\cdot\phi_{L,L\subset T^\ast L}^{1,2}(\beta,x)+\mu^1\left(\sum_{k=0}^\infty t_{L,L}^{1,2;k}(\beta_k,x)\right)+\sum_{k=0}^\infty t_{L,L}^{1,2;k}\left(\beta_k,\mu^1(x)\right) \\
&-\mu^2(\tilde{\gamma}_L,x)+\mu^2(x,\tilde{\gamma}_L) \\
&=\alpha_L\cdot\phi_{L,L\subset T^\ast L}^{1,2}(\beta,x)+\mu^2\left(\sum_{k=0}^\infty t_L^{1,1;k}(\beta_k),x\right)-\mu^2\left(x,\sum_{k=0}^\infty t_L^{1,1;k}(\beta_k)\right) \\
&-\mu^2(\tilde{\gamma}_L,x)+\mu^2(x,\tilde{\gamma}_L),
\end{split}
\end{equation}
where the second equality holds because $t_{L,L}^{1,2;k}$ gives the homotopy between $\mu^2\left(t_L^{1,1;k}(\beta_k),x\right)$ and $\mu^2\left(x,t_L^{1,1;k}(\beta_k)\right)$. The proposition now follows from the fact that
\begin{equation}\label{eq:chain-map}
\phi_{\widetilde{L},\widetilde{L}\subset T^\ast L}:=\alpha_L\cdot\phi_{L,L\subset T^\ast L}^{1,2}(\beta,x)-\mu^2(\gamma_{L},x)+\mu^2(x,\gamma_{L}),
\end{equation}
where
\begin{equation}
\gamma_L:=\tilde{\gamma}_L-\sum_{k=0}^\infty t_L^{1,1;k}(\beta_k),
\end{equation}
is a chain map, which is proved in $\cite{ss}$, Section 4.
\end{proof}

In this way, we obtain an endomorphism
\begin{equation}
\Phi_{\widetilde{L},\widetilde{L}}=\left[\phi_{\widetilde{L},\widetilde{L}}\right]:\mathit{HF}^\ast(L,L)\rightarrow\mathit{HF}^\ast(L,L),
\end{equation}
which is independent of our choice of $\tilde{\gamma}_L$. To understand the endomorphism $\Phi_{\widetilde{L},\widetilde{L}}$ is equivalent to understanding its action on $\mathit{HF}^0(L,L)$ and $\mathit{HF}^n(L,L)$.

\begin{proposition}\label{proposition:deg-0}
$\Phi_{\widetilde{L},\widetilde{L}}$ acts trivially on $\mathit{HF}^0(L,L)$.
\end{proposition}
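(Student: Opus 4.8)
The plan is to show that the unit $e_L \in \mathit{CF}^0(L,L)$ is a distinguished cocycle representing $1 \in \mathit{HF}^0(L,L)$, and that $\Phi_{\widetilde{L},\widetilde{L}}(e_L)$ is a coboundary. Applying the definition \eqref{eq:endo} with $x = e_L$ and $L_0 = L_1 = L$, one gets
\begin{equation*}
\phi_{\widetilde{L},\widetilde{L}}(e_L) = \sum_{k=0}^\infty \phi_{L,L}^{1,2;k}(\beta_k, e_L) - \mu^2(\tilde{\gamma}_L, e_L) + \mu^2(e_L, \tilde{\gamma}_L).
\end{equation*}
Up to the usual $A_\infty$-homotopies, $\mu^2(\tilde{\gamma}_L, e_L)$ and $\mu^2(e_L, \tilde{\gamma}_L)$ are both cohomologous to $\tilde{\gamma}_L$ (via $\mu^3$ involving the unit), so the last two terms cancel up to a coboundary. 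The main point is therefore to understand $\sum_k \phi_{L,L}^{1,2;k}(\beta_k, e_L)$, and I would compare it directly with $\mu^1(\sum_k \phi_L^{1,1;k}(\beta_k))$ from \eqref{eq:mu1}.

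First I would exploit the geometric relationship between the moduli spaces defining $\phi_{L,L}^{1,2;k}(\cdot, e_L)$ (a disc with one interior input, two boundary marked points one of which carries the unit, and one boundary output) and those defining $\phi_L^{1,1;k}$ (a disc with one interior input and one boundary output): filling in the boundary input marked point with the unit $e_L$ corresponds, on the level of domains, to a boundary degeneration where a disc bubble carrying $e_L$ splits off. This is precisely the kind of unitality/degeneration argument used throughout Seidel's categorical dynamics papers \cite{ps4,ss}. Concretely, $\phi_{L,L}^{1,2;k-1}(\beta_k, e_L)$ should, up to homotopy terms, equal $\phi_L^{1,1;k}(\beta_k)$ (or differ from it by the $h_{L,L}$-term), so that equation \eqref{eq:mu1} itself is telling us that $\sum_k \phi_L^{1,1;k}(\beta_k)$, which by \eqref{eq:def} equals $\mu^1(\tilde{\gamma}_L)$, is an exact cocycle; hence $\phi_{\widetilde{L},\widetilde{L}}(e_L) = \mu^1(\tilde{\gamma}_L) + (\text{coboundary}) + (\text{terms cancelling against } \mu^2(\tilde{\gamma}_L, e_L), \mu^2(e_L,\tilde{\gamma}_L))$, which is a coboundary. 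Therefore $[\Phi_{\widetilde{L},\widetilde{L}}(e_L)] = 0$ in $\mathit{HF}^0(L,L)$.

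To conclude I would invoke the fact that $L$ is a Lagrangian sphere of dimension $n \geq 3$, so that $\mathit{HF}^0(L,L) \cong H^0(L;\mathbb{K}) \cong \mathbb{K}$ is one-dimensional and generated by the class of $e_L$; since $\Phi_{\widetilde{L},\widetilde{L}}$ is $\mathbb{K}$-linear and kills this generator, it acts as zero on all of $\mathit{HF}^0(L,L)$. The main obstacle I anticipate is bookkeeping: matching signs and orientation-line isomorphisms in the degeneration argument relating $\phi_{L,L}^{1,2;k}(\cdot,e_L)$ to $\phi_L^{1,1;k}$ and to the $h_{L,L}$-homotopy, and making sure the choice of $\tilde{\gamma}_L$ (only defined up to $H^0(L;\mathbb{K})$) does not affect the conclusion — but this last point is automatic since changing $\tilde{\gamma}_L$ by a cocycle changes $\phi_{\widetilde{L},\widetilde{L}}$ by $[\mu^2(c,-) - \mu^2(-,c)]$ for $c$ a degree-$0$ cocycle, which on $\mathit{HF}^0(L,L)$ vanishes because that algebra is (graded-)commutative in degree $0$. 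Everything else is a routine consequence of the $A_\infty$-relations and the TQFT structure recalled in Section \ref{section:CO}.
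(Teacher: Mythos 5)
Your proposed chain-level computation has a genuine gap, and the central comparison you want to make cannot work for degree reasons.

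The terms in $\phi_{\widetilde{L},\widetilde{L}}(e_L)$ live in degree $0$: since $\phi_{L,L}^{1,2;k}$ lowers total input degree by $2k+1$ and $|\beta_k|=2k+1$, $|e_L|=0$, each summand $\phi_{L,L}^{1,2;k}(\beta_k,e_L)$ has degree $0$. In contrast, the terms $\phi_{L,L}^{1,2;k-1}(\beta_k,e_L)$ on the right-hand side of (\ref{eq:mu1}) have degree $2$, and the left-hand side $\mu^1\bigl(\sum_k\phi_L^{1,1;k}(\beta_k)\bigr)$ is degree $2$ as well. So the sentence asserting that ``$\phi_{L,L}^{1,2;k-1}(\beta_k,e_L)$ should equal $\phi_L^{1,1;k}(\beta_k)$ up to homotopy'' equates a degree-$2$ object with a degree-$1$ object, and the concluding line ``$\phi_{\widetilde{L},\widetilde{L}}(e_L)=\mu^1(\tilde{\gamma}_L)+\text{(coboundary)}+\cdots$'' equates degree $0$ with degree $1$. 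Equation (\ref{eq:mu1}) simply is not about the quantity you need to control; it is a computation of $\mu^1$ applied to a degree-$1$ cochain, whereas you need to show that a certain degree-$0$ cocycle is exact. Relatedly, saying that $\mu^2(\tilde{\gamma}_L,e_L)$ and $\mu^2(e_L,\tilde{\gamma}_L)$ are ``cohomologous to $\tilde{\gamma}_L$'' is not meaningful: $\tilde{\gamma}_L$ is not a cocycle (its $\mu^1$ is $\sum_k\phi_L^{1,1;k}(\beta_k)$), so ``cohomologous'' does not apply; what one has in the strictly unital case is the on-the-nose identity $\mu^2(e_L,x)=x=(-1)^{|x|}\mu^2(x,e_L)$, which does make the last two terms cancel, but that is a different statement. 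Finally, the unit-insertion relation you lean on — that plugging $e_L$ into a boundary input of the parametrized closed-open map $\phi_{L,L}^{1,2;k}$ reduces it to a lower operation — is not a ``routine consequence of the $A_\infty$-relations''; it would require the Floer data on the moduli spaces ${}_k\widecheck{\mathcal{R}}_{2,\mathit{cpt}}^1$ to have been chosen compatibly with the relevant boundary-forgetful map, a choice nowhere imposed in Section~\ref{section:CO} and not obviously compatible with the other consistency requirements.

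The paper's proof avoids all of this by a geometric argument, imported from Remark~4.4 of $\cite{ss}$: one realizes the action of $\Phi_{\widetilde{L},\widetilde{L}}$ on $\mathit{HF}^0(L,L)\cong H^n(L;\mathbb{K})$ via the double evaluation $S^1\times\overline{\mathcal{M}}_{k,+}(y,L)\rightarrow L\times L$, $(t,u)\mapsto(u(0,0),u(0,t))$, from the Cieliebak--Latschev moduli spaces. Under Poincar\'e duality, $\Phi_{\widetilde{L},\widetilde{L}}([e_L])$ is represented by the component $(t,u)\mapsto u(0,0)$, which factors through the projection forgetting the $S^1$-factor; the pushforward of a class along a map that factors through a projection $S^1\times X\rightarrow X$ vanishes, so the class is zero. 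This is shorter and sidesteps the need to understand unit insertion at the chain level. Your final observation — that the choice of $\tilde{\gamma}_L$ does not affect $\Phi_{\widetilde{L},\widetilde{L}}$ on cohomology because changing it by a degree-$0$ cocycle alters $\phi$ by an inner derivation that vanishes on $\mathit{HF}^0$ — is correct, but it is a side remark and does not repair the main line of argument.
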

\begin{proof}
If we use the fact that $\tilde{b}\in\mathit{HF}_{S^1}^1(\lambda)$ is a cyclic dilation, then by (\ref{eq:t1}), the action of $\Phi_{\widetilde{L},\widetilde{L}}$ can be identified with the action of $\left[\phi_{\widetilde{L},\widetilde{L}\subset T^\ast L}\right]$ on $\mathit{HF}^\ast(L,L)$ computed in the Weinstein neighborhood $D^\ast L\subset M$. It is proved in $\cite{ss}$, Section 4 that $\left[\phi_{\widetilde{L},\widetilde{L}\subset T^\ast L}\right]$ is a derivation. Since $\mathit{HF}^0(L,L)\cong\mathbb{K}$, the action of $\left[\phi_{\widetilde{L},\widetilde{L}\subset T^\ast L}\right]$ on $\mathit{HF}^0(L,L)$ must have weight 0. Alternatively, one can make the argument independent of the fact that $\tilde{b}$ is a cyclic dilation. This is inspired by $\cite{ss}$, Remark 4.4. Consider the moduli space $\mathcal{M}_{k,+}(y,L)$ of pairs $\left((Z^+,p_1,\cdots,p_k),u\right)$ studied in Section \ref{section:CL}, there is a double evaluation map $\mathit{Ev}:S^1\times\overline{\mathcal{M}}_{k,+}(y,L)\rightarrow L\times L$ defined by
\begin{equation}
(t,u)\mapsto\left(u(0,0),u(0,t)\right).
\end{equation}
The formal sum of $\mathit{Ev}_\ast\left[S^1\times\overline{\mathcal{M}}_{k,+}(\beta_k,L)\right]$ for each $k\geq0$ represents a topological cycle in $L\times L$. If we use the singular chain model of Lagrangian Floer cohomology $\mathit{HF}^\ast(L,L)\cong H^\ast(L;\mathbb{K})$, then the action of $\Phi_{\widetilde{L},\widetilde{L}}$ on $H^\ast(L;\mathbb{K})$ can be equivalently interpreted as taking a cycle on the first sector $L$, intersecting with the cycle $\sum_{k=0}^\infty\mathit{Ev}_\ast\left[S^1\times\overline{\mathcal{M}}_{k,+}(\beta_k,L)\right]$, and then projecting to the second factor. In particular, the class $\Phi_{\widetilde{L},\widetilde{L}}\left([e_L]\right)$ is represented by the evaluation maps $(t,u)\mapsto u(0,0)$. Since the evaluation maps factor through the projections $S^1\times\overline{\mathcal{M}}_{k,+}(y,L)\rightarrow\overline{\mathcal{M}}_{k,+}(y,L)$, it represents the zero homology class.
\end{proof}

To see that the endomorphism $\Phi_{\widetilde{L},\widetilde{L}}$ is non-trivial, the fact that $\tilde{b}$ is a cyclic dilation plays a crucial role.

\begin{proposition}\label{proposition:top-deg}
Let $\tilde{b}\in\mathit{HF}_{S^1}^1(\lambda)$ be a cyclic dilation, and let $\widetilde{L}$ be a $\tilde{b}$-equivariant Lagrangian sphere, then the action of $\Phi_{\widetilde{L},\widetilde{L}}$ on $\mathit{HF}^n(L,L)$ is the multiplication by a non-zero scalar $\alpha_L$. Moreover, if $h=1$ in the definition of a cyclic dilation, then one can take $\alpha_L=1$ for all $\widetilde{L}$.
\end{proposition}
\begin{proof}
As noticed in the proof of Proposition \ref{proposition:deg-0}, the action of $\Phi_{\widetilde{L},\widetilde{L}}$ on $\mathit{HF}^n(L,L)$ coincides with the action of $\left[\phi_{\widetilde{L},\widetilde{L}\subset T^\ast L}\right]$ on $\mathit{HF}^n(L,L)$ computed in the Weinstein neighborhood $D^\ast L\subset M$. If $h=1$, then $\alpha_L=1$ in Lemma \ref{lemma:unique}, and $\beta\in\mathit{SC}^1(T^\ast L)$ in (\ref{eq:chain-map}) represents a dilation, so it follows from \cite{ps5}, Lemma 18.1 that the action of $\left[\phi_{\widetilde{L},\widetilde{L}\subset T^\ast L}\right]$ on $\mathit{HF}^n(L,L)$ has weight 1. The general case follows from a rescaling by $\alpha_L$.
\end{proof}

For convenience, we introduce the following notation:
\begin{equation}
\widetilde{L}\bullet\widetilde{L}:=\mathrm{Str}\left(\Phi_{\widetilde{L},\widetilde{L}}\right)\in\mathbb{K},
\end{equation}
It follows from Propositions \ref{proposition:deg-0} and \ref{proposition:top-deg} that:

\begin{lemma}\label{lemma:sphere}
Let $L\subset M$ be a Lagrangian sphere with dimension $n\geq3$ and $n$ is odd, then $\widetilde{L}\bullet\widetilde{L}=-\alpha_L$.
\end{lemma}

This fact will be crucial to the proof of Theorem \ref{theorem:main}.

\subsection{Odd-dimensional spheres}\label{section:disjoint}

In subsection we prove Theorem \ref{theorem:main}. The argument is a modification of that of Seidel in $\cite{ps4}$, Sections 3 and 4, and the main new ingredient that enters into our proof is the operation $\ast_k$ introduced in Appendix \ref{section:product}. First, we want to make precise the meaning of property $(\widetilde{\textrm{H}})$ in the statement of Theorem \ref{theorem:main}.

\begin{definition}\label{definition:H}
Let $M$ be a Liouville manifold with $c_1(M)=0$. We say that $M$ has property $(\widetilde{\textrm{H}})$ if there is a real number $\lambda>0$, such that
\begin{itemize}
	\item[(i)] $2\lambda\notin\mathcal{P}_M$;
	\item[(ii)] $\lambda<\min\mathcal{P}_M$. In particular, the PSS map $H^\ast(M;\mathbb{K})\rightarrow\mathit{HF}^\ast(\lambda)$ is an isomorphism;
	\item[(iii)] there is a cyclic dilation $\tilde{b}\in\mathit{HF}_{S^1}^1(2\lambda)$.
\end{itemize}
\end{definition}

The above definition is motivated by $\cite{ps4}$, Definition 2.11, where the corresponding notion deals with the special case of a dilation. As has been mentioned in Section \ref{section:dynamics}, a lot of known examples of Liouville manifolds admitting cyclic dilations satisfy property $(\widetilde{\textrm{H}})$. On the other hand, it is currently unknown whether there are examples of Liouville manifolds with cyclic dilations for which the property $(\widetilde{\textrm{H}})$ is violated.
\bigskip

We take a brief look at the situation when $h=1$ and explain what does property $(\widetilde{\textrm{H}})$ mean in this special case. Recall that the \textit{Gutt-Hutchings capacities} $\cite{gh}$ of the Liouville domain $\overline{M}$ are a sequence of symplectic capacities $\left\{c_k^\mathit{GH}(M)\right\}_{k\geq1}$ defined to be
\begin{equation}\label{eq:capacity}
c_k^\mathit{GH}(M):=\inf\left\{a|\delta_\mathit{eq}(x)=u^{-k+1}e\textrm{ for some }x\in F^{\leq a}\mathit{SC}_{S^1}^{-2k+1}(M)\right\},
\end{equation}
where $F^\bullet$ is induced by the action filtration on $\mathit{SC}^\ast(M)$ (cf. (\ref{eq:filtration})). It is clear from the definition and our discussions in Section \ref{section:Gysin}, in particular Remark \ref{remark:zig-zag} that $M$ admits a cyclic dilation with $h=1$ if and only if $c_1^\mathit{GH}(M)<\infty$.

Suppose that there is a sufficiently small $a>0$ so that there exists an $x\in F^{\leq a}\mathit{SC}_{S^1}^{-1}(M)$ with $\delta_\mathit{eq}(x)=e$, then according to the definition of the action functional (cf. (\ref{eq:action})), we conclude that there exists a small enough $\lambda>0$ so that $x\in\mathit{CF}^{-1}_{S^1}(2\lambda)$, which in turn implies that property $(\widetilde{\textrm{H}})$ holds for $M$. On the other hand, knowing the geometry of $M$ would enable us to find the largest possible $a$ for a fixed slope $\lambda$. Denote this number by $a_\lambda$, then $M$ satisfies property $(\widetilde{\textrm{H}})$ with $h=1$ as long as $c_1^\mathit{GH}(M)\leq a_\lambda$. Define
\begin{equation}
a_M:=\lim_{\lambda\rightarrow\min\mathcal{P}_M}a_\lambda.
\end{equation} 
Thus as a special case of Theorem \ref{theorem:main}, we have the following:

\begin{corollary}\label{corollary:GH}
Let $\overline{M}$ be a Weinstein domain with dimension $2n\geq6$, where $n$ is odd, and whose first Gutt-Hutchings capacity satisfies $c_1^\mathit{GH}(M)<a_M$. Then for any Lagrangian sphere $L\subset M$, its homology class $[L]\in H_n(M;\mathbb{Q})$ is nonzero.
\end{corollary}

From now on, let $M$ be a $2n$-dimensional Weinstein manifold satisfying property $(\widetilde{\textrm{H}})$. As before, we denote by $\tilde{\beta}=\sum_{k=0}^\infty\beta_k\otimes u^{-k}\in\mathit{CF}_{S^1}^1(2\lambda)$ the cochain level representative of the cyclic dilation $\tilde{b}$. 
\bigskip

Let $L\subset M$ be any Lagrangian submanifold satisfying Assumption \ref{assumption:Lag}. Following \cite{ps4}, Section (4d), for every $k\geq0$, we define a map
\begin{equation}\label{eq:map}
\chi_k:\mathit{CF}^{2k+1}(2\lambda)\rightarrow\mathbb{K},
\end{equation}
which is the sum of the following six expressions:
\begin{itemize}
	\item [$\mathrm{(i)}_k$] $\mathit{CF}^{2k+1}(2\lambda)\xrightarrow{\phi_{L}^{2,0;k}}\mathit{CF}^n(\lambda)\xrightarrow{\left\langle\phi_{L}^{1,0},\cdot\right\rangle}\mathbb{K}$,
	\item [$\mathrm{(ii)}_k$] $\mathit{CF}^{2k+1}(2\lambda)\xrightarrow{\phi_{L}^{1,1;k}}\mathit{CF}^1(L,L)\xrightarrow{\psi_{L,L}^{0,1}}\mathbb{K}$,
	\item [$\mathrm{(iii)}_k$] $x\mapsto(-1)^{n(n+1)/2+1}\mathrm{Str}\left(\phi_{L,L}^{1,2;k}(x,\cdot)\right)$,
	\item [$\mathrm{(iv)}_k$] $\mathit{CF}^{2k+1}(2\lambda)\xrightarrow{\phi_{L}^{1,1;k}}\mathit{CF}^1(L,L)\xrightarrow{-\left(\psi_{L,L}^{0,1}\right)^\vee}\mathbb{K}$,
	\item [$\mathrm{(v)}_k$] $\mathit{CF}^{2k+1}(2\lambda)\xrightarrow{\phi_{L}^{2,0;k}}\mathit{CF}^n(\lambda)\xrightarrow{(-1)^{n+1}\left\langle\phi_{L}^{1,0},\cdot\right\rangle}\mathbb{K}$,
	\item [$\mathrm{(vi)}_k$] $(-1)^n\left\langle\cdot,\phi_{L}^{1,0}\ast_k\phi_{L}^{1,0}\right\rangle$,
\end{itemize}
where nondegenerate pairing
\begin{equation}
\langle\cdot,\cdot\rangle:\mathit{CF}^\ast(\lambda)\otimes\mathit{CF}^{2n-\ast}(-\lambda)\rightarrow\mathbb{K},
\end{equation}
gives the chain level realizations of the Poincar\'{e} duality isomorphism on Floer cohomologies. When $k=0$, $\chi_0$ is a chain map, and it is proved in $\cite{ps4}$, Proposition 4.11 that $\chi_0$ is nullhomotopic, since the expressions $\mathrm{(i)}_0$-$\mathrm{(vi)}_0$ correspond to boundary components of a 2-dimensional family of Riemann surfaces. See also \cite{ps4}, Section (5b). In general, we have the following:

\begin{proposition}\label{proposition:null}
When $n$ is odd, there exist choices of Floer data so that the following identity holds:
\begin{equation}\label{eq:null}
\sum_{k=0}^\infty\chi_k(\beta_k)=0.
\end{equation}
\end{proposition}
\begin{proof}
Let $\overline{S}$ be an annulus with a unique interior puncture $\zeta_\mathit{in}$, which is an input, and no boundary punctures. Denote bt $\partial_\mathit{in}S$ the inner boundary of $S$, and by $\partial_\mathit{out}S$ the outer boundary of $S$, both of them are labelled by $L$. Moreover, there are $k$ auxiliary marked points $p_1,\cdots,p_k$ lying in a small neighborhood of $\zeta_\mathit{in}$, and they are ordered so that (\ref{eq:radial2}) is satisfied with respect to the local complex coordinate near $\zeta_\mathit{in}$. The asymptotic marker $\ell_{\mathit{in}}$ at $\zeta_{\mathit{in}}$ is required to point towards $p_k$. See the central picture of Figure \ref{fig:hexagon} for a description of the domain. The associated operation will be denoted as
\begin{equation}
\psi^{1,0;k}_{L,L}:\mathit{CF}^{2k+2}(2\lambda)\rightarrow\mathbb{K}.
\end{equation}

\begin{figure}
	\centering
	\begin{tikzpicture}
	\newdimen\R
	\R=3cm
	\draw (0:\R) \foreach \x in {60,120,...,360} {  -- (\x:\R) };
	
	\node at (0,2.4) {(i)};
	\node at (-2,1.2) {(ii)};
	\node at (-2,-1.2) {(iii)};
	\node at (0,-2.4) {(iv)};
	\node at (2,-1.2) {(v)};
	\node at (2,1.2) {(vi)};
	
	\filldraw[draw=black,color={black!15},opacity=0.5] (0,0) circle (1.6);
	\filldraw[draw=black,color={black!0}] (0,0.3) circle (0.8);
	\draw (0,0) circle [radius=1.6];
	\draw (0,0.3) circle [radius=0.8];
	\draw (0,-1) node {$\times$};
	\node at (0,0.85) {$L$};
	\node at (1.85,0) {$L$};
	\draw (0,-1.2) [orange] node[circle,fill,inner sep=1pt] {};
	\draw (-0.3,-1) [orange] node[circle,fill,inner sep=1pt] {};
	\draw (0.25,-1) [orange] node[circle,fill,inner sep=1pt] {};
	\draw [orange, dashed] (0,-1) circle [radius=0.4];
	\draw [teal] [->] (0,-1) to (0,-1.3);
	
	\filldraw[draw=black,color={black!15},opacity=0.5] (0,4) circle (1);
	\filldraw[draw=black,color={black!15},opacity=0.5] (0,6.5) circle (1);
	\draw (0,4) circle [radius=1];
	\draw (0,6.5) circle [radius=1];
	\draw (0,4) node {$\times$};
	\draw (-0.35,6.5) node {$\times$};
	\draw (0.35,6.5) node {$\times$};
	\draw [orange] (-0.65,6.5) node[circle,fill,inner sep=1pt] {};
	\draw [orange] (-0.15,6.35) node[circle,fill,inner sep=1pt] {};
	\draw [orange] (-0.35,6.75) node[circle,fill,inner sep=1pt] {};
	\draw [orange,dashed] (-0.35,6.5) circle [radius=0.4];
	\node at (1.25,4) {$L$};
	\node at (1.25,6.5) {$L$};
	\draw [teal] [->] (-0.35,6.5) to (-0.35,6.8);
	\draw [teal] [->] (0.35,6.5) to (0.7,6.5);
	\draw [teal] [->] (0,4) to (-0.35,4);
	\draw [blue,dashed] (0.35,6.5) to (0,4);
	
	\filldraw[draw=black,color={black!15},opacity=0.5] (-4.2,2.4) circle (1.6);
	\filldraw[draw=black,color={black!0}] (-4.2,2.4) circle (1);
	\filldraw[draw=black,color={black!15},opacity=0.5] (-4.2,2) circle (0.6);
	\draw (-4.2,2.4) circle [radius=1.6];
	\draw (-4.2,2.4) circle [radius=1];
	\draw (-4.2,2) circle [radius=0.6];
	\draw (-4.2,2) node {$\times$};
	\draw [orange] (-4.5,2) node[circle,fill,inner sep=1pt] {};
	\draw [orange] (-4,2) node[circle,fill,inner sep=1pt] {};
	\draw [orange] (-4.2,2.25) node[circle,fill,inner sep=1pt] {};
	\draw [orange,dashed] (-4.2,2) circle [radius=0.4];
	\draw [teal] [->] (-4.2,2) to (-3.9,2);
	\node at (-4.2,4.2) {$L$};
	\node at (-4.2,3.15) {$L$};
	
	\filldraw[draw=black,color={black!15},opacity=0.5] (0,-4.6) circle (1.6);
	\filldraw[draw=black,color={black!0}] (0,-4.6) circle (0.8);
	\filldraw[draw=black,color={black!15},opacity=0.5] (0,-6.8) circle (0.6);
	\draw (0,-4.6) circle [radius=1.6];
	\draw (0,-4.6) circle [radius=0.8];
	\draw (0,-6.8) circle [radius=0.6];
	\draw (0,-6.8) node {$\times$};
	\draw [orange] (-0.3,-6.8) node[circle,fill,inner sep=1pt] {};
	\draw [orange] (0.2,-6.8) node[circle,fill,inner sep=1pt] {};
	\draw [orange] (0,-7.05) node[circle,fill,inner sep=1pt] {};
	\draw [orange,dashed] (0,-6.8) circle [radius=0.4];
	\draw [teal] [->] (0,-6.8) to (0.3,-6.8);
	\node at (1.85,-4.6) {$L$};
	\node at (0,-5.15) {$L$};
	
	\filldraw[draw=black,color={black!15},opacity=0.5] (4.2,1.2) circle (1);
	\filldraw[draw=black,color={black!15},opacity=0.5] (3.2,3.2) circle (0.6);
	\filldraw[draw=black,color={black!15},opacity=0.5] (5.2,3.2) circle (0.6);
	\draw (4.2,1.2) circle [radius=1];
	\draw (3.2,3.2) circle [radius=0.6];
	\draw (5.2,3.2) circle [radius=0.6];
	\draw [dashed] (4.2,1.2) ellipse (1 and 0.3);
	\draw (3.2,3.2) node {$\times$};
	\draw (5.2,3.2) node {$\times$};
	\draw (4.2,1.5) node {$\times$};
	\draw (3.7,0.95) node {$\times$};
	\draw (4.7,0.95) node {$\times$};
	\draw [blue,dashed] (3.7,0.95) to (3.2,3.2);
	\draw [blue,dashed] (4.7,0.95) to (5.2,3.2);
	\draw [teal] [->] (3.2,3.2) to (3.2,3.5);
	\draw [teal] [->] (5.2,3.2) to (5.2,3.5);
	\draw [teal] [->] (3.7,0.95) to (3.7,1.25);
	\draw [teal] [->] (4.7,0.95) to (4.7,1.25);
	\node at (2.35,3.2) {$L$};
	\node at (6.05,3.2) {$L$};
	\draw [dashed,orange] (4.2,1.5) circle [radius=0.4];
	\draw [orange] (4,1.5) node[circle,fill,inner sep=1pt] {};
	\draw [orange] (4.5,1.5) node[circle,fill,inner sep=1pt] {};
	\draw [orange] (4.2,1.75) node[circle,fill,inner sep=1pt] {};
	\draw [teal] [->] (4.2,1.5) to (3.9,1.5);
	
	\filldraw[draw=black,color={black!15},opacity=0.5] (3.6,-1.8) circle (1);
	\filldraw[draw=black,color={black!15},opacity=0.5] (6.3,-1.8) circle (1);
	\draw (3.6,-1.8) circle [radius=1];
	\draw (6.3,-1.8) circle [radius=1];
	\draw (6.3,-1.8) node {$\times$};
	\draw (3.6,-2.15) node {$\times$};
	\draw (3.6,-1.45) node {$\times$};
	\draw [orange] (3.3,-1.45) node[circle,fill,inner sep=1pt] {};
	\draw [orange] (3.8,-1.45) node[circle,fill,inner sep=1pt] {};
	\draw [orange] (3.6,-1.2) node[circle,fill,inner sep=1pt] {};
	\draw [orange,dashed] (3.6,-1.45) circle [radius=0.4];
	\draw [teal] [->] (6.3,-1.8) to (6.3,-1.45);
	\draw [teal] [->] (3.6,-2.15) to (3.6,-2.5);
	\draw [teal] [->] (3.6,-1.45) to (3.9,-1.45);
	\draw [blue,dashed] (3.6,-2.15) to (6.3,-1.8);
	\node at (3.6,-3) {$L$};
	\node at (6.3,-3) {$L$};
	
	\begin{scope}[shift={(-4.1,-2.2)},rotate=0]
	\filldraw[draw=black,color={black!15},opacity=0.5] (-240:0.8) arc (-240:60:0.8) -- (75:1.55) arc (75:-255:1.55) -- cycle;
	\end{scope}
	\begin{scope}[shift={(-4.5,-1.1)},rotate=0]
	\filldraw[draw=black,color={black!15},opacity=0.5] (0,0) --  (90:0.4) arc (90:-90:0.4) -- cycle;
	\end{scope}
	\begin{scope}[shift={(-3.7,-1.1)},rotate=0]
	\filldraw[draw=black,color={black!15},opacity=0.5] (0,0) --  (270:0.4) arc (270:90:0.4) -- cycle;
	\end{scope}
	\draw (-4.5,-1.5) arc (-90:90:0.4);
	\draw (-3.7,-0.7) arc (90:270:0.4);
	\draw (-4.5,-1.5) arc (-240:60:0.8);
	\draw (-4.5,-0.7) arc (-255:75:1.55);
	\draw (-4.1,-3.35) node {$\times$};
	\draw [orange] (-4.3,-3.35) node[circle,fill,inner sep=1pt] {};
	\draw [orange] (-3.95,-3.35) node[circle,fill,inner sep=1pt] {};
	\draw [orange] (-4.1,-3.5) node[circle,fill,inner sep=1pt] {};
	\draw [teal] [->] (-4.1,-3.35) to (-4.4,-3.35);
	\draw [orange, dashed] (-4.1,-3.35) circle [radius=0.3];
	\node at (-3.7,-0.5) {$L$};
	\node at (-4.1,-2.7) {$L$};
	\end{tikzpicture}
	\caption{A section of the family $\overline{\mathcal{T}}_3$}
	\label{fig:hexagon}
\end{figure}

When there is no auxiliary marked point, the Riemann surfaces $\left(\overline{S};\zeta_\mathit{in},\ell_\mathit{in}\right)$, where $\ell_\mathit{in}$ is allowed to vary in a specific way, form a 2-dimensional family $\mathcal{T}$, which compactifies to a hexagon $\overline{\mathcal{T}}$, see Figure \ref{fig:hexagon}. The construction of this 2-dimensional family, which arises as the real blow up of the KSV (Kimura-Stasheff-Voronov) compactification, is explained in detail in $\cite{ps4}$, Section (5b). Note that the orientations of the boundary components (iii), (iv) and (v) reverse that of the usual boundary orientation of a hexagon, which leads to an additional $-1$ in the definitions of the operations $\mathrm{(iii)}_k$, $\mathrm{(iv)}_k$ and $\mathrm{(v)}_k$ appeared in the expression of $\chi_k$. After the points $p_1,\cdots,p_k$ are added, we get a (compactified) $(2k+2)$-dimensional family $\overline{\mathcal{T}}_k$ of domains $\left(\overline{S};\zeta_\mathit{in},\ell_\mathit{in};p_1,\cdots,p_k\right)$ which fibers over the hexagon, whose fibers can be identified with the compactifcations of the moduli space of $k$-point angle decorated half-cylinders $\overline{\mathcal{M}}_{k,+}$ considered in Section \ref{section:CL}. Figure \ref{fig:hexagon} depicts a section of the moduli space $\overline{\mathcal{T}}_3$ obtained by fixing the positions of $p_1,p_2,p_3$. There are six codimension 1 boundary strata in $\overline{\mathcal{T}}_k$ which correspond respectively to the boundaries of the hexagon labelled by (i)-(vi) in Figure \ref{fig:hexagon}, and they give rise to the operations $\mathrm{(i)}_k$-$\mathrm{(vi)}_k$ defined above. When the points $p_1,\cdots,p_k$ are allowed to vary, there are additional strata in $\partial\overline{\mathcal{T}}_k$ corresponding to the loci where $|p_i|=|p_{i+1}|$ and $|p_1|=\varepsilon$. 

For the boundary strata $|p_i|=|p_{i+1}|$, where $1\leq i\leq k-1$, it is clear from our previous discussions that one can choose Floer data so that they are non-rigid.

For the boundary stratum $|p_1|=\varepsilon$, denote its contribution by
\begin{equation}
\psi_{L,L;S^1}^{1,0;k-1}:\mathit{CF}^{2k+1}(2\lambda)\rightarrow\mathbb{K}.
\end{equation}
Consider the automorphism of the annulus $\overline{S}$ which switches the boundary circles $\partial_\mathit{in}S$ and $\partial_\mathit{out}S$, under which we obtain an operation which, up to homotopy, can be identified with $\psi_{L,L;S^1}^{1,0;k-1}$. Since exchanging the order of the two boundary circles leads to a Koszul sign of $(-1)^n$ ($\cite{ps4}$, Remark 5.3), we have
\begin{equation}\label{eq:relation}
\psi_{L,L;S^1}^{1,0;k-1}-(-1)^n\psi_{L,L;S^1}^{1,0;k-1}=\textrm{null homotopy}.
\end{equation}
This shows that the stratum $|p_1|=\frac{1}{2}$ of $\partial\overline{\mathcal{T}}_k$ does not contribute when $n$ is odd.

Finally, there are strata coming from real blow-ups in the fiber direction of the compactification $\overline{\mathcal{T}}_k$, which correspond to sphere bubbles at the puncture $\zeta_{\mathit{in}}$, containing at least one auxiliary marked points. These strata, together with the sphere bubble at $\zeta_{\mathit{in}}$ without auxiliary marked point, contribute
\begin{equation}\label{eq:blp}
\sum_{j=0}^k\psi^{1,0;k-j}_{L,L}\left(\delta_j(\beta_k)\right)
\end{equation}
for each $k\geq0$. Using the fact that $\tilde{\beta}\in\mathit{CF}_{S^1}^1(2\lambda)$ is an equivariant cocycle, we see that the sum of (\ref{eq:blp}) over $k\geq0$ vanishes.

Combining the above analysis we get (\ref{eq:null}) when $n$ is odd.
\end{proof}

\begin{remark}\label{remark:characteristic}
Note that when $\mathrm{char}(\mathbb{K})=2$, the relation (\ref{eq:relation}) holds for trivial reasons, so we cannot use it to deduce that $\psi_{L,L;S^1}^{1,0;k-1}=0$, and the argument above fails.
\end{remark}

Denote by $(\widetilde{C}^\ast,\tilde{d})$ the direct sum of two Floer cochain complexes
\begin{equation}
\begin{split}
&\widetilde{C}^\ast:=\mathit{CF}^\ast(-\lambda)\oplus\mathit{CF}^\ast(\lambda), \\
&\tilde{d}(\xi,x):=\left(d\xi,dx\right),
\end{split}
\end{equation}
where $d$ is the ordinary Floer differential, and $\lambda>0$ is chosen as in Definition \ref{definition:H}. The cohomology of $(\widetilde{C}^\ast,\tilde{d})$ will be denoted by $\widetilde{H}^\ast$. Note that by item (ii) of Definition \ref{definition:H}, we can choose the Hamiltonian $H_{\lambda,t}$ in the definition of the Floer cochain complex $\mathit{CF}^\ast(\lambda)$ so that it is isomorphic to the Morse complex of some function $\phi:M\rightarrow\mathbb{R}$, which computes the ordinary cohomology $H^\ast(M;\mathbb{K})$. Since $M$ is a Weinstein manifold, we may assume that the $\phi$ is $J$-convex, so that all the Morse critical points have index $\leq n$, and therefore $\mathit{CF}^\ast(\lambda)$ is supported in degrees $\ast\leq n$. For similar reasons, we can arrange so that $\mathit{CF}^\ast(-\lambda)$ is supported in degrees $\ast\geq n$.

We define a pairing on the above chain complex
\begin{equation}\label{eq:pairing}
\begin{split}
&\tilde{\iota}:\widetilde{C}^\ast\otimes\widetilde{C}^{2n-\ast}\rightarrow\mathbb{K}, \\
&\tilde{\iota}\left((\xi_0,x_0),(\xi_1,x_1)\right)=\langle x_0,\xi_1\rangle-(-1)^{|\xi_0|}\langle x_1,\xi_0\rangle+\sum_{k=0}^\infty\langle\beta_k,\xi_0\ast_k\xi_1\rangle.
\end{split}
\end{equation}
In the above, the operations
\begin{equation}
\ast_k:\mathit{CF}^\ast(-\lambda)\otimes\mathit{CF}^{2n-\ast}(-\lambda)\rightarrow\mathit{CF}^{2n-2k-1}(-2\lambda)
\end{equation}
are parametrized analogues of the usual star product (\ref{eq:star}) on the Floer cochain complex. Their detailed constructions are recorded in Appendix \ref{section:product}.

\begin{lemma}
$\tilde{\iota}$ is a chain map.	
\end{lemma}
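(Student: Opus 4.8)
The plan is to verify that $\tilde\iota$ is a chain map by a direct computation, differentiating the pairing and matching the resulting terms against the structure equations established earlier in the paper. Concretely, I would compute $\tilde\iota\big(\tilde d(\xi_0,x_0),(\xi_1,x_1)\big)\pm\tilde\iota\big((\xi_0,x_0),\tilde d(\xi_1,x_1)\big)$ and show it vanishes. The three pieces of $\tilde\iota$ must be handled separately: the two terms $\langle x_0,\xi_1\rangle$ and $\langle x_1,\xi_0\rangle$ are handled by the fact that the Poincar\'e pairing $\langle\cdot,\cdot\rangle$ between $\mathit{CF}^\ast(\lambda)$ and $\mathit{CF}^{2n-\ast}(-\lambda)$ is a chain map (realized on the chain level, as recalled just before equation (\ref{eq:symmetry})), so those contributions cancel among themselves up to the cross terms that must be absorbed by the third summand. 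The genuinely new input is the behaviour of $\sum_k\langle\beta_k,\xi_0\ast_k\xi_1\rangle$ under the differential.

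The key step is therefore to establish a chain-homotopy-type identity for the parametrized operations $\ast_k$: applying $d$ to $\xi_0\ast_k\xi_1$ should produce, besides $d\xi_0\ast_k\xi_1$ and $\xi_0\ast_k d\xi_1$, correction terms involving the ordinary star product $\ast=\ast_0$, the $S^1$-operations $\delta_j$, and the $\smile_k$'s, coming from the boundary strata of $\overline{\mathcal S}_k(y_0^+,y_1^+;y^-)$ described in Section \ref{section:product}. Pairing this identity with $\beta_k$ and summing over $k$, the $\delta_j$-correction terms assemble (after using the equivariant cocycle condition $\sum_{k\ge i}\delta_{k-i}(\beta_k)=0$ satisfied by $\tilde\beta$, equivalently $\delta_\mathit{eq}(\tilde\beta)=0$) into exactly the expression one needs to cancel the cross terms $\langle dx_0,\xi_1\rangle$-type mismatches coming from the first two summands of $\tilde\iota$; this is where Assumption \ref{assumption:data} on the Floer data for $\overline{\mathcal S}_k$ (the $r=0$ versus $r=\pi$ compatibility under swapping the two inputs) is used, since it forces the two ``ends'' $r=0$ and $r=\pi$ of the parameter interval to contribute the two terms $\langle x_0,\xi_1\rangle$ and $\pm\langle x_1,\xi_0\rangle$ with the correct relative sign. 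The strata where auxiliary marked points collide ($|p_i|=|p_{i+1}|$) contribute nothing by the forgetful-map argument already used in the proof of Proposition \ref{proposition:prod}, and the stratum $|p_1|=\tfrac12$ feeds into the $\delta_{k+1}(\beta_k)$-type terms which again die by the cocycle condition on $\tilde\beta$.

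I expect the main obstacle to be bookkeeping of signs and of exactly which boundary strata of $\overline{\mathcal S}_k(y_0^+,y_1^+;y^-)$ appear, since the paper deliberately omits the precise relation between $\ast_k$ and $\smile_k$ (``whose description is omitted here since it will not be needed''), so I will need to isolate only those features of that relation that survive after pairing with an equivariant cocycle. The cleanest route is probably not to write down the full $\ast_k$--$\smile_k$ relation, but rather to argue directly from a codimension-one boundary analysis of the moduli space $\overline{\mathcal S}_k(y_0^+,y_1^+;y^-)$, one dimension up from rigid, grouping its ends into: (a) breaking off a Floer cylinder at one of the three punctures, giving $d\xi_0\ast_k\xi_1$, $\xi_0\ast_k d\xi_1$, and $d$ applied to the output; (b) bubbling of angle-decorated cylinders at the output, giving $\delta_j$-terms; (c) the two parameter endpoints $r=0,\pi$, giving the $\langle x_0,\xi_1\rangle\mp\langle x_1,\xi_0\rangle$ terms via Assumption \ref{assumption:data} and the symmetry relation (\ref{eq:pair}); (d) the collision and $|p_1|=\tfrac12$ strata, which vanish or cancel as above. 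Summing identity (a)+(b)+(c)+(d)$=0$, pairing with $\tilde\beta$, and invoking $\delta_\mathit{eq}(\tilde\beta)=0$ then yields precisely $\tilde\iota\circ\tilde d=\pm\,\tilde d\text{-adjoint}$, i.e. that $\tilde\iota$ descends to cohomology, which is the assertion of the lemma.
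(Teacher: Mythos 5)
Your proposal takes a genuinely different and far more elaborate route than the paper, and it contains a conceptual error that stems from confusing this complex with Seidel's original (twisted) one.

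The crucial point you miss is that $\tilde d$ here is the \emph{untwisted} differential $\tilde d(\xi,x)=(d\xi,dx)$, not Seidel's twisted differential $(d\xi, dx-\beta\smile\xi)$ from $\cite{ps4}$. (The remark immediately after the lemma explains exactly why the twisted version is abandoned in this paper.) As a consequence, the first two summands of $\tilde\iota$ already form a chain map on their own: the Poincar\'e pairing $\langle\cdot,\cdot\rangle$ is a chain map, so $\langle dx_0,\xi_1\rangle+(-1)^{|x_0|}\langle x_0,d\xi_1\rangle=0$ and $\langle dx_1,\xi_0\rangle+(-1)^{|x_1|}\langle x_1,d\xi_0\rangle=0$ identically, with no leftover ``cross terms that must be absorbed by the third summand.'' You therefore set up a cancellation problem that does not exist, and the $r=0,\pi$ endpoints of the $\ast_k$-moduli space are not what produces the $\langle x_0,\xi_1\rangle$- and $\langle x_1,\xi_0\rangle$-type terms --- those come directly from differentiating the first two summands.

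What remains after the Poincar\'e terms cancel is just $\sum_k\bigl\langle\beta_k,\,d\xi_0\ast_k\xi_1+(-1)^{|\xi_0|}\xi_0\ast_kd\xi_1\bigr\rangle$, and the paper kills this by a bare degree count, with no moduli-space analysis at all: by property $(\widetilde{\mathrm H})$ the complex $\mathit{CF}^\ast(-\lambda)$ is supported in degrees $\geq n$, so if $|\xi_0|+|\xi_1|=2n-1$ then at least one of $\xi_0,\xi_1$ vanishes, and hence $d\xi_0\ast_k\xi_1+(-1)^{|\xi_0|}\xi_0\ast_kd\xi_1=0$ outright. Your plan to write down the $\ast_k$--$\smile_k$ boundary relation, invoke Assumption \ref{assumption:data}, and use the equivariant cocycle condition is exactly the piece of machinery the paper explicitly declines to develop (``whose description is omitted here since it will not be needed''), and while a careful version of that boundary analysis might be made to close, it reconstructs a substantial chunk of TCFT bookkeeping to prove something that is vacuously true for degree reasons. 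The deeper structural input you reach for (cocycle condition on $\tilde\beta$, vanishing of the $|p_i|=|p_{i+1}|$ strata, Assumption \ref{assumption:data}) is genuinely used elsewhere in the paper --- notably in the derivation of Proposition \ref{proposition:null} and in the proof of Theorem \ref{theorem:pairing} --- but not in this lemma.
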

\begin{proof}
Pick any cochains $(\xi_0,x_0)$ and $(\xi_1,x_1)$ whose degrees add up to $2n-1$, we have
\begin{equation}
\begin{split}
&\tilde{\iota}\left(\left(d\xi_0,dx_0\right),(\xi_1,x_1)\right)+(-1)^{|\xi_0|}\tilde{\iota}\left((\xi_0,x_0),\left(d\xi_1,dx_1\right)\right) \\
&=\langle dx_0,\xi_1\rangle+(-1)^{|\xi_0|}\langle x_0,d\xi_1\rangle-(-1)^{|\xi_1|}\langle x_1,d\xi_0\rangle-\langle dx_1,\xi_0\rangle \\
&+\sum_{k=0}^\infty\left\langle\beta_k,d\xi_0\ast_k\xi_1+(-1)^{|\xi_0|}\xi_0\ast_kd\xi_1\right\rangle \\
&=\sum_{k=0}^\infty\left\langle\beta_k,d\xi_0\ast_k\xi_1+(-1)^{|\xi_0|}\xi_0\ast_kd\xi_1\right\rangle. \\
\end{split}
\end{equation}
Since $\mathit{CF}^\ast(-\lambda)$ is supported in degrees $\ast\geq n$, and $|\xi_0|+|\xi_1|=2n-1$ by our assumption, either $\xi_0$ or $\xi_1$ is zero, which in turn implies that $d\xi_0\ast_k\xi_1+(-1)^{|\xi_0|}\xi_0\ast_kd\xi_1=0$.
\end{proof}

The induced pairing on cohomology will be denoted by
\begin{equation}
\widetilde{I}:\widetilde{H}^\ast\otimes\widetilde{H}^{2n-\ast}\rightarrow\mathbb{K}.
\end{equation}

To each $\tilde{b}$-equivariant Lagrangian sphere $\widetilde{L}=(L,\tilde{\gamma}_L)$, we can associate a cocycle
\begin{equation}\label{eq:Lag-class}
(\xi_L,x_L):=\left(\phi_L^{1,0},(-1)^{n+1}\sum_{k=0}^\infty\phi_L^{2,0;k}(\beta_k)+\left(\phi_L^{1,1}\right)^\vee(\tilde{\gamma}_L)\right)\in\widetilde{C}^n.
\end{equation}
To see that $(\xi_L,x_L)$ is indeed a cocycle, notice first that $\phi_L^{1,0}\in\mathit{CF}^n(-\lambda)$ is by definition a cocycle. On the other hand, our assumption implies that the differential of the second entry of (\ref{eq:Lag-class}) vanishes for degree reasons. Denote by $[\![\widetilde{L}]\!]\in\widetilde{H}^n$ the cohomology class of $(\xi_L,x_L)$. As a simple observation, under the natural projection $\widetilde{H}^n\rightarrow\mathit{HF}^n(-\lambda)$, the class $[\![\widetilde{L}]\!]$ goes to the Floer cohomology class $[\![L]\!]$ defined by (\ref{eq:cocycle}).

\begin{theorem}\label{theorem:pairing}
Let $L\subset M$ be a Lagrangian sphere of dimension $n\geq3$ and $n$ is odd, then
\begin{equation}\label{eq:co-pair}
\widetilde{I}\left([\![\widetilde{L}]\!],[\![\widetilde{L}]\!]\right)=(-1)^{n(n+1)/2}\widetilde{L}\bullet\widetilde{L}.
\end{equation}
\end{theorem}
\begin{proof}
We have
\begin{equation}
\begin{split}
\textrm{ }\textrm{ }\textrm{ }&(-1)^{n(n+1)/2}\mathrm{Str}(\phi_{\widetilde{L},\widetilde{L}}) \\
&=(-1)^{n(n+1)/2}\left(\sum_{k=0}^\infty\mathrm{Str}\left(\phi_{L,L}^{1,2;k}(\beta_k,\cdot)\right)-\mathrm{Str}\left(\mu^2(\tilde{\gamma}_{L},\cdot)\right)+\mathrm{Str}\left(\mu^2(\cdot,\tilde{\gamma}_{L})\right)\right) \\
&=(-1)^{n(n+1)/2}\sum_{k=0}^\infty\mathrm{Str}\left(\phi_{L,L}^{1,2;k}(\beta_k,\cdot)\right)-(-1)^n\sum_{k=0}^\infty\psi_{L,L}^{0,1}\left(\phi_{L}^{1,1;k}(\beta_k)\right) \\
&-\left\langle\phi_{L}^{1,0},\left(\phi_{L}^{1,1}\right)^\vee(\tilde{\gamma}_{L})\right\rangle+(-1)^n\sum_{k=0}^\infty\left(\psi_{L,L}^{0,1}\right)^\vee\left(\phi_{L}^{1,1;k}(\beta_k)\right)+(-1)^n\left\langle\phi_{L}^{1,0},\left(\phi_{L}^{1,1}\right)^\vee(\tilde{\gamma}_{L})\right\rangle,
\end{split}
\end{equation}
where the first line follows from (\ref{eq:endo}), and the second line follows from (\ref{eq:psi1}), (\ref{eq:psi2}) and (\ref{eq:def}). Proposition \ref{proposition:null} applied to the components of $\tilde{\beta}\in\mathit{CF}^1(2\lambda)$ yields
\begin{equation}
\begin{split}
&(-1)^{n(n+1)/2}\sum_{k=0}^\infty\mathrm{Str}\left(\phi_{L,L}^{1,2;k}(\beta_k,\cdot)\right)-(-1)^n\sum_{k=0}^\infty\psi_{L,L}^{0,1}\left(\phi_{L}^{1,1;k}(\beta_k)\right) \\
&+(-1)^n\sum_{k=0}^\infty\left(\psi_{L,L}^{0,1}\right)^\vee\left(\phi_{L}^{1,1;k}(\beta_k)\right) \\
&=-\sum_{k=0}^\infty\left\langle\phi_{L}^{1,0},\phi_{L}^{2,0;k}(\beta_k)\right\rangle+\sum_{k=0}^\infty\left\langle\beta_k,\phi_{L}^{1,0}\ast_k\phi_{L}^{1,0}\right\rangle+(-1)^n\sum_{k=0}^\infty\left\langle\phi_{L}^{1,0},\phi_{L}^{2,0;k}(\beta_k)\right\rangle,
\end{split}
\end{equation}
so one can rewrite
\begin{equation}
\begin{split}
&(-1)^{n(n+1)/2}\mathrm{Str}(\phi_{\widetilde{L},\widetilde{L}})=\left\langle(-1)^{n+1}\sum_{k=0}^\infty\phi_{L}^{2,0;k}(\beta_k)+\left(\phi_{L}^{1,1}\right)^\vee(\tilde{\gamma}_{L}),\phi_{L}^{1,0}\right\rangle \\
&-(-1)^n\left\langle(-1)^{n+1}\sum_{k=0}^\infty\phi_{L}^{2,0;k}(\beta_k)+\left(\phi_{L}^{1,1}\right)^\vee(\tilde{\gamma}_{L}),\phi_{L}^{1,0}\right\rangle+\sum_{k=0}^\infty\left\langle\beta_k,\phi_{L}^{1,0}\ast_k\phi_{L}^{1,0}\right\rangle \\
&=\tilde{\iota}\left((\xi_{L},x_{L}),(\xi_{L},x_{L})\right).
\end{split}
\end{equation}
Our assumption ensures that $\phi_{\widetilde{L},\widetilde{L}}$ is a well-defined chain map, so on the cohomology level we get (\ref{eq:co-pair}).
\end{proof}

\begin{proof}[Proof of Theorem \ref{theorem:main}]
Let $L\subset M$ be any Lagrangian sphere of some odd dimension $n\geq3$, it follows from Lemma \ref{lemma:sphere} that $\widetilde{L}\bullet\widetilde{L}=-\alpha_L$ for some $\alpha_L\neq0$. By Theorem \ref{theorem:pairing}, this implies that $\widetilde{I}\left([\![\widetilde{L}]\!],[\![\widetilde{L}]\!]\right)\neq0$.

By definition, $\mathit{HF}^n(\lambda)\subset\widetilde{H}^n$ is a half-dimensional subspace, which is isotropic for the pairing $\widetilde{I}$. This implies that by projecting to the first factor $\mathit{HF}^n(-\lambda)$, the class $[\![L]\!]$, which is the image of $[\![\widetilde{L}]\!]$, is non-zero. By Definition \ref{definition:H}, (ii), the dual of the PSS map $\mathit{HF}^n(-\lambda)\rightarrow H_\mathit{cpt}^n(M;\mathbb{K})$ is an isomorphism, and the Floer cohomology class $[\![L]\!]$ is therefore mapped to the Poincar\'{e} dual of the ordinary homology class $[L]\in H_n(M;\mathbb{K})$.
\end{proof}

\bigskip

We end this section with a short remark on the signs appeared in various formulas in Sections \ref{section:q} and \ref{section:disjoint}. Basically, we follow the convention in Seidel's original argument, see $\cite{ps4}$, Section (5c) for details. The only difference here is that we are dealing with operations defined using an additional parametrization by $k$ auxiliary marked points, so the orientations of the relevant moduli spaces defining the operations relating open and closed string invariants are fixed by following the original convention of Seidel, and choosing a preferred orientation of the moduli space $\overline{\mathcal{M}}_{k,+}$ of $k$ point angle decorated half-cylinders. The (relative) orientation of $\overline{\mathcal{M}}_{k,+}$ is determined inductively in $\cite{jz}$, Lemmas 4.4.7 and 4.4.11. This allows us to arrange the signs involved in the above computations so that they coincide with that of $\cite{ps4}$ and $\cite{ss}$.

\section{Existence of cyclic dilations}\label{section:existence}

Let $M$ be a $2n$-dimensional Liouville manifold with $c_1(M)=0$. In this section, we consider the existence questions of cyclic dilations. In Section \ref{section:Koszul}, we use Koszul duality to show that the manifold $M_{3,3,3,3}$ admits a cyclic dilation. This example is non-trivial as $M_{3,3,3,3}$ does not admit a quasi-dilation. With the help of Lefschetz fibrations, one can produce more examples of Liouville manifolds which carry cyclic dilations starting from the known ones. This is done in Section \ref{section:Lefschetz}. Section \ref{section:general type} proves the uniqueness of smooth Calabi-Yau structures on the wrapped Fukaya categories of log general type affine varieties containing exact Lagrangian $K(\pi,1)$'s, from which Theorem \ref{theorem:unique} follows as a corollary. The discussions in Section \ref{section:conjecture} are mostly speculative, they are included here merely as supplements to Section \ref{section:trichotomy}.

\subsection{Koszul duality}\label{section:Koszul}

Although for the most part of this paper we have been taking a geometric viewpoint, dealing with cyclic dilations in equivariant symplectic cohomologies instead of exact Calabi-Yau structures on wrapped Fukaya categories, this subsection is an exception. Here we shall return to the original notion of an exact Calabi-Yau structure (Definition \ref{definition:key}) which motivates the whole paper, and study it essentially from the algebraic perspective, based on a result of Van den Bergh (Theorem \ref{theorem:Koszul}).
\bigskip

Before we proceed, recall that smooth Calabi-Yau structures are Morita invariant, so it makes no difference to study smooth Calabi-Yau structures on an $A_\infty$-algebra $\mathcal{A}$ over some semisimple ring $\Bbbk$, or to consider them as Calabi-Yau structures on the $A_\infty$-category $\mathcal{A}^\mathit{perf}$. See $\cite{cg}$, Theorem 3.1 for an explanation of this fact.
\bigskip

One of the main ingredients of our proof of Theorem \ref{theorem:Fano} is the following theorem due to Van den Bergh $\cite{mv}$, which enables us to characterize a large class of exact Calabi-Yau $A_\infty$-algebras in terms of its Koszul dual. Recall that a \textit{cyclic $A_\infty$-algebra} $\mathcal{B}$ over $\Bbbk$ is an $A_\infty$-algebra equipped with a chain level perfect pairing
\begin{equation}
\langle\bullet,\bullet\rangle:\mathcal{B}\otimes\mathcal{B}\rightarrow\Bbbk[-n]
\end{equation}
such that the induced correlation functions
\begin{equation}
\left\langle\mu_\mathcal{B}^k(\bullet,\cdots,\bullet),\bullet\right\rangle
\end{equation}
are strictly (graded) cyclically symmetric for each $k\geq1$.

\begin{theorem}[$\cite{mv}$, Theorem 11.1]\label{theorem:Koszul}
Let $\mathcal{A}$ be a homologically smooth, complete, augmented dg algebra over $\Bbbk$, so that $H^\ast(\mathcal{A})$ is concentrated in degrees $\leq0$. Denote by $\mathcal{A}^!:=R\mathrm{Hom}_\mathcal{A}(\Bbbk,\Bbbk)$ the Koszul dual of $\mathcal{A}$. Then the following statements are equivalent:
\begin{itemize}
	\item $\mathcal{A}^!$ is a proper $A_\infty$-algebra which, up to quasi-isomorphism, carries a minimal cyclic $A_\infty$-structure of degree $n$.
	\item $\mathcal{A}$ is exact $n$-Calabi-Yau.
\end{itemize}
\end{theorem}

Here, by \textit{complete} we mean the underlying associative algebra of $\mathcal{A}$ is a quotient of the path algebra of some quiver completed at path length. 

Theorem \ref{theorem:Koszul} should be understood in the more general framework of Koszul duality between Calabi-Yau structures, which we now describe. Recall that over a field $\mathbb{K}$ of characteristic 0, cyclic $A_\infty$-structures provide explicit models for the more general notion of a proper Calabi-Yau structure. Precisely, a \textit{proper $n$-Calabi-Yau structure} on a proper $A_\infty$-algebra $\mathcal{B}$ over $\Bbbk$ is defined as a degree $n$ chain map
\begin{equation}
\widetilde{\mathit{tr}}:\mathit{CC}_\ast(\mathcal{B})\rightarrow\mathbb{K}[-n]
\end{equation}
whose projection to the Hochschild complex, $\mathit{tr}:\mathit{CH}_\ast(\mathcal{B})\rightarrow\mathbb{K}[-n]$, defines a \textit{weak proper $n$-Calabi-Yau structure}, i.e. it induces a perfect pairing
\begin{equation}
\begin{split}
&H^\ast\left(\hom_{\mathcal{B}^\mathit{perf}}(\mathcal{P},\mathcal{Q})\right)\otimes H^{n-\ast}\left(\hom_{\mathcal{B}^\mathit{perf}}(\mathcal{Q},\mathcal{P})\right)\xrightarrow{\left[\mu_\mathcal{B}^2\right]}H^n\left(\hom_{\mathcal{B}^\mathit{perf}}(\mathcal{Q},\mathcal{Q})\right) \\
&\rightarrow\mathit{HH}_n(\mathcal{B})\xrightarrow{[\mathit{tr}]}\mathbb{K}.
\end{split}
\end{equation}
When $\mathrm{char}(\mathbb{K})=0$, any cyclic $A_\infty$-algebra $\mathcal{B}$ over $\Bbbk$ has a canonically defined proper Calabi-Yau structure, and any proper Calabi-Yau structure on an $A_\infty$-algebra $\mathcal{B}$ determines a quasi-isomorphism between $\mathcal{B}$ and a cyclic $A_\infty$-algebra, the latter fact is due to Kontsevich-Soibelman $\cite{ks1}$. It is proved by Ganatra ($\cite{sg1}$, Theorem 2) that any full subcategory of the compact Fukaya category $\mathcal{F}(M)$ admits a geometrically defined proper Calabi-Yau structure (under certain technical assumptions which ensure that the Fukaya category $\mathcal{F}(M)$ is well-defined). As a consequence, we have the following:

\begin{proposition}[$\cite{sg1}$, Corollary 2]\label{proposition:proper-CY}
Let $M$ be a Liouville manifold with $c_1(M)=0$. If $\mathrm{char}(\mathbb{K})=0$, then any full $A_\infty$-subcategory of the compact Fukaya category $\mathcal{F}(M)$ admits a minimal cyclic $A_\infty$-structure.
\end{proposition}

If $\mathcal{A}$ and $\mathcal{B}$ are Koszul dual $A_\infty$-algebras, then there is a duality (cf. \cite{jm}, Section 4)
\begin{equation}
\mathit{CH}_{\ast-n}(\mathcal{A})\cong\hom\left(\mathit{CH}_{\ast+n}(\mathcal{B}),\mathbb{K}\right)
\end{equation}
between their Hochschild chains, which suggests that under Koszul duality, non-degenerate cycles in $\mathit{CH}_{-n}(\mathcal{A})$ should correspond to maps $\mathit{CH}_{\ast+n}(\mathcal{B})\rightarrow\mathbb{K}$ which induce proper Calabi-Yau structures on $\mathcal{B}$.

\begin{theorem}[Cohen-Ganatra]\label{theorem:duality-CY}
Let $\mathcal{A}$ be a homologically smooth dg algebra over $\Bbbk$, and let $\mathcal{A}^!$ be a proper $A_\infty$-algebra so that $\mathcal{A}$ and $\mathcal{A}^!$ are Koszul dual. Then $\mathcal{A}$ carries a smooth Calabi-Yau structure if and only if $\mathcal{A}^!$ is a proper Calabi-Yau $A_\infty$-algebra.
\end{theorem}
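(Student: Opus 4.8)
\textbf{Proof proposal for Theorem \ref{theorem:duality-CY}.}

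The plan is to deduce the statement from the Koszul duality between Hochschild invariants together with the Morita-type comparison of Calabi-Yau structures recorded above. First I would set up the precise form of Koszul duality we need: since $\mathcal{A}$ is homologically smooth and $\mathcal{A}^!$ is proper with $\mathcal{A}$ and $\mathcal{A}^!$ Koszul dual (so that $\mathcal{A}^!\cong R\mathrm{Hom}_\mathcal{A}(\Bbbk,\Bbbk)$ and, by the double-centralizer property over a field of characteristic $0$, $\mathcal{A}\cong R\mathrm{Hom}_{\mathcal{A}^!}(\Bbbk,\Bbbk)$ on the relevant module subcategories), there is a quasi-isomorphism of mixed complexes (i.e.\ of $S^1$-complexes in the sense of Section \ref{section:S1cpx})
\begin{equation}
\mathit{CC}_\ast^-(\mathcal{A})\cong R\mathrm{Hom}_\mathbb{K}\!\left(\mathit{CC}^\ast_{-\ast}(\mathcal{A}^!),\mathbb{K}\right)[-2n'],
\end{equation}
compatible with Connes' differential $B$ on one side and the cyclic differential on the other, which on the level of Hochschild homology restricts to the duality $\mathit{CH}_{\ast-n}(\mathcal{A})\cong\hom(\mathit{CH}_{\ast+n}(\mathcal{B}),\mathbb{K})$ quoted just before the theorem statement. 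This is the content of Van den Bergh's computation and of Keller's comparison of the $S^1$-equivariant structures under Koszul duality; I would cite $\cite{mv,cg}$ for it rather than reprove it.

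Next I would translate the two Calabi-Yau conditions through this duality. A smooth Calabi-Yau structure on $\mathcal{A}$ is, by Kontsevich-Vlassopoulos, a negative cyclic cycle $\widetilde{\eta}\in\mathit{CC}_{-n}^-(\mathcal{A})$ whose image in $\mathit{CH}_{-n}(\mathcal{A})$ is a non-degenerate Hochschild class, i.e.\ induces the bimodule isomorphism $\mathcal{A}^\vee[n]\cong\mathcal{A}$. Under the displayed quasi-isomorphism such a $\widetilde{\eta}$ corresponds precisely to a degree $n$ chain map $\widetilde{\mathit{tr}}:\mathit{CC}_\ast(\mathcal{A}^!)\rightarrow\mathbb{K}[-n]$ (a negative cyclic \emph{co}cycle becomes a cyclic \emph{cycle} after dualizing, since $\mathit{CC}^-$ and $\mathit{CC}$ are interchanged by $\mathbb{K}$-linear duality together with the $u\mapsto u^{-1}$ flip), whose underlying Hochschild-level map $\mathit{tr}:\mathit{CH}_\ast(\mathcal{A}^!)\rightarrow\mathbb{K}[-n]$ is the $\mathbb{K}$-dual of the non-degenerate class $[\eta]$. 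The heart of the argument is then to check that non-degeneracy of $[\eta]$ as a smooth Calabi-Yau class (the bimodule isomorphism above) is equivalent to non-degeneracy of $[\mathit{tr}]$ as a weak proper Calabi-Yau structure (the perfect pairing on $H^\ast\hom_{(\mathcal{A}^!)^\mathit{perf}}(\mathcal{P},\mathcal{Q})$ displayed before the theorem). Here I would use that Koszul duality identifies $\mathcal{A}$-$\mathcal{A}$-bimodules built from $\mathcal{A}$ with $\mathcal{A}^!$-$\mathcal{A}^!$-bimodules built from $\mathcal{A}^!$ — in particular it sends the diagonal bimodule $\mathcal{A}$ to $(\mathcal{A}^!)^\vee$ and the inverse dualizing bimodule $\mathcal{A}^\vee[n]$ to $\mathcal{A}^![-n]$ up to the relevant shift — so that the map ``cap with $\eta$'' on one side is intertwined with ``cap with $\mathit{tr}$'' on the other, and an isomorphism of objects in one (derived) bimodule category is an isomorphism in the other. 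Finally, invoking the fact recalled above that over $\mathrm{char}(\mathbb{K})=0$ a proper Calabi-Yau structure on $\mathcal{A}^!$ is the same datum as a cyclic $A_\infty$-structure on $\mathcal{A}^!$ up to quasi-isomorphism (Kontsevich-Soibelman $\cite{ks1}$), one may phrase the conclusion in either language; this also makes the compatibility with Theorem \ref{theorem:Koszul} transparent, since in the situation of that theorem the extra hypothesis $H^\ast(\mathcal{A})$ concentrated in degrees $\leq0$ is exactly what upgrades ``smooth Calabi-Yau'' to ``exact Calabi-Yau''.

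The main obstacle I expect is the bookkeeping in the middle step: making the quasi-isomorphism of $S^1$-complexes genuinely compatible with the \emph{non-degeneracy} data, not just with the underlying complexes. One has to verify that the class in $\mathit{HH}_{-n}(\mathcal{A})$ obtained from a given $\widetilde{\eta}$ and the class $[\mathit{tr}]\in\hom(\mathit{HH}_n(\mathcal{A}^!),\mathbb{K})$ obtained by dualizing it induce the \emph{same} pairing after one identifies $R\mathrm{Hom}_{\mathcal{A}^e}(\mathcal{A},\mathcal{A})$ with the relevant Hochschild cochains on $\mathcal{A}^!$; the signs and degree shifts ($[-n]$ versus $[-2n]$ versus $[+n]$ appearing at various points, and the $u$-degree conventions of Section \ref{section:S1cpx}) must be tracked carefully, and it is here that characteristic $0$ and the finiteness hypotheses (homological smoothness of $\mathcal{A}$, properness of $\mathcal{A}^!$, completeness) are genuinely used so that the double-centralizer isomorphism and the perfectness of all the bimodules in play actually hold. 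Since Cohen-Ganatra prove a statement of exactly this shape ($\cite{cg}$, Theorem 25), I would organize the write-up as: (1) recall their statement; (2) explain the two minor points in which our formulation differs — we phrase the proper side directly in terms of the pairing on $\mathcal{A}^!$-modules rather than in terms of a coalgebra, and we do not assume $\mathcal{A}$ is a Ginzburg-type algebra — and (3) observe that neither change affects the proof, so the theorem follows.
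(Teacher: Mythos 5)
The paper gives no proof of this theorem: it is presented as ``a slight variant of $\cite{cg}$, Theorem 25'' and simply cited, with only the Hochschild-level duality $\mathit{CH}_{\ast-n}(\mathcal{A})\cong\hom(\mathit{CH}_{\ast+n}(\mathcal{B}),\mathbb{K})$ offered as motivation. Your proposal, which sketches the expected argument (identifying cyclic and negative-cyclic complexes under Koszul duality, then transporting non-degeneracy through the induced identification of bimodules) and ultimately defers the degree and $u$-grading bookkeeping to that same reference, therefore takes the same route as the paper, and your sketch is a reasonable account of what the cited theorem establishes.
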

\begin{proof}
This is a slight variant of $\cite{cg}$, Theorem 25, where $\mathcal{A}$ is required to be \textit{strongly smooth} (cf. $\cite{cg}$, Definition 3), which means $\mathcal{A}$ is homologically smooth and $\Bbbk$ is perfect as a module over $\mathcal{A}$. This latter property is needed to ensure that $\mathcal{A}^!$ is proper, which we have included in the assumption.
\end{proof}

From this perspective, the content of Theorem \ref{theorem:Koszul} can be understood as saying that if we further impose the assumptions that $\mathcal{A}$ is complete and supported in non-positive degrees, then the Calabi-Yau structure on $\mathcal{A}$ induced by the proper Calabi-Yau structure on $\mathcal{A}^!$ is not only smooth, but also exact.
\bigskip

Geometrically, the $A_\infty$-Koszul duality between the endomorphism algebras of a set of generators in $\mathcal{F}(M)$ and $\mathcal{W}(M)$ is first studied by Etg\"{u}-Lekili in $\cite{etl1}$ when $M$ is a plumbing of $T^\ast S^2$'s according to a Dynkin tree and later generalized in $\cite{ekl}$ and $\cite{yl}$ to many interesting examples of Liouville manifolds in higher dimensions. More precisely, denote by $\mathcal{F}_M$ and $\mathcal{W}_M$ the $A_\infty$-algebras of some fixed sets of split-generators of $\mathcal{F}(M)$ and $\mathcal{W}(M)$ respectively, and assume in addition that both of these sets are indexed by the same finite set $\Gamma$. By saying that the Fukaya categories $\mathcal{F}(M)$ and $\mathcal{W}(M)$ are Koszul dual, we mean that there are quasi-isomorphisms between augmented $A_\infty$-algebras
\begin{equation}\label{eq:Koszul}
R\mathrm{Hom}_{\mathcal{F}_M}(\Bbbk,\Bbbk)\cong\mathcal{W}_M,R\mathrm{Hom}_{\mathcal{W}_M}(\Bbbk,\Bbbk)\cong\mathcal{F}_M,
\end{equation}
where $\Bbbk:=\bigoplus_{v\in\Gamma}\mathbb{K}e_v$ is the semisimple ring consisting of $|\Gamma|$ copies of the ground field $\mathbb{K}$, and it is regarded as a left $\mathcal{F}_M$-module in the first quasi-isomorphism, and a right $\mathcal{W}_M$-module in the second quasi-isomorphism above.
\bigskip

We will need several results from $\cite{ekl}$, which enables us to verify the completeness of the $A_\infty$-algebra $\mathcal{W}_M$ required in Theorem \ref{theorem:Koszul}.

Fix a finite set $\Gamma$. Let $\overline{M}_{-\Lambda}$ be a $2n$-dimensional Weinstein domain, with its Liouville form denoted by $\theta_M$. For each $v\in\Gamma$, let $\overline{L}_v\subset\overline{M}_{-\Lambda}$ be an oriented, connected, $\mathit{Spin}$ Lagrangian submanifold with vanishing Maslov class, such that its boundary $\partial\overline{L}_v\subset\partial\overline{M}_{-\Lambda}$ defines a Legendrian \textit{sphere} $\Lambda_v$ with respect to the contact structure on $\partial\overline{M}_{-\Lambda}$ defined by the restriction of $\theta_M$. Moreover, we assume that different $\overline{L}_v$'s intersect with each other transversely, and the intersections happen only in the interior of $\overline{M}_{-\Lambda}$. In particular, the Legendrian spheres $\Lambda_v$'s are disjoint from each other in $\partial\overline{M}_{-\Lambda}$, together they form a link $\Lambda:=\bigsqcup_{v\in\Gamma}\Lambda_v$. Attaching $n$-handles to $\overline{M}_{-\Lambda}$ along the Legendrian link $\Lambda$ gives rise to a new Weinstein domain $\overline{M}$. Note that $\overline{M}$ contains a set of closed Lagrangian submanifolds $\{L_v\}_{v\in\Gamma}$, which are unions of $\overline{L}_v$ with the Lagrangian core discs of the Weinstein handles attached along $\Lambda$. Define
\begin{equation}
\mathcal{V}_M:=\bigoplus_{v,w\in\Gamma}\mathit{CF}^\ast(L_v,L_w),
\end{equation}
to be the Fukaya $A_\infty$-algebra of these Lagrangian submanifolds, which is well-defined and $\mathbb{Z}$-graded with our assumptions on the $\overline{L}_v$'s. This is an $A_\infty$-algebra over $\Bbbk$. For simplicity, we shall assume that $\mathcal{V}_M$ is strictly unital. Otherwise there is always a standard algebraic procedure which replaces it with a quasi-isomorphic $A_\infty$-algebra which is strictly unital ($\cite{ps1}$, Lemma 2.1). On the other hand, the Legendrian link $\Lambda\subset\partial\overline{M}_{-\Lambda}$ also has an associated dg algebra, the Chekanov-Eliashberg algebra $\mathit{CE}^\ast(\Lambda)$. Denote by $\mathcal{R}$ the set of Reeb chords ending on $\Lambda$, we have
\begin{equation}
\mathit{CE}^\ast(\Lambda):=\bigoplus_{i=0}^\infty\mathbb{K}\langle\mathcal{R}\rangle^{\otimes i},
\end{equation}
with the differential defined by counting anchored holomorphic discs with boundary punctures in the symplectization $\mathbb{R}\times\partial\overline{M}_{-\Lambda}$, whose boundary components lie in the Lagrangian submanifold $\mathbb{R}\times\Lambda$, and whose punctures are asymptotic to the Reeb chords in $\mathcal{R}$, see $\cite{bee}$, Section 4.1. Note that $\mathit{CE}^\ast(\Lambda)$ can also be realized as a dg algebra over $\Bbbk$, by declaring $e_w\mathcal{R}e_v$ to be the set of Reeb chords from $\Lambda_w$ to $\Lambda_v$. Since the union of the Lagrangian submanifolds $\bigcup_{v\in\Gamma}\overline{L}_v\subset\overline{M}_{-\Lambda}$ gives a filling of the Legendrian link $\Lambda$, there is an induced augmentation
\begin{equation}\label{eq:aug}
\varepsilon_L:\mathit{CE}^\ast(\Lambda)\rightarrow\Bbbk,
\end{equation}
which equips $\mathit{CE}^\ast(\Lambda)$ with the structure of an augmented dg algebra over $\Bbbk$.

By $\cite{ekl}$, Theorem 4, we have the following quasi-isomorphism, which should be understood as a general version of the Eilenberg-Moore equivalence:
\begin{equation}\label{eq:EM}
R\mathrm{Hom}_{\mathit{CE}^\ast(\Lambda)}(\Bbbk,\Bbbk)\cong\mathcal{V}_M.
\end{equation}

Using the augmentation (\ref{eq:aug}), we can write
\begin{equation}\label{eq:CE}
\mathit{CE}^\ast(\Lambda)=\Omega\mathit{LC}_\ast(\Lambda),
\end{equation}
where $\Omega$ is the Adams cobar construction, and $\mathit{LC}_\ast(\Lambda)$ is an $A_\infty$-coalgebra over $\Bbbk$ whose underlying $\Bbbk$-bimodule is generated by $\mathcal{R}$, which is called the \textit{Legendrian $A_\infty$-coalgebra} in $\cite{ekl}$, whose linear dual $\mathit{LC}_\ast(\Lambda)^\#$ is quasi-isomorphic to the Fukaya $A_\infty$-algebra $\mathcal{V}_M$ defined above. On the other hand, the \textit{completed Chekanov-Eliashberg dg algebra} is defined to be
\begin{equation}\label{eq:completed-CE}
\widehat{\mathit{CE}}^\ast(\Lambda):=(\mathrm{B}\mathcal{V}_M)^\#,
\end{equation}
where on the right-hand side the bar construction is taken with respect to the trivial augmentation $\varepsilon:\mathcal{V}_M\rightarrow\Bbbk$ defined by projecting to the idemponents in the degree 0 part.
\bigskip

By (\ref{eq:CE}) we have
\begin{equation}\label{eq:LC}
\mathit{CE}^\ast(\Lambda)=\Bbbk\oplus\bigoplus_{i=1}^\infty\overline{\mathit{LC}}_\ast(\Lambda)[-1]^{\otimes_\Bbbk i},
\end{equation}
where $\overline{\mathit{LC}}_\ast(\Lambda)\subset\mathit{LC}_\ast(\Lambda)$ is the submodule obtained by quotienting out the idempotents $e_v$ of $\Bbbk$ . It follows from the definition (\ref{eq:completed-CE}) that as a graded algebra over $\Bbbk$,
\begin{equation}
\widehat{\mathit{CE}}^\ast(\Lambda)=\Bbbk\langle\langle\overline{\mathit{LC}}_\ast(\Lambda)[-1]\rangle\rangle,
\end{equation}
which is the completed tensor algebra of $\mathbb{K}\langle\overline{\mathit{LC}}_\ast(\Lambda)[-1]\rangle$, regarded as a module over $\Bbbk$. In particular, there is a completion map
\begin{equation}\label{eq:completion-map}
\phi:\mathit{CE}^\ast(\Lambda)\rightarrow\widehat{\mathit{CE}}^\ast(\Lambda).
\end{equation}
Define a quiver $Q_\Lambda$ so that its vertices correspond to elements of $\Gamma$ , and its arrows are in correspondence with the set of Reeb chords $\mathcal{R}$. More precisely, for $v,w\in\Gamma$, there is an arrow from $v$ to $w$ for every Reeb chord in $\mathcal{R}$ from $\Lambda_v$ to $\Lambda_w$. In this way, the underlying $\Bbbk$-algebra of $\mathit{CE}^\ast(\Lambda)$ is the path algebra of $Q_\Lambda$, while the underlying $\Bbbk$-algebra of $\widehat{\mathit{CE}}^\ast(\Lambda)$ is the completed path algebra $\widehat{\mathbb{K}Q_\Lambda}$. In particular, $\widehat{\mathit{CE}}^\ast(\Lambda)$ is a complete augmented dg algebra in the sense of Theorem \ref{theorem:Koszul}.
\bigskip

We now apply Theorem \ref{theorem:Koszul} to concrete geometric situations. As a quick application, let $T$ be a tree with vertex set $T_0$. For each $v\in T_0$ we associate a simply-connected closed manifold $L_v$ of dimension $n\geq3$. For simplicity, we also assume that $L_v$ is $\mathit{Spin}$. Denote by $M_T$ the result of plumbing the cotangent bundles $T^\ast L_v$ according to the tree $T$. As a Weinstein manifold, $M_T$ admits a handlebody decomposition, whose associated subcritical Weinstein manifold is the plumbing of $T^\ast\overline{L}_v$ according to $T$, with $\overline{L}_v$ being the manifold with boundary obtained by carving out an open disc from $L_v$. Denote by $\Lambda_T=\bigsqcup_v\Lambda_v$ the union of the boundaries of the manifolds $\{L_v\}_{v\in T_0}$.

\begin{proposition}\label{proposition:plumbing}
The wrapped Fukaya category $\mathcal{W}(M_T)$ carries an exact Calabi-Yau structure.
\end{proposition}
\begin{proof}
According to \cite{ekl}, Theorem 83 (see also \cite{bee,te}), there is a surgery map
\begin{equation}\label{eq:surgery}
\Theta:\mathcal{W}_{M_T}\rightarrow\mathit{CE}^\ast(\Lambda_T),
\end{equation}
which induces an isomorphism on homologies. In particular, the dg algebra $\mathit{CE}^\ast(\Lambda_T)$ is homologically smooth. It suffices to show that the Chekanov-Eliashberg dg algebra $\mathit{CE}^\ast(\Lambda_T)$ is an exact Calabi-Yau algebra.

It follows from the proof of $\cite{ekl}$, Theorem 68 that $\mathit{CE}^\ast(\Lambda_T)$ is quasi-isomorphic to a dg algebra concentrated in degrees $\leq0$, and it is Koszul dual to the Fukaya $A_\infty$-algebra $\mathcal{F}_{M_T}$ of the compact cores $\{L_v\}_{v\in T_0}$. Combined with (\ref{eq:completed-CE}), we have $\mathit{CE}^\ast(\Lambda_T)\cong\widehat{\mathit{CE}}^\ast(\Lambda_T)$, so the completion map $\phi$ is a quasi-isomorphism. $\mathit{CE}^\ast(\Lambda_T)$ is therefore a complete dg algebra in the sense of Theorem \ref{theorem:Koszul}. Since $\mathrm{char}(\mathbb{K})=0$, Proposition \ref{proposition:proper-CY} implies that up to quasi-isomorphism, $\mathcal{F}_{M_T}$ carries a minimal cyclic $A_\infty$-structure. Now the conclusion follows from Theorem \ref{theorem:Koszul}.
\end{proof}

Note that this gives an alternative way of seeing that the cotangent bundle $T^\ast Q$ of a simply-connected manifold $Q$ admits a cyclic dilation, compare with our discussions at the end of Section \ref{section:Gysin}. It is an interesting question whether the Weinstein manifolds $M_T$ admit higher dilations.
\bigskip

Our second application deals with the specific case of the affine hypersurface $M_{3,3,3,3}\subset\mathbb{C}^4$. Recall that the Liouville 6-manifold $M_{3,3,3,3}$ arises as the Milnor fiber associated to the isolated singularity at the origin
\begin{equation}
x^3+y^3+z^3+w^3=0,
\end{equation}
which is known as a \textit{3-fold triple point}. The smoothing of this singularity has been studied by Smith-Thomas $\cite{st}$, according which we know that there is a basis of vanishing cycles in $M_{3,3,3,3}$ which consists of a configuration of 16 Lagrangian spheres, whose intersection pattern is indicated in Figure \ref{fig:vs}, where each arc represents a Lagrangian sphere.

\begin{figure}
	\centering
	\begin{tikzpicture}
	\draw [blue] (0,0) to (0.9,0);
	\draw [blue] (1.1,0) to (1.9,0);
	\draw [blue] (2.1,0) to (2.9,0);
	\draw [blue] (0,1) to (0.9,1);
	\draw [blue] (1.1,1) to (1.9,1);
	\draw [blue] (2.1,1) to (2.9,1);
	\draw [blue] (0,2) to (0.9,2);
	\draw [blue] (1.1,2) to (1.9,2);
	\draw [blue] (2.1,2) to (2.9,2);
	\draw [blue] (1,3) to (1,0);
	\draw [blue] (2,3) to (2,0);
	\draw [blue] (3,3) to (3,0);
	\draw [blue] (3.1,2) to [in=60,out=0] (5,-2);
	\draw [blue] (3.1,1) to [in=75,out=0] (4.7,-1.7);
	\draw [blue] (3.1,0) to [in=90,out=0] (4.4,-1.4);
	\draw [blue] (1,0) to [in=-150,out=-90] (5.1,-2.1);
	\draw [blue] (2,0) to [in=-175,out=-90] (4.8,-1.8);
	\draw [blue] (3,0) to [in=180,out=-90] (4.5,-1.5);
	\draw [green] (4,-1) to (5.5,-2.5);
	\draw [red] (0.8,1.5) to (1.5,2.2);
	\draw [red] (1.8,1.5) to (2.5,2.2);
	\draw [red] (2.8,1.5) to (3.5,2.2);
	\draw [red] (0.8,0.5) to (1.5,1.2);
	\draw [red] (0.8,-0.5) to (1.5,0.2);
	\draw [red] (1.8,0.5) to (2.5,1.2);
	\draw [red] (1.8,-0.5) to (2.5,0.2);
	\draw [red] (2.8,0.5) to (3.5,1.2);
	\draw [red] (2.8,-0.5) to (3.5,0.2);
	
	\node [blue] at (1,3.3) {$V_{\gamma1}$};
	\node [blue] at (2,3.3) {$V_{\gamma2}$};
	\node [blue] at (3,3.3) {$V_{\gamma3}$};
	\node [blue] at (-0.3,2) {$V_{1\gamma}$};
	\node [blue] at (-0.3,1) {$V_{2\gamma}$};
	\node [blue] at (-0.3,0) {$V_{3\gamma}$};
	\node [green] at (5.75,-2.75) {$V_{\gamma\gamma}$};
	\node [red] at (1.5,2.4) {$V_{11}$};
	\node [red] at (2.5,2.4) {$V_{12}$};
	\node [red] at (3.5,2.4) {$V_{13}$};
	\node [red] at (1.5,1.4) {$V_{21}$};
	\node [red] at (2.5,1.4) {$V_{22}$};
	\node [red] at (3.5,1.4) {$V_{23}$};
	\node [red] at (1.5,0.4) {$V_{31}$};
	\node [red] at (2.5,0.4) {$V_{32}$};
	\node [red] at (3.5,0.4) {$V_{33}$};
	\end{tikzpicture}
	\caption{Configuration of vanishing cycles in $M_{3,3,3,3}$, note that the spheres coloured in blue are mutually disjoint}
	\label{fig:vs}
\end{figure}

One obtains from this the Legendrian surgery description of $M_{3,3,3,3}$:

\begin{lemma}
The Milnor fiber $M_{3,3,3,3}$ is the result of attaching Weinstein 3-handles to $D^6$ along a Legendrian surface $\Lambda_{3,3,3,3}\subset(S^5,\xi_\mathit{std})$, which is a disjoint union of 16 standard unknotted Legendrian $S^2$'s. Up to Legendrian isotopy, the Legendrian fronts of 10 of the components in link $\Lambda_{3,3,3,3}$ are depicted in Figure \ref{fig:front}, where each Legendrian unknot in the picture should be understood as a 2-sphere obtained by spinning the 1-dimensional unknot around along the vertical axis of symmetry of its front projection. The remaining 6 components $\Lambda_{12},\Lambda_{13},\Lambda_{21},\Lambda_{23},\Lambda_{31},\Lambda_{32}$ are unknots linking $\Lambda_{1\gamma}$ and $\Lambda_{\gamma2}$, $\Lambda_{1\gamma}$ and $\Lambda_{\gamma3}$, $\Lambda_{2\gamma}$ and $\Lambda_{\gamma1}$, $\Lambda_{2\gamma}$ and $\Lambda_{\gamma3}$, $\Lambda_{3\gamma}$ and $\Lambda_{\gamma1}$, and $\Lambda_{3\gamma}$ and $\Lambda_{\gamma2}$ respectively, with all the linking numbers being $\pm1$. They are pairwise disjoint and disjoint from $\Lambda_{\gamma\gamma}$.
\end{lemma}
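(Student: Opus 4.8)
The plan is to derive this handle presentation from the smoothing picture of Smith-Thomas together with the Legendrian surgery calculus for plumbings. Being the Milnor fiber of an isolated hypersurface singularity in $\mathbb{C}^4$, $M_{3,3,3,3}$ is $2$-connected and homotopy equivalent to $\bigvee^{16}S^3$, the number $16=(3-1)^4$ being the Milnor number; consequently it admits a Weinstein handle decomposition with a single $0$-handle and sixteen $3$-handles, the lower-index handles being cancellable because the Milnor fiber is $2$-connected. By $\cite{st}$ the cores of these $3$-handles may be taken to be the Lefschetz thimbles over a distinguished basis of vanishing cycles, realized as the $16$ Lagrangian $3$-spheres $V_{ij},V_{i\gamma},V_{\gamma j},V_{\gamma\gamma}$ whose mutual intersections are exactly those recorded in Figure \ref{fig:vs}. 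Each such sphere has a standard Weinstein neighborhood $T^\ast S^3$, and a neighborhood of the union of the vanishing spheres is the plumbing $P_G$ of $16$ copies of $T^\ast S^3$ along the graph $G$ underlying Figure \ref{fig:vs} (vertices $=$ spheres, edges $=$ transverse intersection points), onto which $M_{3,3,3,3}$ deformation retracts. This reduces the statement to identifying the Legendrian link $\Lambda_{3,3,3,3}\subset(S^5,\xi_\mathit{std})$ obtained by pushing the vanishing spheres to the boundary of the $0$-handle.

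Here I would invoke the Bourgeois-Ekholm-Eliashberg surgery calculus $\cite{bee}$, in the detailed form of $\cite{te}$ and as already used in $\cite{ekl}$ to present plumbings of cotangent bundles by Legendrian surgery: the plumbing $P_G$ is Weinstein homotopic to the result of attaching Weinstein $3$-handles to $D^6$ along a Legendrian link whose components are standard Legendrian unknots --- in dimension $3$ the $S^2$-spun unknots pictured in Figure \ref{fig:front}, carrying no nontrivial formal rotation data because the vanishing spheres have vanishing Maslov class --- and such that two of the resulting Lagrangian $3$-spheres meet transversely in a single point exactly when the corresponding components have linking number $\pm1$, and are disjoint when the components are unlinked. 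It then only remains to read the linking pattern off Figure \ref{fig:vs}. The seven ``blue'' spheres $V_{i\gamma},V_{\gamma j},V_{\gamma\gamma}$ are pairwise disjoint, so $\Lambda_{i\gamma},\Lambda_{\gamma j},\Lambda_{\gamma\gamma}$ form an unlink; each ``red'' sphere $V_{ij}$ meets exactly $V_{i\gamma}$ and $V_{\gamma j}$ once and the red spheres are mutually disjoint, so $\Lambda_{ij}$ links $\Lambda_{i\gamma}$ and $\Lambda_{\gamma j}$ with linking number $\pm1$ and is otherwise unlinked; and $V_{\gamma\gamma}$ meets each of $V_{1\gamma},V_{2\gamma},V_{3\gamma},V_{\gamma1},V_{\gamma2},V_{\gamma3}$ once, so $\Lambda_{\gamma\gamma}$ links each of these. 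This is precisely the link asserted: the ten components $\Lambda_{1\gamma},\Lambda_{2\gamma},\Lambda_{3\gamma},\Lambda_{\gamma1},\Lambda_{\gamma2},\Lambda_{\gamma3},\Lambda_{\gamma\gamma},\Lambda_{11},\Lambda_{22},\Lambda_{33}$ have the fronts drawn in Figure \ref{fig:front}, and the remaining six are the off-diagonal unknots $\Lambda_{ij}$ with $i\neq j$, each linking $\Lambda_{i\gamma}$ and $\Lambda_{\gamma j}$.

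The hard part is the first paragraph: turning the topological and symplectic input of $\cite{st}$ into an honest Weinstein handle presentation over $D^6$. One must know not only that a neighborhood of the union of the vanishing spheres is the plumbing $P_G$, but that the inclusion $P_G\hookrightarrow M_{3,3,3,3}$ is a Weinstein deformation equivalence --- so that no further handles are hidden --- and that the attaching spheres appear in $(S^5,\xi_\mathit{std})$ as \emph{standard} unknots with the expected Thurston-Bennequin and rotation invariants rather than knotted or twisted Legendrians. The deformation equivalence can be obtained by a handle-trading argument exploiting the $2$-connectivity of the Milnor fiber, while the unknottedness follows because for $n\geq3$ the space of formal Legendrian embeddings isotopic to the unknot is connected once the Maslov class vanishes; what requires genuine care is keeping track of these identifications simultaneously with the combinatorics of Figure \ref{fig:vs}, and it is here that the explicit iterated-plumbing model of Smith-Thomas is indispensable.
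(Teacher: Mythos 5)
The approach you propose — topological handle counting plus the claim that $M_{3,3,3,3}$ is the plumbing $P_G$ of sixteen copies of $T^*S^3$ along the intersection graph of Figure \ref{fig:vs}, followed by the Bourgeois--Ekholm--Eliashberg surgery calculus — diverges genuinely from the paper's, and the divergence is exactly where the gap sits. The paper does \emph{not} assert that $M_{3,3,3,3}$ is a plumbing, and it does not need to. Instead it works with the Lefschetz fibration $\pi:M_{3,3,3,3}\to\mathbb{C}$ with fiber $T_{3,3,3}$, and with the auxiliary fibration $t:\mathbb{C}^3\to\mathbb{C}$ obtained from the Morsification of $x^3+y^3+z^3$: by van Koert's theorem $D^6$ is built by attaching $8$ Weinstein $3$-handles to $T_{3,3,3}\times D^2$ along the $8$ vanishing cycles of $t$, while $M_{3,3,3,3}$ is built by attaching $24$ such handles to $T_{3,3,3}\times D^2$; after arranging the bases of vanishing cycles compatibly one cancels and obtains $\overline{M}_{3,3,3,3}$ as $D^6$ with $16$ handles attached. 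The explicit Legendrian link is then identified by cutting the base of $\pi$ along an arc, so that the preimage of half the base is deformation equivalent to $D^6$ and the halves of the matching spheres restrict to explicit Lagrangian disc fillings of the attaching Legendrian $2$-spheres. None of this requires knowing that the total space is a plumbing, or that the inclusion of a neighborhood of the union of the vanishing spheres is a Weinstein homotopy equivalence.

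The gap in your version is the sentence "a neighborhood of the union of the vanishing spheres is the plumbing $P_G$, onto which $M_{3,3,3,3}$ deformation retracts." The bouquet/retraction statement is topological (Milnor's theorem, and Smith--Thomas), but it does not by itself make $M_{3,3,3,3}$ Weinstein deformation equivalent to $P_G$; a Weinstein structure on a $2$-connected manifold of the right homotopy type need not be the plumbing structure, and Milnor fibers are not flexible, so no $h$-principle rescues the claim. You flag this yourself as "the hard part," but the proposed repair — "a handle-trading argument exploiting the $2$-connectivity" — is not an argument: smooth handle cancellation gives the number of Weinstein handles, not the Legendrian isotopy class of the attaching link, which is precisely what the lemma asserts. (There is also a smaller slip earlier: the Lefschetz thimbles, which are the core \emph{discs} of the $3$-handles, are being conflated with the closed vanishing \emph{spheres} $V_{\bullet\bullet}$; these are related — each $V_{\bullet\bullet}$ intersects a unique cocore once — but they are not the same objects, and the distinction matters for reading off the attaching data.) The paper's Lefschetz fibration argument supplies exactly the missing control: the Weinstein homotopy to $D^6$ plus explicit Lagrangian fillings of the attaching spheres, from which the linking pattern follows directly from Figure \ref{fig:vs} without ever invoking a plumbing identification.
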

\begin{proof}
Consider the Lefschetz fibration $t:\mathbb{C}^3\rightarrow\mathbb{C}$ obtained as the Morsification of the polynomial $x^3+y^3+z^3$. The smooth fiber of $t$ is symplectomotphic to the Milnor fiber $T_{3,3,3}$ associated to the singularity $x^3+y^3+z^3=0$, and its total monodromy is the composition of Dehn twists along a basis of 8 vanishing cycles in $T_{3,3,3}$, see $\cite{ak1}$, Section 4.2 for a detailed description of this Lefschetz fibration. By $\cite{vk}$, Theorem 4.4, this describes $D^6$ as the result of attaching 8 Weinstein 3-handles to $T_{3,3,3}\times D^2$ along a link of 8 Legendrian 2-spheres in $T_{3,3,3}\times S^1$, which restricts to the basis of vanishing cycles in $T_{3,3,3}$. Moreover, $M_{3,3,3,3}$ also carries a Lefschetz fibration $\pi:M_{3,3,3,3}\rightarrow\mathbb{C}$, with $T_{3,3,3}$ as its smooth fiber, under which the vanishing cycles of $M_{3,3,3,3}$ described in Figure \ref{fig:vs} can be realized as Lagrangian matching spheres. Figure \ref{fig:base} gives a description of the base of $\pi$, where the 24 crosses are critical values, which are divided into three groups, and $\pi^{-1}(\star)$ is a smooth fiber. See $\cite{ak1}$, Section 2.5, where the detailed construction of such a Lefschetz fibration is explained. The blue arc in Figure \ref{fig:base} which ends at two different critical values of $\pi$ is the projection of a matching sphere $V\subset M_{3,3,3,3}$. This shows that $M_{3,3,3,3}$ can be constructed by attaching 24 Weinstein 3-handles to $T_{3,3,3}\times D^2$ along a link of 24 Legendrian $S^2$'s, whose restrictions in $\pi^{-1}(\star)$ are vanishing cycles of $\pi$. Note that it can be arranged so that the basis of vanishing cycles of $\pi$ contains the aforementioned basis of vanishing cycles of $t$ as a subset. Comparing with the handlebody decomposition of $D^6$ described above, this realizes the Weinstein domain $\overline{M}_{3,3,3,3}$ as $D^6$ with 16 Weinstein 3-handles attached along a link of 16 unknotted Legendrian $S^2$'s in $(S^5,\xi_\mathit{std})$. When restricting to the smooth fiber $\pi^{-1}(\star)$ of $\pi$, these Legendrian spheres form a subset of the basis of vanishing cycles of $\pi$, and every one of them lies in a matching sphere, in the fiber above $\star\in\mathbb{C}$. More precisely, consider the associated Lefschetz fibration $\bar{\pi}:\overline{M}_{3,3,3,3}\rightarrow D^2$ on the Liouville domain (with corners) $\overline{M}_{3,3,3,3}$ obtained by cutting off the cylindrical ends of the fibers, and removing the preimage of the part outside of the dashed circle in Figure \ref{fig:base}, where the fibration is locally trivial (this is the original set up of $\cite{ps1}$). If we cut the base of $\pi$ along the orange dashed arc in Figure \ref{fig:base}, the preimage under $\bar{\pi}$ of the lower left half of the disc, which we denote by $D_-$, with the corners rounded-off, is deformation equivalent to $D^6$, and the restrictions of the 16 matching spheres to $\bar{\pi}^{-1}(D_-)$ become exact Lagrangian fillings of the corresponding vanishing cycles, which are considered as Legendrian spheres in the contact boundary $\partial\bar{\pi}^{-1}(D_-)$. For example, for the matching sphere $V$ in the figure, its restriction $\overline{V}:=V\cap\bar{\pi}^{-1}(D_-)$ is a Lagrangian disc, which fills its boundary $\partial\overline{V}\subset\partial\bar{\pi}^{-1}(D_-)$, which is a Legendrian 2-sphere. In this way, the linking pattern of the 16 Legendrian $S^2$'s in $\partial\bar{\pi}^{-1}(D_-)$ is determined by the intersection pattern of the Lagrangian matching spheres in $M_{3,3,3,3}$, which is shown in Figure \ref{fig:vs}.
	
\begin{figure}
	\centering
	\begin{tikzpicture}
	\node at (-1.4,3) {$\times$};
	\node at (-1,3) {$\times$};
	\node at (-0.6,3) {$\times$};
	\node at (-0.2,3) {$\times$};
	\node at (0.2,3) {$\times$};
	\node at (0.6,3) {$\times$};
	\node at (1,3) {$\times$};
	\node at (1.4,3) {$\times$};
		
	\node at (-1.898,-2.711) {$\times$};
	\node at (-2.098,-2.365) {$\times$};
	\node at (-2.298,-2.019) {$\times$};
	\node at (-2.498,-1.673) {$\times$};
	\node at (-2.698,-1.327) {$\times$};
	\node at (-2.898,-0.981) {$\times$};
	\node at (-3.098,-0.635) {$\times$};
	\node at (-3.298,-0.289) {$\times$};
		
	\node at (1.898,-2.711) {$\times$};
	\node at (2.098,-2.365) {$\times$};
	\node at (2.298,-2.019) {$\times$};
	\node at (2.498,-1.673) {$\times$};
	\node at (2.698,-1.327) {$\times$};
	\node at (2.898,-0.981) {$\times$};
	\node at (3.098,-0.635) {$\times$};
	\node at (3.298,-0.289) {$\times$};
		
	\node at (0,0) {$\star$};
	\draw [orange,dashed] (-2.8,2.8) to [in=120,out=-60] (0,0);
	\draw [orange,dashed] (0,0) to [in=120,out=-60] (1.6,-3.6);
	\draw [dashed] (0,0) circle [radius=4];
	\draw [blue] (1,3) to [in=0,out=-90] (0,0);
	\draw [blue] (0,0) to [in=0,out=180] (-3.098,-0.635);
	\node [blue] at (1.2,1) {$V$};
	\node [blue] at (-1.5,-0.6) {$\overline{V}$};
	\end{tikzpicture}
	\caption{Base of the Lefschetz fibration $\pi:M_{3,3,3,3}\rightarrow\mathbb{C}$}
	\label{fig:base}
	\end{figure}
\end{proof}

In Figure \ref{fig:front}, we have arranged so that the labellings of the components of $\Lambda_{3,3,3,3}$ coincide with that of the vanishing cycles in Figure \ref{fig:vs}, which means that for any vanishing cycle $V_{\bullet\bullet}$, the Lagrangian 3-disc $V_{\bullet\bullet}\cap D^6$ is a filling of the component $\Lambda_{\bullet\bullet}$ with the same labelling. The set of Lagrangian cocores $\{L_{\bullet\bullet}\}$ will be labelled in the same way, with $L_{\bullet\bullet}$ being the cocore of the 3-handle attached along $\Lambda_{\bullet\bullet}$. As a consequence, $V_{\bullet\bullet}$ intersects $L_{\bullet\bullet}$ non-trivially and transversely at a unique point if and only if they have the same labelling. We denote by $\mathcal{W}_{M_{3,3,3,3}}$ the Fukaya $A_\infty$-algebra of the cocores $\{L_{\bullet\bullet}\}$.

Regarding $\Lambda_{3,3,3,3}$ as a Legendrian surface in $J^1(\mathbb{R}^2)$, one can consider its image under the base projection $p_x:J^1(\mathbb{R}^2)\rightarrow\mathbb{R}^2$. Under suitable Legendrian isotopies, the image $p_x(\Lambda_{3,3,3,3})\subset\mathbb{R}^2$ consists of 31 circles, see Figure \ref{fig:bp}. The largest solid circle is the projection of the cusp edges of the spheres $\Lambda_{11},\cdots,\Lambda_{33},\Lambda_{\gamma\gamma}$, the 6 small solid circles are projections of the cusp edges of the remaining components $\Lambda_{1\gamma},\Lambda_{\gamma1},\Lambda_{2\gamma},\Lambda_{\gamma2},\Lambda_{3\gamma},\Lambda_{\gamma3}$, which are coloured blue in Figure \ref{fig:front}. For every one of these solid circles, there is a slightly smaller dashed circle, which is the projection of a crossing arc between a blue sphere and $\Lambda_{\gamma\gamma}$. Finally, in each of the 6 dashed circles there are 3 small dashed circles, which are projections of the crossing arcs formed by a blue sphere and a red one. Note that for each blue sphere, say $\Lambda_{1\gamma}$, its front intersects with three red spheres $\Lambda_{11}$, $\Lambda_{12}$ and $\Lambda_{13}$.

\begin{figure}
	\centering
	\begin{tikzpicture}
	\draw [green] (-2,0) to [in=180,out=0] (-0.33,-2);
	\draw [green] (-0.33,-2) to [in=180,out=0] (0.5,-1);
	\draw [green] (0.5,-1) to [in=180,out=0] (1.33,-2);
	\draw [green] (1.33,-2) to [in=180,out=0] (2.16,-1);
	\draw [green] (2.16,-1) to [in=180,out=0] (3,-2);
	\draw [green] (3,-2) to [in=180,out=0] (3.83,-1);
	\draw [green] (3.83,-1) to [in=180,out=0] (4.67,-2);
	\draw [green] (4.67,-2) to [in=180,out=0] (5.5,-1);
	\draw [green] (5.5,-1) to [in=180,out=0] (6.33,-2);
	\draw [green] (6.33,-2) to [in=180,out=0] (7.16,-1);
	\draw [green] (7.16,-1) to [in=180,out=0] (8,-2);
	\draw [green] (8,-2) to [in=180,out=0] (9.67,0);
	\draw [green] (-2,0) to [in=180,out=0] (3.83,2);
	\draw [green] (3.83,2) to [in=180,out=0] (9.67,0);
	\node [green] at (10,0) {\small $\Lambda_{\gamma\gamma}$};
	
	\draw [blue] (-1,-2.1) to [in=180,out=0] (-0.33,-1.7);
	\draw [blue] (-0.33,-1.7) to [in=180,out=0] (0.34,-2.1);
	\draw [blue] (-1,-2.1) to [in=180,out=0] (-0.33,-2.5);
	\draw [blue] (-0.33,-2.5) to [in=180,out=0] (0.34,-2.1);
	\node [blue] at (-0.33,-1.5) {\small $\Lambda_{1\gamma}$};
	
	\draw [blue] (0.66,-2.1) to [in=180,out=0] (1.33,-1.7);
	\draw [blue] (1.33,-1.7) to [in=180,out=0] (2,-2.1);
	\draw [blue] (0.66,-2.1) to [in=180,out=0] (1.33,-2.5);
	\draw [blue] (1.33,-2.5) to [in=180,out=0] (2,-2.1);
	\node [blue] at (1.33,-1.5) {\small $\Lambda_{\gamma1}$};
	
	\draw [blue] (2.33,-2.1) to [in=180,out=0] (3,-1.7);
	\draw [blue] (3,-1.7) to [in=180,out=0] (3.67,-2.1);
	\draw [blue] (2.33,-2.1) to [in=180,out=0] (3,-2.5);
	\draw [blue] (3,-2.5) to [in=180,out=0] (3.67,-2.1);
	\node [blue] at (3,-1.5) {\small $\Lambda_{2\gamma}$};
	
	\draw [blue] (4,-2.1) to [in=180,out=0] (4.67,-1.7);
	\draw [blue] (4.67,-1.7) to [in=180,out=0] (5.34,-2.1);
	\draw [blue] (4,-2.1) to [in=180,out=0] (4.67,-2.5);
	\draw [blue] (4.67,-2.5) to [in=180,out=0] (5.34,-2.1);
	\node [blue] at (4.67,-1.5) {\small $\Lambda_{\gamma2}$};
	
	\draw [blue] (5.66,-2.1) to [in=180,out=0] (6.33,-1.7);
	\draw [blue] (6.33,-1.7) to [in=180,out=0] (7,-2.1);
	\draw [blue] (5.66,-2.1) to [in=180,out=0] (6.33,-2.5);
	\draw [blue] (6.33,-2.5) to [in=180,out=0] (7,-2.1);
	\node [blue] at (6.33,-1.5) {\small $\Lambda_{3\gamma}$};
	
	\draw [blue] (7.33,-2.1) to [in=180,out=0] (8,-1.7);
	\draw [blue] (8,-1.7) to [in=180,out=0] (8.67,-2.1);
	\draw [blue] (7.33,-2.1) to [in=180,out=0] (8,-2.5);
	\draw [blue] (8,-2.5) to [in=180,out=0] (8.67,-2.1);
	\node [blue] at (8,-1.5) {\small $\Lambda_{\gamma3}$};
	
	\draw [red] (-1,-2.6) to [in=180,out=0] (-0.33,-2.2);
	\draw [red] (-0.33,-2.2) to [in=180,out=0] (0.5,-2.6);
	\draw [red] (0.5,-2.6) to [in=180,out=0] (1.33,-2.2);
	\draw [red] (1.33,-2.2) to [in=180,out=0] (2,-2.6);
	\draw [red] (-1,-2.6) to [in=180,out=0] (0.5,-3.6);
	\draw [red] (0.5,-3.6) to [in=180,out=0] (2,-2.6);
	\node [red] at (0.5,-3.8) {\small $\Lambda_{11}$};
	
	\draw [red] (2.33,-2.6) to [in=180,out=0] (3,-2.2);
	\draw [red] (3,-2.2) to [in=180,out=0] (3.835,-2.6);
	\draw [red] (3.835,-2.6) to [in=180,out=0] (4.67,-2.2);
	\draw [red] (4.67,-2.2) to [in=180,out=0] (5.34,-2.6);
	\draw [red] (2.33,-2.6) to [in=180,out=0] (3.835,-3.6);
	\draw [red] (3.835,-3.6) to [in=180,out=0] (5.34,-2.6);
	\node [red] at (3.835,-3.8) {\small $\Lambda_{22}$};
	
	\draw [red] (5.66,-2.6) to [in=180,out=0] (6.33,-2.2);
	\draw [red] (6.33,-2.2) to [in=180,out=0] (7.165,-2.6);
	\draw [red] (7.165,-2.6) to [in=180,out=0] (8,-2.2);
	\draw [red] (8,-2.2) to [in=180,out=0] (8.67,-2.6);
	\draw [red] (5.66,-2.6) to [in=180,out=0] (7.165,-3.6);
	\draw [red] (7.165,-3.6) to [in=180,out=0] (8.67,-2.6);
	\node [red] at (7.165,-3.8) {\small $\Lambda_{33}$};
    \end{tikzpicture}
	\caption{Front view of the Legendrian front of $\Lambda_{3,3,3,3}$, where the components $\Lambda_{12},\Lambda_{13},\Lambda_{21},\Lambda_{23},\Lambda_{31},\Lambda_{32}$ are omitted since they are covered by the other components}
	\label{fig:front}
\end{figure}

\begin{figure}
	\centering
	\begin{tikzpicture}
	\draw (-1.5,2.598) circle [radius=1.3];
	\draw (1.5,2.598) circle [radius=1.3];
	\draw (-3,0) circle [radius=1.3];
	\draw (3,0) circle [radius=1.3];
	\draw (-1.5,-2.598) circle [radius=1.3];
	\draw (1.5,-2.598) circle [radius=1.3];
	\draw [dashed] (-1.5,2.598) circle [radius=1];
	\draw [dashed] (1.5,2.598) circle [radius=1];
	\draw [dashed] (-3,0) circle [radius=1];
	\draw [dashed] (3,0) circle [radius=1];
	\draw [dashed] (-1.5,-2.598) circle [radius=1];
	\draw [dashed] (1.5,-2.598) circle [radius=1];
	\draw (0,0) circle [radius=5];
	\draw [dashed] (-1.5,3.098) circle [radius=0.3];
	\draw [dashed] (-1.933,2.348) circle [radius=0.3];
	\draw [dashed] (-1.067,2.348) circle [radius=0.3];
	\draw [dashed] (1.5,2.098) circle [radius=0.3];
	\draw [dashed] (1.933,2.848) circle [radius=0.3];
	\draw [dashed] (1.067,2.848) circle [radius=0.3];
	\draw [dashed] (-3,-0.5) circle [radius=0.3];
	\draw [dashed] (-3.433,0.25) circle [radius=0.3];
	\draw [dashed] (-2.567,0.25) circle [radius=0.3];
	\draw [dashed] (3,0.5) circle [radius=0.3];
	\draw [dashed] (2.567,-0.25) circle [radius=0.3];
	\draw [dashed] (3.433,-0.25) circle [radius=0.3];
	\draw [dashed] (-1.5,-2.098) circle [radius=0.3];
	\draw [dashed] (-1.933,-2.848) circle [radius=0.3];
	\draw [dashed] (-1.067,-2.848) circle [radius=0.3];
	\draw [dashed] (1.5,-3.098) circle [radius=0.3];
	\draw [dashed] (1.067,-2.348) circle [radius=0.3];
	\draw [dashed] (1.933,-2.348) circle [radius=0.3];
	
	\draw [orange] (-5,0) to (-4,0);
	\draw [orange, rotate around={60:(0,0)}] (-5,0) to (-4,0);
	\draw [orange, rotate around={120:(0,0)}] (-5,0) to (-4,0);
	\draw [orange, rotate around={180:(0,0)}] (-5,0) to (-4,0);
	\draw [orange, rotate around={240:(0,0)}] (-5,0) to (-4,0);
	\draw [orange, rotate around={300:(0,0)}] (-5,0) to (-4,0);
	
	\draw [orange, rotate around={60:(-3,0)}] (-3.693,-0.4) to (-3.866,-0.5);
	\draw [orange, rotate around={180:(-3,0)}] (-3.693,-0.4) to (-3.866,-0.5);
	\draw [orange, rotate around={300:(-3,0)}] (-3.693,-0.4) to (-3.866,-0.5);
	
	\draw [orange] (-2.193,2.198) to (-2.366,2.098);
	\draw [orange, rotate around={120:(-1.5,2.598)}] (-2.193,2.198) to (-2.366,2.098);
	\draw [orange, rotate around={240:(-1.5,2.598)}] (-2.193,2.198) to (-2.366,2.098);
	
	\draw [orange] (3.693,-0.4) to (3.866,-0.5);
	\draw [orange, rotate around={120:(3,0)}] (3.693,-0.4) to (3.866,-0.5);
	\draw [orange, rotate around={240:(3,0)}] (3.693,-0.4) to (3.866,-0.5);
	
	\draw [orange] (2.193,-2.198) to (2.366,-2.098);
	\draw [orange, rotate around={120:(1.5,-2.598)}] (2.193,-2.198) to (2.366,-2.098);
	\draw [orange, rotate around={240:(1.5,-2.598)}] (2.193,-2.198) to (2.366,-2.098);
	
	\draw [orange, rotate around={60:(1.5,2.598)}] (2.193,2.198) to (2.366,2.098);
	\draw [orange, rotate around={180:(1.5,2.598)}] (2.193,2.198) to (2.366,2.098);
	\draw [orange, rotate around={300:(1.5,2.598)}] (2.193,2.198) to (2.366,2.098);
	
	\draw [orange, rotate around={60:(-1.5,-2.598)}] (-2.193,-2.198) to (-2.366,-2.098);
	\draw [orange, rotate around={180:(-1.5,-2.598)}] (-2.193,-2.198) to (-2.366,-2.098);
	\draw [orange, rotate around={300:(-1.5,-2.598)}] (-2.193,-2.198) to (-2.366,-2.098);
	
	\node [orange] at (-5.2,-0.2) {\small $e_1^0$};
	\node [orange] at (-4.5,-0.2) {\small $e_2^0$};
	\node [orange] at (-3.8,-0.2) {\small $e_3^0$};
	\node [orange] at (-4.7,0.2) {\small $e_2^1$};
	\node [orange] at (-4.2,0.2) {\small $e_3^1$};
	
	\end{tikzpicture}
	\caption{Base projection of $\Lambda_{3,3,3,3}$, where the orange arcs are auxiliary edges added in order to obtain a cellular decomposition}
	\label{fig:bp}
\end{figure}

\begin{lemma}\label{lemma:grading}
Up to quasi-isomorphism, the wrapped Fukaya $A_\infty$-algebra $\mathcal{W}_{M_{3,3,3,3}}$ is concentrated in degrees $\leq0$.
\end{lemma}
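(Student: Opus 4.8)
The plan is to replace $\mathcal{W}_{M_{3,3,3,3}}$ by the completed Chekanov--Eliashberg dg algebra of its surgery link and then read off the grading bound by Koszul duality with the compact Fukaya category. First I would invoke the Legendrian surgery description of the wrapped Fukaya category of $\cite{bee}$ (with analytic details supplied by $\cite{te}$): since $\overline{M}_{3,3,3,3}$ is obtained from $D^6$ by attaching Weinstein $3$-handles along the Legendrian link $\Lambda_{3,3,3,3}\subset(S^5,\xi_\mathit{std})$ of the previous lemma, there is a quasi-isomorphism of dg algebras over $\Bbbk$
\[
\mathcal{W}_{M_{3,3,3,3}}\cong\widehat{\mathit{CE}}^\ast(\Lambda_{3,3,3,3}).
\]
It therefore suffices to prove that $\widehat{\mathit{CE}}^\ast(\Lambda_{3,3,3,3})$ is, up to quasi-isomorphism, concentrated in degrees $\leq0$.

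For this I would use Theorem \ref{theorem:completion}. The subcritical hypothesis holds because here the inner domain $\overline{M}_{-\Lambda}$ is the ball $D^6$, so the only remaining input is that the Floer cohomology algebra $H^\ast(\mathcal{F}_{M_{3,3,3,3}})$ of the sixteen vanishing spheres of Figure \ref{fig:vs} is non-negatively graded with $H^0\cong\Bbbk$. This I would verify directly: each $V_{\bullet\bullet}$ is a Lagrangian $3$-sphere, so $\mathit{HF}^\ast(V_{\bullet\bullet},V_{\bullet\bullet})\cong H^\ast(S^3)$ lives in degrees $0$ and $3$; for distinct vanishing spheres $V$, $V'$, either $V\cap V'=\emptyset$ (as for the mutually disjoint blue components), or they meet transversely in a single point and $\mathit{HF}^\ast(V,V')$ is one-dimensional, concentrated in a degree which one reads off from the Lefschetz fibration $\pi:M_{3,3,3,3}\to\mathbb{C}$ of Figure \ref{fig:base} and which lies in $\{1,2\}$ (so that, via Poincar\'{e} duality, $\mathit{HF}^\ast(V,V')$ and $\mathit{HF}^\ast(V',V)$ occupy the two middle degrees). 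Granting this, Theorem \ref{theorem:completion} identifies $\widehat{\mathit{CE}}^\ast(\Lambda_{3,3,3,3})$ up to quasi-isomorphism with the completed cobar construction $\widehat{\Omega}\mathcal{F}_{M_{3,3,3,3}}^{\#}$. Passing to a minimal model, the coaugmentation coideal of $\mathcal{F}_{M_{3,3,3,3}}^{\#}$ is supported in degrees $\{-3,-2,-1\}$, so the generating space $\overline{\mathcal{F}_{M_{3,3,3,3}}^{\#}}[-1]$ of the cobar lies in degrees $\{-2,-1,0\}$; being the completed tensor algebra over $\Bbbk$ on a space concentrated in degrees $\leq0$, the dg algebra $\widehat{\Omega}\mathcal{F}_{M_{3,3,3,3}}^{\#}$ is itself concentrated in degrees $\leq0$, and hence so is $\mathcal{W}_{M_{3,3,3,3}}$.

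I expect the main obstacle to be the grading computation in the second step, namely pinning down the Maslov degrees of all intersection points between distinct vanishing spheres and checking that none of them can be forced into degree $0$ or into degrees $\geq4$. This is the point where the specific geometry of $M_{3,3,3,3}$ enters, and it must be argued rather than guessed: $M_{3,3,3,3}$ is genuinely more than a plumbing of copies of $T^\ast S^3$ (its vanishing spheres have an intersection graph containing cycles, whereas the Milnor fiber is simply connected), so the gradings cannot be inferred from a tree and have to be computed from the matching-cycle picture in Figures \ref{fig:vs} and \ref{fig:base}. A secondary point, which in the plumbing case of $\cite{ekl}$ is dealt with by their Theorem 54, is to confirm that the geometrically defined wrapped Fukaya algebra $\mathcal{W}_{M_{3,3,3,3}}$ is quasi-isomorphic to the \emph{completed} Chekanov--Eliashberg dg algebra rather than the uncompleted one; once these two inputs are in place the rest of the argument is purely formal.
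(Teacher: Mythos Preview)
Your approach is correct in outline but takes a genuinely different route from the paper, and your ``secondary point'' is in fact the substantial part of the argument.

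The paper works directly on the Legendrian side: it invokes the surgery isomorphism $\mathcal{W}_{M_{3,3,3,3}}\cong\mathit{CE}^\ast(\Lambda_{3,3,3,3})$ (the \emph{uncompleted} Chekanov--Eliashberg algebra), then uses Rutherford--Sullivan's cellular model with an explicit Maslov potential ($\mu\in\{0,1\}$ on the green component, $\{-1,0\}$ on blue, $\{-2,-1\}$ on red). This forces all $b$- and $c$-type cellular generators into non-positive degree; the finitely many $a$-type generators sitting in degree~$1$ are then cancelled against $b$-generators via an explicit quotient quasi-isomorphism. No Koszul duality or completion enters.

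Your route --- compute gradings on $\mathcal{F}_{M_{3,3,3,3}}$ and transfer via cobar --- is legitimate, but note that the surgery theorem of \cite{bee,te} gives $\mathcal{W}_{M_{3,3,3,3}}\cong\mathit{CE}^\ast(\Lambda)$, not $\widehat{\mathit{CE}}^\ast(\Lambda)$, and the uncompleted cobar is not invariant under quasi-isomorphism of coalgebras. So knowing that $\widehat{\Omega}\mathcal{F}_{M_{3,3,3,3}}^{\#}$ sits in degrees $\leq 0$ does not by itself bound the degrees of $\mathit{CE}^\ast(\Lambda)$. What you need is that the completion map $\mathit{CE}^\ast(\Lambda)\to\widehat{\mathit{CE}}^\ast(\Lambda)$ is a quasi-isomorphism, and this is precisely the content of Proposition~\ref{proposition:KD0}: one uses Lemma~\ref{lemma:KD1} (the Lekili--Ueda embedding $\mathcal{W}(M_{3,3,3,3})\hookrightarrow\mathcal{F}(M_{3,3,3,3})^{\mathit{mod}}$) together with a compact-generation argument to upgrade the embedding to a quasi-equivalence $\mathcal{W}_{M_{3,3,3,3}}^{\mathit{perf}}\simeq(\widehat{\Omega}\mathcal{F}_{M_{3,3,3,3}}^{\#})^{\mathit{perf}}$. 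In the paper this Proposition is proved \emph{after} Lemma~\ref{lemma:grading} and cites it, but only in order to deduce that $H^\ast(\mathcal{F}_{M_{3,3,3,3}})$ is non-negatively graded with $H^0\cong\Bbbk$ --- exactly the input you propose to supply directly. So there is no circularity; rather, your proposal merges Lemma~\ref{lemma:grading} and Proposition~\ref{proposition:KD0} into a single argument, with the ``secondary point'' absorbing most of the work.

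On your ``main obstacle'': the paper's Maslov potential is the Legendrian-side incarnation of the Floer grading you want, and its explicit values confirm that the off-diagonal $\mathit{HF}^\ast(V,V')$ indeed sit in degrees $\{1,2\}$. Either computation suffices, but you should not claim this ``can be read off'' without actually doing it.
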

\begin{proof}
By the surgery quasi-isomorphism (\ref{eq:surgery}), it suffices to check that the Chekanov-Eliashberg dg algebra $\mathit{CE}^\ast(\Lambda_{3,3,3,3})$ is concentrated in non-positive degrees up to quasi-isomorphism.
	
Equip the Legendrian link $\Lambda_{3,3,3,3}$ with a Maslov potential $\mu:\Lambda_{3,3,3,3}\rightarrow\mathbb{Z}$ as follows. For the green component $\Lambda_{\gamma\gamma}$, let $\mu=1$ on the upper strand, and $\mu=0$ on the lower strand. For each blue component, put $\mu=0$ on the upper strand, and $\mu=-1$ on the lower strand. Finally, for each red component, set $\mu=-1$ on the upper strand, and $\mu=-2$ on the lower strand.
	
To show that $\mathit{CE}^\ast(\Lambda_{3,3,3,3})$ is quasi-isomorphic to a dg algebra with all the generators concentrated in non-positive degrees, we make use of its cellular model introduced by Rutherford-Sullivan $\cite{rs}$. Recall that Rutherford-Sullivan's combinatorial model of $\mathit{CE}^\ast(\Lambda_{3,3,3,3})$ starts with a $\Lambda_{3,3,3,3}$-compatible polygonal decomposition associated to the base projection of $\Lambda_{3,3,3,3}$, which can be obtained by subdividing the base projection of $\Lambda_{3,3,3,3}$. For example, for the base projection shown in Figure \ref{fig:bp}, one way to obtain its $\Lambda_{3,3,3,3}$-compatible polygonal decomposition is to add the orange arcs. For each $i$-cell $e_\alpha^i$, where $0\leq i\leq 2$, the set of sheets of $\Lambda_{3,3,3,3}$ above $e_\alpha^i$ can be equipped with a partial ordering $\prec$ based on their heights. In particular, if the sheet $S_m$ has larger height than $S_n$, then $S_m\prec S_n$, and we label the sheets so that $m<n$. There is a generator of $\mathit{CE}^\ast(\Lambda_{3,3,3,3})$ associated to each pair of sheets $(S_m,S_n)$ above $e_\alpha^i$ with $S_m\prec S_n$. The generators associated to 0-, 1-, and 2-cells labelled by $\alpha$ are denoted respectively by $a_\alpha^{m,n}$, $b_\alpha^{m,n}$ and $c_\alpha^{m,n}$, and they can be assembled in strictly upper triangular matrices $A_\alpha$, $B_\alpha$ and $C_\alpha$. The gradings of these generators are given as follows:
\begin{equation}
|a_\alpha^{m,n}|=\mu(S_n)-\mu(S_m)+1,|b_\alpha^{m,n}|=\mu(S_n)-\mu(S_m),|c_\alpha^{m,n}|=\mu(S_n)-\mu(S_m)-1.
\end{equation}
	
Observe that by our choice of the Maslov potential $\mu$, for any sheet $S_m$ with larger height than $S_n$, we necessarily have $\mu(S_m)\geq\mu(S_n)$, which implies that all the generators of the form $b_\alpha^{m,n}$ and $c_\alpha^{m,n}$ have non-positive gradings. However, there may still be generators $a_\alpha^{m,n}$ with $|a_\alpha^{m,n}|=1$. For each such generator, consider any 1-cell $e_\beta^1$ in the $\Lambda_{3,3,3,3}$-compatible polygonal decomposition which ends at $e_\alpha^0$, with the other end point being $e_\gamma^0$ for some $\gamma$. The differential $d_C$ on cellular dg algebra is defined in completely combinatorial manners. In particular, by $\cite{rs}$, Section 3.6.2 we have (over $\mathbb{K}=\mathbb{Z}/2$)
\begin{equation}\label{eq:diff}
d_Cb_\beta^{m,n}=a_\alpha^{m,n}+a_\gamma^{m,n}+\sum_{m<k<n}a_\alpha^{m,k}b_\beta^{k,n}+\sum_{m<k<n}b_\beta^{m,k}a_\gamma^{k,n}.
\end{equation}
By a lemma of Chekanov ($\cite{rs}$, Theorem 2.1), there is a quasi-isomorphism
\begin{equation}
\mathit{CE}^\ast(\Lambda_{3,3,3,3})\cong\mathit{CE}^\ast(\Lambda_{3,3,3,3})/\left\langle d_Cb_\beta^{m,n},b_\beta^{m,n}\right\rangle
\end{equation}
between $\mathit{CE}^\ast(\Lambda_{3,3,3,3})$ and its quotient dg algebra, where $\left\langle d_Cb_\beta^{m,n},b_\beta^{m,n}\right\rangle$ is the ideal generated by $d_Cb_\beta^{m,n}$ and $b_\beta^{m,n}$. By \cite{yl}, Lemma 6.1, if one can always achieve that $a_\gamma^{m,n}=0$ or $1$ in (\ref{eq:diff}), then by replacing the cellular model of $\mathit{CE}^\ast(\Lambda_{3,3,3,3})$ with a quasi-isomorphic (actually, stable tame isomorphic) dg algebra if necessary, all the generators with positive degrees can be cancelled out. Here, we check this explicitly for the generator $a_3^{2,3}$ associated to the 0-cell $e_3^0$ in Figure \ref{fig:bp} explicitly, the verifications for the other generators $a_\alpha^{m,n}$ with $|a_\alpha^{m,n}|=1$ are similar. Since the Legendrian front of $\Lambda_{3,3,3,3}$ above the largest solid circle consists of cusp edges, it follows that $a_2^{2,3}=0$. Applying (\ref{eq:diff}) to $b_2^{2,3}$, which is a generator associated to the 1-cell $e_2^1$, we have $d_Cb_2^{2,3}=a_2^{2,3}$, so after passing to the quotient dg algebra $\mathit{CE}^\ast(\Lambda_{3,3,3,3})/\left\langle b_2^{2,3},a_2^{2,3}\right\rangle$, we have by (\ref{eq:diff}) applied to the generator $b_3^{2,3}$ associated to $e_3^1$ that $d_Cb_3^{2,3}=a_3^{2,3}$. The dg algebra $\mathit{CE}^\ast(\Lambda_{3,3,3,3})/\left\langle b_2^{2,3},a_2^{2,3},b_3^{2,3},a_3^{2,3}\right\rangle$ is quasi-isomorphic to $\mathit{CE}^\ast(\Lambda_{3,3,3,3})$ and has one less generator of positive degree.

Since the Legendrian front of $\Lambda_{3,3,3,3}$ does not involve any swallowtail singularity, as explained in the proof of $\cite{yl}$, Lemma 8.1, the cancellation of the generators does not depend on the ground field $\mathbb{K}$, so we conclude that $\mathit{CE}^\ast(\Lambda_{3,3,3,3})$ is quasi-isomorphic to a dg algebra with all the generators concentrated in degrees $\leq0$ over any field $\mathbb{K}$.
\end{proof}

\bigskip

Denote by $\tau_V$ the Dehn twist along the Lagrangian sphere $V\subset M_{3,3,3,3}$, it follows from $\cite{ps9}$, Lemmas 4.15 and 4.16 that
\begin{equation}\label{eq:Dehn}
\left(\tau_{V_{11}}\circ\cdots\circ\tau_{V_{33}}\circ\tau_{V_{\gamma1}}\circ\tau_{V_{\gamma2}}\circ\tau_{V_{\gamma3}}\circ\tau_{V_{1\gamma}}\circ\tau_{V_{2\gamma}}\circ\tau_{V_{3\gamma}}\circ\tau_{V_{\gamma\gamma}}\right)^3=[-2].
\end{equation}
Since the right hand side of (\ref{eq:Dehn}) is a non-trivial degree shift, by Seidel's long exact sequence $\cite{ps10}$, the compact Fukaya category $\mathcal{F}(M_{3,3,3,3})$ is split-generated by the Lagrangian spheres
\begin{equation}\label{eq:vs}
V_{11},\cdots,V_{33},V_{\gamma1},V_{\gamma2},V_{\gamma3},V_{1\gamma},V_{2\gamma},V_{3\gamma},V_{\gamma\gamma}.
\end{equation}
Denote by $\mathcal{F}_{M_{3,3,3,3}}$ the Fukaya $A_\infty$-algebra of these vanishing cycles. As a Corollary to Lemma \ref{lemma:grading}, we have the following:

\begin{corollary}\label{corollary:grading}
The Lagrangian spheres $\{V_{\bullet\bullet}\}$ admit gradings for which the $A_\infty$-algebra $\mathcal{F}_{M_{3,3,3,3}}$ is concentrated in degrees $\geq0$, and its degree 0 part is isomorphic to $\Bbbk:=\bigoplus_{i=1}^{16}\mathbb{K}e_i$.
\end{corollary}
\begin{proof}
It follows from the Eilenberg-Moore equivalence (\ref{eq:EM}) that as a $\Bbbk$-bimodule
\begin{equation}
\mathcal{F}_{M_{3,3,3,3}}\cong\Bbbk\oplus\bigoplus_{i=1}^\infty\overline{\mathit{CE}}_\ast(\Lambda_{3,3,3,3})[-1]^{\otimes_\Bbbk i}.
\end{equation}
It follows that any non-idempotent generator of $\mathcal{F}_{M_{3,3,3,3}}$ is of the form $a_1^\vee[-1]\cdots a_m^\vee[-1]$ for some $m\geq1$, where $a_i$ is a generator of $\mathit{CE}^\ast(\Lambda_{3,3,3,3})$ for any $1\leq i\leq m$. By Lemma \ref{lemma:grading}, $\mathit{CE}^\ast(\Lambda_{3,3,3,3})$ is non-positively graded up to quasi-isomorphism, therefore we may assume that $|a_i|\leq0$ for each $i$, which implies that
\begin{equation}
|a_1^\vee[-1]\cdots a_m^\vee[-1]|=(-|a_1|+1)\cdots(-|a_m|+1)\geq1.
\end{equation}
\end{proof}

\begin{remark}
In fact, a more careful study of the dg algebra $\mathit{CE}^\ast(\Lambda_{3,3,3,3})$ implies that it is quasi-isomorphic to a dg algebra freely generated by Reeb chords supported in degrees $-2\leq\ast\leq0$, which means that one can arrange the gradings so that $\mathcal{F}_{M_{3,3,3,3}}$ is concentrated in degrees $0\leq\ast\leq3$. A similar result is proved for the WKB algebra of Lagrangian 3-spheres in $\cite{is}$, Lemma 4.5. We expect that the same grading property holds for the Fukaya $A_\infty$-algebra of a basis of vanishing cycles in any Milnor fiber $M_{a_1,\cdots,a_{n+1}}$ with $\sum_{i=1}^{n+1}\frac{1}{a_i}>1$.
\end{remark}

The final ingredient needed for the proof of Theorem \ref{theorem:Fano} is the following Koszul duality result, which is essentially due to Lekili-Ueda \cite{lu}.

\begin{proposition}\label{proposition:KD0}
There are quasi-isomorphisms between $A_\infty$-algebras over $\Bbbk$:
\begin{equation}\label{eq:Koszulw}
R\mathrm{Hom}_{\mathcal{F}_{M_{3,3,3,3}}}(\Bbbk,\Bbbk)\cong\mathcal{W}_{M_{3,3,3,3}},R\mathrm{Hom}_{\mathcal{W}_{M_{3,3,3,3}}}(\Bbbk,\Bbbk)\cong\mathcal{F}_{M_{3,3,3,3}}.
\end{equation}
\end{proposition}
\begin{proof}
The second quasi-isomorphism follows from the Eilenberg-Moore equivalence (\ref{eq:EM}) and the surgery quasi-isomorphism (\ref{eq:surgery}). For the first quasi-isomorphism, we use (\ref{eq:Dehn}), which implies by \cite{lu}, Lemma 6.6 that the wrapped Floer cochain complex $\mathit{CW}^\ast(L,K)$ is bounded above for any two objects $L,K$ of $\mathcal{W}(M_{3,3,3,3})$. By \cite{lu}, Proposition 6.5, for any object $K$ of $\mathcal{W}(M_{3,3,3,3})$, there exists a sequence of objects $(K_i)_{i\in\mathbb{N}}$ of $\mathcal{F}(M_{3,3,3,3})^\mathit{perf}$, the $A_\infty$-category of perfect modules over $\mathcal{F}(M_{3,3,3,3})$, such that for any fixed $j\in\mathbb{Z}$, there exists an integer $i\gg1$ such that
\begin{equation}
\mathrm{Hom}_{\mathcal{W}(M_{3,3,3,3})^\mathit{perf}}^j(L,K)\cong\mathrm{Hom}_{\mathcal{W}(M_{3,3,3,3})^\mathit{perf}}^j(L,K_i)
\end{equation}
as $\mathbb{K}$-vector spaces for any object $L$ of $\mathcal{W}(M_{3,3,3,3})$. Note that in the above isomorphism, we have identified $K_i$ with objects of $\mathcal{W}(M_{3,3,3,3})^\mathit{perf}$ via the fully faithful embedding $\mathcal{F}(M_{3,3,3,3})^\mathit{perf}\hookrightarrow\mathcal{W}(M_{3,3,3,3})^\mathit{perf}$. In particular, $\mathit{CW}^j(L,K)$ of any two objects $L,K$ of $\mathcal{W}(M_{3,3,3,3})$ is finite-dimensional for any fixed $j$. Under the surgery quasi-isomorphism (\ref{eq:surgery}), this translates into the fact that the Chekanov-Eliashberg dg algebra $\mathit{CE}^\ast(\Lambda_{3,3,3,3})$ is locally finite (finite-dimensional in each degree) as a module over $\Bbbk$. It follows that the filtration on $\mathit{CE}^\ast(\Lambda_{3,3,3,3})$ by word length is complete and Hausdorff, so the completion map (\ref{eq:completion-map}) is a quasi-isomorphism for the Legendrian link $\Lambda_{3,3,3,3}$. By (\ref{eq:completed-CE}), we get the desired quasi-isomorphism
\begin{equation}
\mathcal{W}_{M_{3,3,3,3}}\cong\mathit{CE}^\ast(\Lambda_{3,3,3,3})\cong(\mathrm{B}\mathcal{F}_{M_{3,3,3,3}})^\#\cong R\mathrm{Hom}_{\mathcal{F}_{M_{3,3,3,3}}}(\Bbbk,\Bbbk).
\end{equation}
\end{proof}

\begin{remark}
\cite{lu}, Theorem 6.11 proves a general version of Koszul duality between compact and wrapped Fukaya $A_\infty$-algebras for Milnor fibers associated to weighted homogeneous singularities $\{w(z_1,\cdots,z_{n+1})=0\}\subset\mathbb{C}^{n+1}$. However, they assumed that the Fukaya-Seidel category $\mathcal{F}(w)$ admits a strong full exceptional collection, which isn't satisfied for the polynomial $w(z_1,z_2,z_3,z_4)=z_1^3+z_2^3+z_3^3+z_4^3$.
\end{remark}

\begin{proof}[Proof of Theorem \ref{theorem:Fano}]
It follows from Proposition \ref{proposition:proper-CY} that $\mathcal{F}_{M_{3,3,3,3}}$ is quasi-isomorphic to a minimal cyclic $A_\infty$-algebra. Combining (\ref{eq:completed-CE}) with Proposition \ref{proposition:KD0} we see that $\mathcal{W}_{M_{3,3,3,3}}$ is quasi-isomorphic to a complete dg algebra, whose Koszul dual is $\mathcal{F}_{M_{3,3,3,3}}$. Moreover, Lemma \ref{lemma:grading} implies that $H^\ast\left(\mathcal{W}_{M_{3,3,3,3}}\right)$ is supported in non-positive degrees. Applying Theorem \ref{theorem:Koszul} completes the proof.
\end{proof}

As a by-product, we have the following non-formality result. A similar result is proved in $\cite{lu}$, Theorem 7.3.

\begin{corollary}\label{corollary:non-formality}
The Fukaya $A_\infty$-algebra $\mathcal{F}_{M_{3,3,3,3}}$ is not formal over $\Bbbk$.
\end{corollary}
\begin{proof}
Suppose that $\mathcal{F}_{M_{3,3,3,3}}$ is formal, then there is a quasi-isomorphism
\begin{equation}
\mathcal{F}_{M_{3,3,3,3}}\cong F_{M_{3,3,3,3}}:=H^\ast(\mathcal{F}_{M_{3,3,3,3}}).
\end{equation}
On the cohomology level, the proper Calabi-Yau structure on $\mathcal{F}_{M_{3,3,3,3}}$ induces a non-degenerate pairing
\begin{equation}
\langle\cdot,\cdot\rangle_\mathit{CY}:F_{M_{3,3,3,3}}\otimes F_{M_{3,3,3,3}}\rightarrow\Bbbk[-3],
\end{equation}
which makes $\left(F_{M_{3,3,3,3}},\langle\cdot,\cdot\rangle_\mathit{CY}\right)$ a Frobenius algebra. Denote by $\Delta_F$ the BV operator on the Hochschild cohomology $\mathit{HH}^\ast(F_{M_{3,3,3,3}})$. For any class $c\in\mathit{HH}^{1,0}(F_{M_{3,3,3,3}})$ and $a\in F_{M_{3,3,3,3}}$, we have
\begin{equation}
\langle\Delta_F(c),a\rangle_\mathit{CY}=\langle c(a),1_F\rangle_\mathit{CY},
\end{equation}
where $1_F\in F_{M_{3,3,3,3}}$ is the identity. When applied to the Euler vector field $\mathit{eu}_F\in\mathit{HH}^{1,0}(F(M_{3,3,3,3}))$, we obtain
\begin{equation}
\left\langle\Delta_F\left(\frac{1}{3}\mathit{eu}_F\right),a\right\rangle_\mathit{CY}=\langle a,1_F\rangle_\mathit{CY}=\langle1_F,a\rangle_\mathit{CY}
\end{equation}
for any $a\in F_{M_{3,3,3,3}}$ of degree 3. It follows that
\begin{equation}
\Delta_F\left(\frac{1}{3}\mathit{eu}_F\right)=1.
\end{equation}
Denote by $\mathit{eu}_\mathcal{F}$ the image of the class $\mathit{eu}_F$ under the BV algebra isomorphism $\mathit{HH}^\ast(F_{M_{3,3,3,3}})\cong\mathit{HH}^\ast(\mathcal{F}(M_{3,3,3,3}))$. We have $\Delta_\mathcal{F}(\mathit{eu}_\mathcal{F})=1$, where $\Delta_\mathcal{F}$ is the BV operator on $\mathit{HH}^\ast(\mathcal{F}(M_{3,3,3,3}))$.

By Proposition \ref{proposition:KD0}, there is an isomorphism $\mathit{HH}^\ast(\mathcal{F}(M_{3,3,3,3}))\cong\mathit{HH}^\ast(\mathcal{W}(M_{3,3,3,3}))$ as Gerstenhaber algebras, under which $\mathit{eu}_\mathcal{F}$ goes to a class $b\in\mathit{HH}^1(\mathcal{W}(M_{3,3,3,3}))$. Since changing the Calabi-Yau structure on $\mathcal{W}(M_{3,3,3,3})$ amounts to applying the conjugation action of an invertible element $h_\mathcal{W}\in\mathit{HH}^0(\mathcal{W}(M_{3,3,3,3}))^\times$ to the BV operator
\begin{equation} \Delta_\mathcal{W}:\mathit{HH}^\ast(\mathcal{W}(M_{3,3,3,3}))\rightarrow\mathit{HH}^{\ast-1}(\mathcal{W}(M_{3,3,3,3})),
\end{equation}
the class $b$ satisfies $\Delta_\mathcal{W}(h_\mathcal{W}b)=h_\mathcal{W}$. Under the BV algebra isomorphism
\begin{equation}
\mathit{HH}^\ast(\mathcal{W}(M_{3,3,3,3}))\cong\mathit{SH}^\ast(M_{3,3,3,3})
\end{equation}
established by Ganatra (cf. \cite{sg2}, Theorem 1.1), the image of $b$ defines a quasi-dilation in $\mathit{SH}^1(M_{3,3,3,3})$.

To complete the proof, it suffices to show that $M_{3,3,3,3}$ does not admit a quasi-dilation. we follow the argument of $\cite{ps4}$, Example 2.7 to show that $M_{3,3,3,3}$ does not admit a quasi-dilation. Consider the Milnor fiber $M$ of a 5-fold triple point, which is the affine hypersurface in $\mathbb{C}^6$ given by the equation
\begin{equation}
z_1^3+z_2^3+z_3^3+z_4^3+z_5^3+z_6^3=1.
\end{equation}
Since it is the complement of a smooth divisor in the Fermat projective cubic 5-fold, there is a Morse-Bott spectral sequence $\cite{ps2}$ which converges to $\mathit{SH}^\ast(M)$. Using this one can deduce that $\mathit{SH}^1(M)=0$, which in particular implies that $M$ does not admit a quasi-dilation. On the other hand, there is a Lefschetz fibration $M\rightarrow\mathbb{C}$ on $M$ whose smooth fiber $F$ is symplectomorphic to the Milnor fiber of a 4-fold triple point, i.e. the affine hypersurface in $\mathbb{C}^5$ defined by the equation
\begin{equation}
z_1^3+z_2^3+z_3^3+z_4^3+z_5^3=1.
\end{equation}
Similarly, $F$ also admits a Lefschetz fibration $F\rightarrow\mathbb{C}$ with the Milnor fiber $M_{3,3,3,3}$ as its fiber. If $M_{3,3,3,3}$ admits a quasi-dilation, then by \cite{ps5}, Lemma 19.5 it lifts to a quasi-dilation in $\mathit{SH}^1(F)$. Applying the same lemma again to the Lefschetz fibration $M\rightarrow\mathbb{C}$, it follows that $M$ also admits a quasi-dilation, which gives the desired contradiction.
\end{proof}

\subsection{Lefschetz fibrations}\label{section:Lefschetz}

This subsection is devoted to the proof of Theorem \ref{theorem:Lefschetz}, which allows us to get new examples Liouville manifolds which admit cyclic dilations in terms of the known ones. The argument here is a slight variation of those in $\cite{ss}$, Section 7 and $\cite{jz}$, Section 5.3.
\bigskip

We use the general set up of $\cite{ss}$, Section 7. Let $\pi:M\rightarrow\mathbb{C}$ be an exact symplectic Lefschetz fibration, which means that its smooth fibers $F$ are completions of Liouville domains $\overline{F}$. More explicitly, we require that
\begin{itemize}
	\item For some almost complex structure $J\in\mathcal{J}(M)$, the map $\pi$ is $(J,j)$-holomorphic, where $j$ is the standard complex structure on $\mathbb{C}$.
	\item $\pi$ has finitely many isolated critical points, so that each singular fiber contains at most one critical point, and the almost complex structure $J$ is locally integrable near each of these critical points.
	\item There is a relatively open compact subset $\overline{M}\subset M$, so that its complement $M\setminus\overline{M}$ is identified with
	\begin{equation}
	\widetilde{M}:=(\mathbb{R}_+\times T)\cup_{\mathbb{R}_+\times S^1\times\mathbb{R}_+\times\partial\overline{F}}(\mathbb{C}\times\mathbb{R}_+\times\partial\overline{F}),
	\end{equation}
	where $T=(\mathbb{R}\times F)/(t,x)\sim(t-1,\mu(x))$ is the mapping torus, with $\mu$ being the total monodromy of $\pi$. By construction, $\overline{M}$ is a manifold with corners, which coincides with the Liouville domain associated to $M$ up to deformation once the corners are rounded off.
	\item Fix the choice of a trivialization of the canonical bundle $K_M$, which induces a trivialization of $K_F$, the canonical bundle of the fiber.
\end{itemize}

Given such a Lefschetz fibration, consider the autonomous Hamiltonian $H_M:M\rightarrow\mathbb{R}$ defined by
\begin{equation}\label{eq:Ham-Lef}
H_M=H_F+\pi^\ast H_\mathbb{C},
\end{equation}
where $H_\mathbb{C}(z)=\varepsilon|z-c|^2/2$ for some $\varepsilon>0$ is a function on the base, and $H_F:F\rightarrow\mathbb{R}$ is a Hamiltonian on the fiber which is linear on the cylindrical end $[1,\infty)\times\partial\overline{F}$ with slope $\lambda>0$, where $\lambda\notin\mathcal{P}_F$. By $\cite{ss}$, Lemma 7.2, for sufficiently small $\varepsilon$, there is a short exact sequence
\begin{equation}\label{eq:SES}
0\rightarrow\mathbb{K}^{\mathrm{Crit}(\pi)}[-n]\rightarrow\mathit{CF}^\ast_{\mathit{vert}}(M,\lambda)\rightarrow\mathit{CF}^\ast(F,\lambda)\rightarrow 0,
\end{equation}
where the Floer complexes $\mathit{CF}^\ast_{\mathit{vert}}(M,\lambda)$ and $\mathit{CF}^\ast(F,\lambda)$ are defined by choosing time-dependent perturbations of the autonomous Hamiltonians $H_M$ and $H_F$, and the notation $\mathbb{K}^{\mathrm{Crit}(\pi)}[-n]$ means the complex with trivial differential so that there is a copy of $\mathbb{K}$ in degree $n$ for every critical point of $\pi$. Here, we use the notation $\mathit{CF}^\ast_{\mathit{vert}}(M,\lambda)$ to indicate that when $\lambda\rightarrow\infty$, the slope of our Hamiltonian $H_M$ only increases in the vertical direction. As a consequence, the cohomology level direct limit $\mathit{SH}_\mathit{vert}^\ast(M)$ is in general not isomorphic to the symplectic cohomology $\mathit{SH}^\ast(M)$. The same notational convention will be used later on for equivariant Floer cohomologies.
\bigskip

Let $M$ be a Liouville manifold. Recall that the action functional $\mathcal{A}_{H_t}:\mathcal{L}M\rightarrow\mathbb{R}$ of a time-dependent perturbation $H_t:S^1\times M\rightarrow\mathbb{R}$ of some autonomous Hamiltonian $H:M\rightarrow\mathbb{R}$ is defined to be
\begin{equation}\label{eq:action}
\mathcal{A}_{H_t}(x)=-\int_{S^1}x^\ast\theta_M+\int_0^1H_t(x(t))dt.
\end{equation}
The period spectrum $\mathcal{P}_M$ is a strictly ordered set with elements $0<\eta_1<\eta_2<\cdots$, where $\eta_1$ is the minimum period of a Reeb orbit on the contact boundary $\partial\overline{M}$, and we set $\eta_0=0$. Let $\lambda_j=\frac{\eta_j+\eta_{j+1}}{2}$, so in particular $\lambda_j\notin\mathcal{P}_M$ for any $j$, and introduce the real numbers
\begin{equation}
a_{\lambda_j}:=-\frac{\lambda_j^2}{2}-\lambda_j,\textrm{ }j\geq0.
\end{equation}
Consider a Hamiltonian $H_{\lambda,t}\in\mathcal{H}_\lambda(M)$ so that $\lambda\notin\mathcal{P}_M$, let $\mathcal{O}_{M,\lambda}$ be the set of 1-periodic orbits of $X_{H_{\lambda,t}}$, there is an action filtration $F^\bullet$ on the Floer complex $\mathit{CF}^\ast(\lambda)$ of $H_{\lambda,t}$ given by
\begin{equation}\label{eq:filtration}
F^j\mathit{CF}^\ast(\lambda):=\bigoplus_{x\in\mathcal{O}_{M,\lambda},\mathcal{A}_{H_{\lambda,t}}(x)\geq a_{\lambda_j}}|o_x|_\mathbb{K}
\end{equation}

\bigskip

In order to analyze the compatibility between the $S^1$-complex structure maps $\{\delta_i\}_{i\geq0}$ and the filtration $F^\bullet$ on $\mathit{CF}^\ast(\lambda)$, we study a specific autonomous Hamiltonian $\widetilde{H}_\lambda:M\rightarrow\mathbb{R}$, which has the form
\begin{equation}
\widetilde{H}_\lambda(x)=\left\{\begin{array}{ll}
\textrm{some negative }C^2\textrm{-small Morse function} & x\in M^\mathit{in}, \\
\frac{(r-1)^2}{2} & x\in[1,\lambda+1]\times\partial\overline{M}, \\
\lambda(r-1)-\frac{\lambda^2}{2} & x\in[\lambda+1,\infty)\times\partial\overline{M}.
\end{array}\right.
\end{equation}
Following $\cite{jz}$, Section 3.2.1, we define a carefully-chosen small time-dependent perturbation $\widetilde{H}_{\lambda,t}$ of $\widetilde{H}_\lambda$. For any 1-periodic orbit $x\in\widetilde{\mathcal{O}}_{M,\lambda}$ of $X_{\widetilde{H}_\lambda}$, fix an isolating neighborhood $U_x\subset M$. If $x$ corresponds to a Reeb orbit of multiplicity $k\in\mathbb{N}$, one considers a Morse function $f_x:S^1\rightarrow\left[-1,-\frac{1}{2}\right]$ that has a unique minimum $f_x(0)=-1$ and a unique maximum $f_x(t_0)=-1/2$ for some small enough $t_0\in S^1$. Define $h_x:\overline{U}_x\rightarrow[-1,0]$ by
\begin{equation}
h_x(t,x(s))=f_x(ks-kt)
\end{equation}
on the image of $x$ and extend it smoothly to $U_x$ so that $h_x=0$ on $\partial\overline{U}_x$. The time-dependent perturbation of $\widetilde{H}_\lambda$ is defined to be
\begin{equation}
\widetilde{H}_{\lambda,t}=\widetilde{H}_\lambda+\varepsilon'\sum_{x\in\widetilde{\mathcal{O}}_{M,\lambda}}h_x(t),
\end{equation}
where $h_x(t)=f_x\circ\tilde{\phi}^{-t}$, with $\tilde{\phi}^{-t}$ being the time $-t$ flow of $X_{\widetilde{H}_\lambda}$, and $\varepsilon'>0$ is a small positive number. 

With our choice of $\widetilde{H}_{\lambda,t}$, an energy estimate for Floer trajectories in $\mathcal{M}_i(y^+;y^-)$ with $|o_{y^+}|_\mathbb{K}\in F^j\mathit{CF}^\ast(\lambda)$ implies the following:

\begin{lemma}[$\cite{jz}$, Lemma 3.2.4]\label{lemma:filtration}
For any fixed $\lambda\notin\mathcal{P}_M$, there is an $\varepsilon'>0$ depending on $\lambda$ such that
\begin{equation}
\delta_i\left(F^j\mathit{CF}^\ast(\lambda)\right)\subset F^j\mathit{CF}^\ast(\lambda)
\end{equation}
for all $i,j\geq0$.
\end{lemma}

\begin{proof}[Proof of Theorem \ref{theorem:Lefschetz}]
We first show that (\ref{eq:SES}) is a short exact sequence of $S^1$-complexes. Basically, (\ref{eq:SES}) follows from the fact that the set of generators of $\mathit{CF}^\ast_{\mathit{vert}}(M,\lambda)$ consists of the following three kinds:
\begin{itemize}
	\item[(i)] Critical points of $H_F$. These generators have small negative action, if we perturb $H_F$ so that it is a $C^2$-small Morse function with negative values in the interior of $\overline{F}$.
	\item[(ii)] Non-constant 1-periodic orbits of $X_{H_F}$. Writing $H_F=h_F(r)$ on the cylindrical end $[1,\infty)\times\partial\overline{F}$, such an orbit $x$ has action
	\begin{equation}
	\mathcal{A}_{H_F}(x)=h_F(r_x)-r_xh_F'(r_x)<0,
	\end{equation}
	where $r_x\in[1,\infty)$ is the radial coordinate of $x$.
	\item[(iii)] Constant orbits near the critical points of $\pi^\ast H_\mathbb{C}$, which have Conley-Zehnder index $-n$. If we choose the $C^2$-small Morse function appearing in (i) to be sufficiently small, it can be achieved that
	\begin{equation}
	\mathcal{A}_{H_M}(x)=\varepsilon|\pi(x)-c|^2+H_F(x)>0.
	\end{equation}
\end{itemize}

By choosing the $t$-dependent perturbation of the Hamiltonian $H_F$ (and thus $H_M$) carefully, Lemma \ref{lemma:filtration} applies and shows that the operations $\{\delta_i\}_{i\geq0}$ preserve the action filtration on $\mathit{CF}^\ast_\mathit{vert}(M,\lambda)$. This implies that the generators of $\mathit{CF}^\ast_\mathit{vert}(M,\lambda)$ with positive actions form the (trivial) $S^1$-subcomplex $\mathbb{K}^{\mathrm{Crit}(\pi)}[-n]$. Denote by $\mathcal{H}_\pi(M)$ the space of Hamiltonians which are small $t$-dependent perturbations of the autonomous Hamiltonians of the form (\ref{eq:Ham-Lef}), and by $\mathcal{J}_\pi(M)$ the space of compatible almost complex structures which are of contact type when restricted to the fibers $F$, so that $\pi:M\rightarrow\mathbb{C}$ is $(J,j)$-holomorphic for each $J\in\mathcal{J}_\pi(M)$. The same energy estimate as in the proof of $\cite{ss}$, Lemma 7.2 shows that for any solution $u:Z\rightarrow M$ of the Floer equation $\left(du-X_{H_Z}\otimes\nu_Z\right)^{0,1}=0$ with asymptotics $y^\pm\in\mathcal{O}_{F,\lambda}$, where $\nu_Z\in\Omega^1(Z)$, $H_Z:Z\rightarrow\mathcal{H}_\pi(M)$, and the $(0,1)$-part is taken with respect to some $J_Z:Z\rightarrow\mathcal{J}_\pi(M)$, its image necessarily lies in the fiber $F$. Since universal and consistent Floer data for the operations $\{\delta_i\}_{i\geq0}$ can be chosen among $\mathcal{H}_\pi(M)$ and $\mathcal{J}_\pi(M)$, the fact that $u(Z)\subset F$ implies that the quotient complex $\mathit{CF}^\ast_\mathit{vert}(M,\lambda)/\mathbb{K}^{\mathrm{Crit}(\pi)}[-n]$ can be identified with $\mathit{CF}^\ast(F,\lambda)$ as an $S^1$-complex. This proves that (\ref{eq:SES}) is a short exact sequence of $S^1$-complexes, and it follows from Proposition \ref{proposition:intert} that we have a commutative diagram
\begin{equation}\label{eq:diagram}
\begin{tikzcd}
	\cdots \arrow[r] &\mathbb{K}((u))/u\mathbb{K}[[u]]^{\mathrm{Crit}(\pi)}[-n] \arrow[d, "\mathbf{B}"] \arrow[r] &\mathit{HF}_{S^1,\mathit{vert}}^\ast(M,\lambda) \arrow[d, "\mathbf{B}"] \arrow[r] &\mathit{HF}_{S^1}^\ast(F,\lambda) \arrow[d, "\mathbf{B}"] \arrow[r] &\cdots \\
	\cdots \arrow[r] &\mathbb{K}^{\mathrm{Crit}(\pi)}[-n] \arrow[r] &\mathit{HF}^{\ast-1}_\mathit{vert}(M,\lambda) \arrow[r] &\mathit{HF}^{\ast-1}(F,\lambda) \arrow[r] &\cdots
\end{tikzcd}
\end{equation}
whose rows are long exact sequences of $S^1$-equivariant and ordinary Floer cohomology groups, which are related through the marking map $\mathbf{B}$.

Now suppose that $F$ admits a cyclic dilation, so that for some $\lambda\gg0$ and $\lambda\notin\mathcal{P}_F$, there is a class $\tilde{b}_F\in\mathit{HF}_{S^1}^1(F,\lambda)$, whose image under the composition of the marking map $\mathbf{B}$ and the continuation map $\kappa^{\lambda,\infty}$ defines an invertible element $h_F\in\mathit{SH}^0(F)^\times$. Consider the boundary map $\mathit{HF}_{S^1}^1(F,\lambda)\rightarrow\mathbb{K}((u))/u\mathbb{K}[[u]]^{\mathrm{Crit}(\pi)}[-n]$ in the first row of (\ref{eq:diagram}). On the chain level, it consists of an infinite sequence of maps
\begin{equation}
\partial_k:\mathit{CF}^{2k+1}(F,\lambda)\rightarrow\mathbb{K}^{\mathrm{Crit}(\pi)}[-n]
\end{equation}
for every $k\geq0$, under which the cochain level representative $\tilde{\beta}_F=\sum_{k=0}^\infty\beta_{F,k}\otimes u^{-k}$ of $\tilde{b}_F$ goes to $\sum_{k=0}^\infty\partial_k(\beta_{F,k})$, which lies in degree 2. However, by our assumption that $n\geq3$, we necessarily have $\sum_{k=0}^\infty\partial_k(\beta_{F,k})=0$. This implies that the map $\mathit{HF}_{S^1,\mathit{vert}}^1(M,\lambda)\rightarrow\mathit{HF}_{S^1}^1(F,\lambda)$ in (\ref{eq:diagram}) is surjective, and applying similar argument to the second row of (\ref{eq:diagram}) shows that there is an isomorphism $\mathit{HF}_\mathit{vert}^0(M,\lambda)\cong\mathit{HF}^0(F,\lambda)$, which induces an isomorphism
\begin{equation}\label{eq:iso}
\mathit{SH}_\mathit{vert}^0(M)\cong\mathit{SH}^0(F)
\end{equation}
after passing to direct limits. This is in fact an isomorphism of $\mathbb{K}$-algebras, since applying the same argument as above to the pair-of-pants surface $S$ instead of $Z$ shows that the image of any Floer solution $u$ with asymptotics $y_0^-,y_1^-\in\mathcal{O}_{F,\lambda}$ and $y^+\in\mathcal{O}_{F,2\lambda}$ will be contained in $F$. By the surjectivity of $\mathit{HF}_{S^1,\mathit{vert}}^1(M,\lambda)\rightarrow\mathit{HF}_{S^1}^1(F,\lambda)$ and the commutative diagram (\ref{eq:diagram}), $\tilde{b}_F$ lifts to a class $\tilde{b}_M\in\mathit{HF}_{S^1,\mathit{vert}}^1(M,\lambda)$, whose image under $\mathbf{B}$, followed by the continuation map, is the lift of $h_F$ in $\mathit{SH}_\mathit{vert}^0(M)$. In view of the isomorphism (\ref{eq:iso}), this defines an invertible element of $\mathit{SH}_\mathit{vert}^0(M)$. Finally, there is an equivariant continuation map
\begin{equation}
\mathit{SC}^\ast_\mathit{vert}(M)\otimes_\mathbb{K}\mathbb{K}((u))/u\mathbb{K}[[u]]\rightarrow\mathit{SC}^\ast(M)\otimes_\mathbb{K}\mathbb{K}((u))/u\mathbb{K}[[u]],
\end{equation}
under which the class $\tilde{b}_M$ goes to a cyclic dilation in $\mathit{SH}_{S^1}^1(M)$. For its construction, one modifies the construction of the continuation map $\mathit{SC}^\ast_\mathit{vert}(M)\rightarrow\mathit{SC}^\ast(M)$ (cf. \cite{ps5} (18.25)) in the non-equivariant case by replacing the (1-parameter family of) domain cylinders with $k$-point angle decorated cylinders, as in the construction of (\ref{eq:continuation}).
\end{proof}

\begin{remark}
It is an easy observation in the above argument that if the cyclic dilation $\tilde{b}_F$ of the fiber $F$ satisfies $h=1$, then so is the total space $M$. In particular, according to our remarks in Section \ref{section:dynamics}, any Milnor fiber $M$ of a singularity of the form (\ref{eq:cubic}) has a cyclic dilation $\tilde{b}$ with $\mathbf{B}(\tilde{b})=1$.
\end{remark}

\subsection{Varieties of log general type}\label{section:general type}

We prove Theorem \ref{theorem:unique} in this section. Our argument is based on the work of McLean $\cite{mm}$ on the symplectic invariance of the log Kodaira dimension, and the techniques in $\cite{ab,dh}$, which allow us to produce $J$-holomorphic curves starting from Floer trajectories. For completeness, we shall start by recalling some of the important notions and results from $\cite{mm}$.
\bigskip

Let $(\overline{M},\theta_M)$ be any Liouville domain and let $J$ be an almost complex structure on $\overline{M}$ which is compatible with the symplectic form $d\theta_M$. It is \textit{convex} if there is some function $\phi:\overline{M}\rightarrow\mathbb{R}$ such that
\begin{itemize}
	\item $\partial\overline{M}$ is a regular level set of $\phi$ and $\phi$ attains its maximum on $\partial\overline{M}$;
	\item $\theta_M\circ J=d\phi$ near $\partial\overline{M}$.
\end{itemize}
Every Liouville domain $\overline{M}$ has a convex almost complex structure since one can take $\phi=r$ to be the radial coordinate function in a collar neighborhood of $\partial\overline{M}$, and then extend it smoothly to the interior. The following notion plays a pivotal role in McLean's theory.

\begin{definition}[$\cite{mm}$, Definition 2.2]\label{definition:uniruled}
Let $k\in\mathbb{Z}_{>0}$, and let $\mu\in\mathbb{R}_{>0}$. A Liouville domain $\overline{M}$ is $(k,\mu)$-uniruled if for every convex almost complex structure $J$ and every point $p\in M^\mathit{in}$ so that $J$ is integrable in a neighborhood of $p$, there is a proper $J$-holomorphic map $u:S\rightarrow M^\mathit{in}$ whose image passes through $p$, where $S$ is a genus 0 open Riemann surface with $\dim H_1(S;\mathbb{Q})\leq k-1$, and the energy of $u$ is at most $\mu$.
\end{definition}

It follows from $\cite{mm}$, Theorem 2.3 that $(k,\mu)$-uniruledness is a symplectic invariant of Liouville manifolds after forgetting about the energy bound $\mu$.

We now restrict ourselves to the special case when $M$ is an $n$-dimensional smooth affine variety. We say that $M$ is \textit{algebraically $k$-uniruled} if there is a polynomial map $S\rightarrow M$ passing through every generic point $p\in M$, where $S$ is $\mathbb{CP}^1$ with at most $k$ points removed. This notion of uniruledness is related to Definition \ref{definition:uniruled} in the following way.

\begin{theorem}[$\cite{mm}$, Theorem 2.5]\label{theorem:ML}
Let $M$ be a smooth affine variety. If the associated Liouville domain $\overline{M}$ is $(k,\mu)$-uniruled for some $\mu$, then $M$ is algebraically $k$-uniruled.
\end{theorem}

$k$-uniruledness of an affine variety is closely related to its log Kodaira dimension (\ref{eq:Kodaira}). In particular, we have the following:

\begin{lemma}[$\cite{mm}$, Lemma 7.1]\label{lemma:ML}
Let $M$ be a smooth affine variety which is algebraically $k$-uniruled. If $k=1$, then $\kappa(M)=-\infty$, and if $k=2$, then $\kappa(M)\leq n-1$.
\end{lemma}

\begin{proof}[Proof of Theorem \ref{theorem:unique}]
We start with a summary of the main idea of the proof. In order to show that $M$ does not admit a cyclic dilation, we argue by contradiction. Suppose $M$ has a cyclic dilation, we first notice that since $M$ contains an exact Lagrangian torus, by Corollary \ref{corollary:h=1}, the marking map $\mathbf{B}:\mathit{SH}^1_{S^1}(M)\rightarrow\mathit{SH}^0(M)$ cannot hit the identity. Thus in order for $M$ to have a cyclic dilation, there must be some non-trivial invertible element $h\in\mathit{SH}^0(M)^\times$. Using a limiting argument, we will show that the existence of such an element $h$ would imply that $M$ is uniruled by cylinders, which contradicts with our assumption that $M$ is long general type by Lemma \ref{lemma:ML}. The proof is divided into three steps.

\begin{paragraph}{Step 1: Existence of a Floer trajectory.}
Suppose that $M$ is an $n$-dimensional smooth affine variety so that $\mathit{SH}^0(M)^\times$ is not isomorphic to $\mathbb{K}^\times$, or equivalently, there is an $h\in\mathit{SH}^0(M)^\times$ which is not a multiple of the identity. Since $hh^{-1}=1$ holds in $\mathit{SH}^0(M)$, there must be some $\eta\in\mathit{SC}^0_+(M)$ so that $\alpha\cdot e+\eta$ is the cochain level representative of $h$, where $\alpha\in\mathbb{K}$ and $\mathit{SC}^0_+(M)\subset\mathit{SC}^0(M)$ is the submodule generated by non-constant Hamiltonian orbits. Assume further that $M$ contains an exact Lagrangian torus $L$, consider the Viterbo map
\begin{equation}\label{eq:Vit}
\mathit{SH}^0(M)\rightarrow H_n(\mathcal{L}L;\mathbb{K})\cong Z\left(\mathbb{K}[\pi_1(L)]\right)=\mathbb{K}[\pi_1(L)],
\end{equation}
where $Z\left(\mathbb{K}[\pi_1(L)]\right)$ is the center of $\mathbb{K}[\pi_1(L)]$, which is just $\mathbb{K}[\pi_1(L)]$ by the assumption that $L$ is a torus. In our case,
\begin{equation}\label{eq:Laurent}
\mathbb{K}[\pi_1(L)]\cong\mathbb{K}\left[x_1^{\pm1},\cdots,x_n^{\pm1}\right]
\end{equation}
is just the Laurent polynomial ring. Under the isomorphism (\ref{eq:Laurent}), (\ref{eq:Vit}) maps the cocycle $\alpha\cdot e+\eta$ to a non-trivial unit of $\mathbb{K}\left[x_1^{\pm1},\cdots,x_n^{\pm1}\right]$, which must be a non-zero multiple of some monomial $z_1^{a_1}\cdots z_n^{a_n}$, where $a_1,\cdots,a_n\in\mathbb{Z}$. In particular, we must have $\alpha=0$, because the map (\ref{eq:Vit}) maps the identity to $\iota_\ast[L]$, where $\iota:L\rightarrow\mathcal{L}L$ is the inclusion of constant loops, and $\iota_\ast[L]$ corresponds to $1$ under the isomorphism (\ref{eq:Laurent}). It follows that the pairing
\begin{equation}
\mathit{SC}^0_+(M)\otimes\mathit{SC}^0_+(M)\xrightarrow{\smile}\mathit{SC}^0(M)\xrightarrow{\mathit{pr}}C^0(M;\mathbb{K})\cong\mathbb{K}
\end{equation}
defined by composing the pair-of-pants product with the natural projection to the subcomplex $C^0(M;\mathbb{K})\subset\mathit{SC}^0(M)$ does not vanish, so there must be some $y_0^+,y_1^+\in\mathcal{O}_M$, such that $y_1^+\smile y_0^+=\alpha'\cdot e+\zeta$, where $\alpha'\in\mathbb{K}^\times$ is some non-zero scalar and $\zeta\in\mathit{SC}_+^0(M)$. Without loss of generality, we may assume that $y_1^+\smile y_0^+=e+\zeta$ for convenience.
 
This implies the existence of a map $u:S\rightarrow M$, with $S$ being a 3-punctured sphere, which satisfies the Floer equation $\left(du-X_{H_S}\otimes \nu_S\right)^{0,1}=0$, with asymptotic conditions specified by the non-constant periodic orbits $y_0^+,y_1^+\in\mathcal{O}_M$ at two positive cylindrical ends, and converges to the minimum $y^-$ of some $C^2$-small Morse function defined on $M^\mathit{in}$. Here $H_S:S\rightarrow\mathcal{H}(M)$ is a domain-dependent Hamiltonian-function so that its restriction to $M^\mathit{in}$ is a (domain-independent) $C^2$-small Morse function, and $\nu_S\in\Omega^1(S)$ is a closed 1-form, they are fixed as part of our Floer data defining the pair-of-pants product $\smile$, and the $(0,1)$-part in the Floer equation is taken with respect to some domain-dependent almost complex structure $J_S:S\rightarrow\mathcal{J}(M)$. 
\end{paragraph}

\begin{paragraph}{Step 2: Producing a pseudoholomorphic curve.}
Starting from the Floer trajectory $u$, one can apply a limiting argument of $\cite{ab,dh}$ to produce a $J$-holomorphic cylinder $\bar{u}_\infty:Z\rightarrow M_{1-\varepsilon}^\mathit{in}$ with finite energy which passes through $y^-$ for any convex almost complex structure $J$ on the slightly shrinked Liouville domain
\begin{equation}
\overline{M}_{1-\varepsilon}:=\overline{M}\setminus(1-\varepsilon,0]\times\partial\overline{M}
\end{equation}
containing $y^-$ in its interior, and whose completion is still deformation equivalent to $M$, where $\varepsilon>0$ is a sufficiently small constant.

To do this, we work with linear Hamiltonians instead, and introduce a particular 1-parameter family of domain-dependent Hamiltonians $H_{\lambda,S,\theta}:S\rightarrow\mathcal{H}_\lambda(M)$ which depend on a small parameter $\theta>0$, where as before $\lambda\notin\mathcal{P}_M$ and $\lambda\gg0$. Specifically, for each point $z\in S$, there is a Hamiltonian
\begin{equation}
H_{\lambda,z,\theta}=H_{\lambda,\theta}+F_{\lambda,z}\in\mathcal{H}_\lambda(M),
\end{equation}
where $F_{\lambda,z}:S\times M\rightarrow\mathbb{R}$ is independent of $s$ when restricted to the cylindrical ends, and since it is supported near non-constant orbits of $X_{H_{\lambda,\theta}}$, we can choose $\varepsilon>0$ small enough so that $F_{\lambda,z}$ vanishes on $\overline{M}_{1-\varepsilon}$. Set
\begin{equation}
H_{\lambda,\theta}(x)=\left\{\begin{array}{ll}
-\delta_{\lambda}+\theta f(x) & x\in M^\mathit{in} \\
h_{\lambda,\theta}(r) & x\in\overline{M}_{1+2\varepsilon}\setminus M^\mathit{in} \\
\lambda(r-1-\varepsilon) & x\in M\setminus\overline{M}_{1+2\varepsilon}
\end{array}\right.
\end{equation}
where $\delta_{\lambda}>0$ is a small scalar which satisfies $\lim_{\lambda\rightarrow\infty}\delta_{\lambda}=0$, $f$ is a $C^2$-small Morse function which satisfies $-1\leq f\leq0$ when restricted to $\overline{M}_{1-\varepsilon}$, has a relative minimum at $y^-\in M_{1-\varepsilon}^\mathit{in}$, and equals $r-1+\varepsilon$ on $[1-2\varepsilon,1]\times\partial\overline{M}$. $h_{\lambda,\theta}(r)$ is an arbitrary convex function on $[1,1+2\varepsilon]\times\partial\overline{M}$ which depends only on $r$, and whose slope varies from $\theta$ to $\lambda$ as $r$ goes from 1 to $1+2\varepsilon$, such that $h_{\lambda}(r):=\lim_{\theta\rightarrow0}h_{\lambda,\theta}(r)$ is a smooth function.

With our particular choice of the domain-dependent Hamiltonian $H_{\lambda,S,\theta}$ as above, we get a Floer trajectory $u_{\lambda,\theta}:S\rightarrow M$ which is asymptotic to a Morse critical point $y^-_\theta$ at its negative cylindrical end, and to $y_{0,\theta}^+,y_{1,\theta}^+\in\mathcal{O}_{M,\lambda}$ at two positive cylindrical ends. It follows from our definition of $H_{\lambda,\theta}$ that the non-constant orbits $y_{0,\theta}^+,y_{1,\theta}^+$ necessarily lie in the collar $[1,1+2\varepsilon]\times\partial\overline{M}$. To achieve the non-degeneracies of the orbits $y_{0,\theta}^+$ and $y_{1,\theta}^+$, the perturbation $F_{\lambda,z}$ can be taken to be supported near $y_{0,\theta}^+$ and $y_{1,\theta}^+$, so we may assume (by possibly rescaling $\varepsilon$) that $H_{\lambda,z,\theta}=H_{\lambda,\theta}$ is domain-independent in the shrinked Liouville domain $\overline{M}_{1-\varepsilon}$. Applying the maximum principle from $\cite{as}$, Section 7d to the map $u_{\lambda,\theta}$ shows that $u_{\lambda,\theta}(S)\subset\overline{M}_{1+2\varepsilon}$. To achieve transversality of the moduli space $\mathcal{P}\left(y_{0,\theta}^+,y_{1,\theta}^+;y_\theta^-\right)$ where the trajectory $u_{\lambda,\theta}$ lies in, one can start from any convex almost complex structure $J$ on $\overline{M}$ and perturb it slightly outside of $\overline{M}_{1-\varepsilon}$ to get a domain-dependent almost complex structure $J_{\lambda,S,\theta}:S\rightarrow\mathcal{J}(M)$. Note that we have arranged so that both of $H_{\lambda,S,\theta}$ and $J_{\lambda,S,\theta}$ are domain-independent on $\overline{M}_{1-\varepsilon}$, and we denote the restriction of $J_{\lambda,S,\theta}$ on $\overline{M}_{1-\varepsilon}$ as $J_{\lambda,\theta}$.

We want to pass to the limit $\theta\rightarrow0$. Notice that when restricted to the Liouville domain $\overline{M}_{1-\varepsilon}$, we have $\lim_{\theta\rightarrow0} H_{\lambda,\theta}=-\delta_\lambda$, and $\lim_{\theta\rightarrow0}J_{\lambda,\theta}=J$, for some fixed convex almost complex structure $J\in\mathcal{J}(M)$ which doesn't need to depend on $\lambda\gg0$. By $\cite{dh}$, Proposition 5.11 (which deals with the case when $S$ is a cylinder, but extends in a straightforward way to pair-of-pants), one can find a sequence $\{\theta_n\}$ which limits to 0 so that the corresponding Floer trajectories $\left\{u_{\lambda,\theta_n}\right\}$ converge to a limit $u_\lambda$ in $C_\mathrm{loc}^\infty(S,M)$, and the energy of the limiting trajectory
\begin{equation}
E\left(u_\lambda\right):=\frac{1}{2}\int_S\left|\left|du_\lambda-X_{H_{\lambda,z,0}}\otimes dt\right|\right|^2_{J_{\lambda,z,0}}
\end{equation}
is bounded above by some constant $\mu_M>0$, which is independent of $\lambda\gg0$. Denote by $\phi$ a biholomorphic map which identifies $S$ with $\mathbb{CP}^1\setminus\{0,1,\infty\}$ so that the negative puncture $\zeta_\mathit{out}$ is mapped to the origin. The composition $\tilde{u}_\lambda=u_\lambda\circ\phi:\mathbb{CP}^1\setminus\{0,1,\infty\}\rightarrow M$ is a map whose limit at the origin is $y^-:=\lim_{\theta\rightarrow0}y^-_\theta$ and whose image goes outside of $M^\mathit{in}$ when approaching the other two punctures. Note that by our choice of $H_{\lambda,\theta}$, the minimum $y^-_\theta$ of the $C^2$-small Morse function $f$ is independent of $\theta>0$, so we actually have $y^-\in M_{1-2\varepsilon}^\mathit{in}$. On the other hand, it also follows from our choice of $H_{\lambda,\theta}$ that $y_0^+:=\lim_{\lambda\rightarrow\infty}y^+_{0,\theta}$ and $y_1^+:=\lim_{\lambda\rightarrow\infty}y^+_{1,\theta}$ fall outside of $M^\mathit{in}$.

Pick any $R_\lambda\in(1-2\varepsilon,1-\varepsilon)$ so that $\lim_{\lambda\rightarrow\infty}R_\lambda=1-\varepsilon$, and consider the inverse image $\tilde{u}_\lambda^{-1}(\overline{M}_{R_\lambda})$. Since $y^-\in M_{1-2\varepsilon}^\mathit{in}$ and $y_0^+,y_1^+\notin M^\mathit{in}$, $\tilde{u}_\lambda^{-1}(M^\mathit{in}_{R_\lambda})\subset\mathbb{CP}^1\setminus\{0,1,\infty\}$ is an open punctured cylinder for some $\varepsilon>0$ which can be taken to be sufficiently small. We will denote it by $Z^\ast_\lambda\subset Z$. Since $H_\lambda\equiv-\delta_\lambda$ in $M^\mathit{in}$, it follows that the map $\tilde{u}_\lambda:Z_\lambda^\ast\rightarrow M_{R_\lambda}^\mathit{in}$ is $J$-holomorphic. Moreover, we have
\begin{equation}
\int_{\partial\overline{Z}_\lambda^\ast}\tilde{u}_\lambda^\ast\theta_M\leq E\left(u_\lambda\right)\leq\mu_M.
\end{equation}
In particular, the removable singularity theorem for pesudoholomorphic maps applies, which shows that $\tilde{u}_\lambda$ extends to a $J$-holomorphic map $\bar{u}_\lambda:Z_\lambda\rightarrow M_{R_\lambda}^\mathit{in}$. Letting $\lambda\rightarrow\infty$, we get a $J$-holomorphic map $\bar{u}_\infty:Z\rightarrow M_{1-\varepsilon}^\mathit{in}$ whose image passes through $y^-$.
\end{paragraph}

\begin{paragraph}{Step 3: Uniruledness.}
The uniruledness of the Liouville domain $\overline{M}_{1-\varepsilon}$ follows by noticing that $y^-$ can be taken to be any generic point in $M_{1-\varepsilon}^\mathit{in}$. Alternatively, one can argue as follows.

A slight variation of the construction of the moduli space $\mathcal{P}(y^+_0,y_1^+;y_-)$ enables us to define $\mathcal{P}(y_0^+,y_1^+;\overline{M})$, which parametrizes maps $u:S\rightarrow M$ satisfying Floer's equation, but are now asymptotic to $y_0^+,y_1^+\in\mathcal{O}_{M,\lambda}$ at two positive ends, and $\lim_{s\rightarrow-\infty}(\varepsilon^-)^\ast u(s,\cdot)$ belongs to the relative fundamental cycle in $C_{2n}(\overline{M},\partial\overline{M})$, where $\varepsilon^-$ is the negative cylindrical end. The Gromov bordification of $\mathcal{P}(y_0^+,y_1^+;\overline{M})$ carries an evaluation map
\begin{equation}
\overline{\mathit{ev}}:\overline{\mathcal{P}}\left(y_0^+,y_1^+;\overline{M}\right)\rightarrow\overline{M}
\end{equation}
defined to be the asymptote at the negative puncture for every $u\in\mathcal{P}\left(y_0^+,y_1^+;\overline{M}\right)$, and the coefficient before the identity $e\in\mathit{CF}^0(2\lambda)$ under the pair-of-pants product $y_1^+\smile y_0^+$ is defined by pushing forward the fundamental chain $\left[\overline{\mathcal{P}}\left(y_0^+,y_1^+;\overline{M}\right)\right]$ via $\overline{\mathit{ev}}$. Our assumption that $h$ defines a non-trivial unit in $\mathit{SH}^0(M)$ implies that for appropriate choices of Floer data, there is an identification between $\overline{\mathcal{P}}\left(y_0^+,y_1^+;\overline{M}\right)$ and $\overline{M}$ relative to the boundaries. Applying the same argument as above to every element $u$ of $\mathcal{P}\left(y_0^+,y_1^+;\overline{M}\right)$ proves that the Liouville domain $\overline{M}_{1-\varepsilon}$ is $(2,\mu_M)$-uniruled in the sense of Definition \ref{definition:uniruled}. It follows from Theorem \ref{theorem:ML} that $M$ is algebraically 2-uniruled.

By Lemma \ref{lemma:ML}, $M$ cannot be of log general type. In other words, for any smooth affine variety $M$ of log general type which contains an exact Lagrangian torus, we necessarily have $\mathit{SH}^0(M)^\times\cong\mathbb{K}^\times$. Appealing to Corollary \ref{corollary:h=1} completes our proof.
\end{paragraph}
\end{proof}

\begin{remark}
A key point in the above proof is that any central unit in the fundamental group algebra of a torus has vanishing constant coefficient. This is actually true for the group algebra of any torsion-free group, see \cite{yl3}, Theorem 4.1. Because of this, Theorem \ref{theorem:unique} can be generalized to log general type affine varieties containing an exact Lagrangian $K(\pi,1)$.
\end{remark}

Note that our theorem provides an alternative way to understand Corollary \ref{corollary:non-simple}. One can also try to prove a statement of similar flavour as Theorem \ref{theorem:unique} by making use of the \textit{logarithmic PSS map} introduced by Ganatra-Pomerleano in $\cite{gp1,gp2}$. Under the assumption that $M=X\setminus D$, where $(X,D)$ is a \textit{multiplicatively topological pair} in the sense of $\cite{gp2}$, $\mathit{SH}^0(M)$ is isomorphic to the \textit{logarithmic cohomology} $H_\mathit{log}^\ast(X,D)$ as a $\mathbb{K}$-algebra, while $H_\mathit{log}^0(X,D)$ does not contain any non-trivial unit.

\subsection{A conjectural picture}\label{section:conjecture}

Although the results obtained in this paper are far from providing a complete classification of Liouville manifolds admitting cyclic dilations, in view of our discussions in Section \ref{section:trichotomy}, it seems to be reasonable to expect the following (note that we consider here only the case when $\mathrm{char}(\mathbb{K})=0$):
\begin{conjecture}\label{conjecture:trichotomy}
Let $M$ be an $n$-dimensional smooth affine variety.
\begin{itemize}
	\item If $\kappa(M)=-\infty$, then $M$ admits a cyclic dilation with $h=1$.
	\item If $\kappa(M)=0$, then $M$ admits a cyclic dilation if and only if it admits a quasi-dilation with $h\neq1$.
	\item If $\kappa(M)=n$, then $M$ does not admit a cyclic dilation.
\end{itemize}
\end{conjecture}
Note that in order for our conjecture to make sense, we need to regard manifolds with $\mathit{SH}^\ast(M)=0$ as manifolds which carry cyclic dilations.

The expectation that cyclic dilations should exist for all affine varieties with $\kappa(M)=-\infty$ is probably too optimistic, it seems to be more reasonable to state the conjecture for all the Milnor fibers with $\kappa(M)=-\infty$. However, there are affine varieties with $\kappa(M)=-\infty$ which are not Milnor fibers, but which do admit cyclic dilations. As an example, consider the affine hypersurface $M\subset\mathbb{C}^4$ defined by the equation
\begin{equation}
x+y+xyz+w^2=1.
\end{equation}
Since $M$ carries a Lefschetz fibration $\pi:M\rightarrow\mathbb{C}$ with the smooth fiber being symplectomorphic to a 4-dimensional $D_4$ Milnor fiber (cf. $\cite{cm}$, Section 4.1), combining the argument in $\cite{yl}$, Section 4.2 with the Lefschetz fibration method due to Seidel-Solomon $\cite{ss}$ shows that $M$ admits a quasi-dilation. This example is also interesting in the sense that the existence of an exact Calabi-Yau structure on $\mathcal{W}(M)$ does not follow from Van den Bergh's Theorem \ref{theorem:Koszul}. Direct computations yield the quasi-isomorphism
\begin{equation}
\mathcal{W}_M\cong\mathbb{K}[x,y], |x|=1, |y|=-2,
\end{equation}
see for example $\cite{yl}$, Section 7.4. Since $\mathcal{W}_M$ is formal, and has generators in positive degrees, Theorem \ref{theorem:Koszul} is not applicable here.
\bigskip

The relation between the existence of a cyclic dilation and the finiteness of the first Gutt-Hutchings capacity was explained in Section \ref{section:disjoint}. In view of Lemma \ref{lemma:ML}, the first item of Conjecture \ref{conjecture:trichotomy} implies the following:
\begin{conjecture}
Let $M$ be a smooth affine variety which is algebraically $1$-uniruled, then as a Liouville manifold, we have $c_1^\mathit{GH}(M)<\infty$.
\end{conjecture}
For related studies in the case of closed symplectic manifolds, see $\cite{gl}$.

It seems likely that there is no exact Lagrangian tori in smooth affine varieties with $\kappa(M)=-\infty$. In view of Corollary \ref{corollary:h=1}, this provides evidences for the more precise expectation that the marking map $\mathbf{B}:\mathit{SH}_{S^1}^1(M)\rightarrow\mathit{SH}^0(M)$ should actually hit the identity. 
\bigskip

Since there should be an exact Lagrangian torus in every smooth log Calabi-Yau variety, one expects that $h\neq1$ in view of Corollary \ref{corollary:h=1}. In fact, this can be rigorously proved. By $\cite{zz1}$, Theorem L, if a smooth affine variety $M$ admits a dilation, then $\overline{M}$ is $(1,\mu)$-uniruled for some $\mu>0$ in the sense of Definition \ref{definition:uniruled}. In particular, $\kappa(M)=-\infty$. The same argument as in $\cite{zz1}$, Section 5 can be applied to prove the uniruledness of $M$ by affine lines when it admits a cyclic dilation with $h=1$.
\bigskip

Although this paper does not deal with affine varieties with $0<\kappa(M)<n$, it is not difficult to find affine surfaces of log Kodaira dimension 1 which admit cyclic dilations. For example, since $T^\ast S^1$ admits a quasi-dilation, so do $T^\ast S^1\times F_g$, where $F_g$ is a once punctured surface with genus $g\geq2$. Note that these affine surfaces can be partially compactified to contractible affine surfaces of log Kodaira dimension 1, whose classification can be found in $\cite{tp}$. It is unclear whether these contractible affine surfaces admit cyclic dilations, although we know that there are non-trivial invertible elements in $\mathit{SH}^0(M)$.

To prove the non-existence of cyclic dilations for affine varieties with $\kappa(M)=n$, one needs to exclude the possibility of having a cyclic dilation with $h\neq1$. It seems that the argument in the proof of Theorem \ref{theorem:unique} would still be useful, but it is in general not clear how to show that $h\in\mathit{SH}^0_+(M)$.

\appendix

\section{Construction of the operations $\ast_k$}\label{section:product}

The construction in this appendix is motivated by the \textit{equivariant pair-of-pants product} introduced by Seidel $\cite{ps8}$. Here we need a slight variant of his construction for $S^1$-equivariant Hamiltonian Floer cohomologies. For each $k\geq1$, we will introduce a chain level operation $\smile_k$ which decreases the degree by $2k$. Our real goal here is to construct a parametrized version $\ast_k$ of the star product (\ref{eq:star}) on Hamiltonian Floer cohomologies, which played a role in our proof of Theorem \ref{theorem:main}.
\bigskip

Let $k\geq1$ be an integer, we first define the operation $\smile_k$. Consider the 3-punctured sphere $S=S^2\setminus\{\zeta_{\mathit{in},0},\zeta_{\mathit{in},1},\zeta_\mathit{out}\}$, with two of the punctures $\zeta_{\mathit{in},0}$ and $\zeta_{\mathit{in},1}$ serving as inputs and the remaining one $\zeta_\mathit{out}$ is an output. As a convention, we shall take the representative of the punctured sphere so that $\zeta_{0,\mathit{in}}=e^{\frac{\pi i}{3}}$, $\zeta_{\mathit{in},1}=e^{\frac{2\pi i}{3}}$ and $\zeta_\mathit{out}=1$, so they are equidistributed along the equator. For the purpose of developing a parametrized theory, we also need to introduce the auxiliary marked points $p_1,\cdots,p_k\in S$. We require that the marked points $\{p_1,\cdots,p_k\}$ lie in a disc centered at $\zeta_\mathit{out}$ of radius $\varepsilon$, and they should be strictly radially ordered in the sense of (\ref{eq:radial3}) with respect to the standard complex coordinate near $1\in\mathbb{CP}^1$. Denote by $\mathcal{P}_k$ the moduli space of these punctured surfaces with $k$ marked points. 

For any representative $(S,p_1,\cdots,p_k)$ of an element of $\mathcal{P}_k$, we fix cylindrical ends
\begin{equation}\label{eq:cylend}
\varepsilon_0^+,\varepsilon_1^+:[0,\infty)\times S^1\rightarrow S,\textrm{ }\varepsilon^-:(-\infty,0]\times S^1\rightarrow S
\end{equation}
with coordinates $(s,t)\in\mathbb{R}_\pm\times S^1$, where $\varepsilon_0^+$ and $\varepsilon_1^+$ are positive cylindrical ends at $\zeta_{\mathit{in},0}$ and $\zeta_{\mathit{in},1}$ respectively, and $\varepsilon^-$ is a negative cylindrical end at $\zeta_\mathit{out}$. The choices are made here so that none of the cylindrical ends $\varepsilon_0^+$ and $\varepsilon_1^+$ contain any of the auxiliary marked points $\{p_i\}_{1\leq i\leq k}$, and $\varepsilon^-$ is chosen so that the negative $s$-direction is given by $\theta_1=\arg(p_1)$, where again the argument is taken with respect to the local complex coordinate near the origin. In other words, the corresponding asymptotic markers $\ell_{\mathit{in},0}$ and $\ell_{\mathit{in},1}$ at $\zeta_{\mathit{in},0}$ and $\zeta_{\mathit{in},1}$ are fixed, pointing respectively along the arcs $\left\{\varepsilon_0^+(s,0)\right\}$ and $\left\{\varepsilon_1^+(s,0)\right\}$, while the asymptotic marker $\ell_{\mathit{out}}$ at $\zeta_{\mathit{out}}$ is allowed to vary freely, since it is required to point towards $p_1$. To further fix conventions, we shall require that $\ell_{\mathit{in},0}$ points towards $\zeta_\mathit{out}$, and $\ell_{\mathit{in},1}$ points towards $\zeta_{\mathit{in},0}$, or equivalently, these two arrows are arranged so that they point clockwisely along the equator. We say that the choices of cylindrical ends $\varepsilon_0^+$, $\varepsilon_1^+$ and $\varepsilon^-$ \textit{are compatible with} the asymptotic markers in the sense that the positive (resp. negative) $s$-directions of the cylindrical ends coincide with the directions of $\ell_{\mathit{in},0}$ and $\ell_{\mathit{in},1}$ (resp. $\ell_\mathit{out}$), i.e.
\begin{equation}
\lim_{s\rightarrow\infty}\varepsilon_0^+(s,1)=\ell_{\mathit{in},0}, \lim_{s\rightarrow\infty}\varepsilon_1^+(s,1)=\ell_{\mathit{in},1},
\end{equation}
\begin{equation}
	\lim_{s\rightarrow-\infty}\varepsilon^-(s,1)=\ell_\mathit{out}.
\end{equation}
The codimension 1 boundary strata of the Deligne-Mumford compactification $\overline{\mathcal{P}}_k$ is covered by the images of the natural inclusions of the following strata:
\begin{equation}\label{eq:bdy1}
\overline{\mathcal{P}}_j\times\overline{\mathcal{M}}_{k-j},0<j\leq k,
\end{equation}
\begin{equation}\label{eq:bdy3}
\overline{\mathcal{P}}_k^{i,i+1},1\leq i\leq k-1,
\end{equation}
\begin{equation}\label{eq:bdy4}
\overline{\mathcal{P}}^{S^1}_{k-1},
\end{equation}
where the strata $\mathcal{P}_k^{i,i+1}$ are the loci where $|p_i|=|p_{i+1}|$ for some $i$, and the stratum $\mathcal{P}^{S^1}_{k-1}$ is the locus where $|p_k|=\frac{1}{2}$. Abstractly, the moduli space $\mathcal{P}^{S^1}_{k-1}$ can be identified with $S^1\times\mathcal{P}_{k-1}$, so its compactification is given by $S^1\times\overline{\mathcal{P}}_{k-1}$. However, this identification is not compatible with the choice of cylindrical ends and holds only on the topological level.

Analogous to (\ref{eq:forget1}), there is a forgetful map
\begin{equation}\label{eq:f1}
\pi^i:\mathcal{P}_k^{i,i+1}\rightarrow\mathcal{P}_{k-1}
\end{equation}
for each $1\leq i\leq k-1$, which forgets the auxiliary marked point $p_{i+1}$. Since $\pi^i$ is compatible with our choices of the cylindrical ends, it extends as a map $\bar{\pi}^i:\overline{\mathcal{P}}_k^{i,i+1}\rightarrow\overline{\mathcal{P}}_{k-1}$ on the compactifications.

One can also consider the map
\begin{equation}\label{eq:f2}
\pi^{S^1}:\mathcal{P}^{S^1}_{k-1}\rightarrow\mathcal{P}_{k-1}
\end{equation}
which forgets the marked point $p_k$. Under the identification $\mathcal{P}^{S^1}_{k-1}\cong S^1\times\mathcal{P}_{k-1}$, $\pi^{S^1}$ is the natural projection to the second factor. However, since $\pi^{S^1}$ is not compatible with the cylindrical end $\varepsilon^-$ when $k=1$, the identification fails when taking the choices of Floer data into account.

In order to write down the appropriate Floer equations, we need to specify our choices of Floer data on the domains. For later purposes, we shall work here with Hamiltonians of the form (\ref{eq:linear-Ham}) on the cylindrical end instead of the quadratic ones.

\begin{definition}\label{definition:data-domain}
A Floer datum for a representative $(S,p_1,\cdots,p_k)$ of an element of $\mathcal{P}_k$ consists of the following choices:
\begin{itemize}
	\item cylindrical ends $\varepsilon_0^+$, $\varepsilon_1^+$ and $\varepsilon^-$ which are compatible with the asymptotic markers $\ell_{\mathit{in},0}$, $\ell_{\mathit{in},1}$ and $\ell_{\mathit{out}}$ specified above;
	\item a closed 1-form $\nu_S\in\Omega^1(S)$ which pulls back to $dt$ via the maps $\varepsilon_0^+$, $\varepsilon_1^+$ and $\varepsilon^-$;
	\item a surface-dependent Hamiltonian $H_S:S\rightarrow\mathcal{H}_\ell(M)$ which is compatible with the cylindrical ends, in the sense that
	\begin{equation}
	(\varepsilon_0^+)^\ast H_S=H_{\lambda_0,t}, (\varepsilon_1^+)^\ast H_S=H_{\lambda_1,t}, (\varepsilon^-)^\ast H_S=H_{\lambda_0+\lambda_1,t}
	\end{equation}
	for some fixed choices of Hamiltonians $H_{\lambda_0,t}\in\mathcal{H}_{\lambda_0}(M)$, $H_{\lambda_1,t}\in\mathcal{H}_{\lambda_1}(M)$, and $H_{\lambda_0+\lambda_1,t}\in\mathcal{H}_{\lambda_0+\lambda_1}(M)$, where $\lambda_0$ and $\lambda_1$ are real numbers so that $\lambda_0,\lambda_1,\lambda_0+\lambda_1\notin\mathcal{P}_M$;
	\item a surface-dependent almost complex structure $J_S:S\rightarrow\mathcal{J}(M)$ which is compatible with the cylindrical ends, meaning that
	\begin{equation}
	(\varepsilon_0^+)^\ast J_S=(\varepsilon_1^+)^\ast J_S=(\varepsilon^-)^\ast J_S=J_t
	\end{equation}
	for some fixed $J_t\in\mathcal{J}(M)$.
\end{itemize}
\end{definition}

\begin{definition}\label{definition:data}
	A universal and consistent choice of Floer data for the operations $\{\smile_k\}$ is an inductive choice of Floer datum for each $k\geq1$ and each marked surface $(S,p_1,\cdots,p_k)$ representing a point of $\overline{\mathcal{P}}_k$, varying smoothly in $(S,p_1,\cdots,p_k)$, such that the following conditions are satisfied:
	\begin{itemize}
		\item Along the boundary strata (\ref{eq:bdy1}), the Floer data should be chosen to agree with the product of Floer data chosen previously on $\overline{\mathcal{P}}_j$ and $\overline{\mathcal{M}}_{k-j}$ up to conformal equivalence. Moreover, the choices vary smoothly with respect to the gluing charts.
		\item Along the boundary strata (\ref{eq:bdy3}), the Floer data are conformally equivalent to the ones pulled back from $\overline{\mathcal{P}}_{k-1}$ via the forgetful map $\bar{\pi}^i$.
	\end{itemize}
\end{definition}

In the above, the conformal equivalence of Floer data is defined similarly as before. More precisely, given two Floer data $\left(\varepsilon_{0,i}^+,\varepsilon_{1,i}^+,\alpha_{S,i},H_{S,i},J_{S,i}\right)$, where $i=1,2$ for $(S,p_1,\cdots,p_k)$, we say that they are \textit{conformally equivalent} if the choices of cylindrical ends coincide, and there is a constant $c>0$ such that $H_{\lambda,t,1}=\frac{H_{\lambda,t,2}}{c}\circ\psi^c$ and $J_{t,1}=\left(\psi^c\right)^\ast J_{t,2}$ on the cylindrical ends, where the value $\lambda$ is determined by the corresponding cylindrical end, namely $\lambda=\lambda_0$ for $\varepsilon_0^+$, $\lambda=\lambda_1$ for $\varepsilon_1^+$, and $\lambda=\lambda_0+\lambda_1$ for $\varepsilon^-$.

\begin{remark}
Unlike the boundary strata (\ref{eq:bdy1}) and (\ref{eq:bdy3}), we didn't impose any requirements on the behavior of universal and consistent Floer data along the stratum (\ref{eq:bdy4}) in the above definition. This is mainly due to the fact that for the purposes of this paper, we don't need to analyze it and identify the contribution of (\ref{eq:bdry4}) below. The same can be said for the stratum (\ref{eq:b6}). Compare with the way we dealt with the boundary stratum (\ref{eq:st3}) in Section \ref{section:CL}.
\end{remark}

Inductively, since the space of choices of Floer data at each level is non-empty and contractible, universal and consistent choices of Floer data exist. From now on, fix such a choice. For every $k\geq1$, and Hamiltonian orbits $y_0^+\in\mathcal{O}_{M,\lambda_0}$, $y_1^+\in\mathcal{O}_{M,\lambda_1}$ and $y^-\in\mathcal{O}_{M,\lambda_0+\lambda_1}$, we can define the moduli space $\mathcal{P}_k(y_0^+,y_1^+;y^-)$ of pairs
\begin{equation}
\left((S,p_1,\cdots,p_k),u\right),
\end{equation}
where $(S,p_1,\cdots,p_k)\in\mathcal{P}_k$, and $u:S\rightarrow M$ is a map satisfying the Floer equation
\begin{equation}
\left(du-X_{H_S}\otimes \nu_S\right)^{0,1}=0
\end{equation}
with respect to the domain-dependent almost complex structure $J_S$, which has been fixed as part of the Floer datum for $(S,p_1,\cdots,p_k)$, together with asymptotic conditions
\begin{equation}
	\lim_{s\rightarrow+\infty}(\varepsilon_0^+)^\ast u(s,\cdot)=y_0^+, \lim_{s\rightarrow+\infty}(\varepsilon_1^+)^\ast u(s,\cdot)=y_1^+,
\end{equation}
\begin{equation}
	\lim_{s\rightarrow-\infty}(\varepsilon^-)^\ast u(s,\cdot)=y^-.
\end{equation}
The moduli space $\mathcal{P}_k(y_0^+,y_1^+;y^-)$ admits a well-defined Gromov bordification $\overline{\mathcal{P}}_k(y_0^+,y_1^+;y^-)$, whose codimension 1 boundary $\partial\overline{\mathcal{P}}_k(y_0^+,y_1^+;y^-)$ is covered by the inclusions of the following strata:
\begin{equation}\label{eq:bdry1}
\overline{\mathcal{P}}_{k-j}(y_0^+,y_1^+;y)\times\overline{\mathcal{M}}_j(y;y^-), 1\leq j\leq k,
\end{equation}
\begin{equation}\label{eq:bdry1.1}
\overline{\mathcal{M}}(y_1^+;y)\times\overline{\mathcal{P}}_k(y_0^+,y;y^-),
\end{equation}
\begin{equation}\label{eq:bdry1.2}
\overline{\mathcal{M}}(y_0^+;y)\times\overline{\mathcal{P}}_k(y,y_1^+;y^-),
\end{equation}
\begin{equation}\label{eq:bdry2}
\overline{\mathcal{P}}_k(y_0^+,y_1^+;y)\times\overline{\mathcal{M}}(y;y^-),
\end{equation}
\begin{equation}\label{eq:bdry3}
\overline{\mathcal{P}}_k^{i,i+1}(y_0^+,y_1^+;y^-),
\end{equation}
\begin{equation}\label{eq:bdry4}
\overline{\mathcal{P}}^{S^1}_{k-1}(y_0^+,y_1^+;y^-),
\end{equation}
where the boundary strata (\ref{eq:bdry1.1}), (\ref{eq:bdry1.2}) and (\ref{eq:bdry2}) come from the semi-stable breaking, and the strata (\ref{eq:bdry1}), (\ref{eq:bdry3}), and (\ref{eq:bdry4}) correspond to the boundary strata (\ref{eq:bdy1}), (\ref{eq:bdy3}) and (\ref{eq:bdy4}) of $\partial\overline{\mathcal{P}}_k$ respectively.

For generic choices of Floer data, the moduli space $\overline{\mathcal{P}}_k(y_0^+,y_1^+;y^-)$ is a compact manifold with corners of dimension
\begin{equation}
	\deg(y^-)-\deg(y_0^+)-\deg(y_1^+)+2k.
\end{equation}
Every rigid element of $\overline{\mathcal{P}}_k(y_0^+,y_1^+;y^-)$ gives rise to an isomorphism
\begin{equation}
	\mu_u:o_{y_1^+}\otimes o_{y_0^+}\rightarrow o_{y^-}
\end{equation}
of orientation lines. Define the operation
\begin{equation}
	\smile_k:\mathit{CF}^\ast(\lambda_1)\otimes\mathit{CF}^\ast(\lambda_0)\rightarrow\mathit{CF}^{\ast-2k}(\lambda_0+\lambda_1)
\end{equation}
by
\begin{equation}
y_1^+\smile_ky_0^+=\sum_{|y^-|=|y_0^+|+|y_1^+|-2k}\sum_{\left((S,p_1,\cdots,p_k),u\right)\in\overline{\mathcal{P}}_k(y_0^+,y_1^+;y^-)}(-1)^{|y_0^+|+|y_1^+|}\mu_u,
\end{equation}
where the notational convention of Remark \ref{remark:o-line} has been applied, and we have used the abbreviation $|\cdot|$ for $\deg(\cdot)$.
\bigskip

Similarly, by considering the same pair-of-pants surface, but now allowing the auxiliary marked points $p_1,\cdots,p_k\in S$ to vary in a small neighborhood of $\zeta_{\mathit{in},1}$, we can define an operation
\begin{equation}
\smile^k:\mathit{CF}^{\ast+2k}(\lambda_1)\otimes\mathit{CF}^\ast(\lambda_0)\rightarrow\mathit{CF}^\ast(\lambda_0+\lambda_1).
\end{equation}
Denote the moduli space of the corresponding domains by $\mathcal{P}^k$. When analysing the boundary strata of $\overline{\mathcal{P}}^k$, note that in order for a $(k-j)$-point angle decorated cylinder to break ``above" the surface $S$ at $\zeta_{\mathit{in},1}$, the points $p_1,\cdots,p_k$ should be ordered so that (\ref{eq:radial2}) holds with respect to the local complex coordinate near $\zeta_{\mathit{in},1}$. The codimenson 1 boundary $\partial\overline{\mathcal{P}}^k$ is covered by the strata corresponding to (\ref{eq:bdy1}), (\ref{eq:bdy3}), and (\ref{eq:bdy4}), and as a consequence, the codimension 1 boundary of the Gromov compactification $\overline{\mathcal{P}}^k(y_0^+,y_1^+;y^-)$ is covered by the inclusions of the strata
\begin{equation}\label{eq:b1}
\overline{\mathcal{M}}_j(y_1^+;y)\times\overline{\mathcal{P}}^{k-j}(y_0^+,y;y^-), 1\leq j\leq k,
\end{equation}
\begin{equation}\label{eq:b2}
\overline{\mathcal{M}}(y_1^+;y)\times\overline{\mathcal{P}}^k(y_0^+,y;y^-),
\end{equation}
\begin{equation}\label{eq:b3}
\overline{\mathcal{M}}(y_0^+;y)\times\overline{\mathcal{P}}^k(y,y_1^+;y^-),
\end{equation}
\begin{equation}\label{eq:b4}
\overline{\mathcal{P}}^k(y_0^+,y_1^+;y)\times\overline{\mathcal{M}}(y;y^-),
\end{equation}
\begin{equation}
\overline{\mathcal{P}}_{i,i+1}^k(y_0^+,y_1^+;y^-),
\end{equation}
\begin{equation}\label{eq:b6}
\overline{\mathcal{P}}_{S^1}^{k-1}(y_0^+,y_1^+;y^-),
\end{equation}
which are analogous to (\ref{eq:bdry1}) to (\ref{eq:bdry4}) respectively. A signed count of rigid elements in the moduli space $\overline{\mathcal{P}}_{S^1}^{k-1}(y_0^+,y_1^+;y^-)$ gives rise to an operation
\begin{equation}
\smile^{k-1}_{S^1}:\mathit{CF}^{\ast+2k}(\lambda_1)\otimes\mathit{CF}^\ast(\lambda_0)\rightarrow\mathit{CF}^{\ast+1}(\lambda_0+\lambda_1),
\end{equation}
and we have the following:

\begin{proposition}\label{proposition:prod}
As chain level operations,
\begin{equation}\label{eq:eq-prod}
\sum_{j=0}^k\delta_j(y_1^+)\smile^{k-j}y_0^+=d\left(y_1^+\smile^ky_0^+\right)-(-1)^{|y_1^+|}y_1^+\smile^kdy_0^++y_1^+\smile^{k-1}_{S^1}y_0^+.
\end{equation}
\end{proposition}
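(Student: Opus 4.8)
The plan is to read off (\ref{eq:eq-prod}) from the structure of the codimension-one boundary of the one-dimensional components of the moduli spaces $\overline{\mathcal{P}}^k(y_0^+,y_1^+;y^-)$. Fix $k\geq1$ and orbits $y_0^+\in\mathcal{O}_{M,\lambda_0}$, $y_1^+\in\mathcal{O}_{M,\lambda_1}$ and $y^-\in\mathcal{O}_{M,\lambda_0+\lambda_1}$ with
\[
\deg(y^-)=\deg(y_0^+)+\deg(y_1^+)-2k+1 ,
\]
so that for a generic universal and consistent choice of Floer data---whose existence follows from the contractibility of the relevant spaces of choices---the moduli space $\overline{\mathcal{P}}^k(y_0^+,y_1^+;y^-)$ is a compact, coherently oriented $1$-manifold with boundary. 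Its signed boundary count therefore vanishes, and the proof reduces to identifying the contribution of each boundary stratum among (\ref{eq:bdry1})--(\ref{eq:bdry4}) together with the semistable strip-breaking strata (\ref{eq:bdry1.1})--(\ref{eq:bdry2}).

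First I would treat the strip-breaking strata. By definition of the Floer differential on $\mathit{CF}^\ast(\lambda_0+\lambda_1)$, the rigid configurations in (\ref{eq:bdry2}) contribute $d(y_1^+\smile^ky_0^+)$; those in (\ref{eq:bdry1.1})---strip-breaking at $\zeta_{\mathit{in},1}$ carrying no auxiliary marked point---contribute $\delta_0(y_1^+)\smile^ky_0^+=d(y_1^+)\smile^ky_0^+$; and those in (\ref{eq:bdry1.2})---strip-breaking at $\zeta_{\mathit{in},0}$---contribute $(-1)^{|y_1^+|}y_1^+\smile^k(dy_0^+)$, the Koszul sign appearing because the broken Floer trajectory is attached at the input carrying $y_0^+$, so that its orientation line is permuted past $o_{y_1^+}$. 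For the strata (\ref{eq:bdry1}) with $1\leq j\leq k$, the consistency condition forces the Floer datum on $\overline{\mathcal{M}}_j\times\overline{\mathcal{P}}^{k-j}$ to agree, up to conformal equivalence, with the product of the data defining $\delta_j$ and $\smile^{k-j}$, so that the rigid count over these strata equals $\sum_{j=1}^k\delta_j(y_1^+)\smile^{k-j}y_0^+$; together with the $j=0$ contribution from (\ref{eq:bdry1.1}) this assembles the full sum $\sum_{j=0}^k\delta_j(y_1^+)\smile^{k-j}y_0^+$ appearing on the left of (\ref{eq:eq-prod}). Finally, over the stratum (\ref{eq:bdry4}), the locus $|p_1|=\tfrac{1}{2}$, the last bullet of Definition \ref{definition:data} guarantees that the Floer datum on each slice $\overline{\mathcal{P}}_{S^1,\theta}^{k-1}$ is pulled back via $\bar{\iota}_\theta$ from the datum defining $\smile^{k-1}$; hence a signed count over (\ref{eq:bdry4}) is exactly what computes $y_1^+\smile^{k-1}_{S^1}y_0^+$.

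It remains to show that the strata (\ref{eq:bdry3}), the loci $|p_i|=|p_{i+1}|$, contribute nothing. Over such a stratum the Floer datum is pulled back along the forgetful map $\bar{\pi}_i:\overline{\mathcal{P}}^k_{i,i+1}\rightarrow\overline{\mathcal{P}}^{k-1}$ and so is independent of the position of $p_{i+1}$; consequently $\mathcal{P}^k_{i,i+1}(y_0^+,y_1^+;y^-)$ fibres over $\mathcal{P}^{k-1}(y_0^+,y_1^+;y^-)$ with fibre the circle $\arg(p_{i+1})$. For our choice of asymptotics the base has expected dimension $\deg(y^-)-\deg(y_0^+)-\deg(y_1^+)+2(k-1)=-1$, so it is empty for generic data and the stratum (\ref{eq:bdry3}) is empty as well. (When $i=k-1$, forgetting $p_k$ also displaces the cylindrical end $\varepsilon_1^+$; one first realigns $\varepsilon_1^+$ with $\arg(p_{k-1})$ and then argues as before.) Adding up all contributions to zero and rearranging gives (\ref{eq:eq-prod}).

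The step I expect to be the main obstacle is the bookkeeping of signs: one must coherently orient the moduli spaces $\overline{\mathcal{P}}^k(y_0^+,y_1^+;y^-)$ and all of their boundary facets, match the induced boundary orientations against the sign conventions built into the definitions of $d$, $\delta_j$, $\smile^k$---including the prefactor $(-1)^{|y_0^+|+|y_1^+|}$---and $\smile^{k-1}_{S^1}$, and verify in particular that the unique non-trivial Koszul sign is the factor $(-1)^{|y_1^+|}$ accompanying the input-breaking at $\zeta_{\mathit{in},0}$. A secondary technical point is the construction of consistent Floer data near the stratum (\ref{eq:bdry4}): the identification $\overline{\mathcal{P}}_{S^1}^{k-1}\cong S^1\times\overline{\mathcal{P}}^{k-1}$ holds only topologically and not compatibly with $\varepsilon_1^+$, so the gluing profiles there must be chosen with the same care as in Ganatra's treatment of the equivariant differential.
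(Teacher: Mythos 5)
Your proof is correct and follows essentially the same strategy as the paper's: analyze the codimension-one boundary of the one-dimensional component of $\overline{\mathcal{P}}^k(y_0^+,y_1^+;y^-)$, identify the contributions of the semi-stable breaking strata (\ref{eq:bdry1.1}), (\ref{eq:bdry1.2}), (\ref{eq:bdry2}) and of (\ref{eq:bdry1}), (\ref{eq:bdry4}), and show that (\ref{eq:bdry3}) contributes nothing because the Floer data there factor through the forgetful map $\bar{\pi}_i$. The only substantive difference is in how you rule out (\ref{eq:bdry3}): the paper observes that the $\bar{\pi}_i$-fibre direction gives a free one-parameter family through any solution, so no element is rigid, while you package the same observation as a fibration of $\mathcal{P}^k_{i,i+1}(y_0^+,y_1^+;y^-)$ over $\mathcal{P}^{k-1}(y_0^+,y_1^+;y^-)$ with circle fibre and a base of expected dimension $-1$, hence empty for generic data. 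These are two phrasings of the same dimension drop, and both are valid. Beyond that you make explicit the "standard breaking analysis" that the paper leaves implicit, and you flag the $i=k-1$ realignment of $\varepsilon_1^+$, which is a genuine (if minor) subtlety the paper glosses over.
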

\begin{proof}
As in (\ref{eq:f1}), there is a forgetful map $\pi_i:\mathcal{P}_{i,i+1}^k\rightarrow\mathcal{P}^{k-1}$, which leaves out the point $p_{i+1}$ and extends to a map $\bar{\pi}_i$ defined on the compactifications. The universal and consistency of our choices of Floer data when defining the operations $\smile^k$ implies that the Floer data chosen for elements $(S,p_1,\cdots,p_k)\in\overline{\mathcal{P}}^k_{i,i+1}$ depend only on their images under $\bar{\pi}_i$. Since the forgetful map $\bar{\pi}_i$ has 1-dimensional fibers, given a representative $((S,p_1,\cdots,p_k),u)$ of an element of $\overline{\mathcal{P}}^k_{i,i+1}(y_0^+,y_1^+;y^-)$, any other point $(S',p_1',\cdots,p_k')\in\mathcal{P}^k_{i,i+1}$ in the same fiber as $(S,p_1,\cdots,p_k)$ defines another representative $((S',p_1',\cdots,p_k'),u)$ of an element of $\overline{\mathcal{P}}^k_{i,i+1}(y_0^+,y_1^+;y^-)$. This shows that the elements of the moduli space $\overline{\mathcal{P}}^k_{i,i+1}(y_0^+,y_1^+;y^-)$ are never rigid.
	
Thus we only need to consider the boundary strata (\ref{eq:b1}), (\ref{eq:b2}), (\ref{eq:b3}), (\ref{eq:b4}) and (\ref{eq:b6}). Standard breaking analysis then implies (\ref{eq:eq-prod}).
\end{proof}

Given a family of Riemann surfaces $(S_q)$ parametrized by $q\in[0,\pi]$, so that each $S_q$ is a sphere with three fixed punctures $\zeta_{\mathit{in},0}$, $\zeta_{\mathit{in},1}$ and $\zeta_\mathit{out}$ equidistributed along the equator. As above, these punctures are equipped with asymptotic markers $\ell_{\mathit{in},0}$, $\ell_{\mathit{in},1}$ and $\ell_{\mathit{out}}$. When $q=0$, $\ell_{\mathit{in},0}$ points towards $\zeta_\mathit{out}$, $\ell_{\mathit{in},1}$ points away from $\zeta_\mathit{out}$, and $\ell_{\mathit{out}}$ points towards $\zeta_{\mathit{in},1}$. As $q$ varies from 0 to $\pi$, all of the asymptotic markers perform a half-turn: anticlockwise for the output, and clockwise for the inputs. These are domains defining the operation
\begin{equation}\label{eq:star}	\ast:\mathit{CF}^\ast(\lambda_1)\otimes\mathit{CF}^\ast(\lambda_0)\rightarrow\mathit{CF}^{\ast-1}(\lambda_0+\lambda_1).
\end{equation}
By symmetrizing it, one gets the familiar Lie bracket $[\cdot,\cdot]$ on Hamiltonian Floer cohomologies, which, after passing to direct limit, equips $\mathit{SH}^\ast(M)$ with the structure of a Gerstenhaber algebra.

In view of the discussions above, one may expect to have higher order analogues $\ast_k,k\geq1$ of the operation $\ast$. To define them, one considers the domains $(S_q,p_1,\cdots,p_k)$, the family of punctured spheres $(S_q)$ together with $k$ auxiliary marked points $p_1,\cdots,p_k$ lying in a small neighborhood of $\zeta_\mathit{out}$. If we fix $\zeta_\mathit{out}$ at the origin, these points should be strictly radially ordered as in (\ref{eq:radial}). Moreover, when $q$ goes from 0 to $\pi$, the asymptotic markers $\ell_{\mathit{in},0}$ and $\ell_{\mathit{in},1}$ are required to rotate clockwisely by an angle of $\pi$, while $\ell_{\mathit{out}}$ is required to point towards $p_1$, so it is free to vary. Figure \ref{fig:eq-Lie} describes the family of domains defining $\ast_3$ with the positions of the marked points $p_1,p_2,p_3$ being fixed. If we allow the marked points $p_1,\cdots,p_k$ to vary under the constraint (\ref{eq:radial}), we will get a $(2k+1)$-dimensional family of marked surfaces, which fibers over $[0,\pi]$, and whose fibers are topologically $\mathbb{R}^k\times T^k$.

\begin{figure}
	\centering
	\begin{tikzpicture}
		\draw (-6,2.25) to (6,2.25);
		\node at (-6,2.5) {$q=0$};
		\node at (6,2.5) {$q=\pi$};
		
		\filldraw[draw=black,color={black!15},opacity=0.5] (0,0) circle (1.5);
		\draw (0,0) circle [radius=1.5];
		\draw [dashed] (0,0) ellipse (1.5 and 0.45);
		\draw (-0.75,-0.375) node {$\times$};
		\draw (0.75,-0.375) node {$\times$};
		\draw (0,0.45) node {$\times$};
		\draw [teal] [->] (-0.75,-0.375) to (-0.75,-0.075);
		\draw [teal] [->] (0.75,-0.375) to (0.75,-0.075);
		\draw [orange, dashed] (0,0.45) circle [radius=0.6];
		\draw [orange] (0,0.9) node[circle,fill,inner sep=1pt] {};
		\draw [orange] (-0.3,0.45) node[circle,fill,inner sep=1pt] {};
		\draw [orange] (0.375,0.45) node[circle,fill,inner sep=1pt] {};
		\draw [teal] [->] (0,0.45) to (-0.3,0.45);
		\node at (-0.75,-0.675) {$\zeta_{\mathit{in},0}$};
		\node at (0.75,-0.675) {$\zeta_{\mathit{in},1}$};
		\node [orange] at (-0.3,0.25) {\small $p_3$};
		\node [orange] at (0.375,0.25) {\small $p_2$};
		\node [orange] at (0,1.1) {\small $p_1$};
		
		\filldraw[draw=black,color={black!15},opacity=0.5] (-6,0) circle (1.5);
		\draw (-6,0) circle [radius=1.5];
		\draw [dashed] (-6,0) ellipse (1.5 and 0.45);
		\draw (-6.75,-0.375) node {$\times$};
		\draw (-5.25,-0.375) node {$\times$};
		\draw (-6,0.45) node {$\times$};
		\draw [teal] [->] (-6.75,-0.375) to (-7.125,-0.3);
		\draw [teal] [->] (-5.25,-0.375) to (-5.625,-0.45);
		\draw [orange, dashed] (-6,0.45) circle [radius=0.6];
		\draw [orange] (-6,0.9) node[circle,fill,inner sep=1pt] {};
		\draw [orange] (-6.3,0.45) node[circle,fill,inner sep=1pt] {};
		\draw [orange] (-5.625,0.45) node[circle,fill,inner sep=1pt] {};
		\draw [teal] [->] (-6,0.45) to (-6.3,0.45);
		\node at (-6.725,-0.675) {$\zeta_{\mathit{in},0}$};
		\node at (-5.25,-0.675) {$\zeta_{\mathit{in},1}$};
		\node [orange] at (-6.3,0.25) {\small $p_3$};
		\node [orange] at (-5.625,0.25) {\small $p_2$};
		\node [orange] at (-6,1.1) {\small $p_1$};
		
		\filldraw[draw=black,color={black!15},opacity=0.5] (6,0) circle (1.5);
		\draw (6,0) circle [radius=1.5];
		\draw [dashed] (6,0) ellipse (1.5 and 0.45);
		\draw (5.25,-0.375) node {$\times$};
		\draw (6.75,-0.375) node {$\times$};
		\draw (6,0.45) node {$\times$};
		\draw [teal] [->] (5.25,-0.375) to (5.625,-0.45);
		\draw [teal] [->] (6.75,-0.37) to (7.125,-0.3);
		\draw [orange, dashed] (6,0.45) circle [radius=0.6];
		\draw [orange] (6,0.9) node[circle,fill,inner sep=1pt] {};
		\draw [orange] (5.7,0.45) node[circle,fill,inner sep=1pt] {};
		\draw [orange] (6.375,0.45) node[circle,fill,inner sep=1pt] {};
		\draw [teal] [->] (6,0.45) to (5.7,0.45);
		\node at (5.25,-0.675) {$\zeta_{\mathit{in},0}$};
		\node at (6.725,-0.675) {$\zeta_{\mathit{in},1}$};
		\node [orange] at (5.7,0.25) {\small $p_3$};
		\node [orange] at (6.375,0.25) {\small $p_2$};
		\node [orange] at (6,1.1) {\small $p_1$};
	\end{tikzpicture}
	\caption{By fixing the points $p_1,p_2,p_3$, we get a slice of the moduli space $\mathcal{P}_{q,3}$}
	\label{fig:eq-Lie}
\end{figure}

For a fixed $q\in[0,\pi]$, the moduli space of marked surfaces $(S_q,p_1,\cdots,p_k)$ will be denoted by $\mathcal{P}_{q,k}$. The choice of Floer data on its Deligne-Mumford compactification $\overline{\mathcal{P}}_{q,k}$ will be essentially the same as that on $\overline{\mathcal{P}}_k$, with the only exception that the choices of the cylindrical ends $\varepsilon_{q,0}^+$ and $\varepsilon_{q,1}^+$ differ for different $q$. It is not hard to arrange the choices of the asymptotic markers at $\zeta_{\mathit{in},0}$ and $\zeta_{\mathit{in},1}$ so that the moduli space $\mathcal{P}_{0,k}$ is identical to $\mathcal{P}_k$ together with their Floer data.

Recall that as part of our Floer datum chosen for $(S_0,p_1,\cdots,p_k)$, there is a domain-dependent Hamiltonian function $H_{S_0}:S_0\rightarrow\mathcal{H}_\ell(M)$ and a domain-dependent almost complex structure $J_{S_0}:S_0\rightarrow\mathcal{J}(M)$. On the cylindrical ends, the Floer datum pulls back to
\begin{equation}
	(\varepsilon_{0,0}^+)^\ast H_{S_0}=H_{\lambda_0,t},(\varepsilon_{0,1}^+)^\ast H_{S_0}=H_{\lambda_1,t},(\varepsilon_0^-)^\ast H_{S_0}=H_{\lambda_0+\lambda_1,t},
\end{equation}
\begin{equation}
	(\varepsilon_{0,0}^+)^\ast J_{S_0}=(\varepsilon_{0,1}^+)^\ast J_{S_0}=(\varepsilon_0^-)^\ast J_{S_0}=J_t,
\end{equation}
where $H_{\lambda_0,t},H_{\lambda_1,t},H_{\lambda_0+\lambda_1,t}\in\mathcal{H}_\ell(M)$, and $J_t\in\mathcal{J}(M)$.

\begin{definition}\label{definition:data-r}
	In general, the Floer datum for a representative $(S_q,p_1,\cdots,p_k)$ of $\mathcal{P}_{q,k}$ consists of
	\begin{itemize}
		\item the choices of two positive cylindrical ends $\varepsilon_{0,q}^+$, $\varepsilon_{1,q}^+$ and a negative cylindrical end $\varepsilon_q^-$, which are compatible respectively with the asymptotic markers $\ell_{\mathit{in},0}$, $\ell_{\mathit{in},1}$, which depend on $q\in[0,\pi]$, and $\ell_{\mathit{out}}$, which points to $p_1$;
		\item a closed 1-form $\nu_{S_q}\in\Omega^1(S_q)$ which pulls back to $dt$ under $\varepsilon_{0,q}^+$, $\varepsilon_{1,q}^+$ and $\varepsilon_q^-$;
		\item a surface-dependent Hamiltonian $H_{S_q}:S_q\rightarrow\mathcal{H}_\ell(M)$ which satisfies
		\begin{equation}
			(\varepsilon_{0,q}^+)^\ast H_{S_q}=H_{\lambda_0,t+q},(\varepsilon_{1,q}^+)^\ast H_{S_q}=H_{\lambda_1,t+q},(\varepsilon_q^-)^\ast H_{S_q}=H_{\lambda_1+\lambda_2,t+q};
		\end{equation}
		\item a surface-dependent almost complex structure $J_{S_q}:S_q\rightarrow\mathcal{J}(M)$ so that
		\begin{equation}
			(\varepsilon_{0,q}^+)^\ast J_{S_q}=(\varepsilon_{1,q}^+)^\ast J_{S_q}=(\varepsilon_q^-)^\ast J_{S_q}=J_{t+q}.
		\end{equation}
	\end{itemize}
\end{definition}

Note that for general $q\in[0,\pi]$, there is an obvious identification $\mathcal{P}_{q,k}\cong\mathcal{P}_k$ given by rotating the asymptotic markers $\ell_{\mathit{in},0}$ and $\ell_{\mathit{in},1}$ by a certain angle. Although the identification does not preserve Floer data, this shows that the codimension 1 boundary strata of $\overline{\mathcal{P}}_{q,k}$ correspond exactly to those given in (\ref{eq:bdy1}) to (\ref{eq:bdy4}).

The moduli space of domains $\mathcal{S}_k$ defining the operation $\ast_k$ has an additional parameter $q$, namely
\begin{equation}
	\mathcal{S}_k:=\bigsqcup_{q\in[0,\pi]}\mathcal{P}_{q,k}.
\end{equation}
Abstractly, $\mathcal{S}_k$ can be identified with $[0,\pi]\times\mathcal{P}_k$, but as we have explained above, this identification is not compatible with Floer data. Denote by $\overline{\mathcal{S}}_k$ the Deligne-Mumford compactification of $\mathcal{S}_k$. The previous identification extends to one on the compactifications: $\overline{\mathcal{S}}_k\cong[0,\pi]\times\overline{\mathcal{P}}_k$.

\begin{definition}\label{definition:u-c}
	A universal and consistent choice of Floer data for the moduli spaces $\left\{\overline{\mathcal{S}}_k\right\}_{k\geq1}$ is an inductive choice of Floer data for each $k\geq1$, each $q\in[0,\pi]$, and each representative $(S_q,p_1,\cdots,p_k)$ of an element of $\overline{\mathcal{P}}_{q,k}$ in the sense of Definition \ref{definition:data-r}, so that
	\begin{itemize}
		\item when restricted to the slice $\overline{\mathcal{P}}_{q,k}\subset\overline{\mathcal{S}}_k$, the Floer data should be universal and consistent in the same sense as in Definition \ref{definition:data};
		\item the Floer data on $\overline{\mathcal{P}}_{q,k}$ should vary smoothly with respect to the parameter $q$;
		\item the Floer data on $\overline{\mathcal{P}}_{0,k}$ coincides with those on $\overline{\mathcal{P}}_k$ under the obvious identification $\overline{\mathcal{P}}_{0,k}\cong\overline{\mathcal{P}}_k$;
		\item the Floer data on the moduli spaces $\overline{\mathcal{P}}_{q,k}$ should be chosen so that for any representative of an element $(S_0,p_1,\cdots,p_k)\in\mathcal{P}_{0,k}$, the Floer datum coincides with the one obtained by pulling back of that on $(S_\pi,\mathit{sw}(p_1),\cdots,\mathit{sw}(p_k))\in\mathcal{P}_{\pi,k}$ via the automorphism $\mathit{sw}:S^2\rightarrow S^2$ of the sphere which swaps the two inputs $\zeta_{\mathit{in},0}$ and $\zeta_{\mathit{in},1}$ and preserves the output $\zeta_\mathit{out}$.
	\end{itemize}
\end{definition}

Pick a univeral and consistent choice of Floer data. For $y_0^+\in\mathcal{O}_{M,\lambda_0}$, $y_1^+\in\mathcal{O}_{M,\lambda_1}$ and $y^-\in\mathcal{O}_{M,\lambda_0+\lambda_1}$, we define for each $k\geq1$ the moduli space $\mathcal{S}_k(y_0^+,y_1^+;y^-)$, which parametrizes triples $(q,(S_q,p_1,\cdots,p_k),u)$, with $q\in[0,\pi]$, $(S_q,p_1,\cdots,p_k)\in\mathcal{P}_{q,k}$ and the map $u:S_q\rightarrow M$ is a solution of Floer's equation
\begin{equation}
	\left(du-X_{H_{S_q}}\otimes\nu_{S_q}\right)^{0,1}=0,
\end{equation}
whose behavior at infinity is controlled by the asymptotic conditions
\begin{equation}
	\lim_{s\rightarrow+\infty}(\varepsilon_{0,q}^+)^\ast u(s,\cdot)=y_0^+,\lim_{s\rightarrow+\infty}(\varepsilon_{1,q}^+)^\ast u(s,\cdot)=y_1^+,
\end{equation}
\begin{equation}
	\lim_{s\rightarrow-\infty}(\varepsilon_q^-)^\ast u(s,\cdot)=y^-.
\end{equation}

Denote by $\overline{\mathcal{S}}_k(y_0^+,y_1^+;y^-)$ the Gromov bordification of $\mathcal{S}_k(y_0^+,y_1^+;y^-)$. For generic choices of Floer data, $\overline{\mathcal{S}}_k(y_0^+,y_1^+;y^-)$ is a compact manifold with corners of dimension
\begin{equation}
	\deg(y^-)-\deg(y_0^+)-\deg(y_1^+)+2k+1.
\end{equation}
A signed count of rigid elements of $\overline{\mathcal{S}}_k(y_0^+,y_1^+;y^-)$ defines the map
\begin{equation}
	\ast_k:\mathit{CF}^\ast(\lambda_1)\otimes\mathit{CF}^\ast(\lambda_0)\rightarrow\mathit{CF}^{\ast-2k-1}(\lambda_0+\lambda_1).
\end{equation}

Just as the usual star product $\ast$ measures the homotopy commutativity of the usual pair-of-pants product $\smile$, with our choices of Floer data as in Definition \ref{definition:u-c}, there is a relation between the operations $\ast_k$ and $\smile_k$ as well, but we are not going to discuss it here.

\Addresses

\end{document}